\newtheorem{theorem}{Theorem}[section]
\newtheorem{conjecture}[theorem]{Conjecture}
\newtheorem{lemma}[theorem]{Lemma}
\newcounter{cl}[theorem]
\newtheorem{claim}[cl]{Claim}
\newtheorem*{claim*}{Claim}
\newtheorem*{subclaim*}{Subclaim}
\newtheorem{corollary}[theorem]{Corollary}
\newtheorem{question}[theorem]{Question}
\newcommand{\thistheoremname}{}
\newtheorem*{genericthm}{\thistheoremname}
\newenvironment{namedthm}[1]
  {\renewcommand{\thistheoremname}{#1}%
   \begin{genericthm}}
  {\end{genericthm}}
\theoremstyle{definition}
\newtheorem{definition}[theorem]{Definition}
\newtheorem{terminology}[theorem]{Terminology}
\newtheorem{notation}[theorem]{Notation}
\newtheorem*{definition*}{Definition}
\newcounter{ca}[cl]
\newcounter{sca}[ca]
\newcounter{ssca}[sca]
\theoremstyle{remark}
\newtheorem{remark}[theorem]{Remark}
\newtheorem*{remark*}{Remark}
\newcommand{\ZFC}{\ensuremath{\operatorname{ZFC}} }
\newcommand{\Ord}{\sf{Ord}}
\newcommand{\rng}{\ensuremath{\operatorname{rng}} }
\newcommand{\cf}{\ensuremath{\operatorname{cf}} }
\newcommand{\AD}{\ensuremath{\operatorname{AD}} }
\newcommand{\Col}{\ensuremath{\operatorname{Col}} }
\newcommand{\lh}{\ensuremath{\operatorname{lh}} }
\newcommand{\str}{\ensuremath{\operatorname{str}} }
\newcommand{\Ult}{Ult}
\newcommand{\meas}{\ensuremath{\operatorname{meas}} }
\newcommand{\TW}{\ensuremath{\operatorname{TW}} }
\newcommand{\id}{\ensuremath{\operatorname{id}} }
\newcommand{\powerset}{{\wp}}
\newcommand{\cp}{{\rm cp }}
\newcommand{\crit}{\cp}
\newcommand{\rest}{\restriction}
\newcommand{\BS}{{}^\omega\omega}
\newcommand{\cT}{\mathcal{T}}
\newcommand{\bR}{\mathbb{R}}
\newcommand{\bP}{\mathbb{P}}
\newcommand{\comp}{{\sf{comp}}}
\newcommand{\sfb}{{\sf{b}}}
\newcommand{\sfa}{{\sf a}}
\newcommand{\sfd}{{\sf d}}
\newcommand{\sfe}{{\sf e}}
\newcommand{\gen}{{\sf gen}}
\newcommand{\card}[1]{{\vert #1 \vert} }
\newcommand{\Sealing}{\mathsf{Sealing}}
\newcommand{\GCSealing}{\mathsf{Generically}\ \allowbreak \mathsf{Correct}\ \allowbreak \mathsf{Sealing}}
\newcommand{\Hom}{\ensuremath{\operatorname{Hom}}}
\newcommand{\pwimg}{\, "}
\newcommand{\shortpwimg}{"}
\newcommand{\ThetaR}{\sf{\Theta reg}}
\def\k{\kappa}
\def\a{\alpha}
\def\b{\beta}
\def\d{\delta}
\def\l{\lambda}
\def\M{{\mathcal{M}}}
\def\T {{\mathcal{T}}}
\def\U{{\mathcal{U}}}
\def\X{{\mathcal{X}}}
\begin{document}
\title[Towards a generic absoluteness theorem for Chang models]{Towards a generic absoluteness theorem for Chang models}



\author{Sandra M\"uller} \address{Sandra M\"uller, Institut f\"ur Diskrete Mathematik und Geometrie, TU Wien, Wiedner Hauptstra{\ss}e 8-10/104, 1040 Wien, Austria.}
\email{sandra.mueller@tuwien.ac.at}
\author{Grigor Sargsyan} \address{Grigor Sargsyan, IMPAN, Antoniego Abrahama 18, 81-825 Sopot, Poland.}
\email{gsargsyan@impan.pl}

\thanks{The first author gratefully acknowledges that this research was funded in part by the Austrian Science Fund (FWF) [10.55776/Y1498, 10.55776/I6087, 10.55776/V844]. The second author's work is funded by the National Science Centre, Poland under the Weave-UNISONO call in the Weave program, registration number UMO-2021/03/Y/ST1/00281. For the purpose of open access, the authors have applied a CC BY public copyright license to any Author Accepted Manuscript version arising from this submission. Both authors would like to thank the referee for their helpful comments which improved the presentation of this paper.}




\begin{abstract}
  Let $\Gamma^\infty$ be the set of all universally Baire sets of reals. Inspired by the work done in \cite{SaTrConSealing} and \cite{SaTr21}, we introduce a new technique for establishing generic absoluteness results for models containing $\Gamma^\infty$. 
  
 Our main technical tool is an iteration that realizes $\Gamma^\infty$ as the sets of reals in a derived model of some iterate of $V$. We show, from a supercompact cardinal $\kappa$ and a proper class of Woodin cardinals, that whenever $g \subseteq \Col(\omega, 2^{2^\kappa})$ is $V$-generic and $h$ is $V[g]$-generic for some poset $\mathbb{P}\in V[g]$, there is an elementary embedding $j: V\rightarrow M$ such that $j(\kappa)=\omega_1^{V[g*h]}$ and $L(\Gamma^\infty, \bR)$ as computed in $V[g*h]$ is a derived model of $M$ at $j(\kappa)$. Here $j$ is obtained by iteratively taking ultrapowers of $V$ by extenders with critical point $\k$ and its images.
 
 As a corollary we obtain that $\Sealing$ holds in $V[g]$, which was previously demonstrated by Woodin using the stationary tower forcing. Also, using a theorem of Woodin, we conclude that the derived model of $V$ at $\kappa$ satisfies $\AD_{\mathbb{R}}+``\Theta$ is a regular cardinal".
 
  Inspired by core model induction, we introduce the definable powerset $\mathcal{A}^\infty$ of $\Gamma^\infty$ and use our derived model representation mentioned above to show that the theory of $L(\mathcal{A}^\infty)$ cannot be changed by forcing (see Theorem \ref{sealing for the ub powerset}). Working in a different direction, we also show that the theory of $L(\Gamma^\infty, \bR)[\mathcal{C}]$, where $\mathcal{C}$ is the club filter on $\powerset_{\omega_1}(\Gamma^\infty)$, cannot be changed by forcing (see Theorem \ref{thm: sealing for clubs}). Proving the two aforementioned results is the first step towards showing that the theory of $L(\Ord^\omega, \Gamma^\infty, \bR)([\mu_\a: \a\in \Ord])$, where $\mu_\a$ is the club filter on $\powerset_{\omega_1}(\a)$, cannot be changed by forcing.
\end{abstract}
\maketitle
\setcounter{tocdepth}{1}

\section{Introduction}

The research in this paper started by asking what sort of structures can exist on $\omega_3$ in forcing extensions of models of determinacy. This question is a local version of the more global question asking whether it is possible to force ${\sf{MM^{++}}}$ over a model of determinacy. The motivation behind our question comes from the following four results.

\begin{theorem}[Aspero-Schindler, \cite{AsSch21}]\label{asperoschindler} ${\sf{MM^{++}}}$ implies Woodin's ${\sf{Axiom\ (*)}}$.
\end{theorem}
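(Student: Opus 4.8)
The plan is to verify the two clauses of Woodin's Axiom $(*)$: that $L(\bR)\models\AD$, and that $\cP(\omega_1)$ is contained in $L(\bR)[G]$ for some filter $G\subseteq\mathbb{P}_{\max}$ generic over $L(\bR)$. Since ${\sf{MM^{++}}}$ implies ${\sf{MM}}$, which yields inner models with $\omega$ Woodin cardinals and hence $\AD^{L(\bR)}$, the first clause is essentially standard; moreover ${\sf{MM}}$ supplies the iterability theory for countable $\mathbb{P}_{\max}$-type structures used below. So the real work is producing the $\mathbb{P}_{\max}$-generic and checking that it absorbs $\cP(\omega_1)^V$. First I would record the classical ${\sf{MM^{++}}}$-consequences that $\mathbb{P}_{\max}$ forces — saturation of $\mathrm{NS}_{\omega_1}$, $\psi_{\mathrm{AC}}$, and the fact that every $A\subseteq\omega_1$ is ``certified'' by a real together with a countable iterable model having $A$ in its generic object — since these already pin down the \emph{first-order} theory of $\langle H_{\omega_2},\in,\mathrm{NS}_{\omega_1}\rangle$. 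The content of $(*)$ is to upgrade this to the statement that $H_{\omega_2}$ literally \emph{is} the $H_{\omega_2}$ of a $\mathbb{P}_{\max}$-extension of $L(\bR)$.

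The heart of the argument is to introduce, inside $V$, an auxiliary forcing $\mathbb{Q}$ whose conditions are countable iterable $\mathbb{P}_{\max}$-type structures carrying enough bookkeeping that a generic filter threads a \emph{single} generic iteration of a $\mathbb{P}_{\max}$ condition through $V$: the direct limit $\cM_\infty$ along the filter then has $\omega_1^V$ as its $\omega_1$, and its iteration maps witness that every $A\in\cP(\omega_1)^V$ appears in $\cM_\infty$, hence — via the real certifying $A$ — in a $\mathbb{P}_{\max}$-extension of $L(\bR)$. The crucial lemma, and the step I expect to be the main obstacle, is that $\mathbb{Q}$ is stationary-set preserving. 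This is proved by a catching/condensation argument: given a condition $p$ and an $\mathrm{NS}_{\omega_1}$-positive set $S$, pass to a suitable countable elementary submodel, use $\AD^{L(\bR)}$ and the Suslin representations available in $L(\bR)$ for the sets coding the relevant $\mathbb{P}_{\max}$ structures to conclude that the condensed model is iterable, and then use the ${\sf{MM^{++}}}$-consequences — saturation of $\mathrm{NS}_{\omega_1}$ and the resulting correctness of $\omega_1$ and of stationarity — to build a generic iteration of $p$ inside which $S$ reflects, producing an extension of $p$ forcing $S$ to remain stationary. The ``$++$'' is then genuinely needed in the application of the forcing axiom: one must simultaneously meet the dense sets driving the iteration and preserve the stationarity of $\omega_1$-many prescribed names, one batch for each $A\subseteq\omega_1$ to be absorbed.

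Finally, applying ${\sf{MM^{++}}}$ to $\mathbb{Q}$ produces a filter $g\in V$ meeting the required dense sets and respecting the required $\omega_1$-many names; the threaded iteration it yields has length $\omega_1$, so by the iteration lemma for $\mathbb{P}_{\max}$ it is automatically $L(\bR)$-generic, giving $G\subseteq\mathbb{P}_{\max}$ with $\cP(\omega_1)^V\subseteq L(\bR)[G]$, whence $H_{\omega_2}^V=H_{\omega_2}^{L(\bR)[G]}$, i.e. $(*)$. Two points will deserve care. First, \emph{definability}: $\mathbb{Q}$ and the names for the stationary sets to be preserved must be presented concretely enough that ``$\mathbb{Q}$ is stationary-set preserving'' is actually verifiable from the hypotheses, which is where the $\AD$-theoretic (Wadge and Suslin) analysis of $L(\bR)$ enters. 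Second, \emph{coherence}: the iteration embeddings arising along $g$ must fit together into a single honest generic iteration of one $\mathbb{P}_{\max}$ condition rather than an incoherent family — precisely what the bookkeeping built into the conditions of $\mathbb{Q}$ is designed to guarantee.
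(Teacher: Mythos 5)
This statement is not proved in the paper at all: it is quoted as background, with attribution to Asper\'o--Schindler and a citation to \cite{AsSch21}, so there is no in-paper argument to compare yours against. What you have written is a reconstruction of the architecture of the actual Asper\'o--Schindler proof, and at that level it is accurate: one fixes the target $A\subseteq\omega_1$ (or all of $\cP(\omega_1)$ at once), defines an auxiliary forcing $\mathbb{Q}$ whose conditions are finite/countable approximations to a generic iteration of a $\mathbb{P}_{\max}$ condition together with certifying side data, proves $\mathbb{Q}$ is stationary set preserving, and then applies ${\sf{MM^{++}}}$ — with the ``$++$'' used exactly as you say, to interpret the names for the stationary sets that must survive — to extract a filter whose induced iteration is $L(\bR)$-generic for $\mathbb{P}_{\max}$ and absorbs $\cP(\omega_1)^V$. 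Your first clause is also fine: ${\sf{MM^{++}}}$ implies $\AD^{L(\bR)}$ by the core-model-induction results, and this is what makes the $\mathbb{P}_{\max}$ machinery over $L(\bR)$ available.

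The genuine gap is that the one step you flag as ``the main obstacle'' — stationary set preservation of $\mathbb{Q}$ — is the entire content of the Asper\'o--Schindler paper, and your description of how to prove it does not close. Saying ``pass to a countable elementary submodel, use Suslin representations to get iterability, and build a generic iteration of $p$ inside which $S$ reflects'' skips the key idea: one cannot directly build the required extension of $p$ in $V$; instead one shows that the $\Sigma_1$ statement ``there is a \emph{certificate} for $p$ together with the stationarity of $S$'' holds in some outer model (a $\mathbb{P}_{\max}$-extension of $L(\bR)$ of a collapse extension), and then transfers it back to $V$ by an absoluteness argument exploiting the precise syntactic form of the certificates and the universally Baire/Suslin representations of the relevant sets. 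Without specifying what a certificate is and why its existence is absolute between $V$ and the auxiliary outer model, the density and preservation arguments are not verifiable, and the ``coherence'' issue you raise at the end (that the filter yields one honest iteration) is also resolved only by that certificate bookkeeping. So the proposal is a correct plan but not a proof; as submitted it would not substitute for the citation.
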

Theorem \ref{asperoschindler} has a powerful consequence. It implies that the theorems of ${\sf{MM^{++}}}$ about the structure $(H_{\omega_2}, NS, \in)$, where $NS$ is the non-stationary ideal on $\omega_1$, are all true in a $\mathbb{P}_{max}$ extension of $L(\mathbb{R})$. Thus, studying $(H_{\omega_2}, NS, \in)$ under ${\sf{MM^{++}}}$ can be reduced to studying the internal structure of $L(\bR)$ under ${\sf{AD}}$. This is an incredible reduction of set theoretic complexity; currently all known methods for producing a model of ${\sf{MM}}$ use supercompact cardinals while the assumption $V=L(\bR)+{\sf{AD}}$ requires only $\omega$ many Woodin cardinals \cite{St02}. 

Also, Theorem \ref{asperoschindler} has the following remarkable corollary.
\begin{corollary}\label{cortoaspsch} The theory of $L(\omega_2^{\omega_1})$ conditioned on ${\sf{MM^{++}}}$ cannot be changed by forcing.  More precisely, assuming ${\sf{MM^{++}}}$, whenever $g$ is $V$-generic such that $V[g]\models {\sf{MM^{++}}}$, $(L(\omega_2^{\omega_1}))^{V[g]}$ is elementarily equivalent to $L(\omega_2^{\omega_1})$.
\end{corollary}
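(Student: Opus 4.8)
The plan is to replace $L(\omega_2^{\omega_1})$ by $L(\mathcal{P}(\omega_1))$, then pass to $L(\bR)$ via Theorem~\ref{asperoschindler} and Woodin's $\mathbb{P}_{max}$ analysis, and conclude using the homogeneity of $\mathbb{P}_{max}$ together with the generic absoluteness of $L(\bR)$. For the first step I would check that $L(\omega_2^{\omega_1})=L(\mathcal{P}(\omega_1))$, provably in $\mathsf{ZFC}$, so that this equality holds inside $V$ and inside $V[g]$ separately. The inclusion $\supseteq$ is immediate: every characteristic function $\chi_A$ with $A\subseteq\omega_1$ lies in $\omega_2^{\omega_1}$, hence in $L(\omega_2^{\omega_1})$, so $L(\omega_2^{\omega_1})$ contains the set ${}^{\omega_1}2=\{\chi_A\st A\subseteq\omega_1\}$ and therefore $\mathcal{P}(\omega_1)$, whence $L(\mathcal{P}(\omega_1))\subseteq L(\omega_2^{\omega_1})$. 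For $\subseteq$: given $f\colon\omega_1\to\omega_2$, pick for each $\alpha<\omega_1$ a wellorder $R_\alpha$ of $\omega_1$ of order type $f(\alpha)$ and amalgamate the $R_\alpha$ along a fixed $\mathsf{ZFC}$-definable bijection $\omega_1\times\omega_1\times\omega_1\to\omega_1$ into a single set $A_f\subseteq\omega_1$; then $A_f\in L(\mathcal{P}(\omega_1))$, and $L(\mathcal{P}(\omega_1))$, being a model of $\mathsf{ZF}$, recovers $f$ from $A_f$ by decoding the $R_\alpha$ and reading off order types. Since $V$ and $L(\mathcal{P}(\omega_1))$ have the same subsets of $\omega_1$, they have the same wellorders of $\omega_1$ and hence the same ordinal $\omega_2$, so the set $\omega_2^{\omega_1}$ computed inside $L(\mathcal{P}(\omega_1))$ is the true one and lies in $L(\mathcal{P}(\omega_1))$, giving $L(\omega_2^{\omega_1})\subseteq L(\mathcal{P}(\omega_1))$. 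This elementary observation is what makes the statement really a statement about $L(\mathcal{P}(\omega_1))$; after it, it suffices to prove $L(\mathcal{P}(\omega_1))^{V[g]}\equiv L(\mathcal{P}(\omega_1))^{V}$.

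Next I would apply Theorem~\ref{asperoschindler}: ${\sf{MM^{++}}}$ implies ${\sf{Axiom\ (*)}}$, so $L(\mathcal{P}(\omega_1))^{V}=L(\bR)^{V}[G]$ for some $G$ that is $\mathbb{P}_{max}$-generic over $L(\bR)^{V}$; and since $V[g]\models{\sf{MM^{++}}}$ by hypothesis, likewise $L(\mathcal{P}(\omega_1))^{V[g]}=L(\bR)^{V[g]}[G']$ for some $\mathbb{P}_{max}$-generic $G'$ over $L(\bR)^{V[g]}$. As $\mathbb{P}_{max}$ is lightface definable and weakly homogeneous over $L(\bR)$, for every sentence $\varphi$ one has $L(\bR)[H]\models\varphi$ if and only if $\mathbf{1}\Vdash_{\mathbb{P}_{max}}^{L(\bR)}\varphi$, and this last assertion is equivalent, via a translation $\varphi\mapsto\psi_\varphi$ recursive in $\varphi$, to $L(\bR)\models\psi_\varphi$. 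Hence, with one and the same map $\varphi\mapsto\psi_\varphi$, $\mathrm{Th}(L(\mathcal{P}(\omega_1)))=\{\varphi\colon\psi_\varphi\in\mathrm{Th}(L(\bR))\}$ in both $V$ and $V[g]$, so the problem reduces to proving $\mathrm{Th}(L(\bR)^{V})=\mathrm{Th}(L(\bR)^{V[g]})$.

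I expect this last step --- the generic absoluteness of $L(\bR)$ --- to be the one that needs care, not because the argument is new (it is classical) but because one has to be sure the ambient hypotheses really deliver it: if $g$ adds reals then $L(\bR)^{V}\subsetneq L(\bR)^{V[g]}$, so equality of their theories is not a formality. It holds because, by standard results, ${\sf{MM^{++}}}$ implies that $\mathsf{AD}^{L(\bR)}$ holds in every set-generic extension of $V$ (its large-cardinal strength makes, for instance, the operators $M_n^{\#}$ for all $n$, and $M_\omega^{\#}$, total in all set-generic extensions), which is exactly the hypothesis of Woodin's generic-absoluteness theorem for $L(\bR)$; hence $L(\bR)^{V}\prec L(\bR)^{V[g]}$, in particular $L(\bR)^{V}\equiv L(\bR)^{V[g]}$. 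Chaining the three steps yields $L(\omega_2^{\omega_1})^{V[g]}\equiv L(\omega_2^{\omega_1})^{V}$. Conceptually this argument is the blueprint for the rest of the paper: the combination ``$L(\mathcal{P}(\omega_1))$ is a $\mathbb{P}_{max}$ extension of $L(\bR)$'' together with homogeneity of $\mathbb{P}_{max}$ and generic absoluteness of $L(\bR)$ will be replaced there by the derived-model representation of $\Gamma^\infty$ and the homogeneity of the associated L\'evy collapse.
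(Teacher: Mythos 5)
Your proposal is correct and follows exactly the route the paper sketches for this corollary: identify $L(\omega_2^{\omega_1})$ with $L(\powerset(\omega_1))$, use Theorem \ref{asperoschindler} to realize the latter as a $\mathbb{P}_{max}$-extension of $L(\bR)$, invoke homogeneity and definability of $\mathbb{P}_{max}$ to code its theory into $\mathrm{Th}(L(\bR))$, and finish with generic absoluteness of $\mathrm{Th}(L(\bR))$. The one point you flag as needing care --- that ${\sf{MM^{++}}}$ yields enough mouse closure (e.g.\ a fully iterable $M_\omega^\#$) for that last absoluteness step --- is precisely the proviso the paper itself attaches (``provided the minimal canonical inner model with $\omega$ Woodin cardinals exists''), so you have located the only genuinely load-bearing hypothesis.
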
 
This is because it follows from Theorem \ref{asperoschindler} that, assuming ${\sf{MM^{++}}}$, the theory of $L(\omega_2^{\omega_1})$ is coded into $L(\bR)$, whose theory, provided the minimal canonical inner model with $\omega$ Woodin cardinals exists, cannot be changed by forcing. In other words, studying $L(\omega_2^{\omega_1})$ under ${\sf{MM^{++}}}$ reduces to the study of $V=L(\bR)+{\sf{AD}}$. The next theorem is a remarkable generalization of Corollary \ref{cortoaspsch}.

\begin{theorem}[Viale, \cite{Vi16}]\label{viale} Assuming class of $\Sigma_2$-reflecting cardinals, the theory of $L({\sf{Ord}}^{\omega_1})$ conditioned on ${\sf{MM^{+++}}}$ cannot be changed by stationary set preserving forcing. More precisely, assuming ${\sf{MM^{+++}}}$ and the existence of a class of $\Sigma_2$-reflecting cardinals, whenever $g$ is a set generic over $V$, for a stationary set preserving forcing such that $V[g]\models {\sf{MM^{+++}}}$, $(L({\sf{Ord}}^{\omega_1}))^{V[g]}$ is elementarily equivalent to $L({\sf{Ord}}^{\omega_1})$.
\end{theorem}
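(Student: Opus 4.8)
Following Viale, the plan is to analyze ${\sf{MM^{+++}}}$ through his \emph{category forcing} on stationary set preserving posets. Write ${\sf{SSP}}$ for the class of posets preserving stationary subsets of $\omega_1$, and let $\mathbb{U}_{{\sf{SSP}}}$ be the partial order whose conditions are the set-sized posets in ${\sf{SSP}}$ and whose order is given by the ${\sf{SSP}}$-correct complete embeddings $i\colon P\to Q$ (complete embeddings such that $Q$ forces $Q/i[P]$ to be in ${\sf{SSP}}$ and that stationary subsets of $\omega_1$ are computed correctly). First I would establish, from the large cardinals built into ${\sf{MM^{+++}}}$ (a proper class of supercompacts), the two structural facts that drive everything: (a) $\mathbb{U}_{{\sf{SSP}}}$ is itself in ${\sf{SSP}}$; and (b) forcing with $\mathbb{U}_{{\sf{SSP}}}$ adds a generic \emph{elementary} embedding $j_G\colon V\to\mathcal{V}_G$, where $\mathcal{V}_G\subseteq V[G]$ is the direct limit of $V$ along the system of generic iteration maps coded by $G$. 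Elementarity of $j_G$ already delivers $L({\sf{Ord}}^{\omega_1})^V\equiv L({\sf{Ord}}^{\omega_1})^{\mathcal{V}_G}$.

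Next I would invoke the characterization of ${\sf{MM^{+++}}}$ as the assertion that $V$ is \emph{$\mathbb{U}_{{\sf{SSP}}}$-iterable}: club-many countable $M\prec H_\theta$ have the property that a suitably generic filter for $\mathbb{U}_{{\sf{SSP}}}^{\bar M}$ induces a generic iteration of the transitive collapse $\bar M$ whose last model embeds back into $V$, extending the inverse collapse. This iterability is exactly what is needed to see that $\mathcal{V}_G$ is the \emph{correct} generic ultrapower — wellfounded, with $j_G$ definable from $G$, and satisfying the factoring property used below — whereas the equivalence $L({\sf{Ord}}^{\omega_1})^V\equiv L({\sf{Ord}}^{\omega_1})^{\mathcal{V}_G}$ comes for free from elementarity of $j_G$.

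The key technical ingredient is a \emph{factoring lemma}: for any poset $\mathbb{P}\in V$ in ${\sf{SSP}}$, the forcing $\mathbb{U}_{{\sf{SSP}}}^V$ below the condition $\mathbb{P}$ is forcing equivalent to $\mathbb{P}\ast\dot{\mathbb{U}}_{{\sf{SSP}}}^{V[\dot g]}$; this rests on ${\sf{SSP}}$ and the correctness order being sufficiently absolute between $V$ and its ${\sf{SSP}}$-extensions, and on the good behaviour of quotient forcings. Granting it, the proof concludes: given $g$ generic for the ${\sf{SSP}}$ poset $\mathbb{P}$ with $V[g]\models{\sf{MM^{+++}}}$, use the factoring lemma to choose a $\mathbb{U}_{{\sf{SSP}}}^V$-generic $G$ through $\mathbb{P}$ of the form $G=g\ast G'$, so that $G'$ is $\mathbb{U}_{{\sf{SSP}}}^{V[g]}$-generic, $V[G]=V[g][G']$, and the limit model $\mathcal{V}_G$ built over $V$ coincides with the limit model $\mathcal{V}^{V[g]}_{G'}$ built over $V[g]$. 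Applying (a)--(b) and the iterability of $V$ gives $L({\sf{Ord}}^{\omega_1})^V\equiv L({\sf{Ord}}^{\omega_1})^{\mathcal{V}_G}$; applying (a)--(b) and the iterability of $V[g]$, available since $V[g]\models{\sf{MM^{+++}}}$, gives $L({\sf{Ord}}^{\omega_1})^{V[g]}\equiv L({\sf{Ord}}^{\omega_1})^{\mathcal{V}^{V[g]}_{G'}}=L({\sf{Ord}}^{\omega_1})^{\mathcal{V}_G}$. Chaining the two equivalences, $L({\sf{Ord}}^{\omega_1})^V\equiv L({\sf{Ord}}^{\omega_1})^{V[g]}$ holds in $V[G]$, and since elementary equivalence of (uniformly) definable structures is absolute, it already holds in $V$.

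The main obstacle is fact (a), that $\mathbb{U}_{{\sf{SSP}}}$ lies in ${\sf{SSP}}$: this is where the supercompacts are indispensable and where one needs a careful analysis of how stationary sets survive the category forcing, together with the ``duality'' between $\mathbb{U}_{{\sf{SSP}}}$ and its conditions. Close behind it is the factoring lemma, which is precisely what ties the category forcing of $V$ to that of $V[g]$ — and the reason the hypothesis insists that $\mathbb{P}$ be stationary set preserving, since otherwise $\mathbb{P}$ is not even a condition of $\mathbb{U}_{{\sf{SSP}}}^V$. One must also settle on the exact formulation of ${\sf{MM^{+++}}}$ (and of the correctness order on embeddings) so that it is at once forceable from a supercompact and strong enough to yield $\mathbb{U}_{{\sf{SSP}}}$-iterability, and handle class-sized $\mathbb{P}$, which is where the full proper class of supercompacts, rather than a single one, is used.
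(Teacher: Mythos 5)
A preliminary point: the paper does not prove this theorem. It is quoted from Viale \cite{Vi16} purely as motivation, so there is no in-paper proof to compare against; the only sensible benchmark is Viale's own argument, and your sketch is a reconstruction of exactly that. The ingredients you list — the category forcing $\mathbb{U}_{\sf SSP}$ ordered by ${\sf SSP}$-correct complete embeddings, the fact that it is itself in ${\sf SSP}$ (this is where supercompacts enter), the generic elementary embedding it induces under ${\sf MM^{+++}}$, the factoring lemma $\mathbb{U}^{V}_{\sf SSP}\restriction\mathbb{P}\cong\mathbb{P}\ast\dot{\mathbb{U}}^{V[\dot g]}_{\sf SSP}$, and the two-sided chaining through a common $\mathbb{U}$-generic — are the right ones, in the right order.

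The step that would not survive scrutiny as written is the assertion that the limit model $\mathcal{V}_G$ built over $V$ \emph{coincides} with the limit model $\mathcal{V}^{V[g]}_{G'}$ built over $V[g]$. These generic ultrapowers are in general distinct objects (the second is an ultrapower of $V[g]$, not of $V$), and the chaining does not go through them being equal. What the argument actually needs, and what Viale proves, is a closure property: each generic ultrapower contains every $\omega_1$-sequence of ordinals of $V[G]$ while $\omega_1$ is preserved, so that \[ L({\sf Ord}^{\omega_1})^{\mathcal{V}_G}=L({\sf Ord}^{\omega_1})^{V[G]}=L({\sf Ord}^{\omega_1})^{\mathcal{V}^{V[g]}_{G'}}, \] i.e.\ the Chang model of $V[G]$ itself is the common intermediary. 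This closure is precisely where the presaturation/self-genericity content of ${\sf MM^{+++}}$ (density in $\mathbb{U}_{\sf SSP}$ of totally rigid, strongly presaturated posets) is consumed, and your sketch leaves it implicit while leaning on the dubious identification of the two limit models. Two smaller corrections: ${\sf MM^{+++}}$ does not have "a proper class of supercompacts built in" — the class-many-supercompacts hypothesis is an additional assumption required by Viale's theorem (the paper's own statement also elides it); and elementary equivalence of the two Chang models, once established in $V[G]$, descends to $V$ by absoluteness of satisfaction for set-definable structures, a point you correctly note but which deserves the observation that both structures are definable classes of $V$ and $V[g]$ respectively, coded by their theories, which are sets.
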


Currently, there is no reduction of the study of $L({\sf{Ord}}^{\omega_1})$ under ${\sf{MM^{+++}}}$ to the study of some definable universe, and the research in this paper is partly motivated by the question whether there can be such a reduction. The next two theorems show that it is possible to force fragments of ${\sf{MM^{++}}}$ over a model of determinacy.

\begin{definition}\label{def:ThetaReg} ${\sf{AD_{\bR}}}$ is the statement that every set $A\subseteq \bR^\omega$ is determined. $\ThetaR$ is the theory ${\sf{ZF+AD_{\bR}}}+``\Theta$ is a regular cardinal".
\end{definition}
 Below $\sf{c}$ is the continuum and ${\sf{MM^{++}(\l)}}$ is ${\sf{MM^{++}}}$ for posets of size $\l$. 
\begin{theorem}[Woodin, \cite{WoPmax}]\label{woodins mmc} Assume $V=L(\powerset(\bR))$ and $\ThetaR$ holds. Then if $G\subseteq \mathbb{P}_{max}*Add(1, \omega_3)$ is $V$-generic then $V[G]\models {\sf{MM^{++}(c)}}$.
\end{theorem}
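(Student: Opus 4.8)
The plan is to verify ${\sf{MM^{++}(c)}}$ in $V[G]$ one instance at a time, exploiting the generic iterability that $\mathbb{P}_{max}$-conditions carry together with the fact that, under $\ThetaR$, the poset $\mathbb{P}_{max}*Add(1,\omega_3)$ turns $\powerset(\bR)$ into (essentially) the $H_{\omega_3}$ of the extension in a way that keeps the relevant $\aleph_2$-sized combinatorics captured by iterable structures. First I would record the cardinal arithmetic of $V[G]$: writing $G=G_0*G_1$, the poset $\mathbb{P}_{max}$ is $\sigma$-closed and weakly homogeneous, so it adds no reals and, by the basic analysis of $\mathbb{P}_{max}$ over determinacy models, forces $\AC$, $2^{\aleph_0}=\aleph_2$ and the saturation of $NS_{\omega_1}$; under $\ThetaR$ it moreover sends $\Theta^V$ to $\aleph_3^{V[G_0]}$, so that $H_{\omega_3}^{V[G_0]}$ inherits the structure of $\powerset(\bR)^V$. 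The tail $Add(1,\omega_3)$ is $<\aleph_3$-closed, hence adds no new subsets of $\omega_2$, preserves all cardinals, preserves stationary subsets of $\omega_1$ and the saturation of $NS_{\omega_1}$, and forces $2^{\aleph_2}=\aleph_3$. Consequently $c=\aleph_2$ in $V[G]$, every stationary-set-preserving poset $\mathbb{Q}$ of size $\le c$ together with a sequence $\langle D_\alpha:\alpha<\omega_1\rangle$ of dense subsets of $\mathbb{Q}$ and a sequence $\langle\dot S_\alpha:\alpha<\omega_1\rangle$ of $\mathbb{Q}$-names for stationary subsets of $\omega_1$ is an element of $H_{\omega_3}^{V[G]}$, and ${\sf{MM^{++}(c)}}$ is precisely the assertion that every such package admits a filter $g\subseteq\mathbb{Q}$ meeting all of the $D_\alpha$ for which each $(\dot S_\alpha)_g$ is stationary.

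Fixing $\mathbb{P}_{max}*Add(1,\omega_3)$-names $\dot{\mathbb{Q}}$, $\langle\dot D_\alpha:\alpha<\omega_1\rangle$, $\langle\dot S_\alpha:\alpha<\omega_1\rangle$ for such a package, I would, by weak homogeneity, reduce to showing that it is dense among conditions to force that the required $g$ exists. So I work below a condition whose $\mathbb{P}_{max}$-part is a pair $\langle\cN,\cI\rangle$ — a countable iterable model of a large enough fragment of $\mathsf{ZFC}$ together with a normal precipitous ideal $\cI$ on $\omega_1^{\cN}$ — and use the genericity of $G_0$ to obtain the canonical generic iteration $j\colon\langle\cN,\cI\rangle\to\langle\cN^*,\cI^*\rangle$ with $j(\omega_1^{\cN})=\omega_1^{V[G]}$ and $\cI^*=NS_{\omega_1}^{V[G]}\cap\cN^*$; the usual $\mathbb{P}_{max}$ bookkeeping captures in this way every subset of $\omega_1$ of $V[G]$. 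To reach a poset of size $\aleph_2$ I need the extra strength that $\ThetaR$ supplies: I would arrange that $\mathbb{Q}$, the $D_\alpha$'s and the $\dot S_\alpha$'s are uniformly computable from a single set $A\subseteq\bR$ of Wadge rank $<\Theta$, and that the condition can be chosen so that $\langle\cN,\cI\rangle$ is $A$-iterable, i.e.\ all of its generic iterates are wellfounded and compute $A$ correctly on their reals. The regularity of $\Theta$ under $\AD_\bR$ is exactly what makes the $\aleph_2$-sized combinatorics of $V[G]$ live inside a Suslin-captured fragment of $\powerset(\bR)$ that carries such $A$-iterable structures densely — the ingredient unavailable over $L(\bR)$ alone and the reason the hypothesis is needed.

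Given an $A$-iterable $\langle\cN,\cI\rangle$ that codes a sufficiently elementary fragment of $H_{\omega_3}^{V[G]}$ together with $\mathbb{Q}$, $\langle D_\alpha\rangle$ and $\langle\dot S_\alpha\rangle$, I would construct the witnessing filter by a generic iteration of $\langle\cN,\cI\rangle$ of length $\omega_1^{V[G]}$ that is driven by forcing with $\mathbb{Q}$: at stage $\alpha$ one meets $D_\alpha$ and, using the precipitousness of the current image of $\cI$ (equivalently, the saturation of $NS_{\omega_1}$ in $V[G]$), takes a generic ultrapower that simultaneously keeps $\dot S_\alpha$ reflected as a stationary subset of $\omega_1$. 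This is the familiar stationary-set-preserving generic iteration witnessing that $\mathbb{Q}$ behaves semiproperly relative to the structure. The union of the chosen conditions is a filter $g\subseteq\mathbb{Q}$ meeting all the $D_\alpha$, and since the iteration is continuous at $\omega_1^{V[G]}$ and its final image ideal computes $NS_{\omega_1}^{V[G]}$ correctly, each $(\dot S_\alpha)_g$ is stationary in $V[G]$. Hence the chosen instance holds, and ranging over all conditions and all names gives ${\sf{MM^{++}(c)}}$ in $V[G]$.

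The step I expect to be the main obstacle is the capturing in the second paragraph: one must first reformulate each instance of ${\sf{MM^{++}(c)}}$ so that it mentions only combinatorics of Wadge rank below $\Theta$, and then actually capture $\aleph_2$-sized stationary-set-preserving forcings by iterable structures — the standard $\mathbb{P}_{max}$ machinery captures only subsets of $\omega_1$, so the real work lies in upgrading this to the $H_{\omega_3}$-level capturing that ${\sf{MM^{++}(c)}}$ demands, and it is precisely the regularity of $\Theta$ under $\AD_\bR$ that makes this possible. Once the capturing is set up, producing the generic-iteration witness and checking that stationarity is preserved are routine uses of the $\mathbb{P}_{max}$ toolkit.
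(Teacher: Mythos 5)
This is Theorem \ref{woodins mmc}, which the paper only states as background with an attribution to Woodin and a citation to \cite{WoPmax}; there is no proof in the paper to compare against, so I can only measure your sketch against Woodin's argument. Your setup is on target: the cardinal arithmetic of the two-step extension, the reduction to packages $\langle\mathbb{Q},\langle D_\alpha\rangle,\langle\dot S_\alpha\rangle\rangle$ in $H_{\omega_3}^{V[G]}$, and the identification of $A$-iterability and the regularity of $\Theta$ as the key ingredients. But the two load-bearing steps are either missing or wrong.

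The step you flag as ``the main obstacle'' is where essentially all of the content lives, and your sketch does not supply the mechanism. The correct move is to observe that a filter meeting $\aleph_1$ many dense sets may be taken to have size $\aleph_1$, hence to lie in $H_{\omega_2}$, so that each instance of ${\sf{MM^{++}(c)}}$ becomes a $\Pi_2$ sentence over $\langle H_{\omega_2},\in, NS_{\omega_1}, A\rangle$ for a single $A\subseteq\bR$ coding the whole package (the poset, the dense sets, and the names). The theorem then follows from the $\Pi_2$-maximality of the $\mathbb{P}_{max}$ extension with predicates for all $A\in\powerset(\bR)^V$, and proving that maximality is the real theorem: one must show that for each such $A$ and each forceable $\Pi_2$ sentence there are densely many conditions $\langle(M,I),a\rangle$ that are $A$-iterable and whose model $M$ already contains a witness to the sentence computed from $A\cap M$. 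The regularity of $\Theta$ under ${\sf{AD_{\bR}}}$ enters precisely here, in finding, Wadge-above $A$, the pointclasses and models in which the instance can be forced and then reflected into a countable iterable condition. Asserting that ``$\Theta$ regular is exactly what makes this possible'' without this reformulation and density argument leaves the heart of the proof out.

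More seriously, your third paragraph's construction of the filter would not work. You propose to build $g$ by a length-$\omega_1^{V[G]}$ generic iteration of $\langle\cN,\cI\rangle$ ``driven by forcing with $\mathbb{Q}$,'' meeting $D_\alpha$ and preserving stationarity stage by stage in $V[G]$. For an arbitrary stationary set preserving $\mathbb{Q}$ there is no reason such a construction can be continued through $\omega_1$ stages: that is tantamount to assuming a semiproperness-type property of $\mathbb{Q}$, which is a consequence of the forcing axiom rather than an available hypothesis; moreover the generic iteration of a $\mathbb{P}_{max}$ condition is determined by $G_0$ and cannot be interleaved with choices of conditions of $\mathbb{Q}$. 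In the actual argument the witness is found \emph{inside} the condition: one chooses $\langle(M,I),a\rangle$ in $V$, by density and using the forceability of the instance over a model containing $A$, so that $M$ already contains a filter $\bar g$ on its decoded copy of $\mathbb{Q}$ meeting its dense sets, with its names for stationary sets evaluated to $I$-positive sets. The unique iteration $j$ given by $G_0$ then sends $\bar g$ to the desired $g$; $A$-iterability guarantees that $j$ carries $M$'s copy of the package to the true one, and the fact that the image ideal is $NS_{\omega_1}^{V[G]}\cap\cN^*$ guarantees that each $(\dot S_\alpha)_g$ is stationary. The witness is pulled up by the iteration, not constructed by forcing with $\mathbb{Q}$ over the extension.
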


\begin{theorem}[Larson-Sargsyan, \cite{NairianModels,SaAnnouncement}]\label{larsonsarg} There is a canonical, definable transitive model $M$ satisfying $\ThetaR$ such that if $G\subseteq \mathbb{P}_{max}*Add(1, \omega_3)*Add(1, \omega_4)$ is $M$-generic then $M[G]\models {\sf{MM^{++}(c)}}+\neg\square(\omega_3)+\neg\square_{\omega_3}$. 
\end{theorem}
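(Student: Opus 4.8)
The plan, following Larson and Sargsyan, is to build the model $M$ inside the hod-mouse hierarchy and then read off the conclusion by forcing, extracting $\mathsf{MM^{++}(c)}$ from Woodin's Theorem~\ref{woodins mmc} and the two failures of square from a separate fine-structural argument. For the first step, take $M$ to be the minimal iterable transitive model of $\ThetaR$ satisfying an additional reflection hypothesis at $\Theta$: for concreteness, the minimal model of $\mathsf{ZF}+\mathsf{AD}_{\mathbb{R}}+$``$\Theta$ is regular'' in whose $\HOD$ the ordinal $\Theta$ carries a large-cardinal property — a suitable degree of supercompactness, or being a sufficiently closed limit of Woodin cardinals — strong enough that no square principle on $\Theta$ can survive the forcing used below. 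Concretely $M$ should be presented as the derived model of a minimal hod premouse carrying the corresponding extender sequence, so that its existence, canonicity and definability follow from Sargsyan's hod-mouse machinery. That $M\models\ThetaR$ is then part of that package: one uses the computation of $\HOD^M$ to see that $\Theta^M$ really is regular in $M$ and that $\powerset(\mathbb{R})^M$ is closed under the operations witnessing $\mathsf{AD}_{\mathbb{R}}$. At the same time one records the precise fine structure of $\HOD^M$ at $\Theta^M$, which is what the square argument will consume.

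Having fixed $M$, force over it with $G\subseteq\mathbb{P}_{max}*Add(1,\omega_3)*Add(1,\omega_4)$. Since $M$ is of the form $L(\powerset(\mathbb{R}))$ and satisfies $\ThetaR$, Woodin's Theorem~\ref{woodins mmc} applies to the initial segment $\mathbb{P}_{max}*Add(1,\omega_3)$ and already yields $\mathsf{MM^{++}(c)}$; one then checks that the further factor $Add(1,\omega_4)$ — included only to fix the cardinal arithmetic above $\omega_3$ that the square argument needs — neither disturbs $\mathsf{MM^{++}(c)}$ (a statement about posets of size $c=\omega_2$) nor, in combination with the earlier factors, re-introduces a square sequence on $\omega_3$. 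As in the analysis underlying Theorem~\ref{woodins mmc}, one also has $\omega_3^{M[G]}=\Theta^M$. Thus $M[G]\models\mathsf{MM^{++}(c)}$ and the work reduces to refuting $\square(\omega_3)$ and $\square_{\omega_3}$ there.

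For the square failures the point is that $\omega_3=\Theta^M$ in $M[G]$, and $H_{\omega_3}^{M[G]}$ — respectively the relevant part of $H_{\omega_4}^{M[G]}$ — lies inside a ``$\HOD$-like'' model obtained from $\HOD^M$ by a $\mathbb{P}_{max}$-iteration together with a mild further forcing. One argues that such iterations are sufficiently tame that they, and the subsequent Cohen forcings, preserve the large-cardinal property of $\Theta^M$ recorded in the first step. Since $\omega_3=\Theta^M$ then retains enough largeness in that background model, the usual reflection arguments forbid a coherent non-threadable sequence on $\omega_3$ (so $\square(\omega_3)$ fails) and, because more than weak compactness has been kept, also forbid a $\square_{\omega_3}$-sequence — here mere regularity of $\Theta$ would not suffice. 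Hence $M[G]\models\neg\square(\omega_3)+\neg\square_{\omega_3}$, as required.

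The hard part is this last step: actually showing that both square failures persist through all of $\mathbb{P}_{max}*Add(1,\omega_3)*Add(1,\omega_4)$. Two issues are delicate. First, one must verify that the $\mathbb{P}_{max}$-iteration used to build $H_{\omega_2}^{M[G]}$, together with the analysis of $H_{\omega_3}^{M[G]}$ on top of it, does not erode the large-cardinal property of $\Theta^M=\omega_3$ in the appropriate model. Second, and more seriously, the Cohen forcings at $\omega_3$ and $\omega_4$ must not re-create square sequences; since adding a subset of a merely weakly compact cardinal can destroy weak compactness and can even add a square sequence one level down, this is exactly what forces the strong hypothesis on $\HOD^M$ in the first step — how large $\Theta^M$ must be in $\HOD^M$ is dictated by the robustness required of the square-killing, and that in turn pins down the canonical model $M$. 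Balancing ``$\Theta^M$ large enough in $\HOD^M$'' against ``$M$ still a genuine determinacy model reachable in the hod-mouse hierarchy'' is the real crux of the argument.
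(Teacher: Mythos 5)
This theorem is quoted from \cite{LaSa21,SaAnnouncement}; the paper contains no proof of it, so I can only measure your sketch against what the paper explicitly records about the model $M$. There it is stated that $M=L(A,\lambda^\omega,\powerset(\bR))$ where $\lambda=(\Theta^+)^M$ and $A\subseteq\lambda$, and, crucially, that $M\neq L(\powerset(\bR)^M)$. Your proposal takes $M$ to be (essentially) $L(\powerset(\bR))$ of a minimal determinacy model whose $\HOD$ sees $\Theta$ as large. That is a different and strictly smaller object, and the discrepancy is not cosmetic: your appeal to Theorem \ref{woodins mmc} uses the hypothesis $V=L(\powerset(\bR))$, which the actual $M$ does not satisfy, so even the $\mathsf{MM^{++}(c)}$ part needs an argument that the conclusion of Woodin's theorem survives in the $\mathbb{P}_{max}*Add(1,\omega_3)$ extension of the larger Chang-type model. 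More importantly, the extra predicates $\lambda^\omega$ and $A$ are not bookkeeping; they are precisely what the square-killing consumes, which is why the theorem is about a Chang model over $\powerset(\bR)$ in the first place.

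The serious gap is in your mechanism for $\neg\square(\omega_3)$ and $\neg\square_{\omega_3}$. A putative coherent sequence witnessing $\square(\omega_3)$ or $\square_{\omega_3}$ in $M[G]$ is an arbitrary element of $M[G]$; it need not belong to the $\HOD$-like background model in which you have preserved a large-cardinal property of $\Theta^M$. Largeness of $\omega_3$ in some inner model does not by itself forbid a square sequence in the outer model --- one needs a covering or capturing step showing that every such sequence, or at least a thread through it, is seen by the structured model. That step is exactly where closure of $M$ under $\omega$-sequences from $\lambda=(\Theta^+)^M$ (so that the countably closed factors $Add(1,\omega_3)*Add(1,\omega_4)$ add no new such sequences and threading arguments can be run) and the predicate $A\subseteq\lambda$ enter. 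Note also that $\square_{\omega_3}$ concerns a coherent sequence of length $\omega_4^{M[G]}$, which is $(\Theta^+)^M=\lambda$, so you need fine control at $\lambda$ and not merely at $\Theta^M$; ``$\Theta$ large in $\HOD^M$'' does not address this level at all. Your closing observation that the balance between largeness and reachability in the hod-mouse hierarchy is the crux is reasonable in spirit, but as written the proposal has no route from that largeness to the non-existence of square sequences in $M[G]$.
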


The conjunction ${\sf{MM^{++}(c)}}+\neg\square(\omega_3)+\neg\square_{\omega_3}$ is a consequence of ${\sf{MM}^{++}}(c^{++})$, and it is currently the strongest known consequence of ${\sf{MM}^{++}}$ that can be forced over a model of determinacy. 
The ground model $M$ of Theorem \ref{larsonsarg} is a type of Chang model over $\powerset(\bR)$. Its exact form is $M=L(A, \l^\omega, \powerset(\bR))$ where $\l=(\Theta^+)^M$ and $A\subseteq \l$. It can be easily shown that $M\not= L(\powerset(\bR)^M)$ and so it is a new type of canonical model of determinacy, and it is also exactly the reason why we, in this paper, are concerned with Chang models over $\Gamma^\infty$. The key shortcoming of Theorem \ref{larsonsarg} is that $M$ is constructed assuming very stringent inner model theoretic assumptions; $M$ is constructed inside a hod mouse. While inner model theory serves as motivation and inspiration, the results in this paper do not require any deep inner model theoretic machinery and are hence accessible to a wide audience with some general set theoretic background.

The research in this paper started by asking the following questions.\\

\begin{description}
\item[(Q1)] Is it possible to construct models like $M$ of Theorem \ref{larsonsarg} assuming just large cardinals, without any further inner model theoretic assumptions?\\
\item[(Q2)] Is it possible to reduce Viale's result to a generic absoluteness principle for some model of determinacy, such as those constructed to answer (Q1)?\\
\item[(Q3)] Is it possible to reduce the study of $L(\omega_3^{\omega_1})$ under ${\sf{MM^{++}}}$ to the study of some generically invariant, canonical model of determinacy, such as those constructed to answer (Q1)?\\
\end{description}

To make the above questions mathematically more precise, we need to introduce \textit{universally Baire sets}. Before we do that, we briefly state our main new results for readers who are already familiar with the background and notation and want to skip the introduction.


\subsection{Main results}
We show that $\sf{Generically\ Correct\ Sealing}$, a strengthening of $\Sealing$ (see Lemma \ref{lem:gcsealingsealing}) inspired by the notion of generically correct formula (see Definition \ref{def:gcsealing}), holds in a generic extension collapsing large cardinals. For the proof see Section \ref{sec:GCSealing}.

\begin{namedthm}{Theorem \ref{thm:GCSealing}}
    Let $\kappa$ be a supercompact cardinal and let $g$ be $\Col(\omega, 2^{2^\kappa})$-generic over $V$. Suppose that there is a proper class of Woodin cardinals. Then $\GCSealing$ holds in $V[g]$.
\end{namedthm}

Moreover, after collapsing large cardinals, we show $\sf{Weak\ Sealing}$, i.e., $\Sealing$ for a cofinal set of generic extensions instead of every generic extension (see \ref{def: sealing for the ub powerset}), for $L(\mathcal{A}^\infty)$, the model constructed over the uB-powerset $\mathcal{A}^\infty=\powerset_{uB}(\Gamma^\infty)$. For the proof see Section \ref{sec: sealing for ub powerset}.

\begin{namedthm}{Theorem \ref{sealing for the ub powerset}} Suppose $\kappa$ is a supercompact cardinal and there is a proper class of inaccessible limits of Woodin cardinals. Suppose $g\subseteq \Col(\omega, 2^{2^\kappa})$ is $V$-generic. Then $\sf{Weak\ Sealing}$ for the uB-powerset holds in $V[g]$. 
\end{namedthm}

Finally, we consider another extension of the model $L(\Gamma^\infty, \bR)$ and prove $\Sealing$ for $L(\Gamma^\infty, \bR)[\mathcal{C}^\infty]$, where $\mathcal{C}^\infty$ is the club filter on $\powerset_{\omega_1}(\Gamma^\infty)$, in a generic extension collapsing large cardinals. For the proof see Section \ref{sec: sealing for clubs} and for the relevant definitions see Definition \ref{def:huge cardinal}.

\begin{namedthm}{Theorem \ref{thm: sealing for clubs}} Suppose $\kappa$ is an elementarily huge cardinal and that there is a proper class of  Woodin cardinals. Let $g\subseteq \Col(\omega, 2^{{\sf{ehst}}(\k)})$ be $V$-generic. Then the $\sf{Sealing\ Theorem}$ for $L(\Gamma^\infty, \bR)[\mathcal{C}^\infty]$ holds in $V[g]$. 
 \end{namedthm}

Moreover, in the setting of Theorems \ref{sealing for the ub powerset} and \ref{thm: sealing for clubs} we obtain that $\Theta$ is regular, see \cite{MuSa_ThetaReg}.

In the remaining part of the introduction, we motivate these results and introduce the relevant concepts. Sections \ref{sec:preliminaries} and \ref{sec:preservinguB} introduce more preliminary notions and results before we prove the main derived model representation (see Theorem \ref{thm:newmain}) in Section \ref{sec:summarydmrep}, which summarizes the derived model representation, outlining the properties that will be used in the applications at the end of this paper. In Section \ref{sec:SealingTheorem} we obtain Woodin's Sealing Theorem (see Theorem \ref{thm:SealingWoodin}) as an application of the derived model representation and in Section \ref{sec:sealing for derived models} we discuss corollaries about $\Sealing$ for derived models. The main new results are proven in Sections \ref{sec:GCSealing}, \ref{sec: sealing for ub powerset}, and \ref{sec: sealing for clubs}, using the methodology introduced in Section \ref{sec:summarydmrep}.

 \subsection{The universally Baire sets and the model $L(\Gamma^\infty, \bR)$.} Universally Baire sets originate in work of Schilling and Vaught \cite{SchVa83}, and they were first systematically studied by Feng, Magidor, and Woodin \cite{FMW92}. Since then they play a prominent role in many areas of set theory. Recall that a set of reals is universally Baire if all of its continuous preimages in compact Hausdorff spaces have the property of Baire.  However, the following equivalent definition is set theoretically more useful. 

\begin{definition}[Feng-Magidor-Woodin, \cite{FMW92}]\label{def:uB}
\vspace{0.5em}\begin{enumerate}\itemsep0.5em
    \item  Let $(S,T)$ be trees on $\omega \times \kappa$ for some ordinal $\kappa$ and let $Z$ be any set. We say \emph{$(S,T)$ is $Z$-absolutely complementing} iff \[ p[S] = \BS \setminus p[T] \] in every $\Col(\omega,Z)$-generic extension of $V$.
    \item  A set of reals $A$ is \emph{universally Baire (uB)} if for every $Z$, there are $Z$-absolutely complementing trees $(S,T)$ with $p[S] = A$.
\end{enumerate}\vspace{0.5em}
\end{definition}

Following Woodin, we write $\Gamma^\infty$ for the set of universally Baire sets and, if $g$ is set generic over $V$, we write $\Gamma_g^\infty = (\Gamma^\infty)^{V[g]}$ and $\bR_g = \bR^{V[g]}$ where $\bR$ denotes the set of reals. Given a universally Baire set $A$ and a set generic $g\subseteq \mathbb{P}$ for some poset $\bP$, we write $A_g$ for the interpretation of $A$ in $V[g]$. More precisely, letting $\kappa\geq \card{\mathbb{P}}^+$ be a cardinal and $(S, T)$ be any $\kappa$-absolutely complementing trees with $p[S]=A$, we set $A_g=(p[S])^{V[g]}$. It can be easily checked using the absoluteness of well-foundedness that $A_g$ is independent of $(T, S)$. The main object this paper studies is the model $L(\Gamma^\infty, \bR)$, and our aim is to find a useful derived model representation for it\footnote{By a theorem of Woodin, all models of ${\sf{AD^+}}$ are elementary equivalent to a model of ${\sf{AD^+}}$ that can be obtained as a derived model. This refers to the so-called ``new'' derived model of the form $L(\Gamma,\bR^*)$ for $V(\bR^*)$ a $\Col(\omega, {<}\lambda)$-symmetric  extension of $V$ for $\lambda$ a limit of Woodin cardinals in $V$ and $\Gamma$ the set of all $A \subseteq \bR^*$ such that $L(A,\bR^*) \models \AD^+$ and $A \in V(\bR^*)$. The main focus in this paper is the so-called ``old'' derived model as defined in Section \ref{sec:preliminaries}.}.

It is known that sets of reals that are definable via sufficiently generically absolute formulas are universally Baire (e.g., see \cite[Lemma 4.1]{St09}). In this sense, the generically absolute fragment of ${\sf{ZFC}}$ is coded into the universally Baire sets, and in this sense, the model $L(\Gamma^\infty, \bR)$, as far as the sets of reals go, must be the maximal generically absolute inner model. However, the story of $L(\Gamma^\infty, \bR)$ is not as simple as one might guess.

Assuming a proper class of Woodin cardinals, the study of the model $L(A, \mathbb{R})$, where $A$ is a universally Baire set, is completely parallel to the study of $L(\mathbb{R})$, and most of the theorems proven for $L(\mathbb{R})$ can be easily generalized to $L(A, \mathbb{R})$. For example, assuming a proper class of Woodin cardinals, generalizing the proof for $L(\mathbb{R})$, Woodin showed that the theory of the model $L(A, \mathbb{R})$ cannot be changed by forcing, and in fact, for every  $V$-generic $g$ and $V[g]$-generic $h$, there is an elementary embedding 
\begin{center} $j:L(A_g, \mathbb{R}_g)\rightarrow L(A_{g*h}, \mathbb{R}_{g*h})$
\end{center}
such that $j\restriction \mathbb{R}_g=id$ and $j(A_g)=A_{g*h}$, and also $L(A, \bR)\models {\sf{AD^+}}$. 

However, the model $L(\Gamma^\infty, \bR)$ is much harder to analyze. For example, it is not clear that assuming large cardinals, $L(\Gamma^\infty, \bR)\models {\sf{AD^+}}$ or even 
\begin{center}
    $\powerset(\mathbb{R})\cap L(\Gamma^\infty, \bR)=\Gamma^\infty$.
\end{center}
Woodin's $\Sealing$ deals with the aforementioned issues.

\begin{definition}[Woodin]\label{def:Sealing}
$\Sealing$ is the conjunction of the following statements.
\vspace{0.5em}\begin{enumerate}\itemsep0.5em
    \item For every set generic $g$ over $V$, $L(\Gamma^\infty_g, \bR_g) \models \AD^+$ and $\powerset(\bR_g) \cap L(\Gamma^\infty_g, \bR_g) = \Gamma^\infty_g$. \label{eq:Sealing1}
    \item For every set generic $g$ over $V$ and set generic $h$ over $V[g]$, there is an elementary embedding \[ j \colon L(\Gamma^\infty_g, \bR_g) \rightarrow L(\Gamma^\infty_{g*h}, \bR_{g*h}) \] such that for every $A \in \Gamma^\infty_g$, $j(A) = A_h$. \label{eq:Sealing2}
\end{enumerate}\vspace{0.5em}
\end{definition}

Using the stationary tower forcing, Woodin showed that $\Sealing$ is consistent from a supercompact cardinal and a proper class of Woodin cardinals. More precisely, he showed the following theorem.

\begin{theorem}[Woodin's Sealing Theorem, see \cite{La04,WoodinLongExtender}]\label{thm:SealingWoodin}
Let $\kappa$ be a supercompact cardinal and let $g$ be $\Col(\omega, 2^\kappa)$-generic over $V$. Suppose that there is a proper class of Woodin cardinals. Then $\Sealing$ holds in $V[g]$.
\end{theorem}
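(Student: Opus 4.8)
The plan is to deduce $\Sealing$ in $V[g]$ directly from the derived model representation of Theorem \ref{thm:newmain}, combined with the basic structure theory of derived models (Woodin's Derived Model Theorem). Throughout I would work in $W := V[g]$ and verify the two clauses of Definition \ref{def:Sealing} relative to $W$. The starting observation is that, by Theorem \ref{thm:newmain}, for every set-generic extension $W[h] = V[g*h]$ there is an elementary $j \colon V \to M$ with $j(\kappa) = \omega_1^{V[g*h]}$ such that $(L(\Gamma^\infty, \bR))^{V[g*h]}$ is the derived model $D(M, j(\kappa))$ of $M$ at $j(\kappa)$, computed from a $\Col(\omega, {<}j(\kappa))$-generic over $M$ read off from $g*h$; here the point of collapsing $2^{2^\kappa}$ is that there is enough room to absorb the poset $\mathbb{P}$ into the symmetric collapse. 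Since $j$ is elementary and $V$ carries a proper class of Woodin cardinals, so does $M$, and in particular $j(\kappa)$ is a limit of Woodin cardinals of $M$ with a proper class of Woodin cardinals above it.

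For clause (1), fix a set-generic $h$ over $W$ and let $j \colon V \to M$ be as above. Woodin's Derived Model Theorem gives $D(M, j(\kappa)) \models \AD^+$, hence $(L(\Gamma^\infty,\bR))^{V[g*h]} \models \AD^+$. For the equality $\powerset(\bR_{g*h}) \cap (L(\Gamma^\infty,\bR))^{V[g*h]} = \Gamma^\infty_{g*h}$, the inclusion $\supseteq$ is immediate since every $A \in \Gamma^\infty_{g*h}$ lies in $(L(\Gamma^\infty,\bR))^{V[g*h]}$. For $\subseteq$, every set of reals of $D(M, j(\kappa))$ is ${<}j(\kappa)$-homogeneously Suslin over $M[G\restriction\alpha]$ for all sufficiently large $\alpha < j(\kappa)$; using the Woodin cardinals of $M$ above $j(\kappa)$, such a set is given by trees that are absolutely complementing in $V[g*h]$, witnessing universal Baireness there. (This is exactly the form in which I would use the summary of Section \ref{sec:summarydmrep}.) Thus $\powerset(\bR_{g*h}) \cap (L(\Gamma^\infty,\bR))^{V[g*h]} \subseteq \Gamma^\infty_{g*h}$, as desired.

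For clause (2), fix set-generics $g'$ over $W$ and $h'$ over $W[g']$. I would apply Theorem \ref{thm:newmain} once with the one-step generic $g'$ and once with $g'*h'$, but run the two iterations of Section \ref{sec:genitdmrep} \emph{coherently}, so that the iteration producing $M' := M_{g'*h'}$ extends the one producing $M := M_{g'}$. This yields iterates $j \colon V \to M$ and $j' \colon V \to M'$ together with a factor map $k \colon M \to M'$ satisfying $k \circ j = j'$, with $k(j(\kappa))$ a limit of Woodin cardinals of $M'$ at most $j'(\kappa)$, and with the $\Col(\omega, {<}j(\kappa))$-generic over $M$ mapping under $k$ into the $\Col(\omega, {<}j'(\kappa))$-generic over $M'$. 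Then $k$ induces an elementary embedding of $D(M, j(\kappa)) = (L(\Gamma^\infty,\bR))^{W[g']}$ into $D(M', j'(\kappa)) = (L(\Gamma^\infty,\bR))^{W[g'][h']}$, which is the embedding required in Definition \ref{def:Sealing}(2). If $A \in \Gamma^\infty_{g'}$ then, viewed as a set of reals of the derived model, $A = p[T]$ for a tree $T \in M$ coming from an absolutely complementing representation of $A$; since $k(T)$ is the corresponding tree in $M'$ and $p[k(T)] = A_{h'}$, the induced map sends $A$ to $A_{h'}$. (If instead the representation is arranged so that a single $M$ works for all $h$, clause (2) reduces to the monotonicity of derived models under extending the symmetric collapse generic.)

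I expect the main obstacle to be the coherence step in clause (2): namely that the derived model representation of Section \ref{sec:genitdmrep}, applied to a nested pair of set-generic extensions, can be taken so that the associated iterations and symmetric-collapse generics nest, producing a commuting factor map $k$ that moreover acts correctly on the trees coding the universally Baire sets. This is the point at which the bulk of the earlier sections is really doing the work, and it is what forces the construction of $M$ to be sufficiently canonical. A secondary delicate point is the inclusion $\subseteq$ in clause (1) --- that $D(M,j(\kappa))$ contains no sets of reals beyond $\Gamma^\infty_{g*h}$ --- which uses essentially that $M$ carries a proper class of Woodin cardinals above $j(\kappa)$, so that homogeneously Suslin representations in the symmetric extension can be upgraded to absolutely complementing trees over $V[g*h]$.
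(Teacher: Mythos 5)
Your clause (1) argument is essentially the paper's: the derived model representation plus Woodin's Derived Model Theorem gives $\AD^+$, and the inclusion $\powerset(\bR_{g*h})\cap L(\Gamma^\infty_{g*h},\bR_{g*h})\subseteq\Gamma^\infty_{g*h}$ comes from the fact that every set of reals of the derived model is in $\Hom^*$. One caveat: the assertion that every set of reals of $D(M,j(\kappa))$ is ${<}j(\kappa)$-homogeneously Suslin in the symmetric extension is not a consequence of $j(\kappa)$ being a limit of Woodin cardinals alone; it is \cite[Theorem 9.3]{St09} and needs $j(\kappa)$ to also be a limit of ${<}j(\kappa)$-strong cardinals in $M$, which the paper's construction supplies (it is stated explicitly that $\kappa_\infty$ is a limit of Woodin and ${<}\kappa_\infty$-strong cardinals in $M_0^\omega$); having Woodin cardinals \emph{above} $j(\kappa)$ is not the relevant hypothesis. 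For clause (2) you take a genuinely different route. The paper does \emph{not} build a factor map $k\colon M\to M'$ between the two direct limits; instead it defines the candidate embedding abstractly via terms, indiscernibles, and sharps ($\tau(u_n,A,x)\mapsto\tau(u_n,A_{h'},x)$), runs two \emph{separate} iterations whose guiding enumerations both begin with the fixed $x$ and with $A$, resp.\ $A_{h'}$, arranges only that the two iterations agree at the first ultrapower ($M_0^1=N_0^1$), and then pushes the forcing statement about the derived model down to $M_0^1$ and back up to $N_0^\omega$. This handles one $(\varphi,n,A,x)$ at a time and needs no global coherence between the two constructions. Your nested-iteration approach, with the second iteration extending the first and the induced map on derived models, is exactly the ``$DM$-representation passing through'' machinery the paper develops in Section \ref{sec:summarydmrep} (Remark \ref{continuing blocks}, Theorem \ref{thm: dm-representation}) and deploys for clause \eqref{eq: sealing for the ub powerset clause 3} of Theorem \ref{sealing for the ub powerset}; there the embedding is $\pi^+_{\omega,\omega\cdot 2}$ and one must arrange the generic over the longer limit to restrict to the generic over the shorter one. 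So your approach is viable and is in effect validated elsewhere in the paper; it buys a single globally defined embedding without invoking sharps, at the price of the heavier coherence requirement on the iterations and a separate verification that the induced map sends each $A$ to $A_{h'}$ (which the paper secures by choosing the enumeration of $\Gamma^\infty_{g*h*h'}$ to begin with $\{B_{h'}\mid B\in\Gamma^\infty_{g*h}\}$), whereas the paper's term-condensation argument buys locality but presupposes the existence of $(\Gamma^\infty,\bR)^\#$ in both extensions.
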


Woodin, in addition, showed that in this setting $\Theta$ is regular in $L(\Gamma^\infty_g, \bR_g)$, see \cite{WoodinLongExtender, MuSa_ThetaReg}. Theorem \ref{thm:SealingWoodin} implies that in $V[g]$, $L(\Gamma^\infty, \bR)$ is a canonical model whose theory cannot be changed by forcing. We show that the reason for this is that $L(\Gamma^\infty, \bR)$ can be realized as a derived model at a supercompact cardinal via Woodin's Derived Model Theorem (see Theorem \ref{woodin: der model thm}). More specifically we prove the following theorem ($\Hom^*$ is defined in Section \ref{sec:preliminaries}) which implies Theorem \ref{thm:SealingWoodin} for $g \subseteq \Col(\omega, 2^{2^\kappa})$.

\begin{theorem}\label{thm:newmain}
    Let $\kappa$ be a supercompact cardinal and suppose there is a proper class of Woodin cardinals. Let $g \subseteq \Col(\omega, 2^\kappa)$ be $V$-generic, $h$ be $V[g]$-generic and $k\subseteq \Col(\omega, 2^\omega)$ be $V[g*h]$-generic. Then, in $V[g*h*k]$, there is $j: V\rightarrow M$ such that $j(\kappa)=\omega_1^{M[g*h]}$ and  $L(\Gamma^\infty_{g*h}, \bR_{g*h})$ is a derived model of $M$, i.e., for some $M$-generic $G \subseteq \Col(\omega, {<}\omega_1^{V[g*h]})$,
        \[ L(\Gamma^\infty_{g*h}, \bR_{g*h}) = (L(\Hom^*, \bR^*))^{M[G]}. \]
\end{theorem}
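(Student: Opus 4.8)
The plan is to build the embedding $j\colon V\to M$ as a direct limit of a carefully chosen iteration of $V$, designed so that the sets of reals of the derived model of $M$ at $j(\kappa)$ coincide exactly with $\Gamma^\infty_{g*h}$. First I would set up the iteration: work with the supercompactness measures on $\powerset_\kappa(\lambda)$ for varying $\lambda$ together with the extenders witnessing the proper class of Woodin cardinals, and interleave ultrapowers so that along the iteration the image of $\kappa$ climbs to a limit of Woodin cardinals of the final model $M$, while the iteration maps are sufficiently continuous that $M$ is well-founded and closed under the relevant sequences. The key design constraint is that $j(\kappa)=\omega_1^{M[g*h]}$, so the iteration must collapse everything below $j(\kappa)$ to be countable in $V[g*h*k]$ and must be long enough (of length and complexity controlled by $2^\kappa$, which is why we first force with $\Col(\omega,2^\kappa)$ and then with $\Col(\omega,2^\omega)$ to make $\bR_{g*h}$ itself the reals of a symmetric collapse extension of $M$).

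Next I would verify that Woodin's Derived Model Theorem (Theorem \ref{woodin: der model thm}) applies to $M$ at $\delta:=j(\kappa)$: since $\delta$ is a limit of Woodin cardinals of $M$ and $G\subseteq\Col(\omega,{<}\delta)$ is $M$-generic with $\bR^*=\bigcup_{\alpha<\delta}\bR^{M[G\restriction\alpha]}$, we get $(L(\Hom^*,\bR^*))^{M[G]}\models\AD^+$ with $\powerset(\bR^*)\cap L(\Hom^*,\bR^*)=\Hom^*$. The heart of the argument is then the double inclusion $\Hom^{*}=\Gamma^\infty_{g*h}$ (as sets of reals, identifying $\bR^*$ with $\bR_{g*h}$). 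For $\Hom^*\subseteq\Gamma^\infty_{g*h}$: a set in $\Hom^*$ is $\infty$-homogeneously Suslin in $M[G]$, and using that $M$ is an iterate of $V$ with a proper class of Woodin cardinals above $\delta$ in $M$, such a set is universally Baire in $M[G]$, and then one argues it remains universally Baire in $V[g*h*k]$ (or rather that its canonical tree representation, pulled back along $j$, witnesses universal Baireness back in $V[g*h]$). For the reverse inclusion $\Gamma^\infty_{g*h}\subseteq\Hom^*$: given $A\in\Gamma^\infty_{g*h}$ with absolutely complementing trees in $V[g*h]$, one uses the results of Section \ref{sec:preservinguB} on preservation of universal Baireness under the iteration to pull $A$ back, see it as coming from a uB set in an initial segment of the iteration, reinterpret it in the appropriate $M[G\restriction\alpha]$, and conclude it is captured by the $\Hom^*$ trees — this is exactly the point where the iteration's construction must guarantee that every uB set of $V[g*h]$ is "caught" at some stage.

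The main obstacle will be the reverse inclusion $\Gamma^\infty_{g*h}\subseteq\Hom^*$, i.e., showing the iteration is \emph{long and rich enough} that no universally Baire set of $V[g*h]$ escapes the derived model. This requires a genericity/catching argument: the iteration must be organized so that for every potential uB set (equivalently, every absolutely complementing pair of trees appearing in $V[g*h]$, of which there are at most $2^\kappa$-many relevant ones after the collapse) there is a stage of the iteration where the image model already ``knows'' a tree for it, which in turn relies on the supercompactness of $\kappa$ to reflect the relevant tree-existence statements into the iterates. I would handle this by a bookkeeping argument along the iteration, using the $2^{2^\kappa}$-supercompactness (or the $\Col(\omega,2^\kappa)$-genericity which makes the relevant universe have size controlled by $2^\kappa$) to enumerate all the tasks and interleave them with the Woodin-cardinal-climbing ultrapowers. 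The well-foundedness and continuity bookkeeping for the direct limit, and checking $j(\kappa)=\omega_1^{M[g*h]}$ precisely (not just $\le$ or $\ge$), are the remaining technical points, but I expect those to follow the template of the iterations in \cite{SaTrConSealing} and \cite{SaTr21} cited in the abstract.
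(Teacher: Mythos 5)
Your outline has the right global shape (realize $M$ as a direct limit of an iteration of $V$ whose critical points climb to $\omega_1^{V[g*h]}$, apply the Derived Model Theorem at $j(\kappa)$, and prove $\Hom^*=\Gamma^\infty_{g*h}$ by a double inclusion), and you correctly identify $\Gamma^\infty_{g*h}\subseteq\Hom^*$ as the hard direction. But the proposal is missing the two mechanisms that actually make that inclusion work in the paper. First, the reals and the homogeneity systems of $V[g*h]$ are not absorbed by "collapsing everything below $j(\kappa)$"; they are made generic by Neeman genericity iterations, and these can only be performed on \emph{countable} hulls (Theorem \ref{thm:NeemanGenIt} needs $\powerset(\delta)$ countable). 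The paper therefore works with a "weak block" of countable models $M_i$ realized into $V^*_{j(\chi)}$ via a supercompactness embedding, alternates genericity iterations with ultrapowers by "sealing extenders" $E_i$ with critical point the current image of $\kappa$, and only at the very end transfers the sequence $(E_i)_{i<\omega}$ back to $V$ to define $j\colon V\rightarrow M$. Your plan to iterate $V$ directly with supercompactness measures does not produce the generics needed for $\bR^*=\bR_{g*h}$.

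Second, and more seriously, your appeal to "reflecting tree-existence statements by supercompactness" does not establish that a universally Baire set $A$ of $V[g*h]$ lands in $(\Hom^*)^{M[G]}$. What must be shown is that a homogeneity system for $A$, once reinterpreted in an iterate, (i) still computes $A$ \emph{correctly on reals that appear only at later stages of the iteration}, and (ii) consists of measures that are $\gamma$-complete for every $\gamma<\kappa_\infty$, even though the system was chosen when the image of $\kappa$ was much smaller. Mere existence of some trees in the iterate gives neither. The paper's solution is the combination of $1$-to-$1$ flipping functions (Lemma \ref{lem:FlippingFunction}), which convert "well-founded" into "ill-founded" so that both directions of membership are upward absolute along realization maps, and uB-preserving extenders (Definition \ref{def:uBpreservation}, Lemma \ref{lem:kappauBpreserving}), which supply Lipschitz maps $h_1,h_2$ in $\Ult(M^*,E)$ carrying the old towers into $\pi_E$-images that are $\pi_E(\kappa)$-complete and commute with the flipping functions; Lemmas \ref{lem:representationAinUltM*E}--\ref{lem:genericallyhomSuslinGeneral} then propagate correctness through all later ultrapowers. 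This apparatus is the heart of the proof and is absent from your proposal; without it the step "the image model already knows a tree for $A$" is an assertion, not an argument.
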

In addition, in the first part of this paper that concerns $\sf{Sealing}$, we show that several related corollaries obtained by Woodin using the stationary tower forcing, follow from Theorem \ref{thm:newmain} (or its proof). First, we observe that a form of $\Sealing$ for derived models follows from $\Sealing$ (see Corollary \ref{cor:newmain} and Corollary \ref{cor:diagram}). Next we show that $\sf{Generically\ Correct\ Sealing}$ holds in $V[g]$ (assuming the hypothesis of Theorem \ref{thm:newmain}), which is a generalization of $\Sealing$ inspired by the notion of generically correct formula (see Section \ref{sec:GCSealing}, Definition \ref{def:gcsealing}). $\sf{Generically\ Correct\ Sealing}$ implies $\Sealing$ (see Lemma \ref{lem:gcsealingsealing}). 

We end this section by paraphrasing (Q1) above using $\Gamma^\infty$.\\

\begin{description}
\item[(Q4)] What sort of canonical, generically absolute subsets of $\Gamma^\infty$ can be added to the model $L(\Gamma^\infty, \bR)$?\\
\end{description}

In the next section we propose a model that resembles $M$ of Theorem \ref{larsonsarg}.

\subsection{Forcing ${\sf{MM^{++}}}$ over determinacy.} Motivated by Theorems \ref{asperoschindler}, \ref{viale}, \ref{woodins mmc} and \ref{larsonsarg}, and especially by the form of the model $M$ used in \ref{larsonsarg}, we isolate our first set of test questions.

\begin{definition}\label{def: ax} ${\sf{Ax(Ord, \omega_1)}}'$ is the statement ``there is  a definable transitive model of determinacy $M$ such that
\begin{itemize}
\item ${\sf{Ord}}, \bR\subseteq M$, 
\item there is a poset $\mathbb{P}\subseteq M$ that is a definable class of $M$ and is such that $M\models ``\mathbb{P}$ is homogeneous\footnote{Here, we mean that the automorphisms themselves are definable classes.} and countably closed", and
\item there is an $M$-generic $G\subseteq \mathbb{P}$ such that ${\sf{Ord}}^{\omega_1}\subseteq M[G]$".
\end{itemize}
We let $\sf{Ax(Ord, \omega_1)}$ be the conjunction of the following statements:
\begin{itemize}
\item Letting $M=_{def}L({\sf{Ord}}^\omega, \Gamma^\infty, \bR)[(\mu_\a:\a\in Ord)]$ where $\mu_\a$ is the club filter on $\a^\omega$, $M\models {\sf{AD^+}}$.
\item Letting $\mathbb{P}=\mathbb{P}_{max}*\mathbb{Q}$ where $\mathbb{Q}\in M^{\mathbb{P}_{max}}$ is the full support iteration for adding a Cohen subset to each cardinal $>\omega_2$, there is an $M$-generic $G\subseteq \mathbb{P}$ such that ${\sf{Ord}}^{\omega_1}\subseteq M[G]$. 
\end{itemize}
We let for $n\geq 3$, $\mathsf{Ax}(\omega_n, \omega_1)$ be the conjunction of the following statements:
\begin{itemize}
\item Letting $M=_{def}L({\sf{Ord}}^\omega, \Gamma^\infty, \bR)[(\mu_\a: \a \in Ord)]$ where $\mu_\a$ is the club filter on $\a^\omega$, $M\models {\sf{AD^+}}$.
\item Letting $\mathbb{P}=\mathbb{P}_{max}*\mathbb{Q}$ where $\mathbb{Q}\in M^{\mathbb{P}_{max}}$ is the finite iteration for adding a Cohen subset to $\omega_3, \omega_4,..., \omega_n$, there is an $M$-generic $G\subseteq \mathbb{P}$ such that ${\omega_n}^{\omega_1}\subseteq M[G]$. 
\end{itemize}
\end{definition}
The version of the Chang models introduced above that are built over the $\emptyset$ instead of $\Gamma^\infty$ were studied by Woodin. For example, \cite{La04} shows that assuming large cardinals the theory of the Chang model cannot be changed by forcing. Here is our first set of test questions. 

\begin{question}\label{the mm question} Does ${\sf{MM^{++}}}$ imply $\sf{Ax(Ord, \omega_1)}'$, $\sf{Ax(Ord, \omega_1)}$ or $\forall n<\omega(\mathsf{Ax}(\omega_n, \omega_1))$?
\end{question}

The issue with the models introduced above is that they seem too simple. For example it is not clear that all canonical inner models that can be defined over $\Gamma^\infty$, the object that is usually called ${\sf{Lp}}(\Gamma^\infty)$ in inner model theory, is contained in $L({\sf{Ord}}^\omega, \Gamma^\infty, \bR)[(\mu_\a: \a \in Ord)]$. The following conjecture makes this issue more precise.

\begin{conjecture}\label{mmsharpchang} Assume ${\sf{MM^{++}}}$. Let ${\sf{Lp}}(\Gamma^\infty)$ be the union of all mice $\M$ over $\Gamma^\infty$ such that $\M$ is $\Gamma^\infty$-sound, there is a set $A\in \powerset(\Gamma^\infty)$ such that $A$ is definable over $\M$ with parameters and $A\not \in \M$, and the countable substructures of $\M$ have an $\omega_1+1$-iteration strategy. Then \begin{center}${\sf{Lp}}(\Gamma^\infty)\not \subseteq L({\sf{Ord}}^\omega, \Gamma^\infty, \bR)[(\mu_\a: \a \in Ord)]$.\end{center}
\end{conjecture}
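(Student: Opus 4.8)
The plan is to separate the two models by a complexity analysis: pin down which subsets of $\Gamma^\infty$ belong to the Chang-type model $N:=L(\Ord^\omega,\Gamma^\infty,\bR)[(\mu_\a:\a\in\Ord)]$, and then exhibit a mouse-theoretic subset of $\Gamma^\infty$ that lies in ${\sf{Lp}}(\Gamma^\infty)$ but provably does not lie in $N$. First I would assemble the structural consequences of ${\sf{MM^{++}}}$ that make the comparison meaningful. Via core model induction one expects $L(\Gamma^\infty,\bR)\models{\sf{AD^+}}$ and $\powerset(\bR)\cap L(\Gamma^\infty,\bR)=\Gamma^\infty$ (the $\Sealing$-type consequences of a strong forcing axiom, applied to the ground model); writing $\Theta=\Theta^{L(\Gamma^\infty,\bR)}$, a subset of $\Gamma^\infty$ amounts, via a canonical Wadge-rank coding, to a subset of $\Theta$. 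One also needs ${\sf{Lp}}(\Gamma^\infty)$ to be non-trivial and tall: under ${\sf{MM^{++}}}$, core model induction should produce $\Gamma^\infty$-sound mice over $\Gamma^\infty$ with $(\omega_1+1)$-iterable countable hulls that define new subsets of $\Gamma^\infty$, morally a mouse ``capturing $\Gamma^\infty$''.

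The argument then has two parts. The first, and the heart of the matter, is a Chang-model analysis over the base $\Gamma^\infty$: a generalization of Woodin's analysis of the Chang model $L(\Ord^\omega,\bR)$ to the base $\Gamma^\infty$ enriched by the whole tower of club filters $\mu_\a$. The target is a boundedness statement: every $A\in\powerset(\Gamma^\infty)\cap N$ is ordinal-definable over $L(\Gamma^\infty,\bR)$ from $(\mu_\a:\a\in\Ord)$ together with finitely many ordinals --- equivalently, $\powerset(\Gamma^\infty)\cap N$ is exhausted by a ``thin'', definable-over-$L(\Gamma^\infty,\bR)$ hierarchy of subsets of $\Theta$. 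A natural route is first to show $N\models{\sf{AD^+}}$ (the first clause of ${\sf{Ax(Ord,\omega_1)}}$ in Definition \ref{def: ax}, hence part of a positive answer to Question \ref{the mm question}) and then to use the structure theory of ${\sf{AD^+}}$ to see that no $A\in\powerset(\Gamma^\infty)\cap N$ can encode an iteration strategy that is not already ``generically read off'' from $\Gamma^\infty$. The second part is to produce a witness: let $\M$ be a $\Gamma^\infty$-sound level of ${\sf{Lp}}(\Gamma^\infty)$ with $(\omega_1+1)$-iterable countable hulls that defines some $A\subseteq\Gamma^\infty$ with $A\notin\M$, chosen high enough in the stack that its countable hulls carry large-cardinal strength unavailable to the ${\sf{AD^+}}$ structure of $L(\Gamma^\infty,\bR)$ (on the order of ``$\Theta$ is a limit of Woodin cardinals of an inner model'', or more). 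Take $A$ to be the $<_\M$-least set as above. Then $A$ is essentially a code for a segment of the iteration strategy of the hulls of $\M$; if $A\in N$, then by the boundedness statement this fragment of the strategy is ordinal-definable over $L(\Gamma^\infty,\bR)$ from the $\mu_\a$ and ordinals, and from such a definition one aims to recover $\M$ itself inside $L(\Gamma^\infty,\bR)[(\mu_\a:\a\in\Ord)]$ --- contradicting the choice of $\M$, once one knows that adjoining the club filters $\mu_\a$ to $L(\Gamma^\infty,\bR)$ adds no new mouse-theoretic content over $\Gamma^\infty$.

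The step I expect to be the main obstacle is the boundedness analysis of the second paragraph: a genuine, quantitative description of $\powerset(\Gamma^\infty)\cap L(\Ord^\omega,\Gamma^\infty,\bR)[(\mu_\a:\a\in\Ord)]$. Woodin's analysis of the Chang model over $\bR$ rests on stationary-tower and $\mathbb{P}_{max}$ technology and on the base set being exactly $\bR$; transporting it to the base $\Gamma^\infty$ --- itself pinned down only up to $\Sealing$-type hypotheses --- and, crucially, controlling the effect of adjoining the entire sequence $(\mu_\a:\a\in\Ord)$ of club filters is where the real work lies. It is also exactly the analysis the authors flag as the next step beyond Theorems \ref{sealing for the ub powerset} and \ref{thm: sealing for clubs}. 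Two further points are plausibly at the current frontier and would have to be secured along the way: that ${\sf{MM^{++}}}$ alone (rather than a supercompact) already drives ${\sf{Lp}}(\Gamma^\infty)$ past the strength needed for the witness $\M$, and that the club filters $\mu_\a$ are ``mild'' enough over $L(\Gamma^\infty,\bR)$ that they do not themselves generate the missing mouse.
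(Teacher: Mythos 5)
The statement you are proving is Conjecture \ref{mmsharpchang}: the paper does not prove it, and offers only a one-sentence heuristic in its support (namely that several theorems --- the cited results of Woodin, Mitchell, and Gappo--Sargsyan --- show that ``the sharp of the Chang model is small''). So there is no proof in the paper to compare against, and your proposal should be judged as a proposed attack on an open problem. As such, it is a reasonable articulation of exactly the heuristic the authors have in mind: $\powerset(\Gamma^\infty)\cap L({\sf{Ord}}^\omega,\Gamma^\infty,\bR)[(\mu_\a:\a\in\Ord)]$ should be ``thin'' in a mouse-theoretic sense, while ${\sf{Lp}}(\Gamma^\infty)$ should be tall, and the two should therefore separate. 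But you have not closed either half, and both halves are genuinely open.

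Concretely: (i) Your ``boundedness statement'' --- that every $A\in\powerset(\Gamma^\infty)\cap N$ is ordinal-definable over $L(\Gamma^\infty,\bR)$ from the club filters and ordinals --- is not something you derive; it is the content that would have to be extracted by generalizing the anti-large-cardinal theorems for the Chang model over $\bR$ to the base $\Gamma^\infty$ with the full tower $(\mu_\a:\a\in\Ord)$ adjoined. Even the much weaker statement that $N\models{\sf{AD^+}}$ is only Conjecture \ref{chang ad} in this paper, proved here in neither the large-cardinal nor the ${\sf{MM^{++}}}$ setting; the paper's Theorems \ref{sealing for the ub powerset} and \ref{thm: sealing for clubs} handle only a single club filter $\mathcal{C}^\infty$ or the uB-powerset, under large cardinals rather than ${\sf{MM^{++}}}$. (ii) The witness-production step is equally unsettled: the paper explicitly poses as Question \ref{questionwlw} whether ${\sf{MM^{++}}}$ alone implies the existence of $\M_{wlw}(X)$, noting a positive answer is known only with an additional proper class of Woodin cardinals. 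Without that, you cannot place a sufficiently strong $\Gamma^\infty$-sound mouse in ${\sf{Lp}}(\Gamma^\infty)$, and without (i) you cannot show its defined set $A$ escapes $N$. Your write-up is honest about both obstacles, but that means what you have is a correct identification of the two open sublemmas, not a proof.
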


The reason behind the conjecture is that, for example, there are several theorems in set theory that establish that the sharp of the Chang model is small (e.g., see \cite{Wo23}, \cite{Mi17}, and \cite{GaSa23}). 

Nevertheless, recalling that Theorem \ref{cortoaspsch} reduces the study of $L(\omega_2^{\omega_1})$ under ${\sf{MM^{++}}}$ to just the study of $L(\bR)$ under ${\sf{AD}}$, it is possible that Question \ref{the mm question} has a positive answer. Moreover, it follows from an unpublished theorem of the second author that the consistency strength of the hypothesis of Theorem \ref{larsonsarg} is weaker than a Woodin cardinal that is a limit of Woodin cardinals. While currently it is widely believed that ${\sf{MM^{++}}}$ is, consistency wise, as strong as a supercompact cardinal, we do not even know if it implies the existence of an iterable mouse with a Woodin cardinal that is a limit of Woodin cardinals. Given the unpublished results of Woodin on the model $M=_{def}L({\sf{Ord}}^\omega, \Gamma^\infty, \bR)[(\mu_\a: \a \in Ord)]$ and the results of \cite{Mi17} and \cite{GaSa23}, it is plausible that if $\M$ is the minimal active mouse over $\Gamma^\infty$ that has a Woodin cardinal that is a limit of Woodin cardinals  then $\M\not \in M$. Therefore, answering the following questions seems very important. For each $X$, let $\M_{wlw}(X)$, if exists, be the minimal active $\omega_1+1$-iterable mouse over $X$ that has a Woodin cardinal which is a limit of Woodin cardinals.

\begin{question}\label{questionwlw} Does ${\sf{MM^{++}}}$ imply that $\M_{wlw}(\emptyset)$ exists? Does ${\sf{MM^{++}}}$ imply that for every $X$, $\M_{wlw}(X)$ exists?
\end{question}

It was observed by Nam Trang that the methods of \cite{NeSt16} give a positive answer to Question \ref{questionwlw} if one in addition to ${\sf{MM^{++}}}$ assumes the existence of a proper class of Woodin cardinals. In the next section we introduce the first extension of $L(\Gamma^\infty, \bR)$ that we will study in this paper. 

\subsection{The ordinal definable powerset of $\Gamma^\infty$} 

We introduce a canonical set of subsets of $\Gamma^\infty$, the ordinal definable powerset of $\Gamma^\infty$.

\begin{definition}\label{ub powerset} Suppose $X$ is a set. We let $\iota_X=\max(\card{X}, \card{\Gamma^\infty})$ and $\powerset_{uB}(X)$ be the set of those $Y\subseteq X$ such that whenever $g\subseteq \Col(\omega, \iota_X)$ is $V$-generic, $Y$ is ordinal definable in $L(\Gamma^\infty_g, \bR_g)$ from parameters belonging to the set $\{X, j_g \shortpwimg \Gamma^\infty \}\cup j_g \shortpwimg \Gamma^\infty$, where $j_g: L(\Gamma^\infty, \bR)\rightarrow L(\Gamma^\infty_g, \bR_g)$ is the canonical embedding with the property that $j_g(A)=A_g$ for every  $A \in \Gamma^\infty$.   
\end{definition}

 The following is an easy consequence of $\sf{Sealing}$.
    \begin{lemma}\label{easy consequence} Assume $\sf{Sealing}$ and suppose $Y\in \powerset_{uB}(X)$. Suppose $h$ is any $V$-generic with the property that there is $k\in V[h]$ which is $V$-generic for $\Col(\omega, \iota_X)$. Then $Y$ is ordinal definable in $L(\Gamma^\infty_h, \bR_h)$ from parameters that belong to the set $\{X, j_h \shortpwimg \Gamma^\infty \}\cap j_h \shortpwimg \Gamma^\infty$.
    \end{lemma}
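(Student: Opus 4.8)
The plan is to use $\Sealing$ to transfer the ordinal-definability of $Y$ from the $\Col(\omega,\iota_X)$-extension (where $Y \in \powerset_{uB}(X)$ gives it to us by Definition \ref{ub powerset}) to the arbitrary generic extension $V[h]$, via a common further extension. First I would fix, by hypothesis, $k \in V[h]$ which is $V$-generic for $\Col(\omega, \iota_X)$; write $V[k]$ for the corresponding intermediate model, so $V \subseteq V[k] \subseteq V[h]$. By Definition \ref{ub powerset} applied to the generic $k$, the set $Y$ is ordinal definable in $L(\Gamma^\infty_k, \bR_k)$ from parameters in $\{X, j_k \shortpwimg \Gamma^\infty\} \cup j_k \shortpwimg \Gamma^\infty$; say $Y$ is defined by a formula $\varphi$ with a finite tuple $\vec{A}$ of elements of $j_k \shortpwimg \Gamma^\infty$ together with possibly the parameters $X$ and $j_k \shortpwimg \Gamma^\infty$ itself. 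Now since $V[h]$ is a generic extension of $V[k]$, $\Sealing$ (clause \eqref{eq:Sealing2}, which is preserved to all generic extensions by clause \eqref{eq:Sealing1} together with the usual argument, or applied directly in $V$ to the composite forcing) yields an elementary embedding
\[
j \colon L(\Gamma^\infty_k, \bR_k) \longrightarrow L(\Gamma^\infty_h, \bR_h)
\]
with $j(A) = A_{h}$ for every $A \in \Gamma^\infty_k$ (interpreting via the tail forcing), and in particular $j \circ j_k = j_h$ on $\Gamma^\infty$, so $j$ maps $j_k \shortpwimg \Gamma^\infty$ onto $j_h \shortpwimg \Gamma^\infty$ and fixes $X$.

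Next I would push $Y$ through $j$. By elementarity, $j(Y)$ is defined over $L(\Gamma^\infty_h, \bR_h)$ by the same formula $\varphi$ with parameters $j(\vec A) \in j_h \shortpwimg \Gamma^\infty$ (and $X$, $j_h \shortpwimg \Gamma^\infty$). So it remains to identify $j(Y)$ with $Y$ itself. This is where one uses that $Y \in \powerset_{uB}(X)$ is witnessed \emph{uniformly}: applying Definition \ref{ub powerset} in $V[h]$ — more precisely, since $k \in V[h]$ exhibits $V[h]$ as containing a $\Col(\omega,\iota_X)$-generic, and $\Sealing$ holds everywhere, $Y$ is itself ordinal definable in $L(\Gamma^\infty_h, \bR_h)$ from $\{X, j_h \shortpwimg \Gamma^\infty\} \cup j_h \shortpwimg \Gamma^\infty$ — one compares the two definitions. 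The cleanest way is to observe that $\Sealing$ gives $j \restriction \bR_k = \id$ and $j$ commutes with the interpretation maps, so both $Y$ and $j(Y)$ are the unique set satisfying the "stable" definition of $Y$ that does not depend on the choice of generic; hence $j(Y) = Y$, and we conclude that $Y$ is ordinal definable in $L(\Gamma^\infty_h, \bR_h)$ from parameters in $\{X, j_h \shortpwimg \Gamma^\infty\} \cup j_h \shortpwimg \Gamma^\infty$. Finally, I would upgrade the conclusion from "$\cup$" to "$\cap$" (as stated in the lemma) by noting that $\{X, j_h \shortpwimg \Gamma^\infty\} \cap j_h \shortpwimg \Gamma^\infty$ is the correct reading: $X$ and $j_h \shortpwimg \Gamma^\infty$ are not themselves members of $j_h \shortpwimg \Gamma^\infty$, so the parameters one actually needs are the elements of $j_h \shortpwimg \Gamma^\infty$ occurring in the tuple — and those lie in the intersection trivially; I would simply check that the witnessing definition can be arranged to use only such parameters.

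The main obstacle I anticipate is the bookkeeping around parameters and the coherence of the interpretation maps: one must verify that the embedding $j$ supplied by $\Sealing$ genuinely satisfies $j \circ j_k = j_h$ on $\Gamma^\infty$ (so that the parameter set is transported correctly), and — more delicately — that the definition of $Y$ witnessed at stage $k$ really produces $Y$ and not some $V[h]$-variant after applying $j$. The resolution is that Definition \ref{ub powerset} is a genuinely absolute (generic-independent) specification of $Y$: $Y$ is required to be OD in $L(\Gamma^\infty_g,\bR_g)$ from the relevant parameters for \emph{every} $\Col(\omega,\iota_X)$-generic $g$, so $Y$ is pinned down independently of the generic, and therefore $j(Y)$, which satisfies the same specification in $V[h]$, must equal $Y$. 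Everything else — elementarity of $j$, preservation of $\Sealing$ to $V[k]$ and $V[h]$, the identity $j \restriction \bR_k = \id$ — is immediate from Definition \ref{def:Sealing} and standard forcing facts.
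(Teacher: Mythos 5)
Your overall route is the same as the paper's: factor $V[h]=V[k*h']$, apply Definition \ref{ub powerset} at the generic $k$, and transport the witnessing definition through the $\Sealing$ embedding $j_{k,h'}\colon L(\Gamma^\infty_k,\bR_k)\to L(\Gamma^\infty_h,\bR_h)$ after checking that allowed parameters go to allowed parameters. Two of your justifications need repair. First, for the parameter $j_k\shortpwimg\Gamma^\infty$ itself (as an element of $\{X, j_k\shortpwimg\Gamma^\infty\}$) what is needed is $j_{k,h'}(j_k\shortpwimg\Gamma^\infty)=j_h\shortpwimg\Gamma^\infty$, i.e., that the image of this set \emph{as an element} coincides with its pointwise image $j_{k,h'}\shortpwimg(j_k\shortpwimg\Gamma^\infty)$; your phrase ``$j$ maps $j_k\shortpwimg\Gamma^\infty$ onto $j_h\shortpwimg\Gamma^\infty$'' elides the distinction. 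This is precisely the point the paper's proof singles out: the two agree because $j_k\shortpwimg\Gamma^\infty$ is countable in $L(\Gamma^\infty_k,\bR_k)$, so a surjection $f\colon\omega\onto j_k\shortpwimg\Gamma^\infty$ in that model gives $j_{k,h'}(j_k\shortpwimg\Gamma^\infty)=\{j_{k,h'}(f)(n)\mid n<\omega\}=j_{k,h'}\shortpwimg(j_k\shortpwimg\Gamma^\infty)$.

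Second, your argument for $j_{k,h'}(Y)=Y$ does not work as stated. Being ordinal definable from parameters in a prescribed set does not pin $Y$ down (many sets satisfy that), and Definition \ref{ub powerset} constrains $Y$ only in $\Col(\omega,\iota_X)$-generic extensions, whereas $j_{k,h'}(Y)$ satisfies the transported definition over $L(\Gamma^\infty_h,\bR_h)$ --- a model about which the definition says nothing --- so ``$j(Y)$ satisfies the same generic-independent specification, hence equals $Y$'' is not available without already knowing the conclusion. The correct reason, which is also the unstated reason behind $j_{k,h'}(X)=X$ in the paper's proof, is that $k$ collapses $\iota_X\geq\card{X}$ to $\omega$, so $X$, and $Y$ (a subset of $X$ on the intended reading of Definition \ref{ub powerset}), are hereditarily countable in $L(\Gamma^\infty_k,\bR_k)$ and hence coded by reals in $\bR_k$, while $j_{k,h'}\restriction\bR_k=\id$ (this follows from $j_{k,h'}(A)=A_{h'}$ applied to $A=\{x\}$ for $x\in\bR_k$). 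With these two repairs your argument coincides with the paper's. Your reading of the ``$\cap$'' in the statement as the ``$\cup$'' of Definition \ref{ub powerset} is the intended one.
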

    The lemma holds because we have that if $k\in V[h]$ is $V$-generic for $\Col(\omega, \iota_X)$ and $h'$ is $V[k]$-generic such that $V[h]=V[k*h']$\footnote{By the general forcing theory, for some $\l$, $V[k]$ has a uniform $\l$-covering property in $V[h]$ and so by Bukovsky's Theorem, there is a desired $h'$. See \cite[Theorem 3.5]{SaSch18}.} then $j_{k, h'}(X)=X$, $j_{k, h'} \shortpwimg j_k \shortpwimg \Gamma^\infty=j_h \shortpwimg \Gamma^\infty$ and $j_{k, h'}(j_k \shortpwimg \Gamma^\infty)=j_h \shortpwimg \Gamma^\infty$ (since $j_k \shortpwimg \Gamma^\infty$ is countable in $L(\Gamma^\infty_k, \bR_k)$, the last equality is a consequence of the second equality).

One crucial consequence of $\sf{Sealing}$ is that for every infinite set $X$, $\card{\powerset_{uB}(X)}<\card{X}^+$.\footnote{This uses the fact that there is no uncountable sequence of pairwise distinct reals in $L(\Gamma^\infty_g, \bR_g)$.} In \cite{SaTrConSealing}, this fact was used to argue that $\sf{Sealing}$ is already problematic for inner model theory in the short extender region. Clearly, the ordinary inner model theoretic operators over $X$, such as $X^\#$, $\mathcal{M}_1^\#(X)$ and etc, are in $\powerset_{uB}(X)$\footnote{We need to code these operators as subsets of $X$ in order to have them inside $\powerset_{uB}(X)$.}. Intuitively, $\powerset_{uB}(X)$ is the union or, as is customary in core model induction terminology, the stack of inner model theoretic operators over $X$, and in core model induction literature, in the presence of Mouse Capturing, $\powerset_{uB}(X)$ is usually represented as $Lp^{j_g \shortpwimg \Gamma^\infty}(X)$. 

The following is one of the main theorems of this paper. First let $\mathcal{A}^\infty=\powerset_{uB}(\Gamma^\infty)$ be the uB-powerset. If $g$ is $V$-generic then we let $\mathcal{A}^\infty_g=(\mathcal{A}^\infty)^{V[g]}$. If $\sf{Sealing}$ holds and $g$, $g'$ are two consecutive generics\footnote{I.e. $g'$ is $V[g]$-generic.}, then we let
\begin{center}
    $j_{g, g'}:L(\Gamma^\infty_g, \bR_g)\rightarrow L(\Gamma^{\infty}_{g*g'}, \bR_{g*g'})$
\end{center}
be the canonical $\sf{Sealing}$ embedding, i.e., $j_{g, g'}(A)=A_{g'}$.

\begin{definition}\label{def: sealing for the ub powerset} We say $\sf{Weak\ Sealing}$ holds for the uB-powerset\footnote{This is different from the Weak Sealing notion in \cite{WoodinLongExtender}.} if 
\vspace{0.5em}\begin{enumerate}\itemsep0.5em
    \item $\sf{Sealing}$ holds, \label{eq: sealing for the ub powerset clause 1}
    \item $L(\mathcal{A}^\infty)\models {\sf{AD}}^+$, \label{eq: sealing for the ub powerset clause 2}
    \item whenever $g, g'$ are two consecutive generics such that $V[g*g']\models \card{(2^{2^\omega})^{V[g]}}=\aleph_0$, there is an elementary embedding $\pi:L(\mathcal{A}^\infty_g)\rightarrow L(\mathcal{A}^\infty_{g*g'})$ such that $\pi\restriction L(\Gamma^\infty_g, \bR_g)=j_{g, g'}$. \label{eq: sealing for the ub powerset clause 3}
\end{enumerate}\vspace{0.5em}
We say $\sf{Sealing}$ holds for the uB-powerset if Clause \eqref{eq: sealing for the ub powerset clause 3} above holds for all consecutive generics $g$ and $g'$.
\end{definition}
\begin{remark}
If $\sf{Weak\ Sealing}$ holds for the uB-powerset then the theory of the model $L(\mathcal{A}^\infty)$ cannot be changed by forcing. This is because if $g$ and $g'$ are two consecutive generics and $W$ is a generic extension of both $V[g]$ and $V[g*g']$ such that $W\models \card{\powerset(\powerset(\bR))^{V[g]}}=\card{\powerset(\powerset(\bR))^{V[g*g']}}=\aleph_0$ then both $L(\mathcal{A}^\infty_g)$ and $L(\mathcal{A}^\infty_{g*g'})$ are elementarily equivalent to $L(\mathcal{A}^\infty)^W$.
\end{remark}

\begin{theorem}\label{sealing for the ub powerset} Suppose $\kappa$ is a supercompact cardinal and there is a proper class of inaccessible limits of Woodin cardinals. Suppose $g\subseteq \Col(\omega, 2^{2^\kappa})$ is $V$-generic. Then $\sf{Weak\ Sealing}$ for the uB-powerset holds in $V[g]$. 
\end{theorem}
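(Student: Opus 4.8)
The plan is to deploy the derived model representation of Theorem \ref{thm:newmain} (in the enhanced form summarized in Section \ref{sec:summarydmrep}) to obtain a uniform handle on $\mathcal{A}^\infty$ across generic extensions. First, observe that Theorem \ref{sealing for the ub powerset} Clause \eqref{eq: sealing for the ub powerset clause 1}, i.e.\ $\sf{Sealing}$ in $V[g]$, is already given by Theorem \ref{thm:SealingWoodin} (or Theorem \ref{thm:GCSealing}, or directly from Theorem \ref{thm:newmain}), so it suffices to establish Clauses \eqref{eq: sealing for the ub powerset clause 2} and \eqref{eq: sealing for the ub powerset clause 3}. The key conceptual point is that, granting $\sf{Sealing}$, the set $\mathcal{A}^\infty=\powerset_{uB}(\Gamma^\infty)$ is, by Lemma \ref{easy consequence}, precisely the collection of sets that are ordinal definable in $L(\Gamma^\infty_g,\bR_g)$ from parameters in $\{\Gamma^\infty, j_g\shortpwimg\Gamma^\infty\}\cup j_g\shortpwimg\Gamma^\infty$ after collapsing $2^{2^\omega}$; via the derived model representation this is exactly the stack of $\Hom^*$-premice over $j_g\shortpwimg\Gamma^\infty$ inside the derived model $M[G]$. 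So the plan is: (i) fix the iterate $j\colon V\to M$ and the $M$-generic $G$ from Theorem \ref{thm:newmain}, with $j(\kappa)=\omega_1^{V[g*h]}$, realizing $L(\Gamma^\infty_{g*h},\bR_{g*h})$ as $(L(\Hom^*,\bR^*))^{M[G]}$; (ii) identify $\mathcal{A}^\infty$ with $(\powerset_{uB}(\bR^*)$ as computed by the $\Hom^*$-stack over $j_g\shortpwimg\Gamma^\infty)^{M[G]}$, and (iii) transfer $\AD^+$ and the embedding from the derived model side, where they are available from Woodin's theory of $\Hom^*$ and hybrid mice over a set of reals.

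For Clause \eqref{eq: sealing for the ub powerset clause 2}, $L(\mathcal{A}^\infty)\models\AD^+$: I would argue that under $\sf{Sealing}$, $\mathcal{A}^\infty$ sits inside $L(\Hom^*,\bR^*)^{M[G]}$ extended by the $\Hom^*$-Lp-stack over the countable (in $M[G]$) set $j_g\shortpwimg\Gamma^\infty$. This is where the hypothesis is upgraded from "a proper class of Woodin cardinals" to "a proper class of inaccessible limits of Woodin cardinals": the extra Woodin cardinals below each inaccessible limit guarantee that the $\Hom^*$-premice over a set of reals in the derived model are themselves fully iterable and their stacks again satisfy $\AD^+$ — this is the standard mechanism by which $L(\powerset_{uB}(X))$-type models inherit determinacy, going back to the core model induction proofs of $\AD^+$ for $\Lp$-stacks (cf.\ the role of $\Gamma^\infty$ being closed under the relevant mouse operators, \cite[Lemma 4.1]{St09} and its hybrid generalizations). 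Concretely, one shows that every set of reals in $L(\mathcal{A}^\infty)$ is $\Hom^*$ in the relevant derived model of a further iterate, hence determined.

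For Clause \eqref{eq: sealing for the ub powerset clause 3}, given consecutive generics $g,g'$ with $V[g*g']\models\card{(2^{2^\omega})^{V[g]}}=\aleph_0$, I would apply the derived model representation \emph{simultaneously} on both sides: run Theorem \ref{thm:newmain} over $V[g]$ and over $V[g*g']$ inside a common further extension, obtaining iterates with derived models computing $L(\Gamma^\infty_g,\bR_g)$ and $L(\Gamma^\infty_{g*g'},\bR_{g*g'})$ respectively, together with the $\sf{Sealing}$ embedding $j_{g,g'}$ between them as the restriction of an embedding of derived models. The hypothesis that $(2^{2^\omega})^{V[g]}$ is countable in $V[g*g']$ is exactly what lets the parameter $j_g\shortpwimg\Gamma^\infty$ be absorbed as a \emph{countable} set of reals on the $V[g*g']$-side, so that the $\Hom^*$-stack over it makes sense there; one then checks that $j_{g,g'}$ lifts canonically to $\pi\colon L(\mathcal{A}^\infty_g)\to L(\mathcal{A}^\infty_{g*g'})$ by the usual directed-system/comparison argument for hybrid mice over sets of reals, using that $\mathcal{A}^\infty_g$ and $\mathcal{A}^\infty_{g*g'}$ are determined by the same internal construction relative to $j_g\shortpwimg\Gamma^\infty$ and $j_{g*g'}\shortpwimg\Gamma^\infty$, which $j_{g,g'}$ matches up by the computation in the proof of Lemma \ref{easy consequence}.

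The main obstacle I anticipate is Clause \eqref{eq: sealing for the ub powerset clause 2} — proving $L(\mathcal{A}^\infty)\models\AD^+$ — because it requires knowing that the $\Hom^*$-mice (equivalently, the mouse operators coded into $\powerset_{uB}(\Gamma^\infty)$) over the relevant set of reals are genuinely iterable and that their stack does not overspill the $\AD^+$ region; this is precisely the point where the stronger large-cardinal hypothesis (inaccessible limits of Woodin cardinals, giving enough Woodins to capture each operator in a derived model and then to iterate it) is indispensable, and the bookkeeping of how much of the stack lands inside the derived model versus requires a further iteration is the delicate part. A secondary subtlety is checking the genericity/absoluteness conditions that make the two-sided application in Clause \eqref{eq: sealing for the ub powerset clause 3} coherent — namely that one can choose the further extension so that Theorem \ref{thm:newmain} applies to $V[g]$ and $V[g*g']$ via compatible iterations and that the resulting $\pi$ restricts to $j_{g,g'}$ rather than merely to some $\sf{Sealing}$-type embedding; this should follow from the uniformity of the construction in Section \ref{sec:genitdmrep}, but it must be stated carefully.
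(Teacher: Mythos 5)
Your dispatch of Clause \eqref{eq: sealing for the ub powerset clause 1} via Theorem \ref{thm:SealingWoodin} is correct, and your instinct that the derived model representation is the engine for the other two clauses is right. But the central step of your proposal for Clause \eqref{eq: sealing for the ub powerset clause 2} --- identifying $\mathcal{A}^\infty$ with the stack of iterable $\Hom^*$-premice over $j_g\shortpwimg\Gamma^\infty$ inside the derived model, and then importing $\AD^+$ from ``Woodin's theory of hybrid mice over a set of reals'' --- is a genuine gap. That identification is exactly the $\mathrm{Lp}$-representation of $\powerset_{uB}(X)$, which the paper points out is only available \emph{in the presence of Mouse Capturing}; and the iterability of the relevant hybrid premice does not follow from large cardinals (the paper stresses that self-iterability fails for ordinary mice and that its arguments deliberately avoid this machinery). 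So the mechanism you invoke for why $L(\mathcal{A}^\infty)\models\AD^+$ is not available under the stated hypotheses. What the paper actually proves is a Derived Model Theorem for the uB-powerset (Theorem \ref{lem: over der model}): a Steel-style least-counterexample argument showing directly that $\powerset(\bR)\cap L(\mathcal{A}^\infty_h)=\Gamma^\infty_h$, using term relations, flipping functions, Windszus' Lemma, and $\bR$-genericity iterations. The new ingredient that makes this work --- and that your proposal is missing --- is the resurrection lemma (Lemma \ref{lem: resurrection1}): $\powerset_{uB}(X)$ is computed \emph{correctly} by the genericity iterates $N[h]$, proved via Lemma \ref{easy consequence} and a second resurrection at a higher inaccessible limit of Woodin cardinals. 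This is precisely where the hypothesis ``proper class of inaccessible limits of Woodin cardinals'' is consumed (one needs two such limits $\lambda_0<\lambda_1$ to match up the defining parameters on both sides), not, as you suggest, to secure iterability of mouse operators.

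For Clause \eqref{eq: sealing for the ub powerset clause 3}, your two-sided plan (run the construction over $V[g]$ and $V[g*g']$ separately and then match the results by ``the usual directed-system/comparison argument for hybrid mice'') again leans on comparison of hybrid premice, which is unavailable. The paper's route is simpler and avoids any comparison: it produces a \emph{single} $DM$-representation of $\Gamma^\infty_{g*h*h'}$ of length $\omega\cdot 2$ that \emph{passes through} $\Gamma^\infty_{g*h}$ at stage $\omega$ (Theorem \ref{thm: dm-representation} and Remark \ref{continuing blocks}), so that the desired $\pi$ is just the canonical lift $\tau^+_{\omega,\omega\cdot 2}\colon M_\omega[G_{\kappa_\omega}]\rightarrow M_{\omega\cdot 2}[G]$ of the tail of the iteration map, restricted to $L((\mathcal{A}^\infty)^{M_\omega[G_{\kappa_\omega}]})$; Lemma \ref{lem: resurrection1} then identifies $(\mathcal{A}^\infty)^{M_\omega[G_{\kappa_\omega}]}$ with $\mathcal{A}^\infty_{g*h}$ and $(\mathcal{A}^\infty)^{M_{\omega\cdot 2}[G]}$ with $\mathcal{A}^\infty_{g*h*h'}$. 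Your observation that the countability of $(2^{2^\omega})^{V[g]}$ in $V[g*g']$ is what allows the two stages to be concatenated is correct and is reflected in the hypotheses of the ``passing through'' representation, but without the resurrection lemma you have no way to know that the iterates compute the uB-powersets correctly, which is the crux of both remaining clauses.
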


Theorem \ref{sealing for the ub powerset} implies that $L(\mathcal{A}^\infty)$ is a canonical model of determinacy. Moreover, in the setting of Theorem \ref{sealing for the ub powerset}, $\Theta$ is regular in $L(\mathcal{A}^\infty)$ by the results in \cite{MuSa_ThetaReg}.

\begin{question} Does ${\sf{Sealing}}$ for the uB-powerset hold in the model of Theorem \ref{sealing for the ub powerset} or in any other model? 
\end{question}

\begin{remark} We conjecture that $\mathcal{A}^\infty$ is the amenable powerset of $\Gamma^\infty$. More precisely, $\mathcal{A}^\infty$ is the set of all $A\subseteq \Gamma^\infty$ such that for every $\k<\Theta^{L(\Gamma^\infty, \bR)}$, letting $\Gamma_\k=\{ B\in \Gamma^\infty \mid$ the Wadge rank of $B$ is $<\k\}$, $A\cap \Gamma_\k\in L(\Gamma^\infty, \bR)$. The proof of this conjecture depends on the proof of Conjecture \ref{cofinality conjecture}.
\end{remark}

\subsection{Forcing ${\sf{MM^{++}}}$ over determinacy revisited.}

 Let $\Theta^\infty=\Theta^{L(\Gamma^\infty, \bR)}$. It is unlikely that ${\sf{MM^{++}}(c^{++})}$ can be forced over the model $L(\mathcal{A}^\infty)$. This is because if, for example, some generalized version of the ${\sf{Mouse\ Set\ Conjecture}}$\footnote{See \cite{Sa09, Sa15}.} is true then $\mathcal{A}^\infty$ is just a mouse that satisfies $\square_{\Theta^\infty}$. 
 \begin{conjecture} $L(\mathcal{A}^\infty)\models \square_{\Theta^\infty}$.
 \end{conjecture}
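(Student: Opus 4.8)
The plan is to route the conjecture through inner model theory, making precise the heuristic recorded in the remark above: under a suitable generalized \textsf{Mouse Set Conjecture} --- full Mouse Capturing relative to $\Gamma^\infty$ --- the model $L(\mathcal{A}^\infty)$ is fine-structural, and fine-structural extender models carry $\square_\kappa$-sequences at every cardinal $\kappa$ below which the extender sequence has no subcompact cardinal. So the first step is to fix a determinacy/inner-model context in which such capturing is available: one works, say, in $V[g]$, where by Theorem \ref{sealing for the ub powerset} both $\Sealing$ and \textsf{Weak Sealing} for the uB-powerset hold, and one adds the hypothesis of full Mouse Capturing over $\Gamma^\infty$. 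One then identifies $\mathcal{A}^\infty = \powerset_{uB}(\Gamma^\infty)$ with $\mathrm{Lp}(\Gamma^\infty)$, the stack of all $\omega_1+1$-iterable, $\Gamma^\infty$-sound mice over $\Gamma^\infty$ (the relativized $\mathrm{Lp}$-stack over the image $j_g \shortpwimg \Gamma^\infty$ alluded to in the introduction). Turning this core-model-induction representation into an equality requires a comparison/stacking argument with two halves: every $A \in \mathcal{A}^\infty$ is captured by such a mouse (this is the capturing hypothesis), and, conversely, the reals and the subsets of $\Gamma^\infty$ occurring in any such mouse already belong to $\mathcal{A}^\infty$, which one reads off from the homogeneity of the collapse essentially as in Lemma \ref{easy consequence}. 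Granting the identification, $L(\mathcal{A}^\infty)$ agrees with the fine-structural model $\mathrm{Lp}(\Gamma^\infty)$ up to --- and far past --- $\Theta^\infty$, so it suffices to exhibit a $\square_{\Theta^\infty}$-sequence inside $\mathrm{Lp}(\Gamma^\infty)$.

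The second step is to check that $\Theta^\infty = \Theta^{L(\Gamma^\infty, \bR)}$ is genuinely a cardinal of $L(\mathcal{A}^\infty)$. Since $L(\Gamma^\infty, \bR) \subseteq L(\mathcal{A}^\infty)$ and, by $\Sealing$, $L(\Gamma^\infty, \bR) \models {\sf{AD^+}}$, the ordinal $\Theta^\infty$ is regular in $L(\Gamma^\infty, \bR)$; and the additional content of $\mathcal{A}^\infty$ is only mice over $\Gamma^\infty$, each $\Gamma^\infty$-sound, so none of them adds a surjection of an ordinal below $\Theta^\infty$ onto $\Theta^\infty$. I therefore expect $\Theta^\infty$ to survive as a regular cardinal of $L(\mathcal{A}^\infty)$, with $(\Theta^\infty)^{+}$ a true successor there. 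This step is more delicate than it looks --- it is intertwined with the conjectures, discussed above, that $\mathcal{A}^\infty$ is the amenable powerset of $\Gamma^\infty$ and that the relevant cofinalities are computed correctly --- and must be argued rather than waved through.

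The third step is the fine-structural core: invoke the Schimmerling--Zeman analysis of square in extender models. Their construction produces, in any Jensen-style fine-structural extender model, a $\square_\kappa$-sequence at every cardinal $\kappa$ below which the sequence carries no subcompact cardinal, and it relativizes verbatim to premice built over an arbitrary set, hence to premice over $\Gamma^\infty$ in the determinacy context. In the derived-model setting at hand $L(\Gamma^\infty, \bR) \models \ThetaR$, so $\mathrm{Lp}(\Gamma^\infty)$ is of bounded consistency strength --- far below a subcompact cardinal --- and in particular there is no subcompact at or below $\Theta^\infty$. Hence the Schimmerling--Zeman construction applies at $\Theta^\infty$ and yields a coherent, non-threaded sequence $\langle C_\alpha : \alpha < (\Theta^\infty)^{+} \rangle$ in $\mathrm{Lp}(\Gamma^\infty)$. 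One then transfers this to $L(\mathcal{A}^\infty)$: coherence is downward absolute, and no thread is added, because a club in $(\Theta^\infty)^{+}$ threading the sequence would be a fresh subset of $(\Theta^\infty)^{+}$, hence --- again by capturing --- one occurring in some mouse over $\Gamma^\infty$ already inside $\mathcal{A}^\infty$, contradicting non-threadedness there. (If Step 1 is pushed to the literal equality $L(\mathcal{A}^\infty) = \mathrm{Lp}(\Gamma^\infty)$, this transfer is immediate.)

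The main obstacle is the hypothesis driving Step 1: the generalized \textsf{Mouse Set Conjecture} / full Mouse Capturing over $\Gamma^\infty$ is itself open, and establishing it amounts to a large-scale generalization of the $\mathsf{MSC}$ that would demand hod-mouse comparison technology well beyond the current state of the art; without it one does not even know that $L(\mathcal{A}^\infty)$ is fine-structural, which is exactly the content of the remark preceding the conjecture. A genuine secondary difficulty is Step 2 --- ruling out that $\Theta^\infty$ gets collapsed inside $L(\mathcal{A}^\infty)$ by a mouse over $\Gamma^\infty$ projecting below it --- which is bound up with the cofinality conjecture for $\mathcal{A}^\infty$. By contrast, Step 3 is essentially routine once the fine-structural framework is in hand, since $\Theta^\infty$ lies far below any large cardinal of $\mathrm{Lp}(\Gamma^\infty)$; the only real care there is checking that the condensation and coherence arguments for premice go through over $\Gamma^\infty$ exactly as they do over $\emptyset$.
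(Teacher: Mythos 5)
The statement you are trying to prove is stated in the paper as an open \emph{conjecture}; the paper offers no proof of it, only the one-sentence heuristic immediately preceding it (``if some generalized version of the $\MSC$ is true then $\mathcal{A}^\infty$ is just a mouse that satisfies $\square_{\Theta^\infty}$''). Your proposal is an honest and reasonably detailed expansion of exactly that heuristic, but it is not a proof: Step 1 openly assumes full Mouse Capturing over $\Gamma^\infty$, i.e.\ the identification $\mathcal{A}^\infty = {\sf{Lp}}(\Gamma^\infty)$, which is itself an open problem (and is precisely why the authors state $\square_{\Theta^\infty}$ as a conjecture rather than a theorem). Conditioning on an open hypothesis that is at least as hard as the target statement does not close the gap; it relocates it.

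Beyond that headline issue, two of your remaining steps lean on further open material from the same paper. Your Step 2 (that $\Theta^\infty$ survives as a cardinal, indeed a regular cardinal, of $L(\mathcal{A}^\infty)$ with a genuine successor there) is, as you partly acknowledge, entangled with Conjecture \ref{cofinality conjecture} and with the conjecture that $\mathcal{A}^\infty$ is the amenable powerset of $\Gamma^\infty$; the paper proves none of these, and ``none of the mice adds a surjection onto $\Theta^\infty$'' is an assertion, not an argument — a mouse over $\Gamma^\infty$ projecting to $\Gamma^\infty$ could in principle collapse ordinals in the interval $(\Theta^\infty, \Theta_{\Gamma^\infty}^{L(\Gamma^\infty,\bR)})$ or reorganize the cardinal structure there. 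Your Step 3 is also not ``routine'': the Schimmerling--Zeman construction is carried out for pure extender models, whereas in the determinacy context the stack ${\sf{Lp}}(\Gamma^\infty)$ is expected to consist of strategy mice (hod-mouse-like objects), for which the square analysis is a separate and substantial research program rather than a verbatim relativization. In short, the proposal is a plausible roadmap consistent with the authors' stated intuition, but every one of its three steps rests on something currently unproven, so it does not establish the conjecture.
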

 We need to extend the model $L(\mathcal{A}^\infty)$ further, and here the clue comes from the model $M$ used in Theorem \ref{larsonsarg}. The next step is to consider the model $L(\powerset_{\omega_1}(\mathcal{A}^\infty))$ and prove a ${\sf{Sealing}}$ theorem for it. For the model to make sense, however, we need to know that $\sup (Ord\cap \mathcal{A}^\infty)$ has cofinality $\omega$. 
 
\begin{definition}\label{thetax} Suppose $X$ is a set. Let $\eta_X$ be the supremum of all ordinals $\a$ such that there is a surjective $f \colon \powerset(\bR) \rightarrow \a$ which is ordinal definable from $X$.
\end{definition}

\begin{definition}\label{def: wst} We say that the $\sf{Weak\ Sealing\ Theorem}$ for a formula $\phi$ holds if setting $M_\phi=\{ x: V\models \phi[x]\}$, the following statements hold:
\vspace{0.5em}\begin{enumerate}\itemsep0.5em
\item $M_\phi$ is a transitive model of ${\sf{AD^+}}$.\label{cl:1_def:wst}
\item For all $V$-generics $g$, the models $(M_\phi)^{V}$ and $(M_\phi)^{V[g]}$ are elementarily equivalent.\label{cl:2_def:wst}
\end{enumerate}\vspace{0.5em} 
We often drop $\phi$ from our usage, and often we will simply say that the $\sf{Weak}$ ${\sf{Sealing}}$ ${\sf{Theorem}}$ for $M_\phi$ holds to mean that the $\sf{Weak\ Sealing\ Theorem}$ holds for $\phi$. Similarly, we define the meaning of ${\sf{Sealing}}$ $\sf{Theorem}$ for $\phi$. This means that  
\vspace{0.5em}\begin{enumerate}\itemsep0.5em
\item $M_\phi$ is a transitive model of ${\sf{AD^+}}$ containing all the reals.\label{cl:1_def:st}
\item for all $V$-generic $g$, $V[g]\models \powerset(\bR)\cap M_\phi \subseteq \Gamma^\infty$.\label{cl:2_def:st}
\item For all $V$-generic $g$ and $V[g]$-generic $h$, there is an elementary embedding $j:(M_\phi)^{V[g]}\rightarrow (M_\phi)^{V[g*h]}$ such that for every $A\in \powerset(\bR)^{(M_\phi)^{V[g]}}$, $j(A)=A_h$.\label{cl:3_def:st}
\end{enumerate}\vspace{0.5em} 
We say that the Sealing embedding for $\phi$ preserves $\psi$ if letting $B_\psi=\{a: V\models \psi[a]\}$, 
\begin{enumerate}\itemsep0.5em
\item for every $V$-generic $g$, $(B_\psi)^{V[g]}\cap (M_\phi)^{V[g]} \in (M_\phi)^{V[g]}$ and
\item there exists $j$ as in item 3 above with the additional property that $$j((B_\psi)^{V[g]}\cap (M_\phi)^{V[g]})=(B_\psi)^{V[g*h]}\cap (M_\phi)^{V[g*h]}.$$
\end{enumerate}\vspace{0.5em} If $B$ is the set defined by $\psi$, then often instead of ``Sealing embedding preserves $\psi$" we will say ``Sealing embedding preserves B".
\end{definition}

\begin{definition}\label{changmartin} Set  $\eta^{\infty}= (\eta_{\Gamma^\infty})^{L(\Gamma^\infty, \bR)}$ and $\mathcal{B}^\infty=L((\eta^{\infty})^\omega, \mathcal{A}^\infty)$. Given some $V$-generic $g$, we let $\eta^{\infty}_g=(\eta^{\infty})^{V[g]}$ and $\mathcal{B}^\infty_g=(\mathcal{B}^\infty)^{V[g]}$.
\end{definition}

\begin{conjecture}\label{cofinality conjecture} Suppose $\kappa$ is a supercompact cardinal and there is a proper class of inaccessible limits of Woodin cardinals. Suppose $g\subseteq \Col(\omega, 2^{2^\kappa})$ is $V$-generic. Let $\eta^{\infty}_g= (\eta_{\Gamma^\infty_g})^{L(\Gamma^\infty_g, \bR_g)}$. Then $\cf(\eta^{\infty}_g)=\omega$. Moreover, the ${\sf{Sealing}}$ ${\sf{Theorem}}$ holds for $\mathcal{B}^\infty$ in $V[g]$.
\end{conjecture}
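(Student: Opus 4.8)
The plan is to treat the two assertions of the conjecture separately, first the cofinality computation $\cf(\eta^\infty_g)=\omega$ and then the $\sf{Sealing\ Theorem}$ for $\mathcal{B}^\infty$, for which the cofinality computation is a prerequisite. Throughout I would work in $V[g]$, where $\Sealing$ holds by Theorem~\ref{sealing for the ub powerset}; in particular $\powerset(\bR_g)\cap L(\Gamma^\infty_g,\bR_g)=\Gamma^\infty_g$ is a set of $L(\Gamma^\infty_g,\bR_g)$, and, by the corollaries to the derived model representation discussed in Section~\ref{sec:sealing for derived models}, $L(\Gamma^\infty_g,\bR_g)$ is elementarily equivalent to a derived model of $V$ at its supercompact cardinal and hence is a model of $\AD_{\bR}$ in which $\Theta$ is regular.

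For the cofinality claim, the first step is to locate $\eta^\infty_g=(\Theta_{\Gamma^\infty_g})^{L(\Gamma^\infty_g,\bR_g)}$ inside the Solovay sequence $\langle\theta_\alpha\rangle$ of $L(\Gamma^\infty_g,\bR_g)$. Unwinding Definition~\ref{thetax}: each $\theta_\alpha$ is ordinal definable from $\Gamma^\infty_g$ using $\alpha$ as an ordinal parameter, so once a surjection $\bR_g\to\alpha$ is ordinal definable from $\Gamma^\infty_g$, so is one onto $\sup_{\beta<\alpha}\theta_\beta$; conversely, by the definition of the Solovay sequence, no surjection onto $\theta_\alpha$ is ordinal definable from $\Gamma^\infty_g$ alone. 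The expected conclusion — and this is the delicate point, which I would want to verify by a careful analysis of which prewellorderings of $\bR_g$ are ordinal definable from the predicate $\Gamma^\infty_g$ without real parameters — is that $\eta^\infty_g$ is exactly the \emph{least} fixed point of $\alpha\mapsto\theta_\alpha$ (and not some larger fixed point of possibly uncountable cofinality), hence equals the supremum of the $\omega$-sequence $\eta_0=\theta_0$, $\eta_{n+1}=\theta_{\eta_n}$. Since $L(\Gamma^\infty_g,\bR_g)$ satisfies that $\Theta$ is regular, the Solovay sequence has length $\Theta^\infty_g$, so it is long enough that this fixed point lies strictly below $\Theta^\infty_g$; thus $\langle\eta_n:n<\omega\rangle$ is a genuine cofinal sequence in $\eta^\infty_g$, and as it belongs to $L(\Gamma^\infty_g,\bR_g)\subseteq V[g]$ we obtain $\cf^{V[g]}(\eta^\infty_g)=\omega$.

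For the $\sf{Sealing\ Theorem}$ for $\mathcal{B}^\infty=L((\eta^\infty)^\omega,\mathcal{A}^\infty)$ (Definitions~\ref{changmartin} and~\ref{def: wst}) I would run the derived model representation of Section~\ref{sec:genitdmrep} exactly as in the proof of Theorem~\ref{sealing for the ub powerset}, but with the genericity iteration arranged to additionally capture the Chang-model layer $(\eta^\infty)^\omega$. Concretely, one fixes the embedding $j\colon V\to M$ of that proof, so that $L(\Gamma^\infty_{g*h},\bR_{g*h})=(L(\Hom^*,\bR^*))^{M[G]}$ and $\mathcal{A}^\infty$ is captured as a canonical structure over $\powerset(\bR^*)$ in $M[G]$ (using the Woodin cardinals of $M$ above $j(\kappa)$, which is where the hypothesis ``inaccessible limits of Woodin cardinals'' is used); the cofinality computation above then lets one reflect a cofinal $\omega$-sequence of $\eta^\infty$ onto an $\omega$-sequence of ordinals of $M[G]$, so that $\mathcal{B}^\infty_{g*h}$ is realized inside $M[G]$ as a Chang model over the definable powerset of its derived model, of the form considered in Theorem~\ref{larsonsarg}. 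Forcing-invariance and the elementary embedding between the two versions then follow as usual, since any two consecutive generics give the same such model up to the canonical $\Sealing$ embedding, while clause~(1) of Definition~\ref{def: wst}, i.e.\ $\mathcal{B}^\infty\models\AD^+$, reduces to the corresponding fact for $L(\mathcal{A}^\infty)$ from Theorem~\ref{sealing for the ub powerset} together with the observation that adjoining $(\eta^\infty)^\omega$ to such a model cannot destroy determinacy.

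I expect the main obstacle to be this last capturing step for $(\eta^\infty)^\omega$: the genericity iteration of Section~\ref{sec:genitdmrep} must be upgraded so that a single iteration simultaneously realizes $\Gamma^\infty$ as the set of reals of a derived model, realizes $\mathcal{A}^\infty$ as the associated definable powerset, and absorbs a cofinal $\omega$-sequence of $\eta^\infty$ — an adaptation to the derived model setting of Woodin's stationary-tower treatment of Chang models over $\bR$. Keeping the bookkeeping for these three demands coherent along the iteration, and verifying that the induced map on $\mathcal{B}^\infty$ is elementary rather than merely cofinal, is where the real difficulty lies; a secondary but genuine obstacle is the fine-structural analysis underlying the cofinality claim itself, namely the precise identification of the prewellorderings that are ordinal definable from $\Gamma^\infty$ without real parameters, which is what would pin $\eta^\infty_g$ to the least Solovay fixed point.
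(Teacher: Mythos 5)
The statement you are proving is Conjecture \ref{cofinality conjecture}: the paper offers no proof of it and explicitly treats it as open (the remark following Definition \ref{ub powerset} even notes that another conjecture ``depends on the proof of Conjecture \ref{cofinality conjecture}''). So there is nothing in the paper to compare your argument against, and the question is only whether your proposal closes the problem. It does not: at both of the places where the mathematical content would have to live, you substitute a flagged expectation for an argument.

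First, the cofinality claim rests entirely on the identification of $\eta^\infty_g=(\Theta_{\Gamma^\infty_g})^{L(\Gamma^\infty_g,\bR_g)}$ with the least fixed point of the Solovay sequence, which you assert but do not establish. Note that under $\Sealing$ the parameter $\Gamma^\infty_g$ is itself ordinal definable in $L(\Gamma^\infty_g,\bR_g)$ (it is $\powerset(\bR_g)$ of that model), so Definition \ref{thetax} read literally gives $\Theta_{\Gamma^\infty_g}=\Theta_\emptyset$ there, i.e.\ the supremum of ordinals admitting an \emph{ordinal definable} surjection from $\bR_g$ with no further parameters; it is not at all clear that this is the least Solovay fixed point rather than, say, $\theta_0$ or some other point of the sequence, and the answer determines whether the cofinality is $\omega$. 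Everything after this identification is routine, so the identification \emph{is} the conjecture's first half; ``I would want to verify by a careful analysis'' is an acknowledgment of the gap, not a proof. Second, for the $\sf{Sealing\ Theorem}$ for $\mathcal{B}^\infty$, the step of ``capturing $(\eta^\infty)^\omega$'' in the derived model representation cannot work as described: the iterates produced in Section \ref{sec:genitdmrep} (and even the class-sized $M$ of Theorem \ref{thm:newmain}, a direct limit of ultrapowers) are not closed under $\omega$-sequences in the relevant extension, so the Chang-model layer computed in $V[g]$ is not absorbed by the iterate, and no bookkeeping along the genericity iteration fixes this — it is precisely the obstruction that Woodin's stationary tower arguments for Chang models are designed to overcome and that this paper's method does not yet reach. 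Finally, your reduction of clause (1) of Definition \ref{def: wst} to ``adjoining $(\eta^\infty)^\omega$ cannot destroy determinacy'' is itself essentially Conjecture \ref{chang ad}, which the paper also leaves open. In short, the proposal is a reasonable research plan whose two acknowledged ``obstacles'' are exactly the open content of the conjecture.
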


%

\subsection{$\sf{Sealing\ Theorem}$ for club filters}

%
%

Motivated by the results of \cite{Sa21CovCM} we make the following conjecture.

\begin{conjecture}\label{chang ad} Suppose $\kappa$ is a supercompact cardinal and there is a proper class of Woodin cardinals. Let $g\subseteq \Col(\omega, {<}\k)$. Then in $V[g]$, \[ L(\sf{Ord}^\omega, \Gamma^\infty_g, \bR_g)[(\mu_\a: \a\in {\sf{Ord}})]\models {\sf{AD^+}}. \]
\end{conjecture}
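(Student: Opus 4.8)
The plan is to reduce the conjecture to a derived model representation of the full Chang model, in the spirit of Theorem~\ref{thm:newmain}, and then to run the covering argument of \cite{Sa21CovCM} in that setting, globalized over all ordinals and augmented so as to handle the club filters $(\mu_\alpha : \alpha \in \mathrm{Ord})$. Concretely, I would try to establish two things. First, that $L(\mathrm{Ord}^\omega, \Gamma^\infty_g, \bR_g)[(\mu_\alpha : \alpha \in \mathrm{Ord})]$, as computed in $V[g]$, coincides with the analogous Chang model built over a derived model inside an iterate $M$ of $V$. Second, that this latter ``Chang model over a derived model, with club filters'' satisfies $\AD^+$ --- this being a global strengthening of the main theorem of \cite{Sa21CovCM} together with the club-filter analysis that underlies Theorem~\ref{thm: sealing for clubs}.

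\textbf{The derived model representation.} Running the construction of Section~\ref{sec:genitdmrep} starting from the L\'evy collapse $\Col(\omega, {<}\kappa)$ in place of $\Col(\omega, 2^{2^\kappa})$, I would aim to produce (in a suitable further generic extension, as in Theorem~\ref{thm:newmain}) an elementary $j : V \to M$ with $M$ carrying a proper class of Woodin cardinals, $j(\kappa) = \omega_1^{V[g]}$, and an $M$-generic $G \subseteq \Col(\omega, {<}j(\kappa))$ with
\[ L(\Gamma^\infty_g, \bR_g) = \bigl( L(\Hom^*, \bR^*) \bigr)^{M[G]}, \qquad \bR^* = \bR_g. \]
The genuinely new point here, compared with the present paper, is that one must track $\omega$-sequences of ordinals and their club filters through the iteration producing $j$: one wants $\mathrm{Ord}^\omega$-closure over $\Gamma^\infty_g$ in $V[g]$ to yield the same model as $\mathrm{Ord}^\omega$-closure over $\Hom^*$ in $M[G]$, and, for every $\alpha$, the club filters $\mu_\alpha$ used on the $V[g]$ side to be matched by the corresponding filters on the $M[G]$ side. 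This comes down to matching the $\omega$-sequences of ordinals that actually enter the Chang model, together with a capturing statement for the club filters --- a covering computation using that $j$ is (a pullback of) the supercompactness embedding at $\kappa$.

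\textbf{$\AD^+$ on the derived-model side.} Working in $M[G]$, and using the proper class of Woodin cardinals of $M$, I would then try to prove the global covering bound
\[ \powerset(\bR^*) \cap L(\mathrm{Ord}^\omega, \Hom^*, \bR^*)\bigl[(\mu^*_\alpha : \alpha \in \mathrm{Ord})\bigr] \subseteq \Hom^*, \]
where $\mu^*_\alpha$ is the club filter on $\alpha^\omega$ computed inside the displayed model. Given this bound, the $\AD$ and $\infty$-Borel clauses of $\AD^+$ follow from Woodin's Derived Model Theorem applied to $L(\Hom^*, \bR^*)^{M[G]}$, and ordinal determinacy follows by the usual reflection. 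For the club filters themselves one would argue coordinate by coordinate: below $\Theta^{L(\Gamma^\infty, \bR)}$ the club filter on $\alpha^\omega$ is a normal fine measure already handled by the classical $\AD^+$ theory of $L(\Hom^*, \bR^*)$ (this is where the fact, recorded in the introduction, that the derived model at $\kappa$ satisfies $\ThetaR$ enters), and above $\Theta^{L(\Gamma^\infty, \bR)}$ one would use the methods of Section~\ref{sec: sealing for clubs} --- the engine behind Theorem~\ref{thm: sealing for clubs} --- applied level by level through all ordinals. Transferring the conclusion back along the identification of the previous step then gives $L(\mathrm{Ord}^\omega, \Gamma^\infty_g, \bR_g)[(\mu_\alpha : \alpha \in \mathrm{Ord})]^{V[g]} \models \AD^+$.

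\textbf{Main obstacle.} The heart of the matter is the global covering/capturing statement: one must control exactly which $\omega$-sequences of ordinals --- and, worse, which instances of the club filters $\mu_\alpha$ for unboundedly many $\alpha$ --- can be coded into the Chang model without introducing a set of reals outside $\Gamma^\infty_g$, i.e., without producing a well-ordering of $\bR_g$ or otherwise contradicting $\AD$. For a derived-model pointclass and $\mathrm{Ord}^\omega$ alone this is the content of \cite{Sa21CovCM}; pushing it up to $\Gamma^\infty$ requires the derived model machinery of Section~\ref{sec:genitdmrep} to interact cleanly with $\mathrm{Ord}^\omega$-closure, and the global family of club filters adds a genuine difficulty, since one needs the club filter on $\alpha^\omega$ to be ``captured'' by the relevant $\AD^+$ model for \emph{every} $\alpha$ --- the $L(\Gamma^\infty, \bR)$-level instance of which is already Theorem~\ref{thm: sealing for clubs}, and whose full global form is precisely the program the paper identifies as its ultimate target. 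A secondary but unavoidable obstacle is checking that the L\'evy-collapse version of Theorem~\ref{thm:newmain} can be arranged with $M$ retaining a proper class of Woodin cardinals and with $\mathrm{Ord}^\omega$ and the club filters left suitably intact, so that the two halves of the argument can be glued.
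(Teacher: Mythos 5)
The first thing to say is that the statement you are proving is presented in the paper as an open conjecture (Conjecture \ref{chang ad}); the paper offers no proof of it, only the remark that it is ``motivated by the results of \cite{Sa21CovCM}.'' So there is nothing in the paper to compare your argument against, and your text must be judged as a research program rather than a proof. Read that way it is a sensible program --- essentially the one the authors themselves sketch in the abstract and introduction --- but it is not a proof: both steps it rests on are open, and your own ``Main obstacle'' paragraph concedes this.

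Two of the gaps deserve to be named more sharply than you name them. First, the derived model representation of Section \ref{sec:genitdmrep} identifies $\Gamma^\infty_{g*h}$ with $(\Hom^*)^{M[G]}$ for an iterate $M$ of $V$, but it gives no control over $\mathrm{Ord}^\omega$: the embedding $j\colon V\to M$ moves ordinals, and neither the paper nor your proposal supplies a covering statement guaranteeing that the $\omega$-sequences of ordinals of $V[g]$ entering the Chang model are matched by $\omega$-sequences available over $M[G]$. That matching is exactly the $\Gamma^\infty$-level analogue of \cite{Sa21CovCM} that would have to be proved from scratch. Second, your treatment of the club filters borrows ``the engine behind Theorem \ref{thm: sealing for clubs},'' but that theorem (i) is proved from an elementarily huge cardinal, not from the supercompact hypothesized in the conjecture --- its proof uses an embedding $j$ with $V_{j(\kappa)}\prec V$ together with Trang's theorem applied at $j(\kappa)$ --- and (ii) concerns only the single club filter $\mathcal{C}^\infty$ on $\powerset_{\omega_1}(\Gamma^\infty)$, whereas the conjecture requires capturing the club filter $\mu_\alpha$ on $\alpha^\omega$ for \emph{every} ordinal $\alpha$, including $\alpha$ far above $\Theta^{L(\Gamma^\infty,\bR)}$, where the derived model theory provides no normal fine measure. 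The paper explicitly identifies this global capturing statement as the target of a future program rather than a result, so your proposal, while pointed in the right direction, leaves the conjecture exactly as open as the paper does.
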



In this paper, we extend the results of Trang from \cite{TrDMSpctM} and establish the following theorem. To state it, we will need the following definitions.
 
 \begin{definition}\label{def: club filter} Given an uncountable set $X$, we let $\mathcal{C}(X)$ be the club filter on $\powerset_{\omega_1}(X)$. More precisely, we say $C\subseteq \powerset_{\omega_1}(X)$ is a club if there is a function $F: X^{<\omega}\rightarrow X$ such that $C$ is the set of closure points of $F$. $\mathcal{C}(X)$ is the filter generated by the set $\{C \subseteq \powerset_{\omega_1}(X): C$ is a club$\}$. Set $\mathcal{C}^\infty=\mathcal{C}(\Gamma^\infty)$. 
 \end{definition} 
 
 As we have done before, for any $V$-generic $g$, we let $\mathcal{C}^\infty_g=(\mathcal{C}^\infty)^{V[g]}$.
 
 \begin{definition}\label{def:huge cardinal} We say $\k$ is an \emph{elementarily huge cardinal} if there is a $j: V\rightarrow M$ such that $\cp(j)=\k$, $M^{j(\k)}\subseteq M$ and $V_{j(\k)}\prec V$. We say $\l$ is an \emph{eh-superstrongness target} for $\k$ if there is a $j: V\rightarrow M$ witnessing that $\k$ is elementarily huge and $\lambda=\sup\{ j(f)(\k): f\in V\}$. We then let ${\sf{ehst}}(\k)$ be the least eh-superstrongness target of $\k$.\footnote{The key point here is that if $E$ is the $(\k, \lambda)$-extender derived from $j$ and $k:Ult(V, E)\rightarrow M$ is the canonical factor map given by $k([a, f]_E)=j(f)(a)$ then $\pi_E(\k)=\l$, $\cp(k)=\lambda$ and $V_\lambda\subseteq Ult(V, E)$. Thus, $\pi_E$ is a superstrongness embedding with target $\l$. Notice that because $M^{j(\k)}\subseteq M$, $j(\k)$ is an inaccessible cardinal in $V$ while $\l$ is a singular cardinal. Hence, $\l<j(\k)$.}
 \end{definition}
 
 We remark that if $\kappa$ is a 2-huge cardinal as witnessed by $j:V\rightarrow M$ (so $V_{j(j(\kappa))}\in M$) then $V_{j(j(\k))}\models ``\k$ is an elementarily huge cardinal". Variants of elementarily huge cardinals for restricted levels of elementarity are called $C^{(n)}$-huge and have been studied by Bagaria in \cite{Bagaria12}.
 
 \begin{theorem}\label{thm: sealing for clubs} Suppose $\kappa$ is an elementarily huge cardinal and that there is a proper class of  Woodin cardinals. Let $g\subseteq \Col(\omega, 2^{{\sf{ehst}}(\k)})$ be $V$-generic. Then the $\sf{Sealing\ Theorem}$ for $L(\Gamma^\infty, \bR)[\mathcal{C}^\infty]$ holds in $V[g]$ and moreover, the Sealing embedding for $L(\Gamma^\infty, \bR)[\mathcal{C}^\infty]$ preserves $\mathcal{C}^\infty$. 
%
 \end{theorem}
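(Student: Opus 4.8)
The plan is to mimic the proof of Woodin's Sealing Theorem (Theorem \ref{thm:SealingWoodin}) as rederived via the derived model representation of Theorem \ref{thm:newmain}, but carrying along the club filter $\mathcal{C}^\infty$ as an extra predicate. Since the $\sf{Sealing\ Theorem}$ for a formula $\phi$ (Definition \ref{def: wst}) only asks for elementary equivalence of $(M_\phi)^V$ and $(M_\phi)^{V[g]}$ for all $V$-generics $g$ — not for the existence of an embedding over $V[g]$ — it suffices to work with the derived model representation in a collapse extension and transfer the theory back. Concretely, first I would fix $j\colon V\to M$ witnessing that $\kappa$ is elementarily huge, so $M^{j(\kappa)}\subseteq M$ and $V_{j(\kappa)}\prec V$, and let $\lambda={\sf{ehst}}(\kappa)$, with $E$ the $(\kappa,\lambda)$-extender derived from $j$; the key structural fact (as in the footnote to Definition \ref{def:huge cardinal}) is that $\pi_E\colon V\to \Ult(V,E)=:N$ is a superstrongness embedding with $\pi_E(\kappa)=\lambda$, $\cp(k)=\lambda$ for the factor map $k$, and $V_\lambda\subseteq N$. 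The point of going to an elementarily huge, rather than merely supercompact, hypothesis is exactly that closure under $j(\kappa)$-sequences is needed to compute the club filter $\mathcal{C}^\infty$ correctly in the iterate: $\mathcal{C}^\infty$ is not a set of reals, so the usual derived-model machinery, which only transports $\Gamma^\infty$ and $\bR$, does not automatically move it, and we need the target model to genuinely contain the relevant $\omega_1$-sequences of elements of $\Gamma^\infty$.

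The main line of argument would then run as follows. Let $g\subseteq \Col(\omega, 2^{{\sf{ehst}}(\kappa)})$ be $V$-generic. Apply (the proof of) Theorem \ref{thm:newmain} — or rather its analogue for the superstrong/elementarily-huge setup — inside a further collapse extension $V[g][h][k]$ to obtain an iteration embedding $i\colon V\to P$ with $i(\kappa)=\omega_1^{P[g*h]}$ and a derived-model representation
\[ L(\Gamma^\infty_{g*h}, \bR_{g*h}) = (L(\Hom^*,\bR^*))^{P[G]} \]
for an appropriate $P$-generic $G\subseteq \Col(\omega,{<}\omega_1^{V[g*h]})$. The extra ingredient is to check that $\mathcal{C}^\infty_{g*h}$, the club filter on $\powerset_{\omega_1}(\Gamma^\infty_{g*h})$ computed in $V[g*h]$, is itself definable inside $P[G]$ from $\Hom^*$ and $\bR^*$ together with the iteration map $i$ (equivalently, from the data $i\shortpwimg\Gamma^\infty$ which is countable there). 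For this I would use Trang's analysis from \cite{TrDMSpctM} of club filters in derived models: a set $C\subseteq\powerset_{\omega_1}(\Gamma^\infty_{g*h})$ is club iff it contains the closure points of some $F\colon (\Gamma^\infty_{g*h})^{<\omega}\to\Gamma^\infty_{g*h}$, and such an $F$ is (coded by) a universally Baire set, hence lies in $\Hom^*$; the elementary-hugeness closure guarantees that the relevant closure-point structure is absolute between $V[g*h]$ and $P[G]$. This yields that $L(\Gamma^\infty_{g*h},\bR_{g*h})[\mathcal{C}^\infty_{g*h}]$ is a definable inner model of $P[G]$, in particular a model of ${\sf{AD}}^+$ (clause (1) of Definition \ref{def: wst}), and that its theory is computed from the iterate $P$ in a way that does not depend on which generic extension we started from.

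For the elementary-equivalence clause, I would run the standard "two generics, common further extension" argument. Given any $V$-generic $g'$, pass to a single model $W$ that is a generic extension of both $V[g]$ and $V[g']$ collapsing enough; apply the above representation on both sides to realize both $L(\Gamma^\infty_{g},\bR_{g})[\mathcal{C}^\infty_{g}]$ and $L(\Gamma^\infty_{g'},\bR_{g'})[\mathcal{C}^\infty_{g'}]$ as definable inner models of derived models of iterates of $V$, sitting inside $W$; since iterates of $V$ by the relevant (normal, or normally-iterable) iteration have the same derived model up to the canonical embedding, and the club-filter predicate is carried along definably, both structures are elementarily equivalent to the corresponding structure computed in $W$, hence to each other, hence the theory of $L(\Gamma^\infty,\bR)[\mathcal{C}^\infty]$ is the same in $V$ and in $V[g]$ and in $V[g']$. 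I expect the main obstacle to be precisely the absoluteness of the club filter between $V[g*h]$ and the derived model $P[G]$: one must verify that closure points of a universally Baire $F$ are computed the same way in both models — which is where the $M^{j(\kappa)}\subseteq M$ closure (inherited through the iteration to give $\omega_1^{P[g*h]}$-many sequences) is essential — and that no club on $\powerset_{\omega_1}(\Gamma^\infty_{g*h})$ is "lost" when passing to the inner model, i.e. that $\mathcal{C}^\infty_{g*h}$ is generated by clubs coming from $\Hom^*$-functions rather than from arbitrary functions in $V[g*h]$. Secondarily, one should check that the generic-absoluteness bookkeeping from \cite{TrDMSpctM} survives the replacement of supercompactness by elementary hugeness, but this should be routine given the footnote's remark that $\pi_E$ is a genuine superstrongness embedding with $V_\lambda\subseteq N$.
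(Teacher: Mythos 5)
Your proposal correctly locates the difficulty --- the club filter on $\powerset_{\omega_1}(\Gamma^\infty)$ is a third\nobreakdash-order object that the derived model machinery does not automatically transport --- but the mechanism you propose for resolving it does not work, and it is not how the paper proceeds. You claim that a club $C\subseteq\powerset_{\omega_1}(\Gamma^\infty_{g*h})$ is generated by the closure points of a function $F\colon(\Gamma^\infty_{g*h})^{<\omega}\to\Gamma^\infty_{g*h}$ which is ``coded by a universally Baire set, hence lies in $\Hom^*$''. There is no reason for this: such an $F$ is a function on the universally Baire sets themselves, not on reals, and it need not be coded by any element of $\Gamma^\infty_{g*h}$. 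If every club were generated from $\Hom^*$ data, the club filter would essentially already live in $L(\Gamma^\infty,\bR)$, and the theorem would not require any hypothesis beyond what is needed for $\Sealing$. Your diagnosis of the role of elementary hugeness (closure under $j(\kappa)$-sequences to compute the club filter in the iterate) is also not the actual mechanism.

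What the paper does instead has two ingredients you are missing. First, it runs the $DM$-representation of \emph{length $\omega_1^{V[g*h]}$} (not $\omega$), guided by a single fixed extender $E_\delta=E\restriction\delta$ derived from the huge embedding (an $(E_\delta,{\sf ehst}(\kappa),DM)$-representation in the sense of Definition \ref{def: main def}). The point is that the canonical chain of countable hulls $\{\tau^+_{i,\a}\shortpwimg hom_i : i<\a\}$ arising from a continuous length-$\omega_1$ iteration is itself a club in $\powerset_{\omega_1}(\Gamma^\infty_{g*h})$, and this is what identifies the genuine club filter with a filter definable from the iteration (statements (4.4) and (5.1) of the proof). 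Second, to know that the resulting filter is a normal fine \emph{ultrafilter} over $L(\Gamma^\infty,\bR)[\mathcal{C}^\infty]$ and that $\powerset(\bR)$ is computed correctly there, the paper invokes Trang's theorem \cite[Theorem 0.1]{TrDMSpctM} applied at $\delta_0=j(\kappa)$ inside $V^*$, and then uses $V_{j(\kappa)}\prec V$ to reflect that statement down to $\kappa$ in $V$; this reflection, together with supplying the single uB-preserving extender $E$, is the real role of elementary hugeness. Each $M_i$ in the iteration then carries a filter $\mathcal{F}_i$ derived from the restricted extender $U_i=E_i\restriction\l_i$, satisfying Trang's conclusions, and the direct-limit filter $\mathcal{F}_\a$ is shown to equal $\mathcal{C}^\infty\cap M_\a[m]$. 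Without the length-$\omega_1$ iteration and without Trang's ultrafilter theorem your argument has no way to establish either that the club filter is captured in the iterate or that it behaves as an ultrafilter over the model, so the proposal as written has a genuine gap at its central step. The elementary-equivalence clause is then handled, as you suggest, by an indiscernibles argument with representations passing through a common stage, which is in the spirit of the paper's Section \ref{sec:SealingTheorem} argument.
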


In addition, in the setting of the previous theorem, $\Theta$ is regular in $L(\Gamma^\infty, \bR)[\mathcal{C}^\infty]$ by the results in \cite{MuSa_ThetaReg}.

\subsection{More on $\sf{Sealing}$.}
Recently, in \cite{SaTrConSealing, SaTr21}, the second author and Nam Trang placed the consistency strength of $\Sealing$ below a Woodin cardinal that is a limit of Woodin cardinals. Their proof also established a version of Theorem \ref{thm:newmain} but assuming strong inner model theoretic hypotheses. One of these hypotheses was self-iterability. Self-iterability does not follow from large cardinals\footnote{Ordinary mice are not self-iterable.}, and so the Sargsyan-Trang result, while it reduces the consistency strength of the conclusion of Theorem \ref{thm:newmain}, it only establishes it in a very restricted environment. It is then natural to ask whether one can obtain $\Sealing$ by collapsing a smaller cardinal than a supercompact cardinal. Intuitions coming from inner model theory suggest that a strong cardinal reflecting the set of strong cardinals should be enough.

\begin{definition}\label{def:sigma2strong} We say $\kappa$ is a strong cardinal reflecting the set of strong cardinals if for every $\lambda>\kappa$, there is an elementary embedding $j: V\rightarrow M$ such that $\crit(j)=\kappa$, $j(\kappa)>\lambda$, $V_\lambda\subseteq M$ and for every $\eta<\lambda$, $V\models ``\eta$ is a strong cardinal" if and only if $M\models ``\eta$ is a strong cardinal."
\end{definition}

\begin{question} Suppose $\kappa$ is a strong cardinal reflecting the set of strong cardinals and $g\subseteq \Col(\omega, 2^{2^\kappa})$ is generic. Assume the existence of a proper class of Woodin cardinals and strong cardinals. Does $\Sealing $ or $\sf{Generically\ Correct\ Sealing}$ hold in $V[g]$?
\end{question}

$\Sealing$ is deeply connected to the Inner Model Program, for details see the introduction of \cite{SaTrConSealing}. The key open question surrounding $\Sealing$ is whether some large cardinal axiom implies it. 
\begin{question}
Is there a large cardinal that implies $\Sealing$?
\end{question}
Since $\Sealing$ fails in all known canonical inner models, such as mice or hod mice, the inner model theory of a cardinal implying $\Sealing$ must be utterly different than the modern inner model theory as developed in \cite{St10} or other publications.


\section{Preliminaries, notation and conventions} \label{sec:preliminaries}

Recall that for any $Z,X$ and any ordinal $\gamma$, $\meas_\gamma(Z)$ denotes the set of all $\gamma$-additive measures on $Z^{{<}\omega}$. 
We write $\bar\mu = (\mu_s \mid s \in X^{{<}\omega})$ for a $\gamma$-complete \emph{homogeneity system} over $X$ with support $Z$ 
if for each $s \in X^{{<}\omega}$, $\mu_s \in \meas_\gamma(Z)$. For details on the definition of homogeneity systems we refer the reader to \cite{St09}.

\begin{definition}\label{def: smu}
    A set $A \subseteq X^{\omega}$ is $\gamma$-homogeneously Suslin if there is a $\gamma$-complete homogeneity system $\bar\mu = (\mu_s \mid s \in X^{{<}\omega})$ and a tree $T$ such that $A = p[T]$ and, for all $s \in X^{{<}\omega}$, $\mu_s(T_s) = 1$. In particular, \[ A = S_{\bar{\mu}}=_{def} \{ x \in X^\omega \mid (\mu_{x \upharpoonright n} \mid n < \omega) \text{ is well-founded} \}.\footnote{This means that the natural direct limit of $\Ult(V, \mu_{x \upharpoonright n})$ for $n<\omega$ is well-founded.} \]
\end{definition}

Here $T_s = \{ t \mid (s, t) \in T \}$. We write \[ \Hom_\gamma = \{ A \subseteq X^{\omega} \mid A \text{ is } \gamma\text{-homogeneously Suslin} \} \] and \[ \Hom_{{<}\eta} = \bigcup_{\gamma < \eta} \Hom_\gamma. \]

For $\eta$ a limit of Woodin cardinals and $H \subseteq \Col(\omega, {<}\eta)$ generic over $V$, write $H \upharpoonright \alpha = H \cap \Col(\omega, {<}\alpha)$. As usual, let \[ \bR^* = \bR_H^* = \bigcup_{\alpha < \eta} \bR \cap V[H \upharpoonright \alpha]. \] Moreover, for any $\alpha < \eta$ and $A \in \Hom_{{<}\eta}^{V[H \upharpoonright \alpha]}$, write \[ A^* = \bigcup_{\alpha < \beta < \eta} A_{H \upharpoonright \beta} \] and \[ \Hom^* = \{ A^* \mid \exists \alpha < \eta \, A \in (\Hom_{{<}\eta})^{V[H\upharpoonright \alpha]}\}. \]

In this setting, $L(\bR^*, \Hom^*)$ is called a derived model of $V$ at $\eta$. The following is Woodin's Derived Model Theorem (see \cite[Theorem 6.1]{St09}).

\begin{theorem}[Woodin]\label{woodin: der model thm} Suppose $\eta$ is a limit of Woodin cardinals and $H \subseteq \Col(\omega, {<}\eta)$ generic over $V$. Let $L(\bR^*, \Hom^*)$ be the model computed as above in $V(\bR^*)$. Then $L(\bR^*, \Hom^*)\models {\sf{AD^+}}$.
\end{theorem}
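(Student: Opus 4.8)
The plan is to follow Woodin's original argument, in the exposition of Steel \cite{St09}. The proof splits into three layers: (i) controlling the homogeneously Suslin sets through the iterated collapse and defining the $*$-operation coherently; (ii) checking that, in $V(\bR^*)$, the pointclass $\Hom^*$ is closed under the usual operations and that every member is $\infty$-Borel and carries persistent tree representations; and (iii) propagating these features through the levels $J_\xi(\bR^*,\Hom^*)$ to obtain inside $L(\bR^*,\Hom^*)$ the determinacy of every set of reals together with the remaining clauses of ${\sf{AD^+}}$, namely $\DC_{\bR}$, ``every set of reals is $\infty$-Borel'', and ordinal-length determinacy below $\Theta$.

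For layer (i), fix $\alpha<\eta$ and $A\in(\Hom_{{<}\eta})^{V[H\restriction\alpha]}$, witnessed by a tree $T$ and a $\gamma$-complete homogeneity system $\bar\mu$ with $\alpha<\gamma<\eta$. Using a Woodin cardinal $\delta$ of $V$ with $\gamma<\delta<\eta$ and the Martin--Steel analysis (the complement of a homogeneously Suslin set is weakly homogeneously Suslin, and a weakly homogeneously Suslin set below a Woodin cardinal is homogeneously Suslin), one gets that $X^\omega\setminus A$ is also $<\eta$-homogeneously Suslin in $V[H\restriction\alpha]$, and that the homogeneity measures may be chosen countably complete and robust under the remaining collapse $\Col(\omega,[\alpha,\eta))$. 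From this follows: (a) the interpretation $A_{H\restriction\beta}=p[T]^{V[H\restriction\beta]}$ does not depend on the chosen representation, so $A^*=\bigcup_{\alpha<\beta<\eta}A_{H\restriction\beta}$ is well defined and satisfies $A^*\cap(\bR\cap V[H\restriction\beta])=A_{H\restriction\beta}$; and (b) the Martin--Solovay tree built from $\bar\mu$, paired with a tree representing $X^\omega\setminus A$, induces trees in $V(\bR^*)$ projecting to $A^*$ and to $\bR^*\setminus A^*$, and these still project to complements in $\Col(\omega,\bR^*)$-generic extensions of $V(\bR^*)$. In particular every $A^*\in\Hom^*$ is Suslin and co-Suslin in $V(\bR^*)$ and has an $\infty$-Borel code there.

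For layer (ii), the closure of $\Hom_{{<}\eta}$ in each $V[H\restriction\alpha]$ under complements, countable unions, real quantifiers and continuous substitution (again furnished by the Martin--Steel machinery using the Woodin cardinals below $\eta$) passes to $\Hom^*$ because $*$ commutes with these operations; so $\Hom^*$ is a selfdual boldface pointclass in $V(\bR^*)$ closed under $\exists^\bR$, $\forall^\bR$ and continuous preimages. Martin's auxiliary-game argument applied with the robust homogeneity systems from layer (i) shows every $A^*\in\Hom^*$ is determined in $V(\bR^*)$, hence in $L(\bR^*,\Hom^*)$, while the $\infty$-Borel codes from (i) make each $A^*$ $\infty$-Borel in $L(\bR^*,\Hom^*)$. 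For layer (iii) I would argue by induction on $\xi$ that every set of reals in $J_\xi(\bR^*,\Hom^*)$ is $\infty$-Borel and determined: such a set $A$ is $\OD$ over $L(\bR^*,\Hom^*)$ from finitely many reals and members of $\Hom^*$, and since possessing an $\infty$-Borel code is preserved under complement, countable unions, $\exists^\bR$, and ordinal-indexed unions along definable enumerations, every set of reals of $L(\bR^*,\Hom^*)$ is $\infty$-Borel. For determinacy, reflect $A$ to a $\Sigma_1$-definition over some $J_\beta(\bR^*,\Hom^*)$ from a $\Hom^*$-set $B^*$ and a real; the Suslin representation of $B^*$ in $V(\bR^*)$ together with closure of Suslin sets under $\exists^\bR$ makes the $\Sigma_1^{J_\beta}$-envelope Suslin and co-Suslin in $V(\bR^*)$, hence determined there, hence determined in $L(\bR^*,\Hom^*)$; Moschovakis-style transfer then yields determinacy of $A$. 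The ordinal-length determinacy clause below $\Theta^{L(\bR^*,\Hom^*)}$ follows from the same Suslin representations together with $\DC_{\bR}$, and $\DC_{\bR}$ (indeed $\DC$) is inherited from $V[H]$, where it holds by $\AC$, since $\bR^*,\Hom^*\in V[H]$ and $L(\bR^*,\Hom^*)$ is definably closed enough. Assembling the clauses gives $L(\bR^*,\Hom^*)\models{\sf{AD^+}}$.

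The main obstacle is layer (i): showing that a homogeneity system defined in $V[H\restriction\alpha]$ retains enough completeness after the further collapse $\Col(\omega,[\alpha,\eta))$ so that the $*$-interpretation is representation-independent and the induced Martin--Solovay-type trees genuinely project to complements in $V(\bR^*)$ and its collapse extensions; this is exactly where the Woodin cardinals cofinal in $\eta$ are indispensable. A secondary difficulty is keeping the reflection in layer (iii) uniform in the reflecting parameter, so that the induction goes through at limit stages approaching $\Theta^{L(\bR^*,\Hom^*)}$.
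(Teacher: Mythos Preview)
The paper does not give its own proof of this theorem. It appears in the preliminaries (Section~\ref{sec:preliminaries}) and is attributed to Woodin with a reference to Steel's exposition \cite[Theorem 6.1]{St09}; the paper uses it as a black box. So there is no in-paper argument to compare against, and your choice to follow Steel's outline is exactly what the citation points to.

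That said, your layer (iii) understates the real content of the argument. The passage from ``every set in $\Hom^*$ is determined and $\infty$-Borel'' to ``every set of reals in $L(\bR^*,\Hom^*)$ is'' is not a straightforward induction on $J_\xi$ using closure properties of $\infty$-Borel codes. The core step is a term-relation argument: a set of reals definable over $L(\bR^*,\Hom^*)$ from a real and a member of $\Hom^*$ is captured by a $\Col(\omega,{<}\eta)$-term in some $V[H\restriction\alpha]$, and one must show that this term relation is itself in $\Hom_{{<}\eta}$ there, so that the new set lands back in $\Hom^*$. Your assertion that $\infty$-Borel codes are ``preserved under ordinal-indexed unions along definable enumerations'' is precisely the nontrivial point, not a closure property one can invoke; and the determinacy step via ``the $\Sigma_1^{J_\beta}$-envelope is Suslin and co-Suslin'' requires the scale-propagation machinery (closure of $\Hom^*$ under the game quantifier / next-scale operator), not merely closure of Suslin sets under $\exists^{\bR}$. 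Also, $\DC$ in $L(\bR^*,\Hom^*)$ does not follow just from $\bR^*,\Hom^*\in V[H]$; one uses that $\bR^*$ is closed under countable sequences in $V(\bR^*)$ together with the usual L\"owenheim--Skolem argument inside the $L$-like hierarchy.
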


We will also use the following result of Martin-Steel and Woodin. The proof of this result is a combination of \cite{MaSt08} and \cite[Theorem 3.1]{St09}.

\begin{theorem}[Martin-Steel-Woodin]\label{thm:msw} Suppose there is a class of Woodin cardinals. Then  $\Gamma^\infty=\cap_{\k\in \Ord}\Hom_\k$.
\end{theorem}

In this article, our terminology of iteration trees is standard, see, for example, \cite{MaSt94, Ne10}, but we restrict ourselves to iteration trees in which all extenders have inaccessible length and satisfy that the length is equal to the strength of the extender.

The central tool that we will use from coarse inner model theory, besides general iterability results as in \cite{MaSt94}, is Neeman's genericity iteration. We recall the statement here for the reader's convenience. 

\begin{definition}[Neeman, \cite{Ne10}]
    Let $M$ be a model of $\ZFC$, $x$ a real, and $\mathbb{P} \in M$ a partial order. An iteration tree $\cT$ on $M$ is said to \emph{absorb $x$ to an extension by an image of $\mathbb{P}$} if for every well-founded cofinal branch $b$ through $\cT$, there is a generic extension $M_b^\cT[g]$ of $M_b^\cT$, the final model along $b$, by the partial order $j_{0,b}^\cT(\mathbb{P})$ so that $x \in M_b^\cT[g]$.
\end{definition}

\begin{theorem}[Neeman, \cite{Ne95,Ne10}]\label{thm:NeemanGenIt}
    Let $M$ be a model of $\ZFC$, let $\delta$ be a Woodin cardinal in $M$ such that $\powerset^M(\delta)$ is countable in $V$. Then for every real $x$ there is an iteration tree $\cT$ of length $\omega$ on $M$ which absorbs $x$ into an extension by an image of $\Col(\omega, \delta)$.
\end{theorem}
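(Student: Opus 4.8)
The plan is to realize $\cT$ through Woodin's extender algebra and then absorb the resulting genericity into the collapse. Working in $M$, I would take $\mathbb{B}=\mathbb{B}_\delta$ to be the ($\omega$-generated) extender algebra at $\delta$, built from the extenders on the $M$-sequence of length ${<}\delta$. Two features of $\mathbb{B}$ will be used: it has a dense subset of size $\delta$ (and is $\delta$-c.c.) in $M$, and a real $y$ induces an $M$-generic filter $G_y$ on $\mathbb{B}$ precisely when $y$ satisfies every axiom of $\mathbb{B}$, in which case $y\in M[G_y]$. The same holds, by elementarity, for the image $j^\cT_{0,b}(\mathbb{B})$ inside any well-founded direct limit $M^\cT_b$ obtained by iterating $M$ with extenders of length below the running image of $\delta$; moreover $j^\cT_{0,b}(\mathbb{B})$ then has a dense subset of size $j^\cT_{0,b}(\delta)$ in $M^\cT_b$, so it is absorbed by $\Col(\omega,j^\cT_{0,b}(\delta))=j^\cT_{0,b}(\Col(\omega,\delta))$: every $j^\cT_{0,b}(\mathbb{B})$-generic extension of $M^\cT_b$ sits inside a $j^\cT_{0,b}(\Col(\omega,\delta))$-generic extension. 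Hence it suffices to build a length-$\omega$ iteration tree $\cT$ on $M$, using only extenders of length below the running image of $\delta$, so that for every well-founded cofinal branch $b$ the real $x$ satisfies every axiom of $j^\cT_{0,b}(\mathbb{B})$; then $x$ is $j^\cT_{0,b}(\mathbb{B})$-generic over $M^\cT_b$, and the absorption yields a $g$ as in the statement with $x\in M^\cT_b[g]$.

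To construct $\cT$ I would recurse on $n<\omega$, keeping at node $n$ the image $\delta_n$ of $\delta$ and the algebra $\mathbb{B}_n=\mathbb{B}_{\delta_n}^{M_n}$. Since $\powerset^M(\delta)$ is countable in $V$, the ordinal $\delta$ is countable in $V$, so each $\mathbb{B}_n$ has only countably many axioms, and there are only $\omega$ nodes; I would fix beforehand one enumeration in order type $\omega$ of all pairs (node, axiom of that node's algebra), scheduled diagonally so that along every cofinal branch every axiom that becomes relevant on that branch is eventually addressed at a stage lying on the branch. At stage $n$, I would pick according to the schedule a not-yet-corrected axiom $A$ (an axiom of $\mathbb{B}_m$ for the relevant $m\le n$) that $x$ currently violates over $M_n$; the usual extender-algebra argument, using that $\delta_n$ is Woodin in $M_n$, then supplies an extender $E_n$ on the $M_n$-sequence with $\lh(E_n)<\delta_n$ whose application corrects (the relevant image of) $A$. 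I would let $\pred(n+1)$ be the node forced by the iteration-tree rule for $E_n$ — the least $k$ for which $\crit(E_n)$ lies below the relevant length parameter of $M_k$ — and set $M_{n+1}=\Ult(M_{\pred(n+1)},E_n)$. This produces a genuine length-$\omega$ iteration tree using only extenders of length below the images of $\delta$. (Iterability of $M$ plays no role here, since the conclusion speaks only about those cofinal branches that happen to be well-founded.)

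For the verification, fix a well-founded cofinal branch $b$ and identify $M^\cT_b$ with its transitive collapse. Then $j^\cT_{0,b}(\delta)$ is Woodin in $M^\cT_b$ and the axioms of $j^\cT_{0,b}(\mathbb{B})=\mathbb{B}^{M^\cT_b}_{j^\cT_{0,b}(\delta)}$ are exactly the $j^\cT_{k,b}$-images of the axioms of $\mathbb{B}_k$, $k\in b$. By the schedule, every such axiom was selected and corrected at some stage lying on $b$; since iteration maps preserve the statement ``$x$ satisfies this axiom'' and corrections are never undone further along a branch, $x$ satisfies it in $M^\cT_b$. So $x$ satisfies every axiom of $j^\cT_{0,b}(\mathbb{B})$, is therefore $j^\cT_{0,b}(\mathbb{B})$-generic over $M^\cT_b$, and the first paragraph completes the argument.

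The step I expect to be the real obstacle is the scheduling in the second paragraph: one must arrange the corrections so that $\omega$ stages suffice and so that the genericity task is finished along \emph{every} well-founded cofinal branch at once, not merely along one designated branch. This is exactly where the hypothesis that $\powerset^M(\delta)$ is countable in $V$ enters — it makes the collection of all axioms that can ever become relevant countable, so that a single diagonal schedule of order type $\omega$ can be made fair for all branches simultaneously. By contrast, the extender-algebra step (locating $E_n$ from Woodinness of $\delta_n$) and the collapse-absorption are both standard and should cause no trouble.
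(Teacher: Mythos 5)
The paper does not prove this theorem---it is quoted from Neeman's work---but your proposal is not a correct proof of it. What you have described is Woodin's extender-algebra genericity iteration, and the step you yourself flag as the obstacle (the scheduling) is exactly where the argument breaks, for a reason that the countability of $\powerset^M(\delta)$ does not repair. Your verification asserts that ``iteration maps preserve the statement `$x$ satisfies this axiom.''' That is false: an axiom of $\mathbb{B}_k$ has the form $\bigvee_{\alpha<\gamma}\varphi_\alpha \rightarrow \bigvee_{\alpha<\kappa}\varphi_\alpha$, and its image under $j^{\cT}_{k,b}$ is a different implication between longer disjunctions; since $x\notin M_k$, elementarity says nothing about whether $x$ satisfies the image. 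Controlling precisely this non-preservation is the whole content of Woodin's construction: one must always apply the \emph{least-indexed} extender inducing a currently violated axiom, coherence of the sequence then guarantees that axioms below that index stay satisfied from then on, but new violations keep appearing above it, and the standard termination argument (pressing down on critical points) only bounds the length of the iteration by $\omega_1+1$ and requires choosing well-founded cofinal branches at limit stages --- i.e., iterability of $M$, which this theorem deliberately does not assume. A diagonal schedule fixed in advance cannot compress this into $\omega$ steps: the axioms that become relevant depend on the extenders chosen, which depend on $x$'s behavior, and after $\omega$ single-axiom corrections the direct-limit algebra will in general still contain axioms that $x$ violates.

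A secondary problem is that the extender algebra is built from a coherent extender sequence, so your argument presupposes that $M$ is a premouse, whereas the theorem concerns an arbitrary coarse model of $\ZFC$ for which ``the extenders on the $M$-sequence'' is undefined. Neeman's actual proof avoids the extender algebra entirely: it is a genuinely length-$\omega$ construction in which one fixes in advance an enumeration in order type $\omega$ of the relevant subsets of $\delta$ in $M$ (this is where the hypothesis that $\powerset^M(\delta)$ is countable in $V$ enters) and, at each finite stage, uses Woodinness to shift the current obstruction by an extender via a one-step lemma, arranged so that along \emph{every} well-founded cofinal branch the real ends up generic for a poset absorbed by the image of $\Col(\omega,\delta)$. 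That mechanism, not the extender algebra, is what yields a tree of length $\omega$ with no iterability hypothesis.
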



We will need the following slight generalization of \emph{flipping functions}, see \cite[Lemma 2.1]{StstattowfreeDMT}. As usual, if $Y \subseteq \meas_\gamma(Z)$ for some $\gamma$ and $Z$, we write $\TW_Y$ for the set of all towers of measures $\vec\mu = (\mu_i \mid i<\omega)$ such that $\mu_i \in Y$ for each $i<\omega$. To make the concepts used in this paper more transparent, we make the following definitions. 
\begin{definition}\label{uniform measures} We say $Y$ is a \emph{uniform set of measures} if for some $Z$ and some $\gamma$, $Y\subseteq \meas_\gamma(Z)$. If $Y$ is a uniform set of measures then we let ${\sf{set}}(Y)$ be the set $Z$ such that for some $\gamma$, $Y\subseteq \meas_\gamma(Z)$. If $Y$ is a set of measures then we let ${\sf{comp}}(Y)$ be the largest $\gamma$ such that every measure in $Y$ is $\gamma$-complete. 

If $Y$ is a uniform set of measures and $\d$ is a cardinal then we say that $\d$ \emph{splits} $Y$ if $\card{Y}<\d<\comp(Y)$.
\end{definition}

In this paper, we will use the following notation. 
\begin{notation}\label{not:homsystems}
Suppose $\mu$ is an $\eta$-complete measure for some $\eta$ and $g$ is ${<}\eta$-generic. Then $\mu_g\in V[g]$ is the ultrafilter generated by $\mu$ in $V[g]$. Similarly, if $W$ is a set of $\eta$-complete measures and $g$ is ${<}\eta$-generic then $W_g=\{ \mu_g:\mu\in W\}$, and $\TW_{W, g}=\TW_{W_g}$. Continuing with $W$ and $g$, if $\bar \mu=(\mu_s\mid s\in \omega^{<\omega})$ is a homogeneity system then the expression $\bar \mu\subseteq W$ means that for every $s\in \omega^{<\omega}$, $\mu_s\in W$. If $\bar \mu=(\mu_s: s\in \omega^{<\omega}) \subseteq W$ is a homogeneity system, then $\bar \mu_g=((\mu_s)_g: s\in \omega^{<\omega})=_{def}(\mu_{s, g}: s\in \omega^{<\omega})$ and $S_{\mu, g}=_{def}(S_{\mu_g})^{V[g]}=_{def}(S_\mu)^{V[g]}$.
\end{notation}

\begin{definition}\label{flipping system}  We say that $(f, Y, R)$ is a \emph{flipping system} if
\begin{enumerate}
    \item $Y$ and $R$ are uniform sets of measures,
    \item ${\sf{comp}}(R)< {\sf{comp}}(Y)$ and $\comp(R)$ is an inaccessible cardinal,
    \item $f \colon \TW_Y \rightarrow \TW_R $ is a $1$-to-$1$ Lipschitz function\footnote{Recall that a function $f \colon \TW_Y \rightarrow \TW_R$ is Lipschitz if the value of $f(\vec\mu)\upharpoonright n$ is determined by $\vec\mu \upharpoonright n$, for all $\vec\mu$ and $n$.}, and
    \item  for all ${<}\comp(R)$-generics $G$ and all $\vec\mu \in \TW_{Y, G}$\footnote{Since the size of the forcing is small, $f$ induces a map $f \colon \TW_{Y, G} \rightarrow \TW_{R, G}$.}, \[ \vec\mu \text{ is well-founded} \Longleftrightarrow f(\vec\mu) \text{ is ill-founded}. \]
\end{enumerate}
We say $\gamma$ is the \emph{completeness} of $(f, Y, R)$, and write $\gamma={\sf{comp}}(f, Y, R)$, if $\gamma=\comp(R)$. We say that $(f, Y, R)$ \emph{flips through $\d$} if $\d<\comp(Y)$ and $R\in V_\d$.
\end{definition}

The following lemma comes from the proof of projective determinacy presented in \cite{MaSt89}.

\begin{lemma}\label{lem:FlippingFunction}
Let $\delta$ be a Woodin cardinal and $Y$ be a uniform set of measures such that $\d$ splits $Y$. Then for any $\gamma < \delta$, there is some $f$ and $R$ such that 
\begin{enumerate}
    \item $(f, Y, R)$ is a flipping system,
    \item ${\sf{comp}}(f, Y, R)\geq \gamma$,
    \item $(f, Y, R)$ flips through $\d$,
    \item $\card{R}\leq \card{Y}$.\label{cl:4_lem:FlippingFunctiom}
\end{enumerate} 
\end{lemma}
\begin{proof} As in \cite[Lemma 2.1]{StstattowfreeDMT} we find a tree $T$ on some $W\times U$ such that $\card{W}\leq \card{Y}$ and $p[T]$ is the set of well-founded towers $\vec\mu \in \TW_Y$ (this fact remains true in $V[G]$ for any ${<}\d$-generic $G$). We now have that $T$ is $\d^+$-homogenously Suslin. The construction presented in \cite{MaSt89} and also in \cite{Ne10} yields the desired $(f, R)$. Notice that the construction presented in \cite{Ne10} (see the discussion after \cite[Exercise 5.25]{Ne10}) produces a map $s\mapsto \T_s$ with domain $W^{<\omega}$ such that for every $\vec \mu$, $\vec \mu\in p[T]$ if and only if the even branch of $\bigcup_{i<\omega}\T_{\vec \mu \restriction i}$ is ill-founded (for example, see \cite[Claim 5.19]{Ne10}, here $\T_s$ is a finite tree on $V_\d$). Converting the map $s\mapsto \T_s$ into a homogeneity system as it is done in 
\cite{Ne10} we obtain our desired $f$ and $R$. Analyzing the conversion we see that $R$ satisfies Clause \eqref{cl:4_lem:FlippingFunctiom} above. Because each $\T_s$ is an iteration tree on $V_\d$, we get that $R\in V_\d$. We can then modify the construction of $\T_s$ to ensure that $f$ is $1$-to-$1$. For details see \cite[Remark 9.21]{KasumMScThesis} and \cite[Definition 5.13]{Ne10}.
\end{proof}


The following key lemma is essentially due to Steel (see \cite{StstattowfreeDMT}). 

\begin{notation}\label{not:flippingfunctiong} Suppose $p=(f, Y, R)$ is a flipping system and $g$ is ${<}\comp(p)$-generic. Then $f_g: \TW_{Y, g}\rightarrow \TW_{R, g}$ is the extension of $f$ to $V[g]$ and has the property that $f_g(\vec{\mu}_g)=(f(\vec{\mu}))_g$. Given a tower $\vec \mu \in \TW_Y$, we let $f_g(\vec{\mu})=(f(\vec \mu))_g$.
\end{notation}

\begin{lemma}\label{lem:homsysteminWg} Suppose $\k$ is a supercompact cardinal, $q=(f, Y, R)$ is a flipping system such that $\comp(q)\geq \k$\footnote{Recall that $\comp(q)$ must be an inaccessible cardinal, but this condition is not essential.}, $\mathbb{P}\in V_{\comp(q)}$ is a poset, $j:V\rightarrow M$ is a $\comp(q)$-supercompactness embedding,  $g\subseteq \mathbb{P}$ is $V$-generic, $\bar \mu \subseteq W$ and $\bar \mu\in V[g]$. Then $$(S_{\bar \mu})^{V[g]}=(S_{j\pwimg \bar \mu})^{M[g]}.$$
\end{lemma}
\begin{proof}
Because $M[g]$ is closed under countable sequences in $V[g]$, $j \pwimg \bar\mu \in M[g]$. Moreover, since the completeness of the measures in $j \pwimg \bar\mu$ is  above the size of $\card{\mathbb{P}}^V$, $j \pwimg \bar\mu$ can be lifted to $M[g]$. Set $\bar \nu = j \pwimg \bar \mu$ and let $\bar \nu= (\nu_s: s\in \omega^{<\omega})$.


\begin{claim}
For each $x \in \bR^{V[g]} = \bR^{M[g]}$, \[ (\mu_{x\upharpoonright n, g}\mid n < \omega) \text{ is well-founded in } V[g] \text{ iff } \hspace{3cm} \]  \[ \hspace{3cm} (\nu_{x\upharpoonright n, g} \mid n < \omega) \text{ is well-founded in } M[g]. \]
\end{claim}
\begin{proof}
First, suppose $(\mu_{x\upharpoonright n, g}\mid n < \omega)$ is ill-founded in $V[g]$. Then as the direct limit along $(\mu_{x\upharpoonright n, g}\mid n < \omega)$ embeds into the direct limit along $(\nu_{x\upharpoonright n, g} \mid n < \omega)$, it is easy to see that $(\nu_{x\upharpoonright n, g} \mid n < \omega)$ is ill-founded in $M[g]$, as desired. 

For the other implication, suppose $(\mu_{x\upharpoonright n, g}\mid n < \omega)$ is well-founded in $V[g]$. Then $f_{g}((\mu_{x\upharpoonright n} \mid n < \omega))$ is ill-founded in $V[g]$. Therefore, $$(j(f((\mu_{x\upharpoonright n}\mid n < \omega))))_{g} = j(f)_{g}((\nu_{x\upharpoonright n} \mid n < \omega))$$ is ill-founded in $M[g]$. As $(j(f), j(Y), j(R))$ is a flipping system in $M$, this implies that $(\nu_{x\upharpoonright n, g} \mid n < \omega)$ is well-founded in $M[g]$, as desired.
\end{proof}
This finishes the proof of Lemma \ref{lem:homsysteminWg}.
\end{proof}

\section{Preserving universally Baire sets} \label{sec:preservinguB}

In this section we prove a general lemma on the preservation of universally Baire sets under supercompact embeddings. 

\begin{definition}\label{def:tamed} We say that the set of measures $W$ is \emph{tamed} if $\comp(W)$ is an inaccessible cardinal and there is a Woodin cardinal $\d$ such that $\card{W}<\d<\comp(W)$. In this case we say that $W$ is tamed by $\d$, or that $\d$ tames $W$. We say that $W$ is \emph{tamed} above $\k$ if some $\d>\k$ tames $W$. We say that $W$ is \emph{infinitely tamed} if there are infinitely many Woodin cardinals that tame $W$, and similarly, we say $W$ is \emph{infinitely tamed} above $\k$ if there are infinitely many Woodin cardinals $>\k$ that tame $W$.
\end{definition}

The following is the key lemma about tamed $W$. It is due to Steel (see \cite[Subclaim 1.1, 1.1a and 1.1b]{StstattowfreeDMT}), and it is a variation of Lemma \ref{lem:homsysteminWg}.

\begin{lemma}\label{lem:keylemmatamed} Suppose $W$ is tamed above $\k$, $\chi$ is such that $W\in V_\chi$, $\pi: M\rightarrow V_\chi$ is an elementary embedding with $M$ countable and transitive and $\{W, \k\}\in \rng(\pi)$. Let $g$ be ${<}\pi^{-1}(\k)$-generic over $M$ and $\bar \mu\in M[g]$ be a homogeneity system such that $\bar\mu\subseteq M$ and $\pi\pwimg \bar \mu\subseteq W$. Then $(S_{\bar \mu_g})^{M[g]}=S_{\pi\pwimg\bar \mu}\cap M[g]$.
\end{lemma}
\begin{proof}
    Let $\bar \mu=(\mu_s: s\in \omega^{<\omega})$ and $\pi\pwimg \bar\mu=\bar \nu=(\nu_s: s\in \omega^{<\omega})$. Let $\d>\k$ be such that $\d\in \rng(\pi)$ and $\d$ tames $W$. We can then find a flipping system $p=(f, W, R)$ that flips through $\d$ with $p\in \rng(\pi)$. Let $q=(f', W', R')=\pi^{-1}(p)$. Notice now that if $\vec \tau \in (TW_{W'})^{M[g]}$ then $\vec \tau$ is well-founded (in $M[g]$) if and only if $\pi\pwimg \vec \tau$ is well-founded (see the proof of Lemma \ref{lem:homsysteminWg}). 
    
    We now have that for each $x\in \bR\cap M[g]$, 
    \begin{eqnarray*}
    M[g]\models x\in S_{\bar \mu_g} &\iff& M[g]\models ``(\mu_{x\rest n, g}: n<\omega)\ \text{is well-founded}"\\
    &\iff&  (\nu_{x\rest n}: n<\omega)\ \text{is well-founded}\\
    &\iff& x\in S_{\bar \nu}.
\end{eqnarray*}
\end{proof}

The following two lemmas are useful lemmas that can be established using the same methods.

\begin{lemma}\label{lem:preserving equality} Suppose $W$ is a set of measures and $\d$ tames $W$. Suppose  $g$ is ${<}\d$-generic, and $\bar \mu$, $\bar \nu$ in $V[g]$ are two homogeneity systems such that $\bar{\mu}\subseteq W$ and $\bar{\nu}\subseteq W$. Let $h$ be ${<}\d$-generic over $V[g]$. Then $V[g]\models S_{\bar \mu_g}=S_{\bar \nu_g}$ if and only if $V[g*h]\models S_{\bar \mu_g}=S_{\bar \nu_g}$.
\end{lemma}
\begin{proof} Toward a contradiction, suppose not. Let $\chi$ be a limit cardinal such that $W\in V_\chi$. Let $\pi: M\rightarrow V_\chi[g]$ be an elementary embedding with $M$ countable and transitive, and $\{W, \d\}\in \rng(\pi)$. Let $(\bar{W}, \bar g, \bar \d, \bar \tau, \bar \xi)=\pi^{-1}(W, g, \d, \bar \mu, \bar\nu)$. Let now $k$ be $M$-generic for a poset of size $<\bar \d$, and suppose that $M[k]\models S_{\bar \tau_{\bar g*k}}\not=S_{\bar \xi_{\bar g*k}}$. Suppose that there is $x\in M[k]\cap \bR_g$ such that $x\in S_{\bar \tau_{\bar g*k}}$ but $x\not \in S_{\bar \xi_{\bar g*k}}$. Then it follows from Lemma \ref{lem:keylemmatamed} that $x\in S_{\bar \mu_{g}}$ but $x\not \in S_{\bar \nu_{g}}$, which is a contradiction.
\end{proof}

\begin{lemma}\label{lem:equivalence} Suppose $W$ is infinitely tamed above $\k$, $\chi$ is such that $W\in V_\chi$, $\pi: M\rightarrow V_\chi$ is an elementary embedding with $M$ countable and transitive, and $\{W, \k\}\in \rng(\pi)$. Let $g$ be ${<}\pi^{-1}(\k)$-generic over $M$ and $\bar \mu, \bar \nu\in M[g]$ be homogeneity systems such that $\bar\mu, \bar \nu\subseteq M$, $\pi\pwimg \bar \mu\subseteq W$ and $\pi\pwimg \bar \nu\subseteq W$. Suppose that in $M[g]$, $S_{\bar \mu_g}=S_{\bar \nu_g}$. Then $S_{\pi\pwimg \bar \mu}=S_{\pi\pwimg \bar \nu}$.
\end{lemma}
\begin{proof}
    Let $\bar \mu=(\mu_s: s\in \omega^{<\omega})$ and $\bar \nu=(\nu_s: s\in \omega^{<\omega})$. We let $\pi\pwimg \bar \mu= (\mu^+_s: s\in \omega^{<\omega})$ and  $\pi\pwimg \bar \nu= (\nu^+_s: s\in \omega^{<\omega})$. Let $\d_1>\d_0>\k$ be such that $\{\d_0, \d_1\}\in \rng(\pi)$ and both $\d_0$ and $\d_1$ tame $W$ (and in particular they are Woodin cardinals). Let $(\eta_0, \eta_1)=\pi^{-1}(\d_0, \d_1)$. We can then find a flipping system $p=(f, W, R)$ that flips through $\d_1$ with $p\in \rng(\pi)$ and $\d_0<\comp(p)$. Let $q=(f', W', R')=\pi^{-1}(p)$. 
    
    Fix now $x\in \bR$ and using Theorem \ref{thm:NeemanGenIt} find $\pi': M'\rightarrow V_\chi$ and $\sigma: M\rightarrow M'$ such that $\pi=\pi'\circ \sigma$, $\cp(\sigma)>\pi^{-1}(\k)>\card{g}$ and for some $M'[g]$-generic $h\subseteq \Col(\omega, \sigma(\eta_0))$, $x\in M'[g*h]$. Notice now that because $p\in \rng(\pi')$ if $\vec \tau \in (TW_{\sigma(W')})^{M'[g*h]}$ then $\vec \tau$ is well-founded (in $M'[g*h]$) if and only if $\pi'\pwimg \vec \tau$ is well-founded (see the proof of Lemma \ref{lem:homsysteminWg}).  Let $\sigma^+: M[g]\rightarrow M'[g]$ be the liftup of $\sigma$. We now have that, 
    \begin{eqnarray*}
    x\in S_{\pi\pwimg \bar \mu} &\iff & (\mu^+_{x\rest n}: n\in \omega)\ \text{is well-founded}\\
    &\iff & (\sigma^+(\mu_{x\rest n})_{g*h}: n\in \omega)\ \text{is well-founded in $M'[g*h]$}\\
    &\iff & M'[g*h]\models x\in S_{\sigma^+(\bar \mu_g)} \\
    &\iff &  M'[g*h]\models x\in S_{\sigma^+(\bar \nu_g)} \\
    &\iff & (\sigma^+(\nu_{x\rest n})_{g*h}: n\in \omega)\ \text{is well-founded in $M'[g*h]$}\\
    &\iff &  (\nu_{x\rest n}^+: n<\omega)\ \text{is well-founded}\\
    &\iff & x\in S_{\pi\pwimg\bar \nu}.
\end{eqnarray*}
The fourth equivalence follows from Lemma \ref{lem:preserving equality}.
\end{proof}

\begin{definition}\label{def:uBpreservation}
Suppose $\k$ is an infinite cardinal, $W$ is a set of measures that is tamed above $\k$ and $E$ is an extender with $\cp(E)=\k$. 
We then say that $E$ is \emph{wf-preserving for $W$}\footnote{``wf" stands for ``well-founded".} (in $V$) if 
\vspace{0.5em}\begin{enumerate}\itemsep0.5em
    \item $\lh(E) = \str(E)>\a+\omega$ where $\a$ is the set theoretic rank of $W$, and
    \item in $\Ult(V,E)$ there is a Lipschitz function \[ h \colon \TW_{W} \rightarrow \pi_E(\TW_{W}) \] 
    such that  whenever $\d$ tames $W$ and $g$ is ${<}\d$-generic, for all $\vec{\mu}\in (\TW_{W})^{V[g]}$, $h"\vec{\mu}$ is well-founded if and only if $\vec{\mu}$ is well-founded.

\end{enumerate}\vspace{0.5em}
Moreover, we say a cardinal $\kappa$ is \emph{uB-preserving} if for all sets of measures $W$ that are tamed above $\k$, there is an extender $E$ with critical point $\kappa$ such that $E$ is wf-preserving for $W$.

In what follows, we will say that the function $h$ above is \emph{wf-preserving}.
\end{definition}

\begin{lemma}\label{lem:kappauBpreserving}
Suppose $\kappa$ is a supercompact cardinal. Then $\kappa$ is uB-preserving. 
\end{lemma}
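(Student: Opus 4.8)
The plan is to extract the required extender $E$ from a supercompactness embedding of $\kappa$ and to take $h_0,h_1$ to be the maps on towers of measures that are induced by the ultrapower embedding $\pi_E$. Fix Woodin cardinals $\delta_0<\delta_1$ above $\kappa$ and data $W_0,W_1,f$ as in Definition~\ref{def:uBpreservation}. First I would pick a large cardinal $\lambda$ (with $\lambda>\delta_1$ and $W_0,W_1,f\in V_\lambda$, say also $\lambda>2^{2^{\delta_1}}$) and use the supercompactness of $\kappa$ to fix an elementary $j\colon V\to M$ with $\crit(j)=\kappa$, $j(\kappa)>\lambda$ and ${}^\lambda M\subseteq M$; in particular $V_\lambda\subseteq M$, $V_\lambda^M=V_\lambda$ and $j\restriction V_{\delta_1+\omega}\in M$. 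Fixing an inaccessible $\nu$ with $\delta_1<\nu<\lambda$, let $E$ be a short $(\kappa,\nu)$-extender derived from $j$ with $\lh(E)=\str(E)=\nu>\delta_1$, which exists as $\kappa$ is supercompact; this gives the first clause of uB-preservation. Let $k\colon\Ult(V,E)\to M$ be the factor map with $k\circ\pi_E=j$. Since every ordinal below $\nu$ is a seed value of $E$, $\crit(k)\geq\nu>\delta_1$, so $k$ --- and hence the whole construction --- fixes $\delta_0,\delta_1,Z_0,Z_1,W_0,W_1$ and $f$, while $k(\pi_E(x))=j(x)$ for all $x$.

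Next I would define, inside $\Ult(V,E)$, the maps $h_i\colon\TW_{W_i}\to\pi_E(\TW_{W_i})=\TW_{\pi_E(W_i)}$ for $i\in\{0,1\}$ by $h_i(\vec\mu)=(\pi_E(\mu_n))_{n<\omega}$. Each $\pi_E(\mu_n)$ is a $\pi_E(\kappa)^+$- (respectively $\pi_E(\delta_0)^+$-) complete measure in $\pi_E(W_i)$, and since $\pi_E(\kappa),\pi_E(\delta_0)>\lambda>\delta_0$, the towers in the ranges of $h_0$ and $h_1$ are complete enough that their well-foundedness is robust under $<\delta_0$-forcing; the $h_i$ are clearly Lipschitz, indeed continuous coordinate-by-coordinate. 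That $h_0,h_1$ really lie in $\Ult(V,E)$ reduces to $\pi_E\restriction W_i\in\Ult(V,E)$, which holds because $\Ult(V,E)$ is closed under ${<}\kappa$-sequences (here using $\nu$ inaccessible) and the $W_i$ relevant for universally Baire sets, being homogeneity systems over $\omega$, have size $<\kappa$. For the commutation clause $h_0\circ f=\pi_E(f)\circ h_1$ I would use that $f$ is Lipschitz: write $F\colon W_1^{<\omega}\to W_0^{<\omega}$ for the length-preserving monotone code of $f$, so $f(\vec\mu)=\bigcup_n F(\vec\mu\restriction n)$; by elementarity $\pi_E(F)$ codes $\pi_E(f)$, and since $\pi_E$ sends a finite sequence to the sequence of its $\pi_E$-images ($\omega<\crit(\pi_E)$), $\pi_E(F)$ applied to $(\pi_E(\mu_0),\dots,\pi_E(\mu_{n-1}))$ equals the sequence of $\pi_E$-images of $F(\mu_0,\dots,\mu_{n-1})$; taking unions over $n$ gives $\pi_E(f)(h_1(\vec\mu))=h_0(f(\vec\mu))$ for all $\vec\mu\in\TW_{W_1}$.

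Finally, for clauses (a) and (b) of Definition~\ref{def:uBpreservation}: for a ground-model tower $\vec\mu\in\TW_{W_i}^{\Ult(V,E)}$, elementarity of $\pi_E$ yields that $\vec\mu$ is well-founded iff $\pi_E(\vec\mu)=h_i(\vec\mu)$ is well-founded, and this is unaffected by moving to a $<\delta_0$-generic extension by absoluteness of well-foundedness of the length-$\omega$ linear iteration that $\vec\mu$ codes, which already secures both implications for ground-model towers. For towers appearing only in $\Ult(V,E)[g]$ with $g$ a $<\delta_0$-generic one uses completeness: the $W_1$-measures are $\delta_0^+$-complete, so no new ill-founded $W_1$-tower arises, giving (b); and for (a), since $h_0$ is induced by $\pi_E$ and (using the $\lambda$-closure of $M$) the measures $\pi_E(\mu_n)$ for $\mu_n\in W_0$ stay sufficiently complete in the generic extensions at issue, an ill-founded $W_0$-tower in $\Ult(V,E)[g]$ is carried by $h_0$ to an ill-founded tower. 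I expect the main obstacle to be exactly the tension in the choice of $E$: it must be a short extender with $\crit(E)=\kappa$ and $\lh(E)=\str(E)>\delta_1$, while at the same time $\Ult(V,E)$ must be closed enough to contain $h_0,h_1$ and to carry the well-foundedness transfer in (a) and (b) through every $<\delta_0$-generic extension; balancing the size and completeness of $W_0,W_1$ against the closure that a supercompactness embedding can provide while $E$ stays short is the delicate part, and is the reason the hypothesis is genuine supercompactness of $\kappa$ rather than mere strength.
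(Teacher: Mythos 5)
There is a genuine gap at the step where you place $h_0,h_1$ inside $\Ult(V,E)$. You define $h_i(\vec\mu)=(\pi_E(\mu_n))_{n<\omega}$ and justify $h_i\in\Ult(V,E)$ by asserting that the relevant $W_i$, ``being homogeneity systems over $\omega$, have size $<\kappa$,'' together with ${<}\kappa$-closure of the ultrapower. But Definition~\ref{def:uBpreservation} quantifies over \emph{all} $W_0\subseteq\meas_{\kappa^+}(Z_0)$ with $W_0\in V_{\delta_0}$ and all $W_1\subseteq\meas_{\delta_0^+}(Z_1)$ with $|W_1|<\delta_0$; these sets live up near the Woodin cardinals $\delta_0<\delta_1$ and in the applications (e.g.\ the saturated sets of measures in Section 10, which have size an inaccessible limit of Woodin cardinals $\lambda>\kappa$) they are far larger than $\kappa$. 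For such $W_i$, the map $\pi_E\restriction W_i$ is a $|W_i|$-sized fragment of the ultrapower embedding, and a short $(\kappa,\nu)$-extender ultrapower is in general not closed under $\kappa^+$-sequences, let alone $|W_i|$-sequences; so $\pi_E\restriction\TW_{W_i}$ simply need not be an element of $\Ult(V,E)$. This is exactly the tension you flag in your last sentence, but the resolution you offer (shrinking $W_i$ below $\kappa$) changes the statement being proved.

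The paper's proof circumvents this by never asking the literal map $\pi_E\restriction\TW_{W_i}$ to be in $\Ult(V,E)$. One takes $j\colon V\to M$ witnessing $\delta_1^+$-supercompactness, sets $h_i=j\restriction\TW_{W_i}$, and observes these \emph{are} in $M$ because $M$ is closed under $\delta_1^+$-sequences (this is where genuine supercompactness, as opposed to mere strength, is used). Then $M\models\varphi(f,j(f))$, where $\varphi$ asserts the existence of \emph{some} Lipschitz $h_0,h_1$ with $h_0\circ f=j(f)\circ h_1$ (and the well-foundedness transfer clauses), and one reflects this existential statement to $\Ult(V,E)$ via the factor map $k\colon\Ult(V,E)\to M$, using $k\restriction V_{\delta_1}=\id$ to identify the parameters: $k^{-1}(f)=f$ and $k^{-1}(j(f))=\pi_E(f)$. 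The witnesses in $\Ult(V,E)$ are then whatever that model internally produces, not the coordinatewise image under $\pi_E$. Your commutation computation and your treatment of clauses (a) and (b) are fine in spirit and would apply to the reflected witnesses, but the membership argument must be routed through $M$ and the factor map rather than through closure of $\Ult(V,E)$.
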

\begin{proof}
Let $W$ be any set of measures that is tamed above $\k$. Let $\l$ be a strong limit cardinal such that $W\in V_{\l}$. Let $j \colon V \rightarrow M$ be an elementary embedding witnessing that $\kappa$ is $\l$-supercompact and let $E$ be the $(\kappa, \l)$-extender derived from $j$. We want to see that $E$ is wf-preserving, which amounts to showing that in $Ult(V, E)$, whenever $\d>\k$ tames $W$, there is a wf-preserving Lipschitz $h:TW_{W}\rightarrow TW_{j(W)}$ such that for all ${<}\d$-generics $g$ and for all $\vec{\mu}\in (\TW_{W})^{V[g]}$, $h"\vec{\mu}$ is well-founded if and only if $\vec{\mu}$ is well-founded. We fix $\d$ and $g$ as above and show the existence of $h$.

We start by appealing to Lemma \ref{lem:FlippingFunction} and fixing a flipping system $(f, W, Y)$ such that $g$ is $<\comp(Y)$-generic and $Y\in V_\d$. Now let $h'= j \upharpoonright \TW_{W}$. Since $j$ is a $\l$-supercompactness embedding, we have $h'\in M$. It then follows from Lemma \ref{lem:homsysteminWg} that $h'$ is wf-preserving for $W$. 

We now have that if we let $\varphi(W, j(W))$ denote the formula ``$\exists h: TW_W\rightarrow TW_{j(W)}$ Lipschitz function such that $h$ is wf-preserving"  then
\[ M \models \varphi(W, j(W)).\]
Let $k \colon \Ult(V,E) \rightarrow M$ be the factor map. Then $k \upharpoonright V_{\l} = \id$ and hence $$(W, j(W)) \in \rng(k).$$ In fact, $k^{-1}(W) = W$ and $k^{-1}(j(W)) = \pi_E(W)$. Therefore, \[ \Ult(V,E) \models \varphi(W, \pi_E(W)). \]
If now $h\in Ult(V, E)$ is the witness to $\varphi(W, \pi_E(W))$, then $h$ is as desired.
\end{proof}

\section{The UB-Capturing Principle}\label{sec:blocks}

Our ultimate goal is to find a derived model representation for $L(\Gamma^\infty, \bR)$. One natural attempt is to start with some $\pi: M\rightarrow V_\xi$ with $M$ countable and realize $L(\Gamma^\infty, \bR)$ as a derived model of $M$. This is the approach taken in \cite{StstattowfreeDMT}, and this idea leads to the proof of the Derived Model Theorem. However, \cite[Lemma 1.1]{StstattowfreeDMT} implies that this approach may not work for our purposes (see also \cite[Subclaim 1.1]{StstattowfreeDMT}), as this lemma suggests that there is a homogeneously Suslin set that is not in the derived model of $M$, whereas our goal is to realize $L(\Gamma^\infty, \bR)$ as the derived model of $M$. More precisely, \cite[Lemma 1.1]{StstattowfreeDMT}, known as the Windszus' Lemma, states essentially the following.

\begin{lemma}\label{lem:lemma 1.1} Suppose $\pi: M\rightarrow V_\xi$ is such that $\pi$ is elementary and $M$ is countable and transitive. Let $S$ be the set of reals $x$ that code an iteration tree $\T$ on $M$ such that $\lh(\T)=\omega+1$ and $\pi\T$, the copy of $\T$ on $V_{\xi}$, is well-founded. Then $S$ is homogeneously Suslin.
\end{lemma}

Lemma \ref{lem:lemma 1.1} seems to suggest that the set $S$ can never be in the derived model of $M$, and so ``to put" $S$ in the derived model of $M$ we need to enlarge $M$ to some $N$ which can capture $S$ in its derived model. We informally call this process of enlargement ``changing the model". Before we continue, we state the above as a question. 

\begin{question} Suppose there is a class of Woodin cardinals and suppose $\xi$ is an inaccessible cardinal that is a limit of Woodin cardinals. Is there a pair $(\pi, M)$ such that $\pi: M\rightarrow V_\xi$, $\pi$ is elementary, $M$ is countable and transitive and such that letting $S$ be as in Lemma \ref{lem:lemma 1.1}, there are a $V$-generic $g\subseteq \Col(\omega, \bR)$  and an $\bR$-genericity iterate $N\in V[g]$ of $M$ via an iteration tree $\T$\footnote{This means that for some $\l$ that is a limit of Woodin cardinals of $M$, $\pi^\T(\l)=\omega_1$, $\lh(\T)=\omega_1^V+1$, for every $\a<\omega_1^V$, $\T\rest \a\in V$, and for every $x\in \bR$ there is $\b<\omega_1^V$ and $k\subseteq \Col(\omega, \b)$ such that $k\in V$ is $N$-generic and $x\in N[k]$.} such that $N$ is $\pi$-realizable\footnote{I.e. there is $k: N\rightarrow V_\xi$ such that $\pi=k\circ \pi^\T$.} and if $h\subseteq \Col(\omega, {<}\omega_1^V)$ is $N$-generic, $h\in V[g]$ and $(\bR^*)^{N[h]}=\bR^V$ then $S$ is in the derived model of $N$ at $\omega_1^V$ as computed by $h$?
\end{question}

Our initial idea, which originated in \cite{SaTr21}, was to use an $\omega$-sequence of countable substructures of $V_\xi$, which we were calling \textit{blocks}. Our goal was to iterate these $\omega$ many models simultaneously to make the reals generic in such a way that $L(\Gamma^\infty, \bR)$ could be realized as the derived model of the final iterate. This approach appeared in \cite{SaTr21}. It presents the models at the beginning of the process, each model on the sequence corresponds to the ``changing the model" step applied to the previous model on the sequence.

However, thanks to the discussions that the second author had with Obrad Kasum and Boban Velickovic, we have discovered a natural principle (see Definition \ref{def:abstractblock}) that allows us to work with one model at a time. We still need to change models as we make more reals and universally Baire sets generic; however, because good models are abundant (see Theorem \ref{thm:ubcapturingprinciple}), the change of the model will happen automatically. We will now make these vague ideas more precise.




We now set up some notation and recall some basic definitions. Below if $U$ is transitive and $\a$ is an ordinal then $U_\a=(V_\a)^U=U|\a$. If $\a$ is a cardinal of $U$ then $H_\a^U$ is the  $\a$-th level of the $H$-hierarchy of $U$. Recall from \cite{La04} that if $X\subseteq V_\gamma$, $n\in \omega$ and $\{a_0, ..., a_n, Y\}\subseteq V_\gamma$ then $X[a_0, a_1,..., a_n]=\{ f(a_0, a_1, ..., a_n): f \in X\}$ and $X(Y)=\bigcup\{ X[a_0, a_1, ..., a_n]: (a_0, ..., a_n)\in Y^{n+1}\}$. If now $\gamma'>\gamma$, $X\prec V_{\gamma'}$, $\gamma\in X$ and $\{a_0, ..., a_n, Y\}\subseteq V_\gamma$ then $X[a_0, ...a_n]\cap V_\gamma \prec V_\gamma$ and $X(Y)\cap V_\gamma\prec V_\gamma$. Moreover, we have the following lemma.

\begin{lemma}\label{lem:endext} Suppose $\gamma$ is an ordinal, $\gamma'>\gamma$, $X\prec V_{\gamma'}$ and $\gamma\in X$. Then the following holds.
\begin{enumerate}
    \item If $Y\prec V_\gamma$ is such that $X\cap V_\gamma\subseteq Y$ and $\sup (X\cap \gamma)=\sup( Y\cap \gamma)$ then $Y=\cup \{ X[a]\cap V_\gamma: a\in Y\}$.
    \item Suppose further that $\gamma$ is an inaccessible cardinal. If $\xi\in X\cap \gamma$ and $Y\subseteq V_\xi$, then $\sup(X\cap \gamma)=\sup(X(Y)\cap \gamma)$ and $(X\cap V_\gamma)(Y)\prec V_\gamma$.\label{cl:2_Lemma4.3}
\end{enumerate}
\end{lemma}
\begin{proof}
To show the first claim, it suffices to see that $X[a]\cap V_\gamma\subseteq Y$ given that $a\in Y$. Let $\xi$ be the rank of $a$ and let $\zeta\in X\cap \gamma$ be larger than $\xi$, which exists because $\sup (X\cap \gamma)=\sup( Y\cap \gamma)$ and $\xi\in Y$. Suppose now that $f\in X$, $f(a)\in V_\gamma$ and we want to see that $f(a)\in Y$. We then have that $f\rest V_\zeta\in X\cap V_\gamma\subseteq Y$ and hence, $f(a)=(f\rest V_\zeta)(a)\in Y$. 

To show the second clause, it is enough to show the claim holds for $Y=\{a\}$. Suppose then that $\gamma$ is an inaccessible cardinal, $\xi\in X\cap \gamma$ and $a\in V_\xi$. We want to see that if $f\in X$ is such that $f(a)$ is ordinal, then $f(a)<\sup(X\cap \gamma)$. Let $A\subseteq V_\xi$ be the set of those $u\in V_\xi$ such that $f(u)$ is an ordinal. Then $A\in X$. Let then $\tau=\sup\{ f(u): u\in A\}$. Because $\gamma$ is inaccessible we have $\tau<\gamma$. Since clearly $\tau\in X$, we have $f(a)<\sup(X\cap \gamma)$.

 Finally, suppose $\gamma$ is an inaccessible cardinal, $\xi\in X\cap \gamma$ and $a\in V_\xi$. We want to see that $(X\cap V_\gamma)[a]\prec V_\gamma$. This is because $X[a]\cap V_\gamma=(X\cap V_\gamma)[a]$. Indeed, let $f\in X$ be such that $f(a)\in V_\gamma$ and let $A\subseteq V_\xi$ be the set of those $u$ such that $f(u)\in V_\gamma$. Then $A\in X$ and hence, $A\in X\cap V_\gamma$. Therefore, $f\rest A\in X\cap V_\gamma$. But we have $f(a)=f\rest A(a)$, which implies that $f(a)\in (X\cap V_\gamma)[a]$.
\end{proof}

\begin{definition}\label{def:strongsub} Suppose $\gamma$ is an inaccessible cardinal and $X\prec V_\gamma$. We say $X$ is a \emph{strong substructure} if for every $\xi\in X\cap \gamma$ and $a\in V_\xi$, $X[a]\prec V_\gamma$. 
    
\end{definition}

Before we introduce the UB-Capturing Principle, we need to introduce a few more basic concepts.

\begin{definition}\label{def:wetafull} Suppose $\chi>\eta$ are inaccessible cardinals, $W\in V_\chi$, and  $X\prec V_\chi$ is a strong substructure of $V_\chi$. We say that $X$ is \emph{$(W\mid \eta)$-full} if $\eta\in X$ and for every $b\in V_\eta$, 
$$X(W\cup \{b\})\cap \powerset(V_\eta)=X[b]\cap \powerset(V_\eta).$$
\end{definition}

The above concepts also make sense in generic extensions of $V$. If $W$ is any set then we let $\comp(W)$ be the largest cardinal $\k$ such that every ultrafilter in $W$ is $\k$-complete. 

\begin{definition}\label{def:hullscapturinguBse} Suppose $\chi>\eta$ are inaccessible cardinals, $g$ is ${<}\eta$-generic, $A\in \Gamma^\infty_g$, $X\prec V_\chi$ with $X\in V[g]$ and $W\in V_\chi$ with $\comp(W)>\eta$.  We say $(X, W)$ \emph{captures} $A$ if there is a homogeneity system $\bar \mu\subseteq X\cap W$ such that (in $V[g]$) $A=S_{\bar\mu_g}$.
\end{definition}

 If there is a $W$ such that $(X,W)$ captures $A$ we also just say that $X$ captures $A$.
 We are now ready to introduce the UB-Capturing Principle. 

\begin{definition}[The UB-Capturing Principle]\label{def:abstractblock}  Suppose $\chi>\eta>\l$ are inaccessible cardinals. We then say that the \emph{UB-Capturing Principle} holds at $(\chi, \eta, \l)$ if there is a pair $(W, X)$ such that
   \vspace{0.5em}\begin{enumerate}\itemsep0.5em
    \item $X$ is a strong substructure of $V_\chi$ of size $<\l$,
    \item $\eta\in X$ and $X$ is $(W\mid \eta)$-full,\label{cl:2_def:UBCapturingPrinciple}
    \item there is $W'\in X$ such that $W\subseteq W'$ and $W'$ is infinitely tamed above $\eta$ (see Definition \ref{def:tamed}),\label{cl:3_def:UBCapturingPrinciple}
    \item for every ${<}\l$-generic $g$ and every $A\in \Gamma^\infty_g$, in $V[g]$, there is a countable $z\subseteq W$ such that $(X(z), W)$ captures $A$. \label{cl:4_def:UBCapturingPrinciple}
\end{enumerate}\vspace{0.5em}
\end{definition}

The following simple lemma will be our main way of using the UB-Capturing Principle.

\begin{lemma}\label{lem:mainappubcap} Suppose the UB-Capturing Principle holds at $(\chi, \eta, \l)$ as witnessed by $(W, X)$.
    Let $h$ be ${<}\l$-generic, $A\in \Gamma^\infty_h$, $z\subseteq W$ witness Clause \eqref{cl:4_def:UBCapturingPrinciple} of Definition \ref{def:abstractblock} applied to $(h, A)$ and let $y\in V_\eta[h]\cap \powerset(V_\eta)$. Set $Y=X(y)$ and $Z=Y(z)$, and let $M_Y$ be the transitive collapse of $Y$ and $M_Z$ be the transitive collapse of $Z$. Let $\pi_Y: M_Y\rightarrow V_\gamma$ and $\pi_Z: M_Z\rightarrow V_\gamma$ be the inverses of the transitive collapses, and let $\pi_{Y, Z}=(\pi_Z)^{-1}\circ \pi_Y$. Then $\pi_Y$ and $\pi_Z$ are elementary, $\cp(\pi_{Y, Z})>\pi_Y^{-1}(\eta)$ and $Z$ captures $A$.
\end{lemma}
\begin{proof}
 Clause \eqref{cl:2_Lemma4.3} of Lemma \ref{lem:endext} and the fact that $W\in X$ imply that $Y$ and $Z$ are elementary substructures of $V_\chi$, and hence, $\pi_Y$ and $\pi_Z$ are elementary. Clearly, since $X(z)$ captures $A$, $Z=Y(z)$ captures $A$.
 
   It remains to prove that  $\cp(\pi_{Y, Z})>\pi_Y^{-1}(\eta)$. To show this, it is enough to prove that $Y\cap V_\eta=Z\cap V_\eta$. Notice that $Z=X(y, z)$. Let then $u\in Z\cap V_\eta$. We then have an $a\in y$ such that $u\in X(z\cup\{a\})$. Because $X$ is $(W\mid \eta)$-full, we have that $u\in X[a]$. Hence, $u\in Y$. 
\end{proof}

We now work towards proving the consistency of UB-Capturing Principle. We start with the concept of \textit{saturated set of measures}, which is a set of measures that can be used to represent any universally Baire set as a homogeneously Suslin set. The reader may wish to recall Definition \ref{uniform measures}, Notation \ref{not:homsystems} and Definition \ref{flipping system}.

\begin{definition}\label{def:saturated measures} Suppose $\eta\geq \l$ are cardinals. We say that a uniform set of measures $W$ is \emph{$\eta$-saturated} if $\eta$ splits $W$ and for every $A\in \Gamma^\infty$ there is a homogeneity system $\bar{\mu}\subseteq W$ such that $A=S_{\bar{\mu}}$. We say $W$ is \emph{$(\eta, \l)$-saturated} if for all ${<}\l$-generics $g$, $W_g$ is $\eta$-saturated in $V[g]$. 

We say $W$ is \emph{$(\chi, \eta, \l)$-saturated} if $\eta$ splits $W$ and for every ${<}\l$-generic $h$ and for every $A\subseteq \bR$ that is ${<}\chi$-universally Baire, there is  a homogeneity system $\bar{\mu}\subseteq W$ such that $A=S_{\bar{\mu}}$.
\end{definition}

It is not hard to show that provided there is a class of Woodin cardinals, if $\eta\geq \l$ are inaccessible cardinals then there is a $(\l, \eta)$-saturated $W$. This follows from Theorem \ref{thm:msw}. We can now prove our main theorem on the UB-Capturing Principle. 
\begin{theorem}\label{thm:ubcapturingprinciple} Suppose $\k<\l<\chi$ are inaccessible cardinals such that $\k$ is a supercompact cardinal, $W\in V_\chi$ is a $(\chi, \l^+, \l)$-saturated set of ultrafilters and $j: V\rightarrow M$ witnesses that $\k$ is $\chi$-supercompact. Moreover, assume that the interval $(\l, \comp(W))$ contains infinitely many Woodin cardinals. Let $X\prec V_{\chi+1}$ be elementary and such that $V_{\k+1}\subseteq X$ and $\card{X}=\card{V_{\k+1}}$.  Then the UB-Capturing Principle holds in $M$ at $(j(\chi), j(\k), \l)$ as witnessed by $(j[W], j[X\cap V_\chi])$.
\end{theorem}
\begin{proof} Notice that there is a flipping system $p=(f, W, R)\in V_\chi\cap X$ with $\comp(p)>\l$. This means that we can use Lemma \ref{lem:homsysteminWg}.

First, since $X\prec V_{\chi+1}$, we have that $j[X\cap V_\chi]\prec M_{j(\chi)}$ and $j[X\cap V_\chi]$ is a strong substructure of $M_{j(\chi)}$ (see Clause \eqref{cl:2_Lemma4.3} of Lemma \ref{lem:endext}). Next, since $j\rest X: X\rightarrow j[X]$ is a surjection and $\card{X}^V<\l$, $\card{(j[X])}^M<\l$.

We now list some facts which we will need to establish Clause \eqref{cl:3_def:UBCapturingPrinciple} of Definition \ref{def:abstractblock}. We clearly have that\\\\
(1) $j[W]\in M_{j(\chi)}$ and $M\models ``j[W]$ is $(j(\chi), j(\l^+), \l)$-saturated" (see Lemma \ref{lem:homsysteminWg}).\\\\
But since $j(\k)$ is supercompact (which means ${<}j(\k)$-universally Baire sets are universally Baire), we have that\\\\
(2) $M\models ``j[W]$ is $(j(\chi), j(\k), \l)$-saturated". \\\\
(2) then implies Clause \eqref{cl:4_def:UBCapturingPrinciple} of Definition \ref{def:abstractblock}. Clause \eqref{cl:3_def:UBCapturingPrinciple} of Definition \ref{def:abstractblock} is witnessed by $j(W)$. It remains to prove Clause \eqref{cl:2_def:UBCapturingPrinciple}
 of Definition \ref{def:abstractblock}. Notice first that we have that $j(\k)\in j[X\cap V_\chi]$. We thus need to show that in $M$, $j[X\cap V_\chi]$ is $(j[W]\mid j(\k))$-full. 
 To see this, it is enough to prove the following claim.

\begin{claim}
    Suppose $a\in V_\chi$ and $b\in j(V_\k)$. Then $$j[X\cap V_\chi][j(a), b]\cap \powerset(M_{j(\k)})= j[X\cap V_\chi][b]\cap \powerset(M_{j(\k)}).$$
\end{claim}
\begin{proof}
   Let $B\in j[X][j(a), b]\cap \powerset(M_{j(\k)})$ and let $g\in X$ be such that $B=j(g)(j(a), b)$. Let $k$ be the function in $V$ given by $k(u)=g(a, u)$. We have that $k\in X[a]$. Let $m=k\rest V_\k$ and $C\subseteq V_\k$ be the set of those $u\in V_\k$ such that $m(u)\subseteq V_\k$. Let $D=\{ (w, z): w\in C \wedge z\in m(w)\}$. We have that\\\\
   (3) $B=\{ z: (b, z)\in j(D)\}$.\\\\
   Now, because $V_{\k+1}\subseteq X$ and $D\in V_{\kappa+1}$, we have that $D\in X\cap V_\chi$. But then $j(D)\in j[X\cap V_\chi]$, and so (1) implies that $B\in j[X\cap V_\chi][b]$.  
\end{proof}
This finishes the proof of Theorem \ref{thm:ubcapturingprinciple}.
\end{proof}

In what follows we will describe how to realize $L(\Gamma^\infty, \bR)$ as the derived model of the direct limit model of a linear system of models. Each model in this system will be obtained via the \textit{one step construction} described in the next section. The construction of the entire linear system will be performed in $V[g]$ where $g$ is a generic enumeration of $\Gamma^\infty$ in order type $\omega$ or $\omega_1$. Let $\iota\in \{ \omega, \omega_1\}$ and let $(M_\a, n_{\a, \b} \mid \a<\b<\iota)$ be the linear sequence our construction produces with $n_{\a,\b}: M_\a \rightarrow M_\b$. Also, let $(A_\a \mid \a<\iota)$ be the generic enumeration of $\Gamma^\infty$. We will have an inaccessible cardinal $\chi$ and embeddings $\pi_\a: M_\a\rightarrow V_\chi$ such that for $\a<\b<\iota$, $\pi_\a=\pi_\b\circ n_{\a, \b}$. The one step construction is a construction that describes the procedure for obtaining $M_{\a+1}$ from $M_\a$. Typically, this will have two parts. The first part produces an intermediate model $N$ from $M_\a$ along with an embedding $\sigma: N\rightarrow V_\chi$ such that $\rng(\pi_\a)\subseteq \rng(\sigma)$. $N$'s job is to ensure that $A_\a$ will be in the derived model of $M_\iota$, the direct limit of $(M_\a, n_{\a, \b} \mid \a<\b<\iota)$. The construction of $N$ will involve genericity iterations that make homogeneously Suslin representations of $A_\a$ generic. The second part will be executed by appealing to Lemma \ref{lem:mainappubcap}. This part will produce $M_{\a+1}$ from $N$ using Lemma \ref{lem:mainappubcap}, which we will apply to $Y=_{def} \sigma[N]$ and $A_{\a+1}$. The $M_Z$ of that lemma will be our $M_{\a+1}$. The entire sequence will be produced by consecutively applying the one step construction and taking direct limits at limit stages. In the next section, we will describe the one step construction in more detail. 

\section{The one step construction}

As was mentioned in the previous paragraph, part one of the one step construction involves genericity iterations and a construction of a $\sigma: N\rightarrow V_\chi$ such that $\rng(\pi_\a)\subseteq \rng(\sigma)$. Such a $\sigma$ is usually constructed by using the Martin-Steel Iterability Theorem (see \cite{MaSt94} and \cite[Theorem 2.3]{Ne10}). We start by adopting this theorem to our context.

\subsection{Martin-Steel Iterability Theorem} If $\T$ is an iteration tree then $\gen(\T)$ is the supremum of the lengths of the extenders used in $\T$. First we isolate the models that we will iterate.

\begin{definition}\label{:def:niceext} Suppose $M$ is a transitive model of $\ZFC$ and $E\in M$. We say that $E$ is a \emph{nice extender} in $M$ or just $E$ is a nice extender or $M\models ``E$ is a nice extender" if $\lh(E)$ is an inaccessible cardinal of $M$ and $V_{\lh(E)}^M=V_{\lh(E)}^{Ult(M, E)}$. 
\end{definition}

We adopt the convention that iteration trees are normal and use nice extenders, i.e., if $\T$ is an iteration tree on $M$ then for all $\a<\lh(\T)$, $M_\a^\T\models ``E_\a^\T$ is nice". These are the conventions used in \cite{Ne10}.

\begin{definition}\label{def:rock} We say that $\sfa=(M^\sfa, \k^\sfa)$ is a \emph{rock} if $M^\sfa$ is a countable and transitive model of $\ZFC$, and $\k^\sfa\in M^\sfa$ is an inaccessible cardinal in $M^\sfa$.
\end{definition}

Next we introduce the iterations that we will use to iterate rocks.

\begin{definition}\label{def:blockiterate} Suppose $\sfa$ and $\sfb$ are two rocks. We say $\sfb$ is a \emph{(normal) iterate} of $\sfa$ if there is a normal iteration tree $\T$ on $M^\sfa$ such that 
\begin{enumerate}\itemsep0.5em
    \item $\T$ is above $\k^\sfa$\footnote{I.e., every extender used in $\T$ has critical point $>\k^\sfa$.},
    \item $\lh(\T)<\omega_1$ and $M^\sfb$ is the last model of $\T$, and
    \item $\k^{\sfa}=\k^\sfb$.
\end{enumerate} 
We also say that $\T$ is an \emph{iteration tree} on $\sfa$ and that $\sfb$ is the \emph{last model} of $\T$. 
We say that $\sfb$ is a \emph{sealing iterate} of $\sfa$ if there is an iterate $\sfd$ of $\sfa$ via $\T$ and $E\in M^{\sfd}$ such that
\begin{enumerate}\itemsep0.5em
    \item $M^\sfd\models ``E$ is a nice extender"
    \item $\gen(\T)<\lh(E)$, 
    \item $\cp(E)=\k^\sfd(=\k^\sfa)$,
    \item $M^\sfb=Ult(M^\sfa, E)$, and
    \item $\k^\sfb=\pi_E^{M^\sfa}(\k^\sfa)$. 
\end{enumerate} 
In the above situation, we write $\sfb=Ult(\sfa, E)$ and also say that $(\T, E)$ is a \emph{sealing iteration} of $\sfa$ and that $\sfb$ is the \emph{last model} of $(\T, E)$.
\end{definition}

At the time of writing this paper, it was unclear how to use large cardinal notions in the region of a supercompact cardinal to carry out the arguments presented in this paper by using models that embed into rank initial segments of $V$. The issue is that in Theorem \ref{thm:ubcapturingprinciple} the $X$ we produce is not countable but only becomes countable after collapsing $2^\kappa$ to be countable. We then have to apply Martin-Steel Iterability Theorems to countable hulls of $V$ that are in $V[g]$ where $g\subseteq \Col(\omega, 2^\k)$ is the generic. Here $V$ corresponds to $M$ of Theorem \ref{thm:ubcapturingprinciple} and $V[g]$ to $M[g]$. We will then need the following concept to present our constructions\footnote{For this and other similar notions see \cite{GroundAxiom}.}.

\begin{definition}\label{def:ubclose} Suppose $\l$ is an inaccessible cardinal and $U$ is a transitive class of $V$ such that $Ord\subseteq U$ and $U\models \ZFC$. We say $U$ is a \emph{$\l$-ground} of $V$ if for some poset $\mathbb{P}\in U_\l$ and some $g\in V$ that is $U$-generic over $\mathbb{P}$, $V=U[g]$. 
\end{definition}

Typically in this paper we will have a supercompact cardinal $\k$, a poset $\mathbb{P}$, a $V$-generic $g$, an inaccessible cardinal $\chi$ such that $\mathbb{P}\in V_\chi$ and an elementary embedding $j: V\rightarrow M$ witnessing that $\k$ is $\chi$-supercompact (this is the set up of Theorem \ref{thm:ubcapturingprinciple}). Then $U$ of Definition \ref{def:ubclose} is $M$ and $V$ is $M[g]$. The reader may wish to keep this set up in mind, as it might make Definition \ref{def:relrock} more clear. Below if $U$ is transitive and $\a$ is an ordinal then $U_\a=(V_\a)^U$. We now introduce the realizable rocks.

\begin{definition}\label{def:relrock} Suppose $\l<\chi$ are inaccessible cardinals, $U$ is a $\l$-ground of $V$ and $p\in U_\chi$. We then say that $\sfa=(\sigma^\sfa, M^\sfa, \k^\sfa)$ is a $(U, \chi, \l, p)$-\emph{realizable rock} if $(M^\sfa, \k^\sfa)$ is a rock and $\sigma^\sfa: M^\sfa \rightarrow U_\chi$ is an elementary embedding such that $\sigma^\sfa(\k^\sfa)>\l$ and $p\in \rng(\sigma^\sfa)$. We let $\sfa^-=(M^\sfa, \k^\sfa)$. Also, if $p=\emptyset$ then we omit it from our notation.
\end{definition}

\begin{definition}\label{def:itrelrock} Suppose $\l<\chi$ are inaccessible cardinals, $U$ is a $\l$-ground of $V$, $p\in U_\chi$ and $\sfa=(\sigma^\sfa, M^\sfa, \k^\sfa)$ is a $(U, \chi, \l, p)$-realizable rock. We then say that $\sfb$ is an $\sfa$-\emph{realizable rock} if $\sfb$ is a $(U, \chi, \l, p)$-realizable rock, $\rng(\sigma^\sfa)\subseteq \rng(\sigma^\sfb)$ and $\sigma^\sfa(\k^\sfa)=\sigma^\sfb(\k^\sfb)$. In this case, we let $\sigma^{\sfa, \sfb}:M^\sfa\rightarrow M^\sfb$ be $(\sigma^\sfb)^{-1}\circ \sigma^\sfa$.

We say $\sfb$ is an \emph{iterate} of $\sfa$ via $\T$ if $\sfb$ is an $\sfa$-realizable rock, $\sfb^-$ is the last model of $\T$ and $\sigma^{\sfa, \sfb}=\pi^\T$ (hence, $\sigma^\sfa=\sigma^\sfb \circ \pi^\T)$. Similarly we define the meaning of \emph{sealing iterate}.
\end{definition}

It is now not hard to adopt the Martin-Steel iterability theorem (see \cite{MaSt94} and \cite[Theorem 2.3]{Ne10}) to realizable rocks, which we do below. The important difference between our context and the Martin-Steel context is that our models are not realized into $V$ itself but rather into a $\l$-ground of $V$. But because our iterations, when copied on $V$, are above $\l$, this causes little problems. The following two facts are the key realizability facts that we will use in this paper.

\begin{lemma}\label{lem:keyrealizability} Suppose $\chi>\l$ are inaccessible cardinals, $U$ is a $\l$-ground of $V$ as witnessed by $\mathbb{P}\in U_\l$ and $U$-generic $g\subseteq \mathbb{P}$ (thus, $V=U[g]$) and $\sfa$ is a $(U, \chi, \l)$-realizable rock. Suppose that $\T$ is a normal iteration tree on $\sfa$ such that $\lh(\T)=\omega$.\footnote{This means that $\T$ is above $\k^\sfa$.} There is then a well-founded branch $b$ of $\T$ and an embedding $k: M^\T_b\rightarrow U_\chi$ such that $(k, M^\T_b, \k^\sfa)$ is an iterate of $\sfa$ via $\T^\frown \{b\}$ (so that $\sigma^\sfa = k \circ \pi^\T_b$). 
\end{lemma}
\begin{proof}  Let $\pi: W\rightarrow V_{\chi+1}$ be an elementary embedding with $W$ countable (in $V$) and transitive and such that $U_\chi\in \rng(\pi)$ and $\rng(\sigma^\sfa)\subseteq \rng(\pi)$. Set $N=(\pi)^{-1}(U_\chi)$ and $m=\pi\restriction N$. Then there is $n: M^{\sfa}\rightarrow N$ such that $\sigma^\sfa=m\circ n$. Let now $\U$ be the $n$-copy of $\T$. Notice that because $\U$ is above $m^{-1}(\l)$\footnote{We have that $\T$ is above $\k^\sfa$ and $\sigma^\sfa(\k^\sfa)>\l$. Thus, $\U$ is above $m^{-1}(\l)$.}, $\U$ lifts to an iteration tree $\U'$ on $W$. This is because $(\pi)^{-1}(g)$ is a small forcing relative to the critical points of extenders used in $\U$\footnote{Recall that our iteration trees are nice, see Definition \ref{:def:niceext}. These are the trees used in \cite{Ne10}. See \cite[Remark 2.1]{Ne10}.}.  It now follows from the Martin-Steel Iterability Theorem (\cite[Theorem 2.3]{Ne10}) that there is a branch $b$ of $\U'$ and an elementary embedding $k': M^{\U'}_b\rightarrow V_{\chi+1}$ such that $\pi=k'\circ \pi^{\U'}_b$. 
    
A careful examination of the relationship between $\U$ and $\U'$ shows that if $h=(\pi)^{-1}(g)$ then $\pi^{\U'}_b\restriction N[h]$ is simply the lift-up of $\pi^\U_b$ to $N[h]$. More precisely, $\pi^{\U'}_b(\tau_h)=(\pi^\U_b(\tau))_h$ where $\tau_h$ and $(\pi^\U_b(\tau))_h$ are the realizations of the names $\tau$ and $\pi^\U_b(\tau)$ by $h$.

Hence, setting $k''=k'\restriction M^\U_b$, we have that $m=k''\circ \pi^\U_b$. Let now $n': M^\T_b\rightarrow M^\U_b$ be the embedding given by the copying construction (thus, $\pi^{\U}_b\circ n=n'\circ \pi^\T_b$) and set $k=k''\circ n'$. It now follows that $k: M^\T_b\rightarrow U_\chi$ and $\sigma^\sfa=k\circ \pi^\T_b$.
\end{proof}

The proof of Lemma \ref{lem:keyrealizability} can be used to prove the following lemma.

\begin{lemma}\label{lem:keyrealizabilityextender} Suppose $\chi>\l$ are inaccessible cardinals, $U$ is a $\l$-ground of $V$ as witnessed by $\mathbb{P}\in U_\l$ and a $U$-generic $g\subseteq \mathbb{P}$ (thus, $V=U[g]$) and $\sfa$ is a $(U, \chi, \l)$-realizable rock. Suppose $E\in M^\sfa$ is a nice extender such that $\sigma^\sfa(\cp(E))\geq\l$. There is then an elementary embedding $k: Ult(M^\sfa, E)\rightarrow U_\chi$ such that $\sigma^\sfa=k\circ \pi_E^{M^\sfa}$\footnote{Equivalently, $\sfb=(k, Ult(M^\sfa, E), \pi_E(\k^\sfa))$ is an $ \sfa$-realizable rock such that $\pi_E^{M^\sfa}=\sigma^{\sfa, \sfb}$.}.
\end{lemma}

Finally, to realize sealing iterates of rocks, we will need the following strengthening of Lemma \ref{lem:keyrealizabilityextender}, which uses the following definition.

\begin{definition}\label{def:relextender} Suppose $\chi>\l$ are inaccessible cardinals, $U$ is a $\l$-ground of $V$ as witnessed by $\mathbb{P}\in U_\l$ and a $U$-generic $g\subseteq \mathbb{P}$ (thus, $V=U[g]$) and $\pi: M\rightarrow U_\chi$ is an elementary embedding with $M$ transitive and countable in $V$. Suppose $E$ is an $M$-extender such that $\pi(\cp(E))>\l$. We say that $E$ is \emph{derivable from $\pi$} if there is $\xi<\chi$ and an injective order-preserving map $m: \lh(E)\rightarrow \xi$ such that for every $a\in \lh(E)^{<\omega}$ and for every $A\in \powerset([\cp(E)]^{\card{a}})\cap M$, $(a, A)\in E$ if and only if $m[a]\in \pi(A)$.

In the above situation, we let $\sigma_{E, m}^{M}: Ult(M, E)\rightarrow U_\chi$ be given by $$\sigma^{M}_{E, m}(\pi_E^{M}(f)(a))=\pi(f)(m(a)),$$ for $f\in M$, $a\in \lh(E)^{<\omega}$ and $f: [\cp(E)]^\card{a}\rightarrow M$. 
\end{definition}

It is straightforward to check that in the setup of Definition \ref{def:relextender}, $\sigma_{E, m}^{M}$ is elementary and $\pi=\sigma_{E, m}^{M}\circ \pi_E^M$. Our source of derivable extenders is Lemma \ref{lem:keyrealizabilityextender}. We apply it in the following way. Assuming the setup of Definition \ref{def:relextender}, suppose $\pi: M\rightarrow U_\xi$ and $\tau: N\rightarrow U_\xi$ are elementary embeddings such that $M$ and $N$ are transitive countable models of $\ZFC$, and for some $\nu$, $\powerset(\nu)\cap M=\powerset(\nu)\cap N$ and $\pi\restriction \powerset(\nu)^M=\tau\restriction \powerset(\nu)^N$. Suppose $E\in N$ is such that $\cp(E)=\nu$ and $E$ is nice. It follows from Lemma \ref{lem:keyrealizabilityextender} that there is $k: Ult(N, E)\rightarrow U_\chi$ such that $\tau=k\circ \pi_E^N$. Let $m=k\restriction \lh(E)$. Then $E$ is derivable from $\pi$ as witnessed by $m$. This is because $E$ is derivable from $\tau$ as witnessed by $m$ and  $\pi\restriction \powerset(\nu)^M=\tau\restriction \powerset(\nu)^N$. We summarize what we need below.

\begin{lemma}\label{lem:tworealizability} Suppose $\chi>\l$ are inaccessible cardinals, $U$ is a $\l$-ground of $V$ as witnessed by $\mathbb{P}\in U_\l$ and a $U$-generic $g\subseteq \mathbb{P}$ (thus, $V=U[g]$), $\sfa$ is a $(U, \chi, \l)$-realizable rock and $\sfb$ is an $\sfa$-realizable rock such that $\cp(\sigma^{\sfa, \sfb})>\k^{\sfa}$ (thus, $\k^\sfa=\k^\sfb$). Suppose $E\in M^\sfb$ is a nice extender with $\cp(E)=\k^\sfa$. Let $k: Ult(M^\sfb, E)\rightarrow U_\chi$ be as in Lemma \ref{lem:keyrealizabilityextender} (so $\sigma^{\sfb}=k\circ \pi_E^{M^\sfb}$) and set $\xi=k(\lh(E))$ and $m=k\rest \lh(E)$. Then 
\begin{enumerate}\itemsep0.5em
    \item $\sigma^{M^\sfa}_{E, m}:Ult(M^\sfa, E) \rightarrow U_\chi$ is an elementary embedding,
    \item $\sigma^{M^\sfa}_{E, m}\rest Ult(M^\sfa, E)|\lh(E)=k\rest Ult(M^\sfb, E)|\lh(E)$, and
    \item $(\sigma^{M^\sfa}_{E, m}, Ult(M^\sfa, E), \pi_E^{M^\sfa}(\k^\sfa))$ is an $\sfa$-realizable rock.
\end{enumerate}\vspace{0.5em}
\end{lemma}

\subsection{Supporting systems}\label{sec:ssblocks}

In this subsection we describe a set of flipping systems that we will use to show that various universally Baire sets are inside the derived model of the direct limit  model alluded before. These systems will be used much in the same way as flipping systems were used in the proofs of Lemma \ref{lem:homsysteminWg} and Lemma \ref{lem:kappauBpreserving}.

\begin{definition}\label{def:supporting system} We say that $$p=(W_2, W_1, W_0, f_1, f_0)$$ is a \emph{supporting system} if  $(f_1, W_2, W_1)$ and $(f_0, W_1, W_0)$ are flipping systems (see Figure \ref{fig:setupA}). We let $\comp(p)$ be the minimum of $\comp(f_1, W_2, W_1)$ and $\comp(f_0, W_1, W_0)$.

We say that the supporting system $(W_2, W_1, W_0, f_1, f_0)$ \emph{flips through} $(\d_0: i\leq 3)$ if the following conditions hold:
\vspace{0.5em}\begin{enumerate}\itemsep0.5em
    \item $(\d_i: i\leq 3)$ is a strictly increasing sequence of inaccessible cardinals.
    \item $(f_1, W_2, W_1)$ flips through $\d_3$.
    \item $(f_0, W_1, W_0)$ flips through $\d_2$, $\d_1$, and $\comp(f_0, W_1, W_0)\geq \d_0^+$.
\end{enumerate}\vspace{0.5em}
\end{definition}

\begin{figure}[htb]
      \begin{tikzpicture}

        \draw[-] (0,-.2) -- (0,-5);

        \draw[-] (-0.2,-1.5) -- (0.2,-1.5) node[right] {$\delta_3$};
        \draw[-] (-0.2,-2.5) -- (0.2,-2.5) node[right] {$\delta_2$};
        \draw[-] (-0.2,-3.5) -- (0.2,-3.5) node[right] {$\delta_1$};
        \draw[-] (-0.2,-4.5) -- (0.2,-4.5) node[right] {$\delta_0$};

       \node (W_2) at (1,-1) {$W_2$};
       \node (W_1) at (1,-2) {$W_1$};

       \node (W_0) at (-0.5,-4) {$W_0$};

       \path[->] (1.4,-1.1) edge[bend left] node[right] {$f_1$} (1.4,-1.9);

       \path[->] (-0.9,-2.1) edge[bend right] node[left] {$f_0$} (-0.9,-4);
       
      \end{tikzpicture}
      \caption{Illustration of the setup in $V$.}\label{fig:setupA}
    \end{figure}
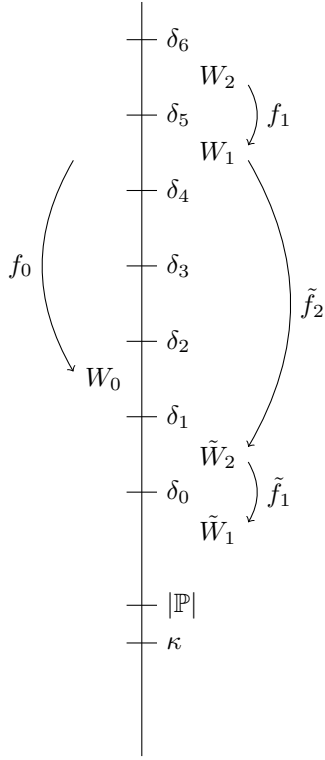

\medskip

 The next lemma can be proved by repeatedly applying Lemma \ref{lem:FlippingFunction}.

\begin{lemma}\label{lem:supporting system} Suppose $(\d_i: i\leq 3)$ is an increasing sequence of Woodin cardinals and $W_2$ is a uniform set of measures such that $\comp(W_2)\geq \d_3^+$ and $\card{W_2}<\d_0$. Then there is a supporting system $(W_2, W_1, W_0, f_1, f_0)$ that flips through $(\d_i: i\leq 3)$.
\end{lemma}

We remark that in the above lemma we do not need that $\d_0$ is a Woodin cardinal. The following lemma summarizes properties of a supporting system.

\begin{lemma}\label{lem:fancyseqproperties}
   Suppose $(W_2, W_1, W_0, f_1, f_0)$ is a supporting system that flips through $(\d_i: i\leq 3)$. Then the following holds (see Figure \ref{fig:setupA}):
   \begin{enumerate}
       \item For all ${<}\delta_2^+$-generics $G$ and all $\vec\mu \in (\TW_{W_2})^{V[G]}$, \[ \vec\mu \text{ is well-founded} \Longleftrightarrow f_1(\vec\mu) \text{ is ill-founded}. \]
       \item For all ${<}\delta_0^+$-generics $G$ and all $\vec\mu \in (\TW_{W_1})^{V[G]}$, \[ \vec\mu \text{ is well-founded} \Longleftrightarrow f_0(\vec\mu) \text{ is ill-founded}. \]
   \end{enumerate}
\end{lemma}

\subsection{Making universally Baire sets generic: The One Step Construction}

In this section, our goal is to present our method of making universally Baire sets generic.  We start by fixing 
\vspace{0.5em}\begin{enumerate}\itemsep0.5em
    \item inaccessible cardinals $\chi>\eta>\l$,
    \item a $\l$-ground $U$ of $V$ such that $U\models ``\eta$ is uB-preserving",\footnote{See Definition \ref{def:uBpreservation}.} and
    \item $W\in U_\chi$ such that in $U$, $W$ is $(\chi, \eta, \l)$-saturated and infinitely tamed above $\eta$.\footnote{See Definition \ref{def:saturated measures} and Definition \ref{def:tamed}.}
\end{enumerate} \vspace{0.5em}
Suppose $\sfa$ is a $(U, \chi, \l, \{\eta, W\})$-realizable rock. In this section, we will use superscript $\sfa$ to denote the preimages of objects in the range of $\sigma^\sfa$. Thus, $W^\sfa$ is the $\sigma^\sfa$-preimage of $W$. We will use this notation with  elementary embeddings as well. Thus, if $r: N\rightarrow U_\chi$ then $W^N$ is the $r$-preimage of $W$ (if it exists). The notation depends on the embedding as well, but omitting this dependence will not cause confusion. 

We will also use the following notation for ultrapowers. If $E$ is an $N$ extender then $\sigma_E^N$ is the ultrapower embedding and $N_E=Ult(N, E)$. 

Our first definition isolates those rocks $\sfb$ over which a given universally Baire set is generic. In this section, our definitions are relative to the objects we have fixed above.

\begin{definition}\label{def:catchingpair} Suppose $A$ is a universally Baire set and  $\sfb \in V$ is a \begin{center}$(U, \chi, \l, \{\eta, W\})$-realizable rock\end{center} such that $\eta=\sigma^\sfb(\k^\sfb)$. We say $\sfb$ \emph{b-catches}\footnote{``$b$-'' stands for ``bottom'' and ``$t$-'' stands for ``top'', and we think of the universe above $\eta$ as the ``top'' part and of the universe below $\eta$ as the ``bottom'' part.} $A$ if for some $\b<\k^\sfb$ and $M^\sfb$-generic $k\subseteq \Col(\omega, \b)$ with $k\in V$, there is a homogeneity system $\bar \mu\in M^\sfb[k]$ such that $\bar \mu\subseteq M^\sfb$, $\sigma^\sfb\pwimg \bar \mu \subseteq W$ and in $V$,\ $A=S_{\sigma^\sfb\pwimg \bar \mu}$\footnote{Letting $g$ be such that $V=U[g]$, the correct notation is $S_{(\sigma^\sfb\pwimg \bar \mu)_g}$.}.

We say $\sfb$ \emph{t-catches} $A$ if there is a $\d\in \rng(\sigma^\sfb)$ such that $\d>\eta$, $W$ is infinitely tamed above $\d$ and for some $M^\sfb$-generic $k\subseteq \Col(\omega, \d^\sfb)$ there is $\bar \mu\in M^\sfb[k]$ such that $\bar \mu\subseteq M^\sfb$, $\sigma^\sfb\pwimg \bar \mu\subseteq W$ and $A=S_{\sigma^\sfb \pwimg \bar \mu}$.
\end{definition}
We once again remark that the correct terminology should be that $\sfb$ b-catches $A$ relative to $(U, \chi, \l, \{\eta, W\})$, but since this object is fixed in this section we will omit it from our terminology. In order to show that universally Baire sets end up in derived models of rocks, we need to develop a method for b-catching universally Baire sets. The next lemma shows how to convert t-catching to b-catching. It is not too hard to t-catch universally Baire sets but this must be done in a way that preserves the fact that other sets have been b-caught. This is the step that requires the UB-Capturing Principle, and it will appear in applications of Lemma \ref{lem:presub}. The reader may profit from reviewing Definition \ref{def:uBpreservation}.

\begin{lemma}[t to b catching]\label{lem:keysealing} Suppose
\begin{enumerate}\itemsep0.5em
    \item $A$ is a universally Baire set,
    \item $\sfa$ is a  $(U, \chi, \l, \{\eta, W\})$-realizable rock such that $\sigma^\sfa(\k^\sfa)=\eta$, 
    \item  $\sfb$ is $\sfa$-realizable and t-catches $A$,
    \item $\cp(\sigma^{\sfa, \sfb})>\k^\sfa$ and $M^\sfa\cap \powerset(\k^\sfa)=M^\sfb\cap \powerset(\k^\sfb)$, 
    \item $E\in M^\sfb$ is an extender such that $\cp(E)=\k^\sfb$, $W^\sfb\in M^\sfb|\lh(E)$ and $M^\sfb\models ``E$ is a nice wf-preserving for $W^\sfb$ extender".
\end{enumerate} Then there is an $\sfa$-realizable $\sfe$ that b-catches $A$, $M^\sfe=M^\sfa_E$, $\k^\sfe=\pi_E^{M^\sfa}(\k^\sfa)$ and $\pi_E^{M^\sfa}=\sigma^{\sfa, \sfe}$ (thus, $\sigma^\sfa=\sigma^\sfe\circ \pi_E^{M^\sfa}$). 
\end{lemma}
\begin{proof} For convenience, we set $\sfa=(m, M, \k)$, $\sfb=(n, N, \k)$, $\sigma^{\sfa, \sfb}=\sigma$ (thus, $m=n\circ \sigma$), and also let $q:M_E\rightarrow N_E$ be given by $q(\pi_E^M(f)(a))=\pi_E^N(\sigma(f))(a)$ where $a\in \lh(E)^{<\omega}$ and $f\in M$ is a function $f:[\k]^{\card{a}}\rightarrow M$. Since $\cp(\sigma)>(\k^+)^{M}$, we have that\\\\
(0.1) $q$ is an elementary embedding\\
(0.2) $q\rest (M_E\cap \powerset(\pi_E^M(\k)))=id$, and\\
(0.3) $M_E\cap \powerset(\pi_E^M(\k))=N_E\cap \powerset(\pi_E^N(\k))$.\\\\
 Using the facts that $E$ is wf-preserving for $W^N$ in $N$, $q(W^N)=W^N$ (this follows from (0.2)) and $q(W^{M_E})=W^{N_E}$, we now fix $h\in M_E$ such that (see Definition \ref{def:uBpreservation}),\\\\
    (1.1) $q(h): W^N\rightarrow W^{N_E}$ and\\
    (1.2) $N_E\models ``q(h)$ is a wf-preserving map".\\\\
Set $F=n(E)$ and let $s: N_E\rightarrow \pi_F^U(U_\chi)$ be the copy map given by $s(\pi_E(f)(a))=\pi_F(n(f))(n(a))$ where $a\in \lh(E)^{<\omega}$ and $f\in N$ is a function $f:[\k]^{\card{a}}\rightarrow N$. Notice that because $U$ is a $\l$-ground and $n(\k)>\l$, we have that $\pi_F$ can be extended to act on $V$. We thus have $\pi_F^+: V\rightarrow V_F$ with the property that $\pi_F^U=\pi_F^+\rest U$. To keep the notation simple, we will identify $\pi_F^U$ with $\pi_F^+$ and call them simply $\pi_F$.

We thus have that $(\pi_F\rest U_\chi)\circ n =s\circ \pi_E^N$ and $(\pi_F\rest U_\chi)\circ m =s\circ q\circ \pi_E^M$. Next, it follows that\\\\
(2) $n\rest \lh(E)=s\rest \lh(E)$.\\\\
(2) is a consequence of the standard copying construction. Indeed, pick $\a<\lh(E)$ and let $id$ be the identity function on $\k$. We then have that $\a=\pi_E^N(id)(\a)$. Hence, 
\begin{eqnarray*}
    s(\a)&=&\pi_F(n(id))(n(\a))\\&=&n(\a).
\end{eqnarray*}
Our strategy now is to show that $s\circ q$ has the desired properties of $\sigma^\sfe$  but in $V_F$. Notice that $s\circ q\in V_F$, as $V_F$ is countably closed\footnote{It is not essential that $s\circ q\in V_F$. A standard absoluteness argument could have been used to obtain $s'\in V_F$ that resembles $s\circ q$.}. 

Let $\bar \nu$, $\d$, $\bar t$ and $k$ be such that \\\\
    (3.1) $k\in V$, $\d\in \rng(n)$ and $k\subseteq \Col(\omega, \d^N)$ is $N$-generic,\\
    (3.2) $\bar t\in N[k]$ and $\bar \nu=n\pwimg \bar t$, \\
    (3.3) (in $V$, see the footnote in Definition \ref{def:catchingpair}) $A=S_{\bar \nu}$,\\
    (3.4) $\bar \nu\subseteq W$,\\
    (3.5) $\bar t \subseteq W^N$,\\
    (3.6) $W$ is infinitely tamed above $\d$.\\\\
    The difficulty now is that $\pi_E^N(\bar t)$ does not make sense even though $\bar t \in N[k]$. This forces us to change our homogeneity systems. Notice that we still have that $\bar t \in N_E[k]$ and also $\bar t \in M_E[k]$. We now set\\\\
    (4) $\bar \mu= h \pwimg \bar t$ and $\bar \tau= s\circ q \pwimg \bar \mu$.\\\\
    It follows that $\bar \mu \in M_E[k]$, $q\pwimg \bar \mu\in N_E[k]$ and $\bar \tau \in V_F$ (as $V_F$ is closed under countable sequences), but it is no longer clear that $S_{\bar \tau}=A$. It is this that we need to prove. More specifically, we now work towards establishing the following.\\\\
    (*) The following holds in $V_F$.
    \begin{enumerate}[(a)]\itemsep0.5em
\item $\pi_F(m)=s\circ q \circ \pi_E^M$,
\item for some $\b<\pi_E^M(\k)$, $k$ is $M_E$-generic for $\Col(\omega, \b)$, 
\item $\bar \mu\subseteq M_E$ and $\bar \mu\in M_E[k]$, 
\item $s\circ q \pwimg \bar \mu \subseteq \pi_F(W)$ and 
\item (again see the footnote in Definition \ref{def:catchingpair}) $A=S_{s\circ q \pwimg \bar \mu}(=S_{\bar \tau})$.
\end{enumerate}\vspace{0.5em}
Clause (e) above is the only clause that needs to be verified\footnote{In Clause (b), $\b=\d^N$.}, which is what we do now.
    
Notice that it follows from (2) that \\\\
(5) $\bar \nu=n\pwimg \bar t= s\pwimg \bar t$.\\\\
Therefore, setting $l=s\circ q$, we get that\footnote{Notice that $\cp(q)>\pi^M_E(\k)$, implying that $q\pwimg \bar t =\bar t$.}
\begin{eqnarray*}
    \bar \tau &=& l \pwimg \bar \mu\\
    &=&  l \pwimg (h\pwimg \bar t)\\
    &=&  l(h)\pwimg (l\pwimg \bar t)\\
    &=& l(h) \pwimg \bar \nu\\
    &=& l(h)(\bar \nu).
\end{eqnarray*}
We thus have that\\\\ 
(6) $\bar \tau = s(q(h))(\bar \nu)$.\\\\
It now follows from (3.3) and (1.2) that\\\\
(7) $V_F\models S_{\bar \tau}=S_{\bar\nu}=A$.\\\\
This finishes the proof of (e). We thus have that\\\\
(8) in $V_F$, there is $w: M_E\rightarrow \pi_F(U_\chi)$ such that 
\begin{enumerate}[(a)]\itemsep0.5em
\item $\pi_F(m)=w\circ \pi_E^M$,
\item for some $\b<\pi_E^M(\k)$, $k$ is $M_E$-generic for $\Col(\omega, \b)$, 
\item $\bar \mu\subseteq M_E$ and $\bar \mu \in M_E[k]$, 
\item $w \pwimg \bar \mu \subseteq \pi_F(W)$ and 
\item $A=S_{w \pwimg \bar \mu }$.
\end{enumerate}\vspace{0.5em}
Set $\sfe=(w, M_E, \pi_E^M(\k))$. Notice that $\pi_F(\sfa)=(\pi_F(n), M, \k)$. It then follows from (8) that\\\\
(9) in $V_F$, 
\begin{enumerate}[(a)]\itemsep0.5em
\item $\sfe$ is $\pi_F(\sfa)$-realizable, $M^\sfe=M^{\pi_F(\sfa)}_E$ and $\pi_E^{M^{\pi_F(\sfa)}}=\sigma^{\pi_F(\sfa), \sfe}$ (thus, $\sigma^{\pi_F(\sfa)}=\sigma^\sfe\circ \pi_E^{M^\sfa}$),
\item for some $\b<\k^{\pi_F(\sfa)}$, $k\in V$ is $M^{\pi_F(\sfa)}$-generic for $\Col(\omega, \b)$, 
\item $\bar \mu\subseteq M^\sfe$ and $\bar \mu\in M^\sfe[k]$, 
\item $\sigma^\sfe\pwimg \bar \mu \subseteq \pi_F(W)$ and 
\item in $V_F$,\ $A=S_{\sigma^\sfe \pwimg \bar \mu}$ (see the footnote in Definition \ref{def:catchingpair}).
\end{enumerate}\vspace{0.5em}
Because $\pi_F$ is elementary, (9) implies the conclusion of Lemma \ref{lem:keysealing}.
\end{proof}

Our last lemma of the subsection shows that when we ``change the model" we do not ``lose" our accomplishments.

\begin{lemma}\label{lem:presub} Suppose $A$ is a universally Baire set, $\sfb$ is a $(U, \chi, \l, \{\eta, W\})$-realizable rock such that $\sigma^\sfb(\k^\sfb)=\eta$ and $\sfb$ b-catches $A$, and $\sfd$ is a $\sfb$-realizable rock such that $\cp(\sigma^{\sfb, \sfd})\geq \k^\sfb$. Let $\b<\k^\sfb$, $k\subseteq \Col(\omega, \b)$ and $\bar\mu\in M^\sfb[k]$ witness that $\sfb$ b-catches $A$. Then $\sfd$ b-catches $A$ as witnessed by $(\b, k, \sigma^{\sfb, \sfd}\pwimg \bar \mu)$.
\end{lemma}
\begin{proof}
    Because $\cp(\sigma^{\sfb, \sfd})\geq \k^\sfb$ and because $\sigma^\sfb\pwimg \bar \mu=\sigma^\sfd \pwimg (\sigma^{\sfb, \sfd}\pwimg \bar \mu)$, we only need to prove that $\sigma^{\sfb, \sfd}\pwimg \bar \mu\in M^\sfd[k]$. To see this, notice that $\sigma^{\sfb, \sfd}$ can be lifted to $M^\sfb[k]$ (because $\cp(\sigma^{\sfb, \sfd})\geq \k^\sfb$ and $\b<\k^\sfb$). Let $j: M^\sfb[k]\rightarrow M^\sfd[k]$ be this embedding. We thus have that $j\rest M^\sfb=\sigma^{\sfb, \sfd}$. It then follows that $\sigma^{\sfb,\sfd}\pwimg \bar \mu=j\pwimg \bar \mu = j(\bar\mu) \in M^\sfd[k]$ (because $\bar \mu$ is a countable set in $M^\sfb[k]$).
    \end{proof}



\subsection{Summary} Here we summarize the results of the previous subsections. Again we start by letting 
\vspace{0.5em}\begin{enumerate}\itemsep0.5em
    \item $\chi>\eta>\l$ be inaccessible cardinals,
    \item $U$ be a $\l$-ground of $V$
    such that $U\models ``\eta$ is uB-preserving",\footnote{See Definition \ref{def:uBpreservation}.}
     \item $W\in U_\chi$ be such that in $U$, $W$ is $(\chi, \eta, \l)$-saturated and infinitely tamed above $\eta$\footnote{See Definition \ref{def:saturated measures} and Definition \ref{def:tamed}.},
    \item $\sfa$ be a $(U, \chi, \l, \{\eta, W\})$-realizable rock such that $\sigma^\sfa(\k^\sfa)=\eta$.
\end{enumerate} \vspace{0.5em}
The meanings of these objects are fixed until Theorem \ref{thm:summarysec5}, which will reintroduce some of these objects.
\begin{definition}\label{def:inducedrock} Suppose $X\prec U_\chi$ is such that $(\eta, \l)\in X$. We then say that $\sfb$ is the \emph{$(U, \chi, \l , \{\eta\})$-realizable rock induced by $X$} (or just the rock induced by $X$) if $M^\sfb$ is the transitive collapse of $X$, $\sigma^\sfb$ is the inverse of the transitive collapse and $\sigma^\sfb(\k^\sfb)=\eta$.
\end{definition}



We will catch universally Baire sets by making their \textit{codes} generic. Again the definitions below are relative to the objects we have fixed. 

\begin{definition}\label{def:codeofA0}
Suppose $A$ is a universally Baire set. We then say that $(y, z)$ is a \emph{code} (or an $\sfa$-code) of $A$ if the following condition holds: for some supporting system  $p=(W_2, W_1, W_0, f_1, f_0)$\footnote{See Definition \ref{def:supporting system}.} and an increasing sequence of inaccessible cardinals $(\d_i: i\leq 3)$, 
\vspace{0.5em}\begin{enumerate}\itemsep0.5em
\item $p$ flips through $(\d_i: i\leq 3)$,
\item $\eta<\d_0<\d_3<\chi$,
\item $W=W_2$ and $W$ is infinitely tamed above $\d_3$,
\item $\{ p, (\d_i: i\leq 3)\}\in \rng(\sigma^\sfa)$,
\item $y\in \bR^V$ codes a bijection  $u_y: \omega \rightarrow M^\sfa$, and
\item $z\in \powerset(\omega)^V$ is such that $\bar \nu = (u_y(i): i \in z)$ is a homogeneity system, $\bar \mu=_{def}\sigma^\sfa\pwimg \bar \nu\subseteq W_0$ and $(S_{\bar \mu})^V=A$.
\end{enumerate}\vspace{0.5em}
\end{definition}
Clearly $A$ may not have an $\sfa$-code. Next we introduce $A$-genericity pairs. 

\begin{definition}\label{def:geniterate0} Suppose $A$ is a universally Baire set, $(y, z)$ is a code of $A$ and $\sfb$ is an $\sfa$-realizable rock. We then say that $(\sfb, E)$ is an \emph{$(A\mid y, z)$-genericity $\sfa$-realization} if letting $(p, (\d_i: i\leq 3))$ witness that $(y, z)$ is an $\sfa$-code,
\vspace{0.5em}\begin{enumerate}\itemsep0.5em
    \item $\k^\sfb=\k^\sfa$ (this follows from the clause below),
    \item $\cp(\sigma^{\sfa, \sfb})>\d_1^\sfa$,\footnote{Recall that $\d_1^\sfa = (\sigma^\sfa)^{-1}(\delta_1)$, etc.} 
    \item for some $k\subseteq \Col(\omega, \d_2^\sfb)$, $(y, z)\in M^\sfb[k]$.
    \item $E\in M^\sfb$ is such that $\cp(E)=\k^\sfb(=\eta^\sfb)$, $\d^\sfb_3<\lh(E)$ and $M^\sfb\models ``E$ is a nice wf-preserving for $W^\sfb$ extender". 
\end{enumerate} \vspace{0.5em}
\end{definition}

The following is an easy fact. The reader may wish to review Definition \ref{def:catchingpair}.

\begin{lemma}\label{lem:genbcatch} Suppose $A$ is a universally Baire set, $(y, z)$ is a code of $A$ and $(\sfb, E)$ is an $(A \mid y, z)$-genericity $\sfa$-realization. Then $\sfb$ t-catches $A$ (relative to $(U, \chi, \l, \{\eta, W \})$).
\end{lemma}
\begin{proof} 
    Let $(p, (\d_i: i\leq 3))$ witness that $(y, z)$ is an $\sfa$-code and $k\subseteq \Col(\omega, \d_2^\sfb)$ be $M^\sfb$-generic such that $(y, z)\in M^\sfb[k]$. Let then $\bar \nu=(u_y(i): i\in z)$ be as in Definition \ref{def:codeofA0}. We have that $\bar \nu\in M^\sfb[k]$ and $A= S_{\sigma^\sfb\pwimg \bar \nu}$. Let $v=(f_1\circ f_0)^{-1}$ and set $\bar \mu=v^{\sfb}(\bar \nu)$. Then $\bar \mu\in M^\sfb[k]$ and $\sigma^\sfb\pwimg \bar \mu\subseteq W_2(=W)$. To show that $(k, \bar \mu)$ witnesses that $\sfb$ t-catches $A$, it is enough to show that $A=S_{\sigma^\sfb \pwimg \bar \mu}$. This, however, easily follows from the fact that $(f_1, W_2, W_1)$ and $(f_0, W_1, W_0)$ are flipping systems.
\end{proof}

If $(\sfb, E)$ is an $(A\mid y, z)$-genericity $\sfa$-realization then we will informally say that $E$ is the $A$-sealing extender or $E$ seals $A$. The reason for the terminology is that we will show that $A$ is in the derived model of $M^\sfa_E$. We will obtain an $A$-genericity $\sfa$-realization $(\sfb, E)$ by performing genericity iterations. In particular, $\sfb$ will be an iterate of $\sfa$ via  an iteration tree $\T$ of length $\omega+1$. The following theorem is the summary of what we have shown. For convenience, we will re-introduce all previously fixed objects. The reader may wish to review Definition \ref{def:catchingpair}.

\begin{theorem}\label{thm:summarysec5}
 Suppose $\chi>\eta>\l$ are inaccessible cardinals and the UB-Capturing Principle holds at $(\chi, \eta, \l)$ as witnessed by $(X, W)$. 
 Let $W'\in X$ be as in Clause \eqref{cl:3_def:UBCapturingPrinciple} of Definition \ref{def:abstractblock} and $g$ be ${<}\l$-generic such that $\card{X}^{V[g]}=\aleph_0$. Then the following holds in $V[g]$\footnote{We apply the concepts we have developed to $U=V$ and $V=V[g]$. Clearly, $U$ is a $\l$-ground of $V$.}.
 
Suppose $B\in \Gamma^\infty_g$, $\vec{A}\in (\Gamma^\infty_g)^{<\omega_1}$\footnote{This is $\omega_1$ of $V[g]$.}, $z\subseteq W$ is countable, $a\subseteq \eta$ is bounded and $\sfa$ is the rock induced by $X(z\cup a)$. Assume $\sfa$ b-catches every member of $\vec{A}$ (relative to $(U, \chi, \l, \{\eta, W'\})$). There is then an $\sfa$-realizable $\sfb$, an $\sfa$-realizable $\sfe$ and $E\in M^\sfb$ such that
\vspace{0.5em}\begin{enumerate}\itemsep0.5em
 \item $\sfb$ t-catches $B$ (relative to $(U, \chi, \l, \{\eta, W'\})$), 
 \item $\sigma^{\sfa, \sfb}\rest (M^\sfa\cap \powerset(\k^\sfa))=id$,
 \item $(W')^\sfb\in M^\sfb|\lh(E)$ and $M^\sfb\models ``E$ is a nice wf-preserving for $(W')^\sfb$ extender",
 \item $M^\sfe=Ult(M^\sfa, E)$ and $\sigma^{\sfa, \sfe}=\pi_E^{\sfe}$, and
 \item $\sfe$ b-catches every member of $\vec A$ and $B$ (relative to $(U, \chi, \l, \{\eta, W'\})$).
 \end{enumerate}\vspace{0.5em}
 \end{theorem}
 \begin{proof}
     The proof is an application of Lemmas \ref{lem:keysealing} and \ref{lem:presub}. First, applying Definition \ref{def:abstractblock}, we find a countable $z'\subseteq W$ such that $(X(z\cup z'\cup a), W)$ captures $B$. Let $\sfb'$ be the rock induced by $X(z\cup z'\cup a)$. Because $X$ is $(W\mid \eta)$-full,\footnote{See Definition \ref{def:wetafull}.} we have that $\sfb'$ is $\sfa$-realizable and $\sigma^{\sfa, \sfb'}\rest (M^\sfa\cap \powerset(\k^\sfa))=id$. 

     Let $\{p, (\d_i: i\leq 3)\}\in X$ be such that the following holds:
 \vspace{0.5em}\begin{enumerate}[(a)]\itemsep0.5em
 \item for each $i\leq 3$, $\d_i$ is a Woodin cardinal,
 \item $\eta<\d_0<\d_3<\chi$,
 \item $p=(W_2, W_1, W_0,  f_1, f_0)\in V_\chi$ is a supporting system that flips through $(\d_i: i\leq 3)$,\footnote{See Definition \ref{def:supporting system}.} and
 \item $W'=W_2$ and $W'$ is infinitely tamed above $\d_3$\footnote{The existence of such a $p$ follows from Lemma \ref{lem:supporting system}.}.
 \end{enumerate}\vspace{0.5em}
     
     We now have that $B$ has a $\sfb'$-code. Indeed, fix $\bar \tau\subseteq \rng(\sigma^{\sfb'})\cap W$ such that $S_{\bar \tau}=B$. Let $v=f_2\circ f_1$ and $w=v^{-1}$. Set $\bar \mu=v(\bar \tau)$. 
     Let now $(r, s)$ be such that $r$ codes a bijection $u_r: \omega \rightarrow M^{\sfb'}$ and $s\subseteq \omega$ is such that letting $\bar \nu=(u_r(i): i\in s)$, $\bar \mu=\sigma^{\sfb'}\pwimg \bar \nu$. Clearly $(r, s)$ is a $\sfb'$-code of $B$.

     Next, using Neeman's Genericity Theorem, we find a $\sfb'$-realizable $\sfb$ such that $\cp(\sigma^{\sfb', \sfb})>\d^{\sfb'}_1$ and for some $M^{\sfb}$-generic $k\subseteq \Col(\omega, \d_2^{\sfb})$, $(r, s)\in M^{\sfb}[k]$. It now follows from Lemma \ref{lem:genbcatch} that $\sfb$ t-catches $B$. 

     At this stage, we have that $\sfb$ is $\sfa$-realizable, t-catches $B$ and $\sigma^{\sfa, \sfb}\rest (M^\sfa\cap \powerset(\k^{\sfa}))=id$. Applying Lemma \ref{lem:keysealing} we get an extender $E\in M^\sfb$ such that $\cp(E)=\k^{\sfa}$, $W_2^{\sfb}\in M^\sfb|\lh(E)$ and $M^\sfb\models ``E$ is a nice wf-preserving for $W_2^{\sfb}$ extender$"$ and an $\sfa$-realizable $\sfe$ such that $M^\sfe=Ult(M^\sfa, E)$, $\sigma^{\sfa, \sfe}=\pi_E^{M^\sfa}$ and $\sfe$ b-catches $B$. Finally it follows from Lemma \ref{lem:presub} that $\sfe$ b-catches every member of $\vec{A}$.
 \end{proof}

Notice that in the above proof, $(\sfb, E, \sfe)$ depends on the choice of $p$, which is why below we use the term ``possible".

\begin{definition}[The One Step Construction]\label{def:onestepcon} In the context of Theorem \ref{thm:summarysec5}, we say that $(\sfb, E, \sfe)$ is \emph{a possible output of the one-step-construction applied to $(\sfa, \vec{A}, B, X, W, W', U, \chi, \eta, \l)$}.
\end{definition}

\section{A Derived Model Representation for $L(\Gamma^\infty, \bR)$.} \label{sec:summarydmrep}

In this section, our goal is to produce a linear system $\sf{DM}$ of rocks with the property that $L(\Gamma^\infty, \bR)$ can be realized as the derived model of the direct limit model of $\sf{DM}$. The next definition introduces the $\sf{DM}$-systems.

\begin{definition}\label{def:predmlinearsys}
Suppose $\chi>\eta>\l$ are inaccessible cardinals, $U$ is a $\l$-ground and $U\models ``$the UB-Capturing Principle holds at $(\chi, \eta, \l)$ as witnessed by $(X, W)$". Suppose $i\in \{0, 1\}$, $k\subseteq \Col(\omega_i, \card{\Gamma^\infty})$ is $V$-generic and $W'\in X$ is such that $W\subseteq W'$ and $W'$ witnesses Clause \eqref{cl:3_def:UBCapturingPrinciple} of Definition \ref{def:abstractblock} applied to $(X, W)$. Suppose $\iota\leq \omega_1$ is a limit ordinal and $k'\in V[k]$ is a surjection $k':\iota\rightarrow \Gamma^\infty$ with the property that for all $\a<\iota$, $k'\rest \a\in V$. We say $D=((\sfa_\a, \sfb_\a, E_\a) \mid \a<\iota)$ is a \emph{$\sf{DM}$-sequence for $V$ supported by $( X, W, W', U, \chi, \eta, \l, k')$} if in $V[k]$,
   \begin{enumerate}\itemsep0.5em
   \item $\sfa_0$ is the rock induced by $X$, 
   \item for $\a<\iota$, $\sfa_{\a}$ b-catches every member of $k\rest \a$ relative to $(U, \chi, \l, \{\eta, W'\})$,
    \item for every $\a<\iota$, $(\sfb_\a, E_\a, \sfa_{\a+1})$ is a possible output of the one-step-construction applied to $(\sfa_\a, k'\rest \a, k'(\a), X, W, W', U, \chi, \eta, \l)$,\footnote{Thus, $\sfb_\a$ t-catches $A_\a$, $E_\a$ is a wf-preserving extender for some $W_2^\sfb$ as in the proof of Theorem \ref{thm:summarysec5} and $\sfa_{\a+1}$ b-catches every member of $k\rest \a+1$.}
       \item for every $\a<\iota$, $D\rest \a\in V$,
       \item for every $A\in \Gamma^\infty$\footnote{While this statement is evaluated in $V[g]$, it applies to $A\in V$.} there is an $\a<\iota$ such that if $\sfa_\a=(\sigma_{X_\a}, M_{X_\a}, \k_{X_\a})$ is the rock induced by $X_\a$ then $\sfa_\a$ b-catches $A$ relative to $(U, \chi, \l, \{\eta, W'\})$,\footnote{Since $\{x\}$ is a universally Baire set for every $x \in \bR$, we have that for each $x\in \bR$ there is an $\a<\iota$ and an $M^{\sfa_\a}$-generic $g\subseteq \Col(\omega, {<}\k^{\sfa_\a})$ such that $x\in M^{\sfa_\a}[g]$.}
       \item for every limit ordinal $\a<\iota$, $\sfa_\a$ is a direct limit of $(\sfa_\b\mid  \b<\a)$.\footnote{More precisely, $\sfa_\a$ is the $\sfa_0$-realizable rock induced by $\cup_{\b<\a}\rng(\sigma^\sfa_\a)$.}
   \end{enumerate} 
\end{definition}

The expression ``for $V$" is somewhat redundant, but omitting it may cause confusion when applying this notion in generic extensions of $V$ (see Theorem \ref{thm:exdmomega}).

The following is our main theorem on the existence of $\sf{DM}$-sequences. The reader may wish to review Definition \ref{flipping system}, Definition \ref{def:uBpreservation}, Lemma \ref{lem:kappauBpreserving}, Definition \ref{def:abstractblock} (The UB-Capturing Principle), Theorem \ref{thm:ubcapturingprinciple}, Definition \ref{def:supporting system}, Definition \ref{def:codeofA0}, Lemma \ref{lem:keysealing} and Theorem \ref{thm:summarysec5}. We state two versions of the theorem. In the first version, we collapse $\card{\Gamma^\infty}$ to be $\omega$ and in the second, we collapse it to be $\omega_1$. The proofs are straightforward applications of Theorem \ref{thm:summarysec5} and Lemma \ref{lem:presub}, and we leave them to the reader.

\begin{theorem}[The $\omega$ version]\label{thm:exdmomega} Suppose $\chi>\eta>\l$ are inaccessible cardinals, $U$ is a $\l$-ground, $U\models ``$the UB-Capturing Principle holds at $(\chi, \eta, \l)$ as witnessed by $(X, W)$", and $W'\in X$ is such that $W\subseteq W'$ and $W'$ witnesses Clause \eqref{cl:3_def:UBCapturingPrinciple} of Definition \ref{def:abstractblock} applied to $(X, W)$. Suppose  $k\subseteq \Col(\omega, \card{\Gamma^\infty})$ is $V$-generic, $\iota<\omega_1$ is a limit ordinal and $k':\iota \rightarrow \Gamma^\infty$ is a surjection in $V[k]$ with the property that for all $\a<\iota$, $k'\rest \a\in V$. Then, in $V[k]$, there is a $\sf{DM}$-sequence $D=((\sfa_\a, \sfb_\a, E_\a) \mid \a<\iota)$ for $V$ supported by $( X, W, W', U, \chi, \eta, \l, k')$.
\end{theorem}
\begin{theorem}[The $\omega_1$ version]\label{thm:exdmomega1} Suppose $\chi>\eta>\l$ are inaccessible cardinals, $U$ is a $\l$-ground, $U\models ``$the UB-Capturing Principle holds at $(\chi, \eta, \l)$ as witnessed by $(X, W)$" and $W'\in X$ is such that $W\subseteq W'$ and $W'$ witnesses Clause \eqref{cl:3_def:UBCapturingPrinciple} of Definition \ref{def:abstractblock} applied to $(X, W)$. Suppose  $k\subseteq \Col(\omega_1, \card{\Gamma^\infty})$ is $V$-generic. Then, in $V[k]$, there is a $\sf{DM}$-sequence $D=((\sfa_\a, \sfb_\a, E_\a) \mid \a<\omega_1)$ for $V$ supported by $( X, W, W', U, \chi, \eta, \l, k)$.
\end{theorem}

We end this section by presenting our derived model representation for $\Gamma^\infty$. The following terminology will be used in Theorem \ref{thm:dermodelrepomega}.

\begin{terminology}\label{term:underseq}  Assume the set up of Definition \ref{def:predmlinearsys} and suppose $$D=((\sfa_\a, \sfb_\a, E_\a) \mid \a<\iota)$$ is a ${\sf{DM}}$-sequence for $V$ supported by $( X, W, W', U, \chi, \eta, \l, k')$. We say $${\sf{dm}}=(M_\a, \sigma_{\a, \b}, \sigma_\a, \k_\a:\a<\b<\iota)$$ is the \emph{underlying sequence} of $D$ if for $\a<\b<\iota$, $M_\a=M^{\sfa_\a}$, $\sigma_{\a, \b}=\sigma^{\sfa_\a, \sfa_\b}$, $\sigma_\a=\sigma^{\sfa_\a}$, $\k_\a=\k^{\sfa_\a}$. Moreover, $M_\iota$ is the direct limit of $(M_\a, \sigma_{\a, \b}: \a<\b<\iota)$, $\sigma_{\a, \iota}:M_\a\rightarrow M_\iota$ is the direct limit embedding, and $\sigma_\iota: M_\iota\rightarrow U_\chi$ is the canonical realizability embedding given by $\sigma_\iota(x)=\sigma_\a(\bar x)$ where  $\a<\iota$ and $\bar x\in M_\a$ are such that $\sigma_{\a, \iota}(\bar x)=x$. We call $M_\iota$ the direct limit of ${\sf{dm}}$.
\end{terminology}

The reader may wish to review Definition \ref{def: smu} and Theorem \ref{woodin: der model thm}. We will present two versions of the theorem, one for $\Col(\omega, \Gamma^\infty)$ and one for $\Col(\omega_1, \Gamma^\infty)$.

\begin{theorem}[$\omega$-version]\label{thm:dermodelrepomega}
    Suppose $\chi>\eta>\l$ are inaccessible cardinals, $U$ is a $\l$-ground, $U\models ``$the UB-Capturing Principle holds at $(\chi, \eta, \l)$ as witnessed by $(X, W)$" and $W'\in X$ is such that $W\subseteq W'$ and $W'$ witnesses Clause \eqref{cl:3_def:UBCapturingPrinciple} of Definition \ref{def:abstractblock} applied to $(X, W)$. Suppose $k\subseteq \Col(\omega, \card{\Gamma^\infty})$ is $V$-generic, $\iota < \omega_1^V$ is a limit ordinal, $k'\in V[k]$ is a surjection $k':\iota\rightarrow \Gamma^\infty$ with the property that for all $\a<\iota$, $k'\rest \a\in V$ and $$D=((\sfa_\a, \sfb_\a, E_\a) \mid \a<\iota)$$ is a ${\sf{DM}}$-sequence for $V$ supported by $( X, W, W', U, \chi, \eta, \l, k')$. Let $${\sf{dm}}=(M_\a, \sigma_{\a, \b}, \sigma_\a, \k_\a:\a<\b<\iota)$$ be the \emph{underlying sequence} of $D$, $M_\iota$ be the direct limit of ${\sf{dm}}$ and $\sigma_\iota: M_\iota \rightarrow U_\chi$ be the canonical realizability embedding. Then, in $V[k]$, there is an $M_\iota$-generic $h\subseteq \Col(\omega, {<}\omega_1^V)$ such that  $$\bR^{M_\iota[h]}=\bR^V$$ and  $$(\Gamma^\infty, \bR)^\#=((Hom^*, \bR^*)^\#)^{M_\iota[h]}.$$
\end{theorem}
\begin{proof}
Notice that we have that $\k_\iota=\omega_1^V$. The existence of $h$ follows from \cite[Lemma 3.1.5]{La04}. Because $\sigma_\iota: M_\iota\rightarrow U_\chi$ and there are Woodin cardinals in $U_\chi$ (see Clause \eqref{cl:3_def:UBCapturingPrinciple} of Definition \ref{def:abstractblock}), $M_\iota[h]\models ``(Hom^*, \bR^*)^\#$ exists$"$ and $((Hom^*, \bR^*)^\#)^{M_\iota[h]}=(Hom^*, \bR^*)^\#$. It is then enough to prove that if $h\subseteq \Col(\omega, {<}\omega_1^V)$ is $M_\iota$-generic such that $h\in V[k]$ and $\bR^{M_\iota[h]}=\bR^V$ then $$\Gamma^\infty=(Hom^*)^{M_\iota[h]}.$$
For $\a<\iota$, let $h_\a=\Col(\omega, {<}\k_\a)\cap h$. We then have that for $\a<\b\leq \iota$, $\sigma_{\a, \b}:M_\a\rightarrow M_\b$ can be lifted to $\sigma_{\a, \b}^+: M[h_\a]\rightarrow M[h_\b]$.
 
 First fix $A\in \Gamma^\infty$. We want to show that $A\in (Hom^*)^{M_\iota[h]}$. Let $\a<\iota$ be such that $\sfa_\a$ b-catches $A$ relative to $(U, \chi, \l, \{\eta, W'\})$. Let $\b<\k_\a$ and $f\subseteq \Col(\omega, \b)$ be such that $f\in V$, $f$ is $M_\a$-generic and there is $\bar\mu\in M_\a[f]$ such that $\bar \nu=_{def}\sigma_\a\pwimg \bar \mu \subseteq W'$ and $S_{\bar \nu}=A$. We can then find some $\gamma\in [\a, \iota)$ such that $f\in M_\gamma[h_\gamma]$. Because we have that $\sigma_{\a, \gamma}$ lifts to $\sigma_{\a, \gamma}^f: M_\a[f]\rightarrow M_\gamma[f]$ and because $M_\gamma[f]\subseteq M[h_\gamma]$, we have that letting $\bar \tau =\sigma_{\a, \gamma}^+\pwimg \bar \mu$,\\\\ (1) $\bar \tau\in M_\gamma[h_\gamma]$, $\bar \nu=\sigma_\gamma\pwimg \bar \tau$ and $A=S_{\sigma_\gamma\pwimg \bar \tau}$.\\\\
 Let now for $\xi\in (\gamma, \iota)$, $\bar \tau_\xi=\sigma_{\gamma, \xi}^+(\bar \tau)$ and $A_\xi=S_{\sigma_\xi\pwimg \bar \tau_\xi}\cap M[h_\xi]$. We have that\\\\
 (2) $A=\bigcup_{\xi\in (\gamma, \iota)}A_\xi$ and $A_\xi=(S_{\bar \tau_\xi})^{M_\xi[h_\xi]}$ (see Lemma \ref{lem:keylemmatamed}).\\\\
 It follows from (2) that\\\\
 (3) for all $\xi<\xi'$ belonging to $(\gamma, \iota)$, $A_\xi \in M_\xi[h_\xi]$ and $\sigma_{\xi, \xi'}^+(A_\xi)=A_{\xi'}$.\\\\
 (2) and (3) imply that for every $\xi\in (\gamma, \iota)$, $A=\sigma_{\xi, \iota}(A_\xi)$. Hence, $A\in M_\iota[h]$ and $A=(S_{\sigma_{\gamma, \iota}^+ \pwimg \bar \tau})^{M_\iota[h]}$, implying that $A\in (Hom^*)^{M_{\iota}[h]}$.

 The argument showing that $(Hom^*)^{M_\iota[h]}\subseteq \Gamma^\infty$ is just the proof of Subclaim 1.1 of \cite{StstattowfreeDMT}.
\end{proof}

Next we state the $\omega_1$ version. Since its proof is very similar, we leave it to the reader.

\begin{theorem}[$\omega_1$-version]\label{thm:dermodelrepomega1}
    Suppose $\chi>\eta>\l$ are inaccessible cardinals, $U$ is a $\l$-ground, $U\models ``$the UB-Capturing Principle holds at $(\chi, \eta, \l)$ as witnessed by $(X, W)$" and $W'\in X$ is such that $W\subseteq W'$ and $W'$ witnesses Clause \eqref{cl:3_def:UBCapturingPrinciple} of Definition \ref{def:abstractblock} applied to $(X, W)$. Suppose $k\subseteq \Col(\omega_1, \card{\Gamma^\infty})$ is $V$-generic and $$D=((\sfa_\a, \sfb_\a, E_\a) \mid \a<\omega_1)$$ is a ${\sf{DM}}$-sequence for $V$ supported by $( X, W, W', U, \chi, \eta, \l, k)$. Let $${\sf{dm}}=(M_\a, \sigma_{\a, \b}, \sigma_\a, \k_\a:\a<\b<\omega_1)$$ be the underlying sequence of $D$, $M_{\omega_1}$ be the direct limit of ${\sf{dm}}$ and $\sigma_{\omega_1}: M_{\omega_1} \rightarrow U_\chi$ be the canonical realizability embedding. Then, in $V[k]$, there is an $M_{\omega_1}$-generic $h\subseteq \Col(\omega, {<}\omega_1^V)$ such that  $$\bR^{M_{\omega_1}[h]}=\bR^V$$ and  $$(\Gamma^\infty, \bR)^\#=((Hom^*, \bR^*)^\#)^{M_{\omega_1}[h]}.$$
\end{theorem}

 \begin{remark}[$\omega_1$-version]\label{length omega1 case} This remark constructs the generic $h$ used in Theorem \ref{thm:dermodelrepomega1}. Notice that because we are working in a universe that has a class of Woodin cardinals, countably closed posets do not add new universally Baire sets as they do not add new homogeneity systems (see Theorem \ref{thm:msw}). Let $\vec{x}=(x_i \mid i<\a)\in V[k]$ be an enumeration of $\bR(=\bR^V)$ and let $f:\omega_1\rightarrow \omega_1$ be any increasing continuous function in $V[k]$ such that for every $\b<\omega_1$ there is an $M_{f(\b)}$-generic $g\subseteq \Col(\omega, {<}\k_{f(\b)})$ such that $x_\b\in M_{f(\b)}[g]$. Let $\mathbb{P}$ consist of functions $p$ such that 
 \vspace{0.5em}\begin{enumerate}\itemsep0.5em
 \item $dom(p)\in \omega_1$ and $dom(p)$ has a largest element,
 \item for each $\b\in dom(p)$, $p(\b)$ is $M_{f(\b)}$-generic for $\Col(\omega, {<}\kappa_{f(\b)})$ such that $\vec{x}\restriction \b\subseteq M_{f(\b)}[p(\b)]$,
 \item if $\b_0<\b_1$ are in $dom(p)$ then $p(\b_0)=p(\b_1)\cap \Col(\omega, {<}\kappa_{f(\b_0)})$,
 \item if $(\b_i \mid i<\omega) \subseteq dom(p)$ is an increasing sequence, then $p(\b_\omega)=\bigcup_{i<\omega}p(\b_i)$\footnote{Notice that $p(\b_\omega)$ is indeed $M_{f(\b_\omega)}$-generic such that $\vec{x}\restriction \b_\omega\subseteq M_{f(\b_\omega)}[p(\b_\omega)]$. This follows from the direct limit construction. Each dense set in $M_{f(\b_\omega)}$ has a preimage in some $M_{\b_i}$.}.
 \end{enumerate}\vspace{0.5em}
 The order of $\mathbb{P}$ is by extension. It follows from the proof of Claim 3 that appears in the proof of \cite[Lemma 3.1.5]{La04} that each $p\in \mathbb{P}$ has a proper extension in $\mathbb{P}$. Notice now that $\mathbb{P}$ is countably closed\footnote{This is because if $(p_i: i<\omega)$ is a decreasing sequence of conditions then letting $\b=\sup\{\gamma: \exists i\in \omega (\gamma\in dom(p_i))\}$ and $g=\bigcup_{i<\omega}p_i(\b_i)$ where $\b_i$ is the largest element of $dom(p_i)$, $g\subseteq \Col(\omega, {<}\kappa_\b)$ is $M_\b$-generic.}. Let then $\xi$ be large and $\nu : H\rightarrow V_\xi[k]$ be an elementary embedding such that in $V[k]$, $H$ is transitive, $\card{H}=\aleph_1$, $H^\omega\subseteq H$ and $\{\Gamma^\infty, \mathbb{P}, D \}\subseteq \rng(\nu)$. We now have $h'\in V[k]$ which is $H$-generic for $\mathbb{P}$. Let then $h=\bigcup_{p\in h', \b\in dom(p)} p(\b)$. It follows that $h$ is $M_{\omega_1}$-generic for $\Col(\omega, {<}\omega_1)$. 
 \end{remark}
 
\section{$\Sealing$ from the derived model representation}\label{sec:SealingTheorem}

In this section, we use the derived model representation in Theorem \ref{thm:dermodelrepomega} to prove $\Sealing$ similarly as in \cite[Section 6]{SaTr21}.

\begin{proof}[Proof of Theorem \ref{thm:SealingWoodin}]
Let $\kappa$ be a supercompact cardinal and let $g$ be $\Col(\omega, 2^{2^\kappa})$-generic over $V$. We aim to show $\Sealing$ in $V[g]$, so let $h$ be set generic over $V[g]$ and $h'$ be set generic over $V[g*h]$. 

We now work towards the set up of Theorem \ref{thm:ubcapturingprinciple}, and so we work in $V$. Let $\l$ be an inaccessible cardinal such that $g*h*h'$ is ${<}\l$-generic over $V$ and let $\chi>\l$ be an inaccessible cardinal. Let $W\in V_\chi$ be a $(\chi, \l^+, \l)$-saturated set of measures with the property that there are infinitely many Woodin cardinals in the interval $(\l, \comp(W))$, let $X\prec V_\chi$ be such that $\card{X}=\card{V_{\k+1}}$ and $V_{\k+1}\cup \{\l, W\}\subseteq X$ and let $j: V\rightarrow N$ be a $\chi$-supercompactness embedding. We now have that the UB-Capturing Principle holds in $N$ at $(j(\chi), j(\k), \l)$ as witnessed by $(j[W], j[X\cap V_\chi])$. In what follows we apply the lemmas and theorems of the previous sections to $(V, N[g*h])$ and $(V, N[g*h*h'])$ with $V$ in the role of $U$. In particular, we apply Theorem \ref{thm:dermodelrepomega} with $U$ as $V$, $\chi$ as $j(\chi)$, $\eta$ as $j(\k)$, $\l$ as $\l$, $X$ as $j[X\cap V_\chi]$, $W$ as $W$ and $W'$ as $j(W)$. 

 Then Clause (1) of $\Sealing$ (see Definition \ref{def:Sealing}) follows immediately from Theorem \ref{thm:dermodelrepomega} applied in $N[g][h]$ with $U=V$ and $\iota=\omega$, and well-documented results about derived models (see \cite{St08dm}). More precisely, letting $(M_\iota, \k_\iota)$ be as in Theorem \ref{thm:dermodelrepomega}, $L(\Gamma^\infty_{g*h}, \bR_{g*h}) \models \AD^+$ follows from the Derived Model Theorem, see \cite[Theorem 7.1]{St09}. The statement $\powerset(\bR_{g*h}) \cap L(\Gamma^\infty_{g*h}, \bR_{g*h}) = \Gamma^\infty_{g*h}$ follows from \cite[Theorem 9.3]{St09} as $\kappa_\iota = \omega_1^{V[g*h]}$ is by construction a limit of Woodin cardinals and ${<}\kappa_\iota$-strong cardinals in $M_\iota$. 
 
 So we are left with showing Clause (2) of $\Sealing$. For this, it is enough to prove the following.\\\\
 (a) Suppose $A\in \Gamma^\infty_{g*h}$, $\phi$ is a formula and $(c_1, ..., c_n)$ are constants for indiscernibles. Then
 \begin{center}
     $(\phi, A, c_1, ..., c_n)\in (\Gamma^\infty_{g*h})^\# \iff (\phi, A_{h'}, c_1,..., c_n)\in (\Gamma^\infty_{g*h*h'})^\#$.
 \end{center}

Let $k\subseteq \Col(\omega, \card{\Gamma^\infty_{g*h}})$ be $N[g*h]$-generic and $k'\subseteq \Col(\omega, \card{\Gamma^\infty_{g*h*h'}})$ be $N[g*h*h']$-generic such that $k(0)=A$ and $k'(0)=A_h$. Applying Theorem \ref{thm:dermodelrepomega} in $N[g*h]$, we obtain a ${\sf{DM}}$-sequence 
$$D=((\sfa_\a, \sfb_\a, E_\a) \mid \a<\omega)$$
for $N[g*h]$ supported by $(j[X\cap V_\chi], j[W], j(W), V, j(\chi), j(\k), \l, k)$, and applying Theorem \ref{thm:dermodelrepomega} in $M[g*h*h']$, we obtain a ${\sf{DM}}$-sequence 
$$D'=((\sfa_\a', \sfb'_\a, E'_\a) \mid \a<\omega)$$
for $M[g*h*h']$ supported by $(j[X\cap V_\chi], j[W], j(W), V, j(\chi), j(\k), \l, k')$.
We furthermore demand that $\sfa_1=\sfa_1'$, which is possible as we can simply perform the same one step construction to obtain $\sfa_1$ and $\sfa_1'$.

Let $${\sf{dm}}=(M_\a, \sigma_{\a, \b}, \sigma_\a, \k_\a:\a<\b<\omega)$$ be the underlying sequence of $D$ and $${\sf{dm}}'=(M'_\a, \sigma'_{\a, \b}, \sigma'_\a, \k'_\a:\a<\b<\omega)$$
be the underlying sequence of $D'$. Let $(M_\omega, \sigma_{\a, \omega}, \sigma_\omega)$ and $(N'_\omega, \sigma_{\a, \omega}', \sigma'_\omega)$ be the direct limits of ${\sf{dm}}$ and ${\sf{dm}}'$. Let $l\subseteq \Col(\omega, {<}\omega_1^{N[g*h]})$ and $l'\subseteq \Col(\omega, {<}\omega_1^{N[g*h*h']})$ be $M_\omega$ and $M'_\omega$ generics such that\\\\
(1.1) $l\in N[g*h*k]$ and $l'\in N[g*h*k']$,\\
(1.2) $(\Gamma^\infty_{g*h})^{\#}=((Hom^*, \bR_{g*h})^\#)^{M_\omega[l]}$, and\\
(1.3) $(\Gamma^\infty_{g*h*h'})^{\#}=((Hom^*, \bR_{g*h*h'})^\#)^{M'_\omega[l']}$.\\\\
Recall that $\kappa_1 = \kappa^{\sfa_1}$ and $\kappa_1' = \kappa^{\sfa_1'}$. Let $l_1=l\cap \Col(\omega, {<}\k_1)$ and $l_1'=l\cap \Col(\omega, {<}\k_1')$. We then have that\\\\
(2.1) $A_1=_{def}A\cap M_1[l_1]\in (Hom^*)^{M_1[l_1]}$,\\
(2.2) $A_1'=_{def}A'\cap M_1'[l'_1]\in (Hom^*)^{M'_1[l'_1]}$,\\
(2.3) $\sigma_{1, \omega}(A_1)=A$,\\
(2.4) $\sigma_{1, \omega}'(A_1')=A_{h'}$.\\\\
(2.1) and (2.2) hold because $\sfa_1=\sfa_1'$ b-catches $A$ and $A_{h'}$. In (2.3) and (2.4) we abuse the notation and think of $\sigma_{1, \omega}$ and $\sigma_{1, \omega}'$ as acting on $M_1[l_1]$ and $M_1'[l_1']$. (2.3) and (2.4) follow from the proof of Theorem \ref{thm:dermodelrepomega}. Notice that because we started with $\sfa_1=\sfa_1'$, we have that $M_1=M_1'$ and $l_1=l_1'$. We now have that 

      \begin{eqnarray*}
        (\phi, A, c_1, ..., c_n)\in (\Gamma^\infty_{g*h})^\#  
        &\iff & M_\omega[l] \models (\phi, A, c_1, ..., c_n)\in (Hom^*)^\# \\
        &\iff & M_1[l_1] \models (\phi, A_1, c_1, ..., c_n)\in (Hom^*)^\#\\
        &\iff & M'_1[l'_1] \models (\phi, A'_1, c_1, ..., c_n)\in (Hom^*)^\#\\
        &\iff & M_\omega'[l'] \models (\phi, A_{h'}, c_1, ..., c_n)\in (Hom^*)^\# \\
        &\iff & (\phi, A_{h'}, c_1, ..., c_n)\in (\Gamma^\infty_{g*h*h'})^\#  \\
    \end{eqnarray*}
    The above equivalences finish the proof of (a). 
    
\end{proof}

\section{$\Sealing$ for derived models} \label{sec:sealing for derived models}

In this section we outline how the following two corollaries, establishing $\Sealing$ for derived models, can be obtained from $\Sealing$. The reader may wish to review Definition \ref{def: smu} (in particular, $A^*$ is defined there).

\begin{corollary}\label{cor:newmain}
    Let $\kappa$ be a supercompact cardinal and suppose there is a proper class of Woodin cardinals. Let $g \subseteq \Col(\omega, 2^{2^\kappa})$ be $V$-generic and let $\eta$ be a limit of Woodin cardinals in $V[g]$. Let $H \subseteq \Col(\omega, {<}\eta)$ be $V[g]$-generic. Then there exists an elementary embedding \[ j \colon L(\Gamma^\infty_g, \bR_g) \rightarrow (L(\Hom^*, \bR^*))^{V[g*H]} \] such that for $A \in \Gamma^\infty_g$, $j(A) = A^*$. 
\end{corollary}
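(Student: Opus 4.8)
\textbf{Proof plan for Corollary \ref{cor:newmain}.}

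The plan is to reduce the statement to the derived model representation (Corollary \ref{cor:dmrep}) applied to $V[g]$ together with the extra generic $H$, and then compare the two derived model computations via an argument entirely parallel to the proof of clause (2) of $\Sealing$ in Theorem \ref{thm:SealingWoodin}. First I would observe that $\Sealing$ holds in $V[g]$ by Theorem \ref{thm:SealingWoodin}, so in particular $L(\Gamma^\infty_g, \bR_g) \models \AD^+$, and that $(\Gamma^\infty_g, \bR_g)^\#$ exists; hence there is at most one candidate embedding $j$, defined on terms in indiscernibles, a real and a uB set exactly as in the proof of Theorem \ref{thm:SealingWoodin}. The content is to show this canonical $j$ is elementary. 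To do this I would, on the one hand, apply Corollary \ref{cor:dmrep} inside $V[g]$ (taking the partial order $\bP$ there to be trivial, i.e. $h$ absent) to obtain a genericity iterate $M_0^\omega$ of a small substructure $M_0$ of $V_\delta$ for suitable $\delta$, together with a $\Col(\omega, {<}\omega_1^{V[g]})$-generic $G$ over $M_0^\omega$, so that
\[ L(\Gamma^\infty_g, \bR_g) = L(\Hom^*, \bR^*)^{M_0^\omega[G]}. \]
On the other hand, I would run the same genericity iteration but now guided by a generic enumeration of $\bR_g = \bR^{V[g]}$ (note $\bR_g$ is not enlarged by $H$ if $\eta$ is a limit of Woodins in $V[g]$, but we need $\bR^*_H$, so more care is needed — see below) and $\Gamma^\infty_g$, producing a model $N_0^\omega$ and observing that the derived model of $N_0^\omega$ computed relative to the generic $H$ on the $V[g]$-side agrees with $(L(\Hom^*,\bR^*))^{V[g*H]}$.

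The key reduction is the same ``meeting in the middle'' trick used in the proof of Theorem \ref{thm:SealingWoodin}: by arranging the two generic enumerations to begin with the same real and the same uB set $A$ (with the same homogeneity system $\bar\mu_2^{(0)}$ and the same sealing extender $E_0$), we get $M_0^1 = N_0^1$, and then the chain of implications
\begin{eqnarray*}
&& L(\Gamma^\infty_g, \bR_g) \models \varphi(u_n, A, x) \\
&&\;\;\Longrightarrow M_0^\omega[g^*] \models \text{``}\Vdash_{\Col(\omega,{<}\kappa_\infty)} \dot{DM} \models \varphi((s_m)^{M_0^\omega}, B_A^{M_0^\omega}, x)\text{''} \\
&&\;\;\Longrightarrow M_0^1[g^*] \models \text{``}\Vdash_{\Col(\omega,{<}\kappa_1)} \dot{DM} \models \varphi((s_m)^{M_0^1}, B_A^{M_0^1}, x)\text{''} \\
&&\;\;\Longrightarrow N_0^1[g^*] \models \text{``}\Vdash_{\Col(\omega,{<}\kappa_1)} \dot{DM} \models \varphi((s_m)^{N_0^1}, B_A^{N_0^1}, x)\text{''} \\
&&\;\;\Longrightarrow (L(\Hom^*,\bR^*))^{V[g*H]} \models \varphi(u_n, A^*, x)
\end{eqnarray*}
gives elementarity, where the crucial points are that the genericity iteration embeddings move indiscernibles correctly (because $(M_0^n \mid \kappa_n^M)^\# \in M_0^n$, etc.) and that the last implication uses that $N_0^\omega$, when collapsed by $H$ on the $V[g]$-side, recomputes exactly $(\Hom^*)^{V[g*H]}$ with $A^*$ represented by $B_A^{N_0^\omega}$; here one invokes Lemma \ref{lem:AkInHom*} and Lemma \ref{lem:genericallyhomSuslinGeneral} to see that each $A_i \in \Gamma^\infty_g$ interprets to $A_i^* \in (\Hom^*)^{V[g*H]}$ along $H$.

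The main obstacle I anticipate is matching the derived model taken inside $V[g*H]$ at $\eta$ with the derived model of $N_0^\omega$: in Corollary \ref{cor:dmrep} the derived model is taken at $\omega_1^{V[g]}$, which is a limit of Woodins in $N_0^\omega$ by construction, whereas here we want the derived model at an \emph{arbitrary} limit of Woodins $\eta$ of $V[g]$ with its own generic $H$. The right way to handle this, I expect, is to note that $\bR^*_H \supseteq \bR_g$ and that every $A \in \Gamma^\infty_g$ remains universally Baire in $V[g \restriction \alpha]$ for each $\alpha < \eta$ and interprets coherently; by Woodin's Derived Model Theorem (Theorem \ref{woodin: der model thm}) $(L(\Hom^*,\bR^*))^{V[g*H]} \models \AD^+$, and by the $\Sealing$ we already have in $V[g]$ together with the genericity-iteration machinery, the map $A \mapsto A^*$ extends the canonical $\Sealing$ embedding; one then checks $\powerset(\bR^*) \cap L(\Hom^*,\bR^*)^{V[g*H]} = \Hom^{*,V[g*H]}$ exactly as in the $\powerset(\bR_{g*h})$-computation in the proof of Theorem \ref{thm:SealingWoodin} (using that $\eta$ is a limit of Woodins and of ${<}\eta$-strongs). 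Granting this, elementarity of $j$ follows as above, and the uniqueness remark pins $j$ down; hence the corollary. A secondary technical point — already flagged in the excerpt after Definition \ref{def:weakblock} — is the bookkeeping of the extra collapse generics $g_i$ (the transitive collapses of $g$, or $g * (\text{restriction of }H)$, to the countable models), but this is routine given the earlier lemmas.
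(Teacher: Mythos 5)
Your proposal takes a genuinely different route from the paper, and it has a gap at exactly the point you flag yourself. The paper does not rerun the genericity-iteration machinery to prove Corollary \ref{cor:newmain}; it derives it from Corollary \ref{cor:diagram}, whose proof is entirely soft and uses only the already-established $\Sealing$ in $V[g]$. One fixes $\eta_i \nearrow \eta$, sets $H_i = H \cap \Col(\omega,{<}\eta_i)$, takes the canonical $\Sealing$ embeddings $j_{i,k} \colon L(\Gamma^\infty_{g*H_i}, \bR_{g*H_i}) \rightarrow L(\Gamma^\infty_{g*H_k}, \bR_{g*H_k})$, observes that $(L(\Hom^*,\bR^*))^{V[g*H]}$ \emph{is} the direct limit of this system (well-founded because the $j_i$ factor through it into $L(\Gamma^\infty_{g*H},\bR_{g*H})$), and checks in a short claim that the direct limit embedding sends $A$ to $A^*$. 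Taking $j^0 = j_{0,\infty}$ finishes the proof. No new iteration, no new derived model representation.

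The gap in your version is the identification of $(L(\Hom^*,\bR^*))^{V[g*H]}$ as the derived model of your second iterate $N_0^\omega$. If the $N$-side iteration is guided by enumerations of $\bR_g$ and $\Gamma^\infty_g$, as you first write, then Corollary \ref{cor:dmrep} gives you back $L(\Gamma^\infty_g,\bR_g)$, not the target model. To hit $(L(\Hom^*,\bR^*))^{V[g*H]}$ you would have to guide the iteration by enumerations of $\bR^*_H$ and $(\Hom^*)^{V[g*H]}$, objects living in the symmetric extension $V[g](\bR^*_H)$ rather than in $V[g]$ or any $V[g*H\upharpoonright\alpha]$; you would then need an analogue of Lemma \ref{lem:AkInHom*} for the $*$-interpretations $A^*$ and a recomputation of $\kappa_\infty^N$ (the analogue of Lemma \ref{lem:ComputationOmega_1}) in that setting. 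Your closing paragraph does not supply this: asserting that ``the map $A \mapsto A^*$ extends the canonical $\Sealing$ embedding'' elementarily is essentially the statement of the corollary, so the patch is circular as written. The heavy route can in principle be carried out, but since $\Sealing$ is already in hand at this stage, the direct-limit argument is both shorter and avoids all of these issues; I would recommend switching to it.
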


We can, in fact, obtain the following commuting diagram of embeddings.

\begin{corollary}\label{cor:diagram}
    Let $\kappa$ be a supercompact cardinal and suppose there is a proper class of Woodin cardinals. Let $g \subseteq \Col(\omega, 2^{2^\kappa})$ be $V$-generic and let $\eta$ be a limit of Woodin cardinals in $V[g]$. Let $H \subseteq \Col(\omega, {<}\eta)$ be $V[g]$-generic. Then there are elementary embeddings $j^0, j^1,$ and $k$ such that the following diagram commutes:
    \vspace{0.3cm}
    \begin{center}
      \begin{tikzpicture}
        \draw[->] (-0.35,-1.4) -- node[left] {$j^1$} (-0.35,-0.3) node[above]
        {$L(\Gamma^\infty_{g*H}, \bR_{g*H})$}; 

        \draw[->] (0.5,-1.75) node[left] {$L(\Gamma^\infty_g, \bR_g)$}-- node[below] {$j^0$} (2.3,-1.75) node[right] {$(L(\Hom^*, \bR^*))^{V[g*H]}$};
       
        \draw[->] (2.9,-1.45) -- node[above] {$k$} (-0.15,-0.25);   
      \end{tikzpicture}
    \end{center}
 Moreover, these embeddings move universally Baire sets canonically, i.e.,
 \vspace{0.5em}\begin{enumerate}\itemsep0.5em
     \item For $A \in \Gamma^\infty_g$, $j^0(A) = A^*$. 
     \item For $A \in \Gamma^\infty_g$, $j^1(A) = A_H$.
     \item For $A \in (\Hom^*)^{V[g*H]}$, say $A = B^*$ for some $B \in \Hom_{{<}\eta}^{V[g*H \upharpoonright \gamma]}$ for some $\gamma<\eta$, then $k(A)$ is the canonical interpretation of $B$ in $V[g*H]$.
 \end{enumerate}\vspace{0.5em}
\end{corollary}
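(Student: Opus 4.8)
\textbf{Proof proposal for Corollary \ref{cor:diagram}.}

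The plan is to reduce everything to Corollary \ref{cor:newmain} together with $\Sealing$ (Theorem \ref{thm:SealingWoodin}), since all three of these hold in $V[g]$ under the stated hypotheses. First I would obtain the embedding $j^0 \colon L(\Gamma^\infty_g, \bR_g) \rightarrow (L(\Hom^*, \bR^*))^{V[g*H]}$ directly from Corollary \ref{cor:newmain}, so that clause (1) holds by construction. Next, observe that $V[g*H]$ is a set-generic extension of $V[g]$, so $\Sealing$ applied in $V[g]$ yields the canonical $\Sealing$ embedding $j^1 \colon L(\Gamma^\infty_g, \bR_g) \rightarrow L(\Gamma^\infty_{g*H}, \bR_{g*H})$ with $j^1(A) = A_H$ for $A \in \Gamma^\infty_g$; this gives clause (2). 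The remaining task is to produce $k \colon (L(\Hom^*, \bR^*))^{V[g*H]} \rightarrow L(\Gamma^\infty_{g*H}, \bR_{g*H})$ with $k \circ j^0 = j^1$ and with $k$ moving $\Hom^*$-sets to their canonical interpretations as in clause (3).

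For $k$, the key point is that $\bR^* = \bR_{g*H}$ (every real of $V[g*H]$ appears in some $V[g*H\restriction\gamma]$ since $H$ is $\Col(\omega,{<}\eta)$-generic and $\eta$ is a limit of Woodins), and that each $A = B^* \in (\Hom^*)^{V[g*H]}$, being homogeneously Suslin in $V[g*H\restriction\gamma]$ for cofinally many $\gamma < \eta$, is universally Baire in $V[g*H]$ with canonical interpretation $B$; hence $(\Hom^*)^{V[g*H]} \subseteq \Gamma^\infty_{g*H}$, and by $\Sealing$ clause (1) in $V[g*H]$ (i.e.\ $\powerset(\bR_{g*H}) \cap L(\Gamma^\infty_{g*H},\bR_{g*H}) = \Gamma^\infty_{g*H}$) together with the Derived Model Theorem this inclusion of pointclasses is in fact an equality of the models as sets of reals once one checks the derived model computed inside $V[g*H]$ has exactly $\Gamma^\infty_{g*H}$ as its sets of reals. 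I would define $k$ on a term-model representation exactly as the unique candidate embedding: using that both $(L(\Hom^*,\bR^*))^{V[g*H]}$ and $L(\Gamma^\infty_{g*H},\bR_{g*H})$ admit sharps, set $k(\tau^{DM}(u,A,x)) = \tau^{L(\Gamma^\infty_{g*H},\bR_{g*H})}(u, B, x)$ where $B$ is the canonical interpretation of the relevant $\Hom_{{<}\eta}$-set witnessing $A$, and $u$ a tuple of indiscernibles. Elementarity of $k$ follows by the same genericity-iteration / derived-model argument used in the proof of Theorem \ref{thm:SealingWoodin}: realize both derived models as $L(\Hom^*,\bR^*)$ of a common iterate obtained by a genericity iteration making the enumeration of $\bR_{g*H}$ and $\Gamma^\infty_{g*H}$ (equivalently, the witnessing $\Hom_{{<}\eta}$-sets) generic, and push statements through the indiscernible-preserving iteration maps.

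Finally, commutativity $k \circ j^0 = j^1$: both $k \circ j^0$ and $j^1$ are elementary embeddings $L(\Gamma^\infty_g,\bR_g) \rightarrow L(\Gamma^\infty_{g*H},\bR_{g*H})$ that restrict to the identity on $\bR_g$ and send each $A \in \Gamma^\infty_g$ to $A_H$ (for $k \circ j^0$: $j^0(A) = A^*$ by clause (1), and $k(A^*) = A_H$ since the interpretation of the $\Hom_{{<}\eta}$-set witnessing $A^*$ is $A_H$); since sharps exist, such an embedding is unique, so the two agree. I expect the main obstacle to be the bookkeeping needed to verify that the single genericity iteration can be arranged to simultaneously witness the derived model representations underlying $j^0$ (from Corollary \ref{cor:newmain}), the $\Sealing$ embedding $j^1$ (from Theorem \ref{thm:SealingWoodin}), and the interpretation map $k$, all with compatible choices of extenders and indiscernibles so that the triangle literally commutes rather than merely commuting up to the unique candidate embedding — in other words, making the uniqueness arguments and the explicit term-definitions line up. The derived-model and $\Sealing$ machinery is all already in place from the preceding sections, so this should be a matter of careful assembly rather than a new idea.
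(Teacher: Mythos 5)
Your route is genuinely different from the paper's, and it has two soft spots worth flagging. The paper's proof is a pure direct limit argument: fix a sequence $(\eta_i)$ cofinal in $\eta$, let $H_i = H \cap \Col(\omega,{<}\eta_i)$, and consider the $\Sealing$ embeddings $j_{i,k} \colon L(\Gamma^\infty_{g*H_i},\bR_{g*H_i}) \rightarrow L(\Gamma^\infty_{g*H_k},\bR_{g*H_k})$ together with $j_i \colon L(\Gamma^\infty_{g*H_i},\bR_{g*H_i}) \rightarrow L(\Gamma^\infty_{g*H},\bR_{g*H})$, which already satisfy $j_i = j_k \circ j_{i,k}$. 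One then checks that $(L(\Hom^*,\bR^*))^{V[g*H]}$ \emph{is} the direct limit of this system (the key computation being that $j_{i,\infty}(B) = B^*$ for $B \in \Gamma^\infty_{g*H_i}$, since $j_{i,k}(B) = B_{g*H_k} = B^* \cap \bR_{g*H_k}$), and sets $j^0 = j_{0,\infty}$, $k = j_\infty$, and $j^1 = j_0$. Well-foundedness of the limit, commutativity, and clauses (1)--(3) then all fall out of the direct limit identities; no genericity iteration, no term calculus with sharps and indiscernibles, and no separate uniqueness argument are needed.

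Concretely, the two issues with your version are these. First, you import $j^0$ from Corollary \ref{cor:newmain}, but the paper derives Corollary \ref{cor:newmain} \emph{from} Corollary \ref{cor:diagram} ("we only prove Corollary \ref{cor:diagram} as Corollary \ref{cor:newmain} clearly follows from it"), so as written your argument is circular within the paper's development; you would need an independent construction of $j^0$, and the direct limit above is exactly that. Second, your commutativity step rests on the claim that an elementary embedding $L(\Gamma^\infty_g,\bR_g) \rightarrow L(\Gamma^\infty_{g*H},\bR_{g*H})$ fixing $\bR_g$ and sending each $A$ to $A_H$ is unique. That is not true as stated: every element of the domain has the form $\tau(u, A, x)$ with $u$ a tuple of indiscernibles, and two embeddings agreeing on all $A$ and $x$ can still disagree on where they send $u$ (just as, when $0^\#$ exists, there are many elementary self-embeddings of $L$). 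Uniqueness only holds once the action on a generating class of indiscernibles is also pinned down, and verifying that $k \circ j^0$ and $j^1$ send the same $V$-cardinals $\xi_l$ to themselves is precisely the bookkeeping you defer to the end as "the main obstacle." That alignment is the substantive content of the commutativity claim, and it is what the direct limit presentation dissolves.
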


We only prove Corollary \ref{cor:diagram} as Corollary \ref{cor:newmain} clearly follows from it.

\begin{proof}[Proof of Corollary \ref{cor:diagram}]
    In the setting of Corollary \ref{cor:diagram}, $\Sealing$ holds in $V[g]$ by Woodin's Sealing Theorem \ref{thm:SealingWoodin}, see also Section \ref{sec:SealingTheorem}. Let $\eta$ be a limit of Woodin cardinals in $V[g]$ and let $H \subseteq \Col(\omega, {<}\eta)$ be $V[g]$-generic. Fix an increasing sequence of ordinals $(\eta_i \mid 0 < i < \cf(\eta))$ cofinal in $\eta$ and let $H_i = H_{\eta_i} = H \cap \Col(\omega, {<}\eta_i)$, where $H_0$ is trivial. Then there are elementary embeddings \[ j_i \colon L(\Gamma^\infty_{g*H_i}, \bR_{g*H_i}) \rightarrow L(\Gamma^\infty_{g*H}, \bR_{g*H}) \] for $i < \cf(\eta)$ such that for $A \in \Gamma^\infty_{g*H_i}$, $j_i(A) = A_H$ and elementary embeddings \[ j_{i,k} \colon L(\Gamma^\infty_{g*H_i}, \bR_{g*H_i}) \rightarrow L(\Gamma^\infty_{g*H_k}, \bR_{g*H_k}) \] for $i<k<\cf(\eta)$ such that for $A \in \Gamma^\infty_{g*H_i}$, $j_{i,k}(A) = A_{g*H_k}$. Moreover, for $i<k<\cf(\eta)$, \[ j_i = j_k \circ j_{i,k}. \]
    Note that 
    \begin{claim}
        $(L(\Hom^*, \bR^*))^{V(\bR^*)}$ is equal to the direct limit of $L(\Gamma^\infty_{g*H_i}, \bR_{g*H_i})$ under the elementary embeddings $j_{i,k}$ for $i<k<\cf(\eta)$. 
    \end{claim}
    \begin{proof}
         Let \[ j_{i,\infty} \colon L(\Gamma^\infty_{g*H_i}, \bR_{g*H_i}) \rightarrow M \] be the direct limit embedding. Note that $M$ is well-founded as there is an elementary embedding \[ j_\infty \colon M \rightarrow L(\Gamma^\infty_{g*H}, \bR_{g*H}) \] such that $i<\cf(\eta)$, $j_i = j_\infty \circ j_{i,\infty}.$ Moreover, if $X=j_{\infty}(\Gamma^\infty_{g*H})$ and $Y=j_{\infty}^{-1}(\bR_{g*H})$ then $M=L(X, Y)$. 

         It is enough to show that $X=Hom^*$ and $Y=\bR^*$. Suppose first that $x\in Y$. We can then find $i<\cf(\eta)$ such that $x\in \rng(j_{i, \infty})$. Then it follows that $x\in \bR_{g*H_i}$, and hence $x\in \bR^*$. Similarly, if $x\in \bR^*$ then for some $i<\cf(\eta)$, $x\in \bR_{g*H_i}$ and so, $x\in Y$ as $j_{i, \infty}(x)=x$. 

         Suppose now that $A\in Hom^*$. We want to show that $A\in X$. Fix $i<\cf(\eta)$ and some $B \in \Gamma^\infty_{g*H_i}$ so that $A = B^*$. We have for every $k\in [i, \cf(\eta))$, $j_{i, k}(B)=B_{g*H_k}$. As $A=\bigcup_{k\in [i, \cf(\eta))}B_{g*H_k}$ and $j_{i, \infty}(B)=\bigcup_{k\in [i, \cf(\eta))}j_{i, k}(B)$, we get $A=j_{i, \infty}(B)$. Hence, $A\in X$ as $A=j_{\infty}^{-1}(j_i(B))$. 
         
         Suppose next that $A\in X$. We have $i<\cf(\eta)$ and $B\in \Gamma^\infty_{g*H_i}$ such that $A=j_{i, \infty}(B)$. Arguing as above, we get that $A=B^*$. 
    \end{proof}

    We are left with the following claim (which was implicitly proven above).

    \begin{claim}
        For $A \in (L(\Hom^*, \bR^*))^{V[g*H]} \cap (\powerset(\bR^*))^{V[g*H]}$ and $B \in \Gamma^\infty_{g*H_i}$ for some $i<\cf(\eta)$ with $A = B^*$, $j_{i,\infty}(B)=A$.
    \end{claim}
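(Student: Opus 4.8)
The plan is to unwind the definitions of the two relevant direct limit systems and match them up. Recall that $(L(\Hom^*,\bR^*))^{V[g*H]}$ has underlying set of reals $\bR^* = \bigcup_{i<\cf(\eta)} \bR_{g*H_i}$ and its sets of reals are exactly the $B^*$ for $B\in\Hom_{<\eta}^{V[g*H_i]}$, which under $\Sealing$ coincide with the $B^*$ for $B\in\Gamma^\infty_{g*H_i}$. Meanwhile the direct limit model $M$ along the embeddings $j_{i,k}$ has, by \L o\'s-type reasoning for these $\#$-generated models, underlying set of reals the increasing union $\bigcup_{i<\cf(\eta)}\bR_{g*H_i}$ as well (each $j_{i,k}\restriction\bR_{g*H_i}=\id$), and its sets of reals are represented by threads $(j_{i,k}(B) \mid i\le k<\cf(\eta))$ starting from some $B\in\Gamma^\infty_{g*H_i}$, i.e.\ by $(B_{g*H_k} \mid k\ge i)$. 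So the content of the Claim is that $j_{i,\infty}(B)$, which \emph{a priori} is the thread $(B_{g*H_k})_{k\ge i}$ read off inside $M$, is literally the set $A=B^*=\bigcup_{i\le k<\cf(\eta)} B_{g*H_k}$ computed in $V[g*H]$.

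First I would observe that $j_\infty\colon M\to L(\Gamma^\infty_{g*H},\bR_{g*H})$ with $j_i=j_\infty\circ j_{i,\infty}$ is elementary and, crucially, is the identity on reals (each $j_i$ is, being the canonical $\Sealing$ embedding which fixes reals), and that $j_\infty(j_{i,\infty}(B))=j_i(B)=B_H$ by the defining property of $j_i$. Next I would argue that $j_\infty$ is in fact the identity on $\powerset(\bR^*)^M$; for this it suffices to see $j_\infty$ is surjective onto sets of reals, i.e.\ that every $C\in\powerset(\bR_{g*H})\cap L(\Gamma^\infty_{g*H},\bR_{g*H})=\Gamma^\infty_{g*H}$ lies in the range of some $j_i$. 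This is the standard directed-union fact: a universally Baire set $C$ in $V[g*H]$ is, for $\eta_i$ large enough relative to trees witnessing universal Baireness of $C$, the interpretation $B_H$ of some $B\in\Gamma^\infty_{g*H_i}$ (again by $\Sealing$ applied in $V[g*H_i]$ and absoluteness of the tree representations), so $C=j_i(B)$. Combined with $j_\infty$ fixing reals and $\#$-correctness, this forces $j_\infty=\id$ on $\powerset(\bR^*)$, hence on all of $M$ (the models are constructed from $\bR^*$ together with their sets of reals). Therefore $j_{i,\infty}(B)=j_\infty(j_{i,\infty}(B))=j_i(B)=B_H$.

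Finally I would identify $B_H$ with $A=B^*$. By definition, $B^* = \bigcup_{i\le k<\cf(\eta)} B_{g*H_k}$, and by the coherence of the tree representations for the universally Baire set $B$ across the stages $V[g*H_k]$, this increasing union equals the interpretation $B_H$ of $B$ in $V[g*H]$ — indeed $p[S]^{V[g*H]}=\bigcup_k p[S]^{V[g*H_k]}$ for $\eta$-absolutely complementing trees $(S,T)$ with $p[S]=B$, since any real of $V[g*H]$ appears in some $V[g*H_k]$ and well-foundedness of branches is absolute. Putting the pieces together, $j_{i,\infty}(B)=B_H=B^*=A$, which is exactly the Claim; and the identifications $j^0=j_{0,\infty}$, $j^1=j_0$, $k=j_\infty$ then yield the commuting triangle and clauses (1)--(3) of Corollary \ref{cor:diagram}.

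The main obstacle I anticipate is the bookkeeping needed to justify that $j_\infty$ is the identity — specifically, establishing surjectivity of each relevant $j_i$ onto $\Gamma^\infty_{g*H_i}$-interpretations cleanly, which requires invoking $\Sealing$ at the intermediate stages $V[g*H_i]$ (legitimate, since $\Col(\omega,{<}\eta_i)$ is a set forcing over $V[g]$ where $\Sealing$ holds) together with the absoluteness of $\Hom_{<\eta}$-representations under the tail forcing $\Col(\omega,{<}\eta)/\Col(\omega,{<}\eta_i)$. Once that is in hand, the rest is routine direct-limit chasing; care is only needed to keep straight which embeddings fix reals versus move universally Baire sets.
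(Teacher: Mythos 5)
Your opening paragraph has the right picture (the thread $(B_{g*H_k})_{k\ge i}$ versus the union $B^*=\bigcup_k B_{g*H_k}$), but the argument you then run is not the one needed and contains a genuine error. You try to prove the Claim by first showing that $j_\infty$ (the map called $k$ in Corollary \ref{cor:diagram}) is the identity on $\powerset(\bR^*)^M$, via surjectivity of the $j_i$'s onto $\Gamma^\infty_{g*H}$, and then concluding $j_{i,\infty}(B)=j_i(B)=B_H=B^*$. This fails at two points. First, $j_\infty$ is not the identity and cannot be: the derived model has reals $\bR^*=\bigcup_{k}\bR_{g*H_k}$, which is in general a proper subset of $\bR_{g*H}$ (when $\cf(\eta)=\omega$ the collapse adds reals at stage $\eta$ itself), so for instance the singleton of a real in $\bR_{g*H}\setminus\bR^*$ is a universally Baire set of $V[g*H]$ that lies in the range of no $j_i$; your ``standard directed-union fact'' is therefore false, and if $j_\infty$ were the identity the three models in the diagram would coincide, which is exactly what the corollary is not asserting. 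Second, and relatedly, your final identification $B_H=B^*$ is false in general: $B^*=B_H\cap\bR^*$, and the whole point of distinguishing $j^0=j_{i,\infty}$ from $j^1$ and $k$ is that $j_{i,\infty}(B)=B^*$ while $j_i(B)=B_H$ are different sets.

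The correct argument is the short element-chase that your first paragraph already suggests and that the paper gives: if $x\in j_{i,\infty}(B)$, then since every real of the direct limit $M$ appears at some finite stage and the maps $j_{i,k}$ fix reals, there is $k\ge i$ with $x\in\bR_{g*H_k}$ and $x\in j_{i,k}(B)$; but $j_{i,k}(B)=B_{g*H_k}=A\cap\bR_{g*H_k}$ by the defining property of the $\Sealing$ embeddings and the coherence of the interpretations of $B$, so $x\in A$. The converse is symmetric. No appeal to $j_\infty$ being the identity, or to surjectivity of the $j_i$'s, is needed or available.
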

    \begin{proof}
        Let $x \in j_{i,\infty}(B)$. Then, for some $k<\cf(\eta)$, $k \geq i$, $x \in \bR_{g*H_k}$ and $x \in j_{i,k}(B)$. But $j_{i,k}(B) = B_{g*H_k} = A \cap \bR_{g*H_k}$, so $x \in A$. By an analogous argument, if $x \in A$, then $x \in j_{i,\infty}(B)$.
    \end{proof}

    Therefore, $j^1 = j_0$ obtained from $\Sealing$, $j^0 = j_{0,\infty}$, and $k = j_\infty$ are as desired.
\end{proof}

\section{Generically Correct Sealing}\label{sec:GCSealing}

We start with the definition of $\GCSealing$ and show that it implies $\Sealing$. Then, we will argue that our proof of Theorem \ref{thm:newmain} in fact shows that $\GCSealing$ holds in $V[g]$ (assuming the hypothesis of Theorem \ref{thm:newmain}). The reader may wish to consult Definition \ref{def:tamed} and Definition \ref{def:saturated measures}.

\begin{definition}\label{def:gcsealing}
    Assume there is a class of Woodin cardinals. Then we say $\GCSealing$ holds if for all inaccessible cardinals $\l<\chi$ and for every $(\chi, \l^+, \l)$-saturated and infinitely tamed $W\in V_\chi$  the following holds:
\vspace{0.5em}\begin{enumerate}\itemsep0.5em
        \item Whenever $g$ is ${<}\l$-generic and $\bar \mu \subseteq W$ is a homogeneity system in $V[g]$, $S_{\bar \mu_g} \in \Gamma^\infty_g$.\label{cl:1_def:gcsealing}
        \item For a club of countable substructures $X \prec V_{\chi}$ with $\{ \l, W\}\in X$, letting $\pi_X \colon N_X \rightarrow V_\chi$ be the Mostowski collapse of $X$, whenever $G\in V$ is ${<}\pi_X^{-1}(\l)$-generic over $N_X$, there is an elementary embedding \[ j \colon L((\Gamma^\infty_G)^{N_X[G]}, (\bR_G)^{N_X[G]}) \rightarrow L(\Gamma^\infty, \bR)\] such that for every homogeneity system $\bar \mu \subseteq  \pi_X^{-1}(W)$ with $\bar \mu\in N_X[G]$, \[ j(S_{\bar \mu}^{N_X[G]}) = S_{\pi_X\shortpwimg\bar\mu}. \]\label{cl:2_def:gcsealing}
    \end{enumerate}
\end{definition}

$\GCSealing$ is a consequence of Woodin's Strong Tower Sealing. Woodin obtains Strong Tower Sealing at supercompact cardinals above an extendible cardinal $\delta$ in generic extensions collapsing $2^\delta$ to be countable using stationary tower forcing, see \cite{WoodinLongExtender}.

The following theorem is a consequence of our proof of Theorem \ref{thm:newmain}. 

\begin{theorem}\label{thm:GCSealing}
    Let $\kappa$ be a supercompact cardinal and let $g$ be $\Col(\omega, 2^{2^\kappa})$-generic over $V$. Suppose that there is a proper class of Woodin cardinals. Then $\GCSealing$ holds in $V[g]$.
\end{theorem}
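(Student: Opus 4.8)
\textbf{Proof plan for Theorem \ref{thm:GCSealing}.}
The plan is to derive $\GCSealing$ in $V[g]$ directly from the construction in Section \ref{sec:genitdmrep}, using the fact that everything done there is, up to a supercompact embedding, a statement about countable substructures and homogeneity systems. Fix in $V[g]$ a sequence of Woodin cardinals $\delta_0 < \delta_1 < \delta_2$ and a club of countable $X \prec H_{\delta_2^+}^{V[g]}$. Let $\pi_X \colon N_X \to H_{\delta_2^+}^{V[g]}$ be the collapse and let $G$ be ${<}\pi_X^{-1}(\delta_0)$-generic over $N_X$. Since $\kappa$ is supercompact in $V$ (and this is preserved to $V[g]$ up to the relevant region, or rather: we work with the supercompact embedding $j\colon V\to V^*$ already fixed in Section \ref{sec:genitdmrep}), I would first arrange that the $\delta_i$ are images of the parameters used to set up the flipping functions $\tilde f_1,\tilde f_2, f_0, f_1$; more precisely, re-run the construction of Section \ref{sec:genitdmrep} with $N_X$ (rather than a small $V_\chi$-substructure) playing the role of the base model $M_0$ of the weak block, and with the generic $G$ replacing $g_0$. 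The point of the hypothesis that $X$ ranges over a club is exactly to guarantee that $N_X$, together with the homogeneity systems $\bar\mu$ for each $A \in (\Gamma^\infty_G)^{N_X[G]}$ and the restrictions of the flipping functions, is a legitimate base for a $(\ldots)$-weak block for $V^*[g]$ at $\kappa$ in the sense of Definition \ref{def:weakblock}: the club of $X$ for which the needed reflection holds is exactly the club of $X$ closed under the Skolem functions computing the flipping functions and the trees witnessing homogeneity.

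The second step is to run the genericity iteration of Section \ref{sec:genitdmrep} on this weak block, making all reals and all homogeneity systems for members of $(\Gamma^\infty_G)^{N_X[G]}$ generic. By Corollary \ref{cor:dmrep} (applied with the trivial continuation, i.e.\ $h$ trivial) the resulting direct limit $M^\omega$ satisfies $L(\Hom^*,\bR^*)^{M^\omega[H]} = L(\Gamma^\infty_G, \bR_G)^{N_X[G]}$ for a suitable $\Col(\omega,{<}\kappa_\infty)$-generic $H$, where $\kappa_\infty = \omega_1^{V[g]}$ by Lemma \ref{lem:ComputationOmega_1}. Simultaneously running the construction with the full $H_{\delta_2^+}^{V[g]}$ (equivalently, with $V[g]$ itself, as in the remark after Corollary \ref{cor:dmrep}) gives $L(\Gamma^\infty_g,\bR_g) = L(\Hom^*,\bR^*)^{N^\omega[H']}$. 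Because $N_X$ embeds into $H_{\delta_2^+}^{V[g]}$ via $\pi_X$ and because — as in the proof of Theorem \ref{thm:SealingWoodin} in Section \ref{sec:SealingTheorem} — the genericity iterations on the collapsed and uncollapsed sides can be arranged to use \emph{the same} sealing extenders $E_i$ (since $N_X$ and $H_{\delta_2^+}^{V[g]}$ agree on $\powerset(\kappa)$ and $\pi_X \restriction \powerset(\kappa) = \id$), one gets a single realization map from the collapsed tower into the uncollapsed one. Tracking indiscernibles through this realization exactly as in the displayed chain of implications at the end of Section \ref{sec:SealingTheorem} produces the elementary embedding
\[
j \colon (L(\Gamma^\infty_G, \bR_G))^{N_X[G]} \rightarrow L(\Gamma^\infty_g, \bR_g)^{V[g]}
\]
together with its restriction to the sharps, and the formula for $j$ on a set $A = (S_{\bar\mu})^{N_X[G]}$ reads $j(A) = S_{\pi_X \shortpwimg \bar\mu}$ because the realization agrees with $\pi_X$ on the measures of $\bar\mu$ (these sit above $\kappa$ and are fixed, up to $\pi_X$, by the copy maps). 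This is precisely the statement of Definition \ref{def:gcsealing} with $V$ there replaced by $V[g]$, and since we are proving $\GCSealing$ \emph{in} $V[g]$, that is what is wanted.

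The main obstacle I expect is the bookkeeping needed to show that the two genericity iterations — the one on $N_X$ and the one on $H_{\delta_2^+}^{V[g]}$ (or $V[g]$) — can be driven by \emph{compatible} generic enumerations and the \emph{same} sequence of uB-preserving sealing extenders, so that a single realization map exists and correctly transports the homogeneity system $\bar\mu$ to $\pi_X \shortpwimg \bar\mu$. This is the analogue of the ``$M_0^1 = N_0^1$'' arrangement in the proof of Theorem \ref{thm:SealingWoodin}, but here it must be carried out uniformly over the club of $X$ and with $\pi_X$ not the identity on all of $N_X$, only on $\powerset(\kappa)$; one must check that the club of good $X$ — those for which $N_X$ contains the preimages under $\pi_X$ of all relevant flipping functions and for which the Skolem hull closure needed in Definition \ref{def:weakblock}\eqref{eq:homsysteminrngsigma_iweakblock} holds — really is a club, and that the resulting realization $\iota^H$ of the copied ultrapower (as in Lemma \ref{lem:representationAinUltM*E}) composes with the collapse to give exactly the map $A \mapsto S_{\pi_X \shortpwimg \bar\mu}$. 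Everything else — well-foundedness of the direct limits via realizations into $V^*_{j(\chi)}$, the derived-model computation, and the indiscernible-tracking argument for elementarity — is a routine adaptation of Sections \ref{sec:genitdmrep} and \ref{sec:SealingTheorem}.
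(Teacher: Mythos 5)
Your overall architecture is the paper's: build derived-model representations on both the collapsed side ($N_X[G]$) and the uncollapsed side ($V[g]$), match the two towers at the start, and run the indiscernible-tracking argument from the proof of Theorem \ref{thm:SealingWoodin}. But the step you yourself flag as the main obstacle --- how the two towers are matched --- is resolved incorrectly. $N_X$ is a countable Mostowski collapse, so it does not agree with $H_{\delta_2^+}^{V[g]}$ on $\powerset(\kappa)$: even though $\pi_X$ fixes $\kappa$ and every element of $\powerset(\kappa)^{N_X}$ on a club of $X$ (since $\kappa$ is countable in $V[g]$), the set $\powerset(\kappa)^{N_X}$ is a countable proper subset of $\powerset(\kappa)^{V}$, so an extender arising in (a block model inside) $N_X[G]$ with critical point $\kappa$ measures only countably many sets and is not a $V$-extender. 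Hence the two genericity iterations cannot use \emph{the same} sealing extenders, and no choice of club repairs this. Relatedly, the collapsed-side tower only needs to absorb the reals and uB sets of the countable model $N_X[G]$, so its $\kappa_\infty$ is $\omega_1^{N_X[G]}$, a countable ordinal of $V[g]$, not $\omega_1^{V[g]}$ as you assert via Lemma \ref{lem:ComputationOmega_1}; your displayed identity $L(\Hom^*,\bR^*)^{M^\omega[H]}=(L(\Gamma^\infty_G,\bR_G))^{N_X[G]}$ with $H$ generic for $\Col(\omega,{<}\omega_1^{V[g]})$ cannot hold, since the left side would then contain all reals of $V[g]$.

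The correct matching is through $\pi_X$ rather than by identity, and this is what the paper does. On the collapsed side take an ordinary block in the sense of Definition \ref{def:block} for $N_X[G]$, whose genericity iterates are realized directly into $(H_{\delta_2^+})^{V[g]}$ --- no supercompactness embedding or weak block is needed on that side --- and let $\bar E_0$ be its first sealing extender. On the uncollapsed side start the weak block for $V^*[g]$ with first model $M_0=\pi_X(N_0)$ and apply $\pi_X(\bar E_0)$ as the first sealing extender. Elementarity of $\pi_X$ is what transports the collapsed configuration to a legitimate uncollapsed one and sends the homogeneity system $\bar\mu$ to $\pi_X\shortpwimg\bar\mu$, which is exactly what the clause $j(A)=S_{\pi_X\shortpwimg\bar\mu}$ of Definition \ref{def:gcsealing} requires. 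With that correction (and with the candidate $j$ first written down via terms in indiscernibles using the sharps, so that elementarity is the only thing to check), your plan goes through as in Section \ref{sec:SealingTheorem}.
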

\begin{proof}
Let $\l<\chi$ be inaccessible cardinals in $V[g]$ and $W_0\in V_\chi[g]$ be a $(\chi, \l^+, \l)$-saturated and infinitely tamed set of measures. We can then find $W\in V$ such that $W_0\subseteq W$ and in $V$ (and hence in $V[g]$), $W$ is a $(\chi, \l^+, \l)$-saturated and infinitely tamed set of measures. It is enough to verify Clauses \eqref{cl:1_def:gcsealing} and \eqref{cl:2_def:gcsealing} in the definition of $\GCSealing$ for $(\l, \chi, W)$.

Clause \eqref{cl:1_def:gcsealing} follows from Lemma \ref{lem:homsysteminWg} and is also proven in \cite{La04} (e.g. see \cite[Lemma 3.4.11]{La04}). We verify Clause \eqref{cl:2_def:gcsealing}. Let $X \prec V_\chi$ be a countable substructure. Let $\pi_X \colon N_X \rightarrow V_\chi[g]$ be the Mostowski collapse of $X$. In what follows we will use subscript $X$ to denote the $\pi_X$-preimages of various objects, and we will use this convention with other substructures as well. Let $G$ be ${<}\l_X$-generic over $N_X$. Then the elementary embedding \[ j \colon (L(\Gamma^\infty_G, \bR_G))^{N_X[G]} \rightarrow (L(\Gamma^\infty, \bR))^{V[g]} \] in the statement of $\GCSealing$ is obtained similarly as in the proof of Theorem \ref{thm:SealingWoodin} in Section \ref{sec:SealingTheorem}. Notice that because $$((\Gamma^\infty_G, \bR_G)^\#)^{N_X[G]}=((\Gamma^\infty)^{N_X[G]}, \bR^{N_X[G]})^\#,$$ to prove the existence of an embedding as in Clause \eqref{cl:2_def:gcsealing} of Definition \ref{def:gcsealing}, it is enough to verify the following statement in $V[g]$:\\\\
(a) For a club of $X\prec V_\chi[g]$, if $G$ is ${<}\l_X$-generic, $\bar \mu \subseteq W_X$ is a homogeneity system with $\bar \mu\in N_X[G]$, $\phi$ is a formula and  $c_0,..., c_n$ are constants for indiscernibles, then
\[ (\phi, S_{\bar \mu}^{N_X[G]}, c_0,..., c_n)\in ((\Gamma^\infty_G, \bR_G)^\#)^{N_X[G]}  \iff  (\phi, S_{\pi_X\pwimg \bar \mu}, c_0,..., c_n)\in (\Gamma^\infty, \bR)^\#. \]\vskip 0.3cm    
 Let $\theta'>\theta>\chi$ be inaccessible cardinals and $Y\prec V_{\chi+1}$ be such that $Y\in V$, $\{\chi, W\}\in Y$, $V_{\kappa+1}\subseteq Y$, and $\card{Y}^V=\card{V_{\kappa+1}}^V$. Fix some $j: V\rightarrow P'$ that witnesses the $\card{V_{\chi+1}}$-supercompactness of $\k$ in $V$. To show (a) it is enough to verify (b) in $V[g]$, where (b) is the following statement:\\\\
(b) For all $X\prec V_{\theta'}[g]$ such that $\{Y, \chi, \l, \theta, j\rest V_\theta\}\in X$, if $G$ is ${<}\l_X$-generic over $N_X[G]$, $\bar \mu \subseteq W_X$ is a homogeneity system with $\bar \mu\in N_X[G]$, $\phi$ is a formula and  $c_0,..., c_n$ are constants for indiscernibles, then
\[ (\phi, S_{\bar \mu}^{N_X[G]}, c_0,..., c_n)\in ((\Gamma^\infty_G, \bR_G)^\#)^{N_X[G]}  \iff  (\phi, S_{\pi_X\pwimg \bar \mu}, c_0,..., c_n)\in (\Gamma^\infty, \bR)^\# \] \vskip 0.3cm   

We first show that (a) follows from (b). Assume (b). Let $T\subseteq \omega\times [V_\chi[g]]^{<\omega}$ be the theory of $V_{\theta'}$ with parameters from $\{Y, \chi, \l, \theta, j\rest V_\theta\}$. More precisely, $T$ consists of pairs $(\phi, \vec{a})$ where $\phi$ is a formula and $\vec{a}$ is a finite sequence of members of $V_\chi[g]$ such that  $$V_{\theta'}[g]\models \phi[\vec{a}, Y, \chi, \l, \theta, j\rest V_\theta].$$
Suppose now that $X'\prec (V_\chi[g], T, \in)$. We claim that (a) holds for any such $X'$. Let $X$ be the Skolem hull of $X'\cup\{Y, \chi, \l, \theta, j\rest V_\theta\}$ computed in $V_{\theta'}[g]$. We claim that $X\cap V_\chi[g]=X'$. Suppose that $\vec{a}\in [X']^{<\omega}$ and $b\in V_\chi[g]$ is such that for some formula $\phi$, $b$ is the unique set such that 
$$V_{\theta'}[g]\models \phi[b, \vec{a}, Y, \chi, \l, \theta, j\rest V_\theta].$$
It then follows that $b$ is the unique member of $V_{\chi}[g]$ such that $(\phi, b^\frown \vec{a})\in T$. Since $\vec{a}\in X'$, we have that $b\in X'$. We can now apply (b) to $X$ and get (a) for $X'$.

 Fix then an $X$ as in (b), and also fix $G\in V[g]$ that is ${<}\l_X$-generic over $N_X$, a homogeneity system $\bar \mu \subseteq W_X$ with $\bar \mu \in N_X[G]$, a formula $\phi$ and $c_0, ..., c_n$ constants for indiscernibles. We set $P=j(V_\theta)$. Following the proof of Theorem \ref{thm:SealingWoodin}, we can find, in $N_X[G]$, a $(P_X, j_X(\chi), \l_X, \{j_X(\k), j_X(W_X)\})$-realizable rock\footnote{See Definition \ref{def:relrock}. We are abusing terminology here as $P_X$ is not a class, but clearly this is irrelevant.} $\sfa$ and $k\subseteq \Col(\omega, {<}\k^{\sfa})$ such that for some $\bar \nu\in M^\sfa[k]$, \\\\
(1.1) in $N_X[G]$, $\sfa$ b-catches $(S_{\bar \mu})^{N_X[G]}$ relative to $(P_X, j_X(\chi), \l_X, \{j_X(\k), j_X(W_X)\})$ (see Definition \ref{def:catchingpair}),\\
(1.2) $k$ is $M^\sfa$-generic,\\
(1.3) in $N_X[G]$, $S_{\sigma^{\sfa}\pwimg \bar \nu}=S_{\bar \mu}$,\\
(1.4) $\sigma^\sfa\pwimg \bar \nu\subseteq j_X(W)$, and\\
(1.5) $(\phi, (S_{\bar \nu})^{M^\sfa[k]}, c_0,..., c_n)\in ((Hom^*)^\#)^{M^\sfa[k]}$ if and only if $(\phi, S_{\bar \mu}, c_0, ..., c_n)\in (\Gamma^\infty_G)^\#$.\\\\
Such an $\sfa$ exists in any ${\sf{DM}}$-sequence that is supported by $$(j_X[Y_X], j_X[W_X], j_X(W_X), M_X, j_X(\chi), j_X(\k), \l_X, l)$$ where $l\subseteq \Col(\omega, \Gamma^\infty_G)$ is $N_X$-generic collapsing $\Gamma^\infty_G$ to be countable. Notice now that letting $\sfb=(\pi_X\circ \sigma^\sfa, M^\sfa, \k^\sfa)$, we have that\\\\
(2.1) $\sfb$ b-catches $S_{\pi_X\circ \sigma^\sfa\pwimg \bar \nu}$ relative to $(P, j(\chi), \l, \{j(\k), j(W_X)\})$,\\
(2.2) $k$ is $M^\sfa$-generic,\\
(2.3) $\pi_X\circ \sigma^\sfa\pwimg \bar \nu\subseteq j(W)$, and\\
(2.4) $(\phi, (S_{\bar \nu})^{M^\sfb[k]}, c_0,..., c_n)\in ((Hom^*)^\#)^{M^\sfb[k]}$ if and only if $(\phi, S_{\pi_X\circ \sigma^{\sfa}\pwimg\bar \nu}, c_0, ..., c_n)\in (\Gamma^\infty)^\#$.\\\\
(2.1) is a consequence of Lemma \ref{lem:keylemmatamed}. To finish it is then enough to show that the following statement holds:\\\\
(c) $S_{\pi_X\pwimg\bar \mu}= S_{\pi_X\circ \sigma^{\sfa}\pwimg\bar \nu}$.\\\\
We have that in $N_X[G]$\\\\
(3) $S_{j_X\pwimg \bar \mu}=S_{\bar \mu}$ (see Lemma \ref{lem:homsysteminWg}).\\\\
Clearly (3) then implies that in $N_X[G]$,\\\\
(4) $S_{j_X\pwimg \bar \mu}=S_{\sigma^\sfa \pwimg \bar \nu}$.\\\\
We also have that in $V[g]$,\\\\
(5) $S_{\pi_X\pwimg \bar \mu}=S_{j\circ \pi_X \pwimg \bar \mu}=S_{\pi_X\circ j_X \pwimg \bar \mu}$. \\\\
It is then enough to show that \\\\
(d) $S_{\pi_X\circ j_X \pwimg\bar \mu}= S_{\pi_X\circ \sigma^{\sfa}\pwimg\bar \nu}$.\\\\
(d), however, easily follows from Lemma \ref{lem:equivalence}.
\end{proof}

Finally, we observe that $\Sealing$ is a consequence of $\GCSealing$.

\begin{lemma}\label{lem:gcsealingsealing}
    Assume there is a proper class of inaccessible limits of Woodin cardinals. Then $\GCSealing$ implies $\Sealing$.
\end{lemma}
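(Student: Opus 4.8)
The plan is to derive both clauses of $\Sealing$ directly from $\GCSealing$ by an elementarity/reflection argument. Fix a generic $g$ over $V$ and work in $V[g]$; by a standard reflection, it suffices to establish clauses \eqref{eq:Sealing1} and \eqref{eq:Sealing2} of Definition \ref{def:Sealing} for an arbitrary further generic $h$ over $V[g]$. First I would fix a triple of Woodin cardinals $\delta_0 < \delta_1 < \delta_2$ with the poset witnessing $h$ of size $<\delta_0$, and take a countable $X \prec H_{\delta_2^+}$ in the given club with $\pi_X \colon N_X \to H_{\delta_2^+}$ the inverse of the collapse, and let $G$ be $N_X$-generic for the relevant collapse $\Col(\omega,{<}\pi_X^{-1}(\delta_0))$ realizing (the collapse of) $h$ — here one uses that $N_X$ is countable and hence such $G$ exists in $V[g]$, and that $\pi_X^{-1}(\delta_0)$ is Woodin in $N_X$. $\GCSealing$ then supplies an elementary $j \colon (L(\Gamma^\infty_G,\bR_G))^{N_X[G]} \to L(\Gamma^\infty,\bR)^{V[g]}$ moving uB sets to the canonical $S_{\pi_X\shortpwimg\bar\mu}$.

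For clause \eqref{eq:Sealing1}, the point is that $\AD^+$ and the statement $\powerset(\bR) \cap L(\Gamma^\infty,\bR) = \Gamma^\infty$ both hold in $(L(\Gamma^\infty_G,\bR_G))^{N_X[G]}$: in $N_X[G]$ there are still a proper class (or enough) of Woodin cardinals — really, cofinally many Woodin cardinals of $N_X$ below $\mathrm{Ord}$ — so $L(\Gamma^\infty_G,\bR_G)^{N_X[G]}$ is (essentially) a derived model and hence satisfies $\AD^+$, and the full-powerset statement follows since a limit of Woodins is also a limit of $<$-strongs in $N_X$. Then pulling these statements along the elementary $j$ gives them in $L(\Gamma^\infty,\bR)^{V[g]}$; but since this argument runs for $V[g]$ itself in place of $V[g*h]$ (take $h$ trivial, or rather re-run with $H_{\delta_2^+}^{V[g*h]}$), one concludes $L(\Gamma^\infty_{g*h},\bR_{g*h}) \models \AD^+ \wedge \powerset(\bR_{g*h}) \cap L(\Gamma^\infty_{g*h},\bR_{g*h}) = \Gamma^\infty_{g*h}$ for every set generic $g*h$ over $V$, i.e.\ clause \eqref{eq:Sealing1}.

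For clause \eqref{eq:Sealing2}, given consecutive generics $g*h$ and $g*h*h'$, I would apply the paragraph above twice inside $V[g*h*h']$: choose Woodins $\delta_0<\delta_1<\delta_2$ above the sizes of the posets for $h$ and $h'$, pick $X \prec H_{\delta_2^+}^{V[g*h*h']}$ in the club, and let $G_0 \subseteq G_1$ be $N_X$-generic so that (collapses of) $h$ and $h*h'$ are absorbed, with $G_0$ further generic inside $N_X[G_0]$ over... more precisely, arrange $G_0$ below $\pi_X^{-1}(\delta_0)$ realizing $h$ and $G_1$ below $\pi_X^{-1}(\delta_1)$ realizing $h*h'$, with $\Gamma^\infty_{G_0}$ countable in $N_X[G_1]$. $\GCSealing$ gives elementary embeddings $j_0\colon L(\Gamma^\infty_{G_0},\bR_{G_0})^{N_X[G_0]} \to L(\Gamma^\infty_{g*h},\bR_{g*h})$ and $j_1\colon L(\Gamma^\infty_{G_1},\bR_{G_1})^{N_X[G_1]} \to L(\Gamma^\infty_{g*h*h'},\bR_{g*h*h'})$, each moving uB sets canonically to the appropriate $S_{\pi_X\shortpwimg\bar\mu}$. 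Inside $N_X$ one has the internal $\Sealing$-type embedding $i\colon L(\Gamma^\infty_{G_0},\bR_{G_0})^{N_X[G_0]} \to L(\Gamma^\infty_{G_1},\bR_{G_1})^{N_X[G_1]}$ — this is Woodin's generic absoluteness for $L(A,\bR)$ applied inside $N_X$, valid since $N_X$ has enough Woodins — sending $\bar A \mapsto \bar A_{G_1}$. The desired embedding is $j = j_1 \circ i \circ j_0^{-1}$; one checks $j$ is well-defined (both $j_0$ and $j_1$ are onto the respective $L(\Gamma^\infty,\bR)$'s by the sharp-uniqueness argument from the proof of Theorem \ref{thm:SealingWoodin}, so $j_0^{-1}$ makes sense on all of $L(\Gamma^\infty_{g*h},\bR_{g*h})$) and that for $A \in \Gamma^\infty_{g*h}$ the canonical-interpretation bookkeeping yields $j(A) = A_{h'}$, using that $\pi_X$, $i$, and the interpretation maps all commute on homogeneity systems.

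The main obstacle I expect is the surjectivity and well-definedness of $j_0$ and $j_1$ onto the ambient $L(\Gamma^\infty,\bR)$, together with the verification that the composite $j_1 \circ i \circ j_0^{-1}$ genuinely sends $A$ to $A_{h'}$ and not to some other interpretation. Surjectivity should follow, as in the proof of Theorem \ref{thm:SealingWoodin}, from the existence and uniqueness of $(\Gamma^\infty,\bR)^\#$: the range of $j_0$ is an elementary substructure of $L(\Gamma^\infty_{g*h},\bR_{g*h})$ containing all reals (since $\pi_X$ fixes countably coded reals below the Woodins, and all of $\bR_{g*h}$ is absorbed) and all uB sets of the form $S_{\pi_X\shortpwimg\bar\mu}$, which exhausts $\Gamma^\infty_{g*h}$ by clause \eqref{eq:Sealing1} applied to $g*h$; the same for $j_1$. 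The commutation of $j$ with interpretation then reduces to the assertion that, for a uB set $A \in \Gamma^\infty_{g*h}$ with homogeneity system $\mu$, chasing $A$ back through $j_0$ to $\bar A \in N_X[G_0]$, forward through $i$ to $\bar A_{G_1}$, and out through $j_1$, reproduces $A_{h'}$ — this is where one must be careful that the homogeneity-system representations in $N_X$ interpret coherently, which is exactly the content of $\GCSealing$'s clause on $j(A) = S_{\pi_X\shortpwimg\bar\mu}$ combined with Woodin's internal uB-interpretation theory in $N_X$.
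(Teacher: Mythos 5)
Your high-level instinct — reflect into a countable hull $N_X$ that absorbs both generics and use the $\GCSealing$ embeddings to compare the two small models — is the right one and is what the paper does, but the way you assemble the embeddings does not work, for several concrete reasons.

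First, $\GCSealing$ is assumed only in $V$, and its embeddings target $L(\Gamma^\infty,\bR)$ \emph{of the model in which $\GCSealing$ holds}. Your $j_0$ and $j_1$ are supposed to land in $L(\Gamma^\infty_{g*h},\bR_{g*h})$ and $L(\Gamma^\infty_{g*h*h'},\bR_{g*h*h'})$, with $X \prec H_{\delta_2^+}^{V[g*h*h']}$; that requires $\GCSealing$ in $V[g*h]$ and $V[g*h*h']$, which is not part of the hypothesis (and its preservation under forcing is not obvious). Second, your internal map $i\colon L(\Gamma^\infty_{G_0},\bR_{G_0})^{N_X[G_0]} \to L(\Gamma^\infty_{G_1},\bR_{G_1})^{N_X[G_1]}$ is exactly a $\Sealing$ embedding for $N_X$; Woodin's generic absoluteness theorem applies to $L(A,\bR)$ for a single universally Baire $A$, not to $L(\Gamma^\infty,\bR)$, so invoking it here is circular. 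Third, $j_0$ cannot be surjective: $N_X[G_0]$ is a countable model, so it has only countably many reals, whereas $\bR_{g*h}$ is uncountable; hence $j_0^{-1}$ is undefined on almost all of $L(\Gamma^\infty_{g*h},\bR_{g*h})$ and the composite $j_1\circ i\circ j_0^{-1}$ is not a total map. (The "all of $\bR_{g*h}$ is absorbed" claim confuses a single collapse generic over a countable hull with an $\omega_1$-length genericity iteration.) Finally, your argument for Clause \eqref{eq:Sealing1} assumes that $L(\Gamma^\infty_G,\bR_G)^{N_X[G]}$ is "essentially a derived model" and so satisfies $\AD^+$; but $N_X$ need not have a limit of Woodin cardinals (only $\bar\delta_0<\bar\delta_1<\bar\delta_2$ are guaranteed), and even where a derived model exists, identifying $L(\Gamma^\infty,\bR)$ with $L(\Hom^*,\bR^*)$ is precisely the hard content of Theorem \ref{thm:newmain}, proved under much stronger hypotheses — so this step begs the question.

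The paper's proof avoids all of this. Clause \eqref{eq:Sealing1} is not proved directly: it follows from Clause \eqref{eq:Sealing2} by a known implication. For Clause \eqref{eq:Sealing2}, the candidate embedding $j$ is defined term-by-term from $(\Gamma^\infty,\bR)^\#$ (so totality and well-definedness are free), and its elementarity is verified statement-by-statement: one reflects the forcing assertion $\Vdash_{\bP_1}\varphi$ into $N_X$ via $\pi_X$, realizes both $\bar h_1$ and $\bar h_1 * \bar h_2$ inside $V$ (possible since $N_X$ is countable), and applies $\GCSealing$ \emph{twice in $V$}, obtaining two embeddings $j_1, j_2$ into the single target $L(\Gamma^\infty,\bR)^V$ that send the respective interpretations of $S_{\bar\nu}$ to the same set $S_{\pi_X\shortpwimg\bar\nu}$. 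Truth then transfers from $N_X[\bar h_1]$ up to $L(\Gamma^\infty,\bR)^V$ and back down to $N_X[\bar h_1 * \bar h_2]$, and elementarity of $\pi_X$ lifts the conclusion to $\Vdash_{\bP_1*\bP_2}\varphi$. No inverse of a $\GCSealing$ embedding and no internal $\Sealing$ embedding for $N_X$ is ever needed.
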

\begin{proof}
    Suppose there is a proper class of Woodin cardinals and $\GCSealing$ holds in $V$. Let $\bP_1 \in V$ and $\bP_2 \in V^{\bP_1}$ be partial orders and let $h_1$ be $\bP_1$-generic over $V$ and $h_2$ be $(\bP_2)_{h_1}$-generic over $V[h_1]$. Let $\l$ be an inaccessible cardinal such that $\mathbb{P}_1*\mathbb{P}_2\in V_\l$. Let $\chi$ and $W$ be such that $\chi$ is an inaccessible cardinal, $W\in V_\chi$ and $W$ is an infinitely tamed $(\chi, \l^+, \l)$-saturated set of measures. 
    
    We want to show that Clause~\eqref{eq:Sealing2} in Definition \ref{def:Sealing} holds, i.e., that there is an elementary embedding \[ j \colon L(\Gamma^\infty_{h_1}, \bR_{h_1}) \rightarrow L(\Gamma^\infty_{h_1 * h_2}, \bR_{h_1 * h_2}) \] such that for every $A \in \Gamma^\infty_{h_1}$, $j(A) = A_{h_2}$. The derived model theorem (see Theorem \ref{woodin: der model thm}) implies that assuming $\Sealing$ and a class of inaccessible limits of Woodin cardinals, Clause \eqref{eq:Sealing2} implies Clause \eqref{eq:Sealing1} in Definition \ref{def:Sealing}.

    Fix an increasing sequence $(\xi_l \mid l < \omega)$ of $V$-cardinals that are in the interval $(\l, \chi)$. Recall that these are uniform indiscernibles for $L(\Gamma^\infty_{h_1}, \bR_{h_1})$ and $L(\Gamma^\infty_{h_1*h_2}, \bR_{h_1*h_2})$.
    It suffices to show that for any natural number $m$, real $x \in \bR_{h_1}$, and set $A \in \Gamma^\infty_{h_1}$, 
     if $\varphi$ is an arbitrary formula, then \[ L(\Gamma^\infty_{h_1}, \bR_{h_1}) \models \varphi[A, x, u_m] \Longrightarrow L(\Gamma^\infty_{h_1*h_2}, \bR_{h_1*h_2}) \models \varphi[A_{h_2}, x, u_m], \] where $u_m = (\xi_l \mid l < m)$.

     Fix some $A \in \Gamma^\infty_{h_1}$ and let $\dot{\bar\mu}$ be a $\bP_1$-name such that $(\dot{\bar\mu})_{h_1}$ is a homogeneity system for $A$ in $V[h_1]$. That means, we aim to show that \[ \Vdash_{\bP_1} L(\Gamma^\infty_{h_1}, \bR_{h_1}) \models \varphi[S_{\dot{\bar\mu}}, \dot x, u_m] \, \Longrightarrow \,\, \Vdash_{\bP_1*\bP_2} L(\Gamma^\infty_{h_1*h_2}, \bR_{h_1*h_2}) \models \varphi[S_{\dot{\bar\mu}}, \dot x, u_m], \] where $\dot x$ is a $\bP_1$-name such that $(\dot x)_{h_1} = x$ and we confuse $h_1$ and $h_2$ with their standard names as well as $\dot x$ with the standard $\bP_1*\bP_2$-name such that $(\dot x)_{h_1*h_2} = x$.

     Let $X \prec V_\chi$ be a countable substructure such that $\{ \bP_1 * \bP_2, \dot{\bar\mu}, \dot x, \l, W\} \cup \{ \xi_l \mid l < \omega \} \subset X$. Let \[ \pi_X \colon N_X \rightarrow V_\chi \] be the Mostowski collapse of $X$ and let $(\bar\bP_1 * \bar\bP_2, \dot{\bar\nu}, \dot{\bar x}), \bar \l, \bar W, (\bar\xi_l \mid l < \omega)$ be the respective collapses such that $\pi_X(\bar\bP_1 * \bar\bP_2, \dot{\bar\nu}, \dot{\bar x}, \bar \l, \bar W) = (\bP_1 * \bP_2, \dot{\bar\mu}, \dot x, \l, W)$ and $\pi_X(\bar\xi_l) = \xi_l$ for each $l < \omega$. Let $\bar h_1$ be $\bar\bP_1$-generic over $N_X$, let $\bar\nu = (\dot{\bar\nu})_{h_1}$ and let $\bar x = (\dot{\bar x})_{h_1}$. Then, by $\GCSealing$, there is an elementary embedding \[ j_1 \colon (L(\Gamma^\infty_{\bar h_1}, \bR_{\bar h_1}))^{N_X[\bar h_1]} \rightarrow L(\Gamma^\infty, \bR) \] such that \[ j_1 \colon ((\Gamma^\infty_{\bar h_1}, \bR_{\bar h_1})^\#)^{N_X[\bar h_1]} \rightarrow (\Gamma^\infty, \bR)^\# \] and \[ j_1((S_{\bar\nu})^{N_X[\bar h_1]}) = S_{\pi_X\shortpwimg\bar\nu}. \]
     Let $\bar h_2 \in V$ be $\bP_2$-generic over $N_X[\bar h_1]$. Then, again by $\GCSealing$, there is an elementary embedding \[ j_2 \colon (L(\Gamma^\infty_{\bar h_1 * \bar h_2}, \bR_{\bar h_1 * \bar h_2}))^{N_X[\bar h_1 * \bar h_2]} \rightarrow L(\Gamma^\infty, \bR) \] such that \[ j_2 \colon ((\Gamma^\infty_{\bar h_1 * \bar h_2}, \bR_{\bar h_1 * \bar h_2})^\#)^{N_X[\bar h_1 * \bar h_2]} \rightarrow (\Gamma^\infty, \bR)^\# \] and \[ j_2((S_{\bar\nu})^{N_X[\bar h_1 * \bar h_2]}) = S_{\pi_X\shortpwimg\bar\nu}. \]

     Note that we are left with showing that 
     \begin{eqnarray*}
     &(L(\Gamma^\infty_{\bar h_1}, \bR_{\bar h_1}))^{N_X[\bar h_1]} \models \varphi[(S_{\bar\nu})^{N_X[\bar h_1]}, \bar x, \bar u_m] \Longrightarrow \\ &(L(\Gamma^\infty_{\bar h_1 * \bar h_2}, \bR_{\bar h_1 * \bar h_2}))^{N_X[\bar h_1 * \bar h_2]} \models \varphi[(S_{\bar\nu})^{N_X[\bar h_1 * \bar h_2]}, \bar x, \bar u_m],
     \end{eqnarray*}
     where $\bar u_m$ is any sequence of uniform indiscernibles for the models in the equation above. 

     But now using $j_1((S_{\bar\nu})^{N_X[\bar h_1]}) = S_{\pi_X\shortpwimg\bar\nu}$ and $j_2((S_{\bar\nu})^{N_X[\bar h_1 * \bar h_2]}) = S_{\pi_X\shortpwimg\bar\nu}$, we have
     \begin{eqnarray*}
         && (L(\Gamma^\infty_{\bar h_1}, \bR_{\bar h_1}))^{N_X[\bar h_1]} \models \varphi[(S_{\bar\nu})^{N_X[\bar h_1]}, \bar x, \bar u_m]\\
         && \Longrightarrow  L(\Gamma^\infty, \bR) \models \varphi[S_{\pi_X\shortpwimg\bar\nu}, x, u_m] \\
         && \Longrightarrow  (L(\Gamma^\infty_{\bar h_1 * \bar h_2}, \bR_{\bar h_1 * \bar h_2}))^{N_X[\bar h_1 * \bar h_2]} \models \varphi[(S_{\bar\nu})^{N_X[\bar h_1 * \bar h_2]}, \bar x, \bar u_m],
     \end{eqnarray*}
     as desired, where $\bar u_m$ and $u_m$ are arbitrary sequences of $m$ uniform indiscernibles for the relevant models.
\end{proof}

\section{A proof of  Theorem \ref{sealing for the ub powerset}}\label{sec: sealing for ub powerset}

In this section we prove Theorem \ref{sealing for the ub powerset}. We start by some preliminary lemmas that we will use in the proof of Theorem \ref{sealing for the ub powerset}.

\subsection{Resurrection of derived models}

Our goal here is to show that the derived models can be resurrected in the following sense. We disregard the notation used in the previous section. Recall that if $h\subseteq \Col(\omega, {<}\l)$ is generic and $\xi<\l$ then $h_\xi=h\cap \Col(\omega, {<}\xi)$.

\begin{lemma}\label{lem: resurrection} Suppose $\k$ is a supercompact cardinal and there is a proper class of inaccessible limits of Woodin cardinals. Suppose $\l$ is an inaccessible limit of Woodin cardinals  above $\kappa$, $\d>\l$ is a Woodin cardinal and $\chi>\d$ is an inaccessible cardinal. Let $j: V\rightarrow M$ be a $\chi$-supercompactness embedding, i.e., $\cp(j)=\k$, $j(\k)>\chi$ and $M^{\chi}\subseteq M$. Suppose $g\subseteq \Col(\omega, {<}\l)$ is $V$-generic, $\mathbb{P}\in V_{\chi}[g]$ is such that $V[g]\models ``\mathbb{P}$ is  countably closed", $k\subseteq \mathbb{P}$ is $V[g]$-generic and $(\pi, N, \sigma, h)\in V[g*k]$ are such that
\begin{enumerate}\itemsep0.5em
\item $N$ is a transitive model of $\sf{ZFC}$,
\item $\pi: V_{\chi}\rightarrow N$ and $N\subseteq V_\chi[g*k]$,
\item $\pi(\l)=\l$,
\item every $x\in \bR_g$ is in some ${<}\l$-generic extension of $N$,
\item $\sigma: N\rightarrow j(V_{\chi})$, $j\restriction V_{\chi}=\sigma\circ \pi$, and
\item $h\subseteq \Col(\omega, {<}\l)$ is $N$-generic such that  $h\in V[g*k]$ and $\bR^{N[h]}=\bR_g$.
  \end{enumerate}
   Then $\Gamma^\infty_g=(\Gamma^{\infty})^{N[h]}$.
\end{lemma}
\begin{proof} Let $p=(f, W_1, W_0)\in V_{\chi}$ be a flipping system (see Definition \ref{flipping system}) that flips through $\d$ such that $\comp(p)>\l^+$ and $W_1$ is $(\chi, \l^+, \l)$-saturated (see Definition \ref{def:saturated measures}). We have that if $r$ is ${<}\l$-generic and $\bar \mu\subseteq W_1$ is a homogeneity system in $V[r]$ then $S_{\bar \mu_r}\in \Gamma^\infty_r$. This follows from Lemma \ref{lem:homsysteminWg} and is also proven in \cite{La04} (e.g. see \cite[Lemma 3.4.11]{La04}). Notice that because $M[g*k]$ is $\chi$-closed in $V[g*k]$, we have that $(\pi, N, \sigma, h)\in M[g*k]$.

We first show that   $(\Gamma^{\infty})^{N[h]}\subseteq \Gamma^\infty_g$. To see this, fix a set $D\in (\Gamma^\infty)^{N[h]}$. There is then some $\xi<\l$ such that for some homogeneity system $\bar{\mu}\subseteq  \pi(W_1)$, $\bar{\mu}\in N[h_\xi]$ and $D=S_{\bar{\mu}}^{N[h]}$. Using the flipping function $f$ and our choice of $j$, we now get that $D=S_{\sigma\pwimg\bar{\mu}}^{V[g]}$, and hence $D\in \Gamma^\infty_g$ (see Lemma \ref{lem:keylemmatamed}).

Conversely, let $D\in \Gamma^\infty_{g}$. Again, let $\xi<\l$ be such that there is a homogeneity system $\bar{\mu}\subseteq W_1$ with the property that $\bar{\mu}\in V[g_\xi]$ and $D=S_{\bar{\mu}}^{V[g]}$. It is enough to prove that\\\\
(a) $D\in N[h]$ and for some homogeneity system $\bar \nu\subseteq \pi(W_1)$ with $\bar \nu\in N[h]$, $N[h]\models D=S_{\bar \nu}$.\\\\
Let $\zeta\in (\xi, \l)$ be a Woodin cardinal and let $W^*_1\subseteq W_1$ be $(\chi, \zeta, \xi)$-saturated and such that $\bar \mu \subseteq W^*_1$. Let $q=(f^*, W^*_1, W^*_0)$ be  a flipping system that flips through $\zeta$ with $\comp(q)>\xi$. 

Let now $\xi^*<\l$ be large enough such that $\pi\rest V_\zeta\in N[h_{\xi^*}]$. Because $f^*$ is a bijection, we have that $(\pi(f^*))^{-1}(\pi\pwimg f^*(\bar \mu))\in N[h_{\xi^*}]$. Set then $\bar \nu=(\pi(f^*))^{-1}(\pi\pwimg f^*(\bar \mu))$. We thus have that\\\\
(1) $\bar \nu\in N[h]$ and $(S_{\bar \nu_h})^{N[h]}=S_{\bar \mu_g}$ (because $\sigma\pwimg \bar \nu= j\pwimg \bar \mu$).\\\\
(1) easily implies (a).
\end{proof}

Next we introduce genericity iterations, which are the ways models such as $N$ in Lemma \ref{lem: resurrection} are produced. We will use Neeman's genericity iterations (see Theorem \ref{thm:NeemanGenIt}) as in previous sections of the paper. 

\begin{definition}\label{def: genericity iteration1} Suppose $\k$ is a supercompact cardinal, $\l>\k$ is an inaccessible limit of Woodin cardinals and $\d>\l$ is an inaccessible cardinal. Let $g\subseteq \Col(\omega, {<}\l)$ be $V$-generic and let $j: V\rightarrow M$ be such that $\cp(j)=\k$, $j(\k)>\d$ and $M^\d\subseteq M$. Suppose that $(\pi, N, \sigma)$ is such that
\begin{enumerate}
\item $N$ is a transitive model of $\sf{ZFC}$,
\item $\pi: V_{\d}\rightarrow N$,
\item $\pi(\l)=\l$,
\item $\sigma: N\rightarrow j(V_{\d})$ and $j\restriction V_{\d}=\sigma\circ \pi$.
  \end{enumerate}
Fix an $\eta\in (\kappa, \l)$, let $(a_\xi \mid \xi<\l=\omega_1^{V[g]})\in V[g]$ be an enumeration of $\bR_{g}$ and let $(\eta_\xi \mid \xi< \l)$ 
            be a strictly increasing sequence of Woodin cardinals of $N$ with supremum $\l$ such that $\eta_0>\eta$. We then obtain a sequence $(\X_\xi, \M_\xi, \sigma_\xi \mid \xi<\l)\in V[g]$ such that

            \vspace{0.5em}
            \begin{enumerate}\itemsep0.5em
                \item[(GI.1)] $\M_0=N$, $\sigma_0=\sigma$ and $\X_0$ is a nice iteration tree of length $\omega+1$ on $\M_0$ (see Definition \ref{:def:niceext}) that is below $\eta_0$, above $\eta$ and for some $\M^{\X_0}$-generic 
             $h\subseteq \Col(\omega, \pi^{\X_0}(\eta_0))$ with $h\in V[g]$, $a_0\in \M^{\X_0}[h]$,
                \item[(GI.2)] for each $\xi<\l$, $\M_{\xi+1}$ is the last model of $\X_\xi$,
                \item[(GI.3)] for each limit ordinal $\xi<\l$, $\M_\xi$ is the direct limit of $(\M_\b, \pi_{\b, \b'} \mid \b<\b'<\xi)$ where $\pi_{\b, \b'}:\M_\b\rightarrow \M_{\b'}$ is the composition of the iteration embeddings given by the iteration trees $(\X_\b \mid \b<\xi)$,\footnote{We have that $\pi_{\b, \b'}(\l)=\l$.} 
                \item[(GI.4)]for each $\xi<\l$, $\sigma_\xi: \M_\xi\rightarrow j(V_{\d})$ is an elementary embedding such that $j\restriction V_{\d}=\sigma_\xi\circ \pi_{0, \xi}$ and for all $\xi<\xi'<\l$, $\sigma_\xi=\sigma_{\xi'}\circ \pi_{\xi, \xi'}$,
                \item[(GI.5)] for each $\xi<\l$, $\X_\xi$ is a nice iteration tree of length $\omega+1$ on $\M_\xi$ that is below $\pi_{0, \xi}(\eta_\xi)$, above $\sup_{\b<\xi}\pi_{0, \xi}(\eta_\b)$ and for some $\M^{\X_\xi}$-generic $h\subseteq \Col(\omega, \pi_{0, \xi+1}(\eta_\xi))$ with $h\in V[g]$, $a_\xi\in \M^{\X_\xi}[h]$.
            \end{enumerate}
            \vspace{0.5em}
            
             Let now $N'$ be the direct limit of $(\M_\b, \pi_{\b, \b'} \mid \b<\b'<\l)$, $\pi_{\b, \l}: \M_\b\rightarrow N'$ be the direct limit embedding, $\pi'=\pi_{0, \l}$ and $\sigma': N'\rightarrow j(V_{\d})$ be the canonical realizability map, i.e., $\sigma'(x)=\sigma_\xi(\bar{x})$ where $\bar{x}$ and $\xi<\l$ are such that $\pi_{\xi, \l}(\bar{x})=x$. We then say that $(\X_\xi, \M_\xi, \sigma_\xi \mid \xi<\l)$ is an $\bR_g$-genericity iteration of $(\pi, N, \sigma)$, and $(\pi', N', \sigma')$ is an $\bR_g$-genericity iterate of $(\pi, N, \sigma)$. 
             \end{definition}
Our next lemma shows that the uB-powerset can be correctly computed over the iterates of $V$. 

\begin{lemma}\label{lem: resurrection1} Suppose $\k$ is a supercompact cardinal and there is a proper class of inaccessible limits of Woodin cardinals. Suppose $\l$ is an inaccessible limit of Woodin cardinals above $\kappa$ and suppose $\d_0$ is a Woodin cardinal and $\d_0<\d_1$ is an inaccessible cardinal such that $\d_0>\l$. Let $j: V\rightarrow M$ be a $\d_1$-supercompactness embedding, i.e., $\cp(j)=\k$, $j(\k)>\d_1$ and $M^{\d_1}\subseteq M$. Suppose $g\subseteq \Col(\omega, {<}\l)$ is $V$-generic, $\l_0<\l$ is such that $\l_0=\omega_1^{V[g_{\l_0}]}$ and $(\pi, N, \sigma)\in V[g]$ is such that
\begin{enumerate}
\item $N$ is a transitive model of $\sf{ZFC}$,
\item $\pi: V_{\d_1}\rightarrow N$ and $N\subseteq V_{\d_1}[g]$,
\item $\pi(\l)=\l$,
\item every $x\in \bR_{g_{\l_0}}$ is in some ${<}\l_0$-generic extension of $N$,
\item for every $\a<\l_0$, $N|\a\in V[g_{\l_0}]$ and $N|\a$ is countable in $V[g_{\l_0}]$,
\item $\sigma: N\rightarrow j(V_{\d_1})$ and $j\restriction V_{\d_1}=\sigma\circ \pi$.
  \end{enumerate}
    Let $h\subseteq \Col(\omega, {<}\l_0)$ be $N$-generic such that 
  \begin{enumerate}
  \item $\bR^{N[h]}=\bR_{g_{\l_0}}$,
  \item $h\in V[g]$,
  \item $h$ appears in a forcing extension of $V[g_{\l_0}]$ by some poset $\mathbb{P}$ such that $\card{\mathbb{P}}^{V[g_{\l_0}]}=\lambda_0$\footnote{See \cite[Lemma 3.1.5]{La04}. As was commented before, the generic $h$ is added by the poset $\mathbb{P}$ appearing in the proof of \cite[Lemma 3.1.5]{La04}.}, and
  \item for all $\xi<\l_0$, $h_\xi\in V[g_{\l_0}]$.
  \end{enumerate}
 Suppose $\Gamma^\infty_{g_{\l_0}}=(\Gamma^\infty)^{N[h]}$ and $X\in V_{\l}[g_{\l_0}]\cap V_{\l}^{N[h]}$. Then \[ \powerset_{uB}(X)^{V[g_{\l_0}]}=\powerset_{uB}(X)^{N[h]}. \]
\end{lemma}
\begin{proof} 
Let $\iota=\max(\iota_X^{N[h]}, \iota_X)$. Because $\l$ is an inaccessible cardinal, we have that $\iota<\l$. We intend to compute $\powerset_{uB}(X)^{V[g_{\l_0}]}$ in $V[g_{\l_0}]$ after forcing with $\Col(\omega, \iota)$, and since $\powerset_{uB}(X)^{V[g_{\l_0}]}$ is independent of the generic we pick for $\Col(\omega, \iota)$, we may just as well assume that our generic is in $V[g]$. Let then $k\in \powerset(\Col(\omega, \iota))\cap V[g]$ be both $V[g_{\l_0}]$-generic and $N[h]$-generic. 

Let $r\subseteq \Col(\omega_1^{V[g]}, \bR_g)$ be $V[g]$-generic. Notice that because $M[g]$ is $\d_1$-closed in $V[g]$, we have that $(\pi, N, \sigma)\in M[g]$. We now perform an $\bR_g$-genericity iteration of $N$ (in $M[g*r]$) and obtain $(\pi', N', \sigma')\in M[g*r]$ such that 

\vspace{0.5em}             \begin{enumerate}\itemsep0.5em
    \item[(1.1)] $N'$ is a transitive model of ${\sf{ZFC}}$,
    \item[(1.2)] $\pi': N\rightarrow N'$ is an elementary embedding and $N'\subseteq V_{\d_1}[g*r]$,
    \item[(1.3)] $\pi'(\l)=\l$ and $\cp(\pi')>\xi$ where $\xi$ is such that $(\powerset_{uB}(X))^{N[h]}\in V_{\xi}^{N[h]}$,
    \item[(1.4)] every $x\in \bR_g$ is in some ${<}\l$-generic extension of $N'[h]$,
    \item[(1.5)] $\sigma': N'\rightarrow j(V_{\d_1})$ is such that $\sigma=\sigma'\circ \pi'$.
\end{enumerate}\vspace{0.5em}

Let now $h'\subseteq \Col(\omega, {<}\l)$ be $N'[h]$-generic with $k \in N'[h*h']$ and $\bR^{N'[h*h']}=\bR_g$. We can find such an $h'$ in a forcing extension of $V[g*r]$ by some poset $\mathbb{P}$ such that $\card{\mathbb{P}}^{V[g]}=\lambda$ and $\mathbb{P}$ is countably closed in $V[g*r]$ (see Remark \ref{length omega1 case}). It then follows from Lemma \ref{lem: resurrection} that $(\Gamma^\infty)^{N'[h*h']}=\Gamma^\infty_g$. Let $j_{h, h'}^{N'[h*h']} \colon L(\Gamma_h^\infty, \bR_h)^{N'[h]} \rightarrow L(\Gamma^\infty_{h*h'}, \bR_{h*h'})^{N'[h*h']}$ be the canonical embedding\footnote{There are many such embeddings. By ``the canonical" embedding, we mean the embedding that moves indiscernibles to indiscernibles in the order preserving way.} in $N'[h*h']$ with the property that $j_{h,h'}(A) = A_{h'}$ for $A \in \Gamma_h^\infty$. Moreover, let $j_{g_{\l_0}, g} \colon L(\Gamma_{g_{\l_0}}^\infty, \bR_{g_{\l_0}}) \rightarrow L(\Gamma^\infty_{g_{\l_0}*g}, \bR_{g_{\l_0}*g})$ be the canonical embedding with the property that $j_{g_{\l_0},g}(A) = A_{g}$ for $A \in \Gamma_{g_{\l_0}}^\infty$. Notice that $$j_{h, h'}^{N'[h*h']}=j_{g_{\l_0}, g}\rest L(\Gamma_h^\infty, \bR_h)^{N'[h]}.$$ 
The above equality is a consequence of the fact that $A_{h'}=A_g$, and this follows from calculations presented in Lemma \ref{lem:keylemmatamed}. It then follows from Lemma \ref{easy consequence} that the parameters used to define members of $\powerset_{uB}(X)^{V[g_{\l_0}]}$ and $\powerset_{uB}(X)^{N'[h]}$ are the same, and hence $\powerset_{uB}(X)^{V[g_{\l_0}]}=\powerset_{uB}(X)^{N'[h]}$. Since $\cp(\pi')>\xi$, we have that $\powerset_{uB}(X)^{V[g_{\l_0}]}=\powerset_{uB}(X)^{N[h]}$.
\end{proof}

\subsection{A Derived Model Theorem.} Here we prove a derived model theorem for the uB-powerset. The proof is very much like the proof of the $\sf{Derived\ Model\ Theorem}$ (see \cite{StstattowfreeDMT}). Again, we do not use the terminology introduced in the previous section. Recall that $\mathcal{A}^\infty_h = (\powerset_{uB}(\Gamma^\infty))^{V[h]}$.

 \begin{theorem}\label{lem: over der model} Suppose $\k$ is a supercompact cardinal, there is a proper class of inaccessible limits of Woodin cardinals and $\l$ is an inaccessible limit of Woodin cardinals above $\kappa$. Suppose $h\subseteq \Col(\omega, {<}\lambda)$ is $V$-generic. Then $L(\mathcal{A}^\infty_{h})\models {\sf{AD^+}}$.
    \end{theorem}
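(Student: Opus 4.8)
The statement asserts that in $V[h]$, where $h \subseteq \Col(\omega, {<}\lambda)$ for $\lambda$ an inaccessible limit of Woodin cardinals below a supercompact $\kappa$ (more precisely $\lambda$ above $\kappa$; but then $\kappa$ is still supercompact in $V$ and so the whole derived-model machinery of the earlier sections applies), the model $L(\mathcal{A}^\infty_h)$ satisfies $\mathsf{AD}^+$. The natural strategy is to realize $L(\mathcal{A}^\infty_h)$ (or at least an elementarily equivalent model) as a derived model of an iterate of $V$, exactly parallel to how $L(\Gamma^\infty,\bR)$ was realized in Section~\ref{sec:genitdmrep} and how the ordinary derived model is handled in \cite{StstattowfreeDMT}. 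First I would fix a $\delta_1$-supercompactness embedding $j \colon V \to M$ with $\mathrm{cp}(j)=\kappa$, $j(\kappa)>\delta_1$ for suitable Woodin cardinals $\delta_0<\delta_1$ above $\lambda$, and then run a genericity iteration of $V_{\delta_1}$ (as in Definition~\ref{def: genericity iteration1}) to produce $(\pi, N, \sigma)$ with $N$ transitive, $\pi\colon V_{\delta_1}\to N$, $\pi(\lambda)=\lambda$, every real of $V[h_\xi]$ captured in a ${<}\lambda$-generic extension of $N$, and $\sigma\circ\pi = j\restriction V_{\delta_1}$. By Lemma~\ref{lem: resurrection} together with Lemma~\ref{lem: resurrection1} applied along the iteration, $\Gamma^\infty_h = (\Gamma^\infty)^{N[\bar h]}$ and $\mathcal{A}^\infty_h = \powerset_{uB}(\Gamma^\infty)^{N[\bar h]}$ for the appropriate $N$-generic $\bar h \subseteq \Col(\omega,{<}\lambda)$ with $\bR^{N[\bar h]} = \bR_h$; here one uses Remark~\ref{length omega1 case}/\cite[Lemma 3.1.5]{La04} to obtain $\bar h$ living in a mild extension with each initial segment back in $V[h]$.

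\textbf{The derived-model identification.} Having resurrected $\mathcal{A}^\infty_h$ over an iterate $N$ of $V$, I would then argue, following the proof of the Derived Model Theorem in \cite{StstattowfreeDMT}, that $L(\mathcal{A}^\infty_h)$ is (the $L$ of) an $\mathsf{AD}^+$-model inside a genuine derived model. The point is that $\lambda$ is a limit of Woodin cardinals in $N$ (preserved because $\pi(\lambda)=\lambda$ and the Woodins below $\lambda$ in $V_{\delta_1}$ are sent cofinally into $\lambda$), and by Lemma~\ref{lem: resurrection1} the uB powerset operation is computed correctly over $N$. Concretely, each $A \in \mathcal{A}^\infty_h$ is, for $\Col(\omega,\iota_{\Gamma^\infty})$-generic $k$, ordinal definable over $L(\Gamma^\infty_k,\bR_k)$ from parameters in $\{\Gamma^\infty, j_k\shortpwimg\Gamma^\infty\}\cup j_k\shortpwimg\Gamma^\infty$; since $L(\Gamma^\infty_k,\bR_k)$ is itself realized as $(L(\Hom^*,\bR^*))^{N'[G]}$ for a further genericity iterate $N'$ of $N$ (Corollary~\ref{cor:dmrep}/Theorem~\ref{thm:newmain}), the parameter $j_k\shortpwimg\Gamma^\infty$ is just (the image of) the critical sequence of the iteration, which is a countable set coded by a real in the derived model. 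Hence $\mathcal{A}^\infty_h \subseteq L(\Hom^*,\bR^*)^{N'[G]}$, and moreover $L(\mathcal{A}^\infty_h)$ is an inner model of $L(\Hom^*,\bR^*)^{N'[G]}$ containing all the reals. By Woodin's Derived Model Theorem (Theorem~\ref{woodin: der model thm}), $L(\Hom^*,\bR^*)^{N'[G]}\models\mathsf{AD}^+$, and since $\mathsf{AD}^+$ is downward absolute to inner models of the derived model that contain all reals and are of the form $L(\mathcal{P})$ for $\mathcal{P}\subseteq\powerset(\bR)$ in the derived model (this is the standard closure-under-$L$ argument for $\mathsf{AD}^+$, using that the derived model satisfies $V=L(\powerset(\bR))$ up to the relevant level and $\mathsf{DC}$), we conclude $L(\mathcal{A}^\infty_h)\models\mathsf{AD}^+$.

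\textbf{Main obstacle.} The delicate point — and the step I expect to require the most care — is verifying that $\mathcal{A}^\infty_h$ really is a \emph{set of reals of the derived model}, i.e.\ that $\mathcal{A}^\infty_h$, viewed appropriately, lies in $\powerset(\bR^*)$ of the relevant derived model rather than merely being ordinal-definable from it. Since $\mathcal{A}^\infty_h$ is a subset of $\Gamma^\infty_h$, not of $\bR$, one must fix a coding of $\Gamma^\infty_h$ by reals (available because $\Gamma^\infty_h$ has size continuum and, after the relevant collapse, is countable) and check that this coding is uniform enough that the resulting coded version of $\mathcal{A}^\infty_h$ is itself a set of reals appearing in the derived model; this is where the genericity-iteration bookkeeping of Section~\ref{sec:summarydmrep}, especially the fact from Lemma~\ref{easy consequence} that the defining parameters can be taken in $j_h\shortpwimg\Gamma^\infty\cap\{X, j_h\shortpwimg\Gamma^\infty\}$, is essential. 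A secondary technical worry is ensuring the resurrection hypotheses of Lemmas~\ref{lem: resurrection} and~\ref{lem: resurrection1} are met simultaneously for the two-step generic $h$ needed to compute the uB powerset and its canonical embedding — this requires choosing the Woodin cardinals $\delta_0<\delta_1$, the inaccessible limits of Woodins $\lambda_0<\lambda_1$, and the auxiliary flipping functions $(f,W_0,W_1)$ in the right order, exactly as laid out in the hypotheses of Lemma~\ref{lem: resurrection1}, and then threading the genericity iteration through all of them. Modulo these points, the argument is a routine, if lengthy, assembly of the tools already developed in Sections~\ref{sec:genitdmrep}–\ref{sec: sealing for ub powerset}.
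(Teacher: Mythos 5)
Your overall architecture is not the one the paper uses, and more importantly it rests on a false containment. You propose to show $\mathcal{A}^\infty_h \subseteq (L(\Hom^*,\bR^*))^{N'[G]}$ and then get $\AD^+$ in $L(\mathcal{A}^\infty_h)$ by downward absoluteness to an inner model of the derived model. But $\mathcal{A}^\infty_h=\powerset_{uB}(\Gamma^\infty_h)$ is, by design, \emph{not} contained in $L(\Gamma^\infty_h,\bR_h)$: its elements are subsets of $\Gamma^\infty_h$ that are merely ordinal definable over the collapsed model $L(\Gamma^\infty_k,\bR_k)$ from the parameters of Definition \ref{ub powerset}, and these include sets (e.g.\ a coding of $(\Gamma^\infty_h,\bR_h)^\#$, or of the full theory of $L(\Gamma^\infty_h,\bR_h)$ with parameters) that cannot lie in $L(\Gamma^\infty_h,\bR_h)$. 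Indeed the whole point of $L(\mathcal{A}^\infty)$ in this paper is that it is a proper \emph{extension} of the derived model $L(\Gamma^\infty,\bR)$ (the authors conjecture it satisfies $\square_{\Theta^\infty}$, which requires new subsets of $\Theta^\infty$). Your ``main obstacle'' paragraph half-recognizes this, but the proposed fix --- coding $\Gamma^\infty_h$ by reals because it ``has size continuum'' --- uses a well-ordering/enumeration available only in the choiceful universe $V[h]$, not inside the determinacy model, so it cannot turn elements of $\mathcal{A}^\infty_h$ into sets of reals of the derived model. The direction of the argument must be reversed: the content of Theorem \ref{lem: over der model} is that the larger model $L(\mathcal{A}^\infty_h)$ adds no \emph{new sets of reals}, i.e.\ $\powerset(\bR)\cap L(\mathcal{A}^\infty_h)=\Gamma^\infty_h$; once that is known, every set of reals of $L(\mathcal{A}^\infty_h)$ is universally Baire and $\AD^+$ follows.

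The paper proves exactly that equality, by the term-relation reflection argument of Steel's stationary-tower-free proof of the Derived Model Theorem rather than by any embedding of $L(\mathcal{A}^\infty_h)$ into a derived model. One assumes a least level $L_{\alpha_0+1}(\mathcal{A}^\infty_h)$ producing a non-uB set of reals $C$, extracts minimal defining data $(\alpha_0,\phi_0,s_0)$, $(\alpha_1,\phi_1,s_1)$, $(B_0,B_1)$ together with homogeneity systems and flipping functions, builds term relations $\tau',\tau$ in an intermediate $V[h_\nu]$, and shows via Windszus' Lemma and $\bR_h$-genericity iterations that membership in $C$ is computed by $\pi^{\cT}(\tau)$ over realizable iterates; this exhibits $C$ as definable by real quantification from a homogeneously Suslin set $\Sigma_h$, hence $C\in\Gamma^\infty_h$, a contradiction. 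The resurrection Lemmas \ref{lem: resurrection} and \ref{lem: resurrection1} enter there only to verify that the iterates compute $\Gamma^\infty$ and $\powerset_{uB}$ correctly, so that the term relation is realized as $C$ --- not to place $\mathcal{A}^\infty_h$ inside a derived model. As written, your proposal is missing the central minimality-and-reflection idea and its key step would fail.
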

    \begin{proof}
        It is enough to show that in $V[h]$, $\Gamma^\infty_{h}=\powerset(\bR)\cap L(\mathcal{A}_{h}^\infty)$. Towards a contradiction, suppose that there is an $\alpha'$ such that
        \[ \Gamma^\infty_{h}\not =\powerset(\bR)\cap L_{\alpha'+1}(\mathcal{A}_{h}^\infty). \]
        We work in $V[h]$.

        \medskip
        
        \noindent\textbf{The definition of $(\a_0, \phi_0, s_0)$.}\\
          Let $(\alpha_0, \phi_0, s_0)$ be the lex-least tuple $(\a, \phi, s)$ such that there is a pair $(A, B)$ with the following properties:

          \vspace{0.5em}\begin{enumerate}\itemsep0.5em
              \item[(1.1)] $\Gamma^\infty_{h}\not =\powerset(\bR)\cap L_{\alpha+1}(\mathcal{A}_{h}^\infty)$,
              \item[(1.2)] $s\in \a^{<\omega}$,
              \item[(1.3)] $A\in \mathcal{A}^\infty_{h}$,
              \item[(1.4)] $B\in \Gamma^\infty_{h}$,
              \item[(1.5)]\footnote{We assume that the formula $\phi$ is in the language that has a constant symbol for $\mathcal{A}_{h}^\infty$.}   letting $C$ be the set of $y\in \bR_h$ such that \[ L_{\alpha}(\mathcal{A}_{h}^\infty) \models \phi[A, B, s, y],\]
             then $C\not \in \Gamma^\infty_{h}$.
          \end{enumerate}\vspace{0.5em}

          \medskip
          
             \noindent\textbf{The definition of $(\a_1, \phi_1, s_1)$.}\\
             Let now $\lambda_1>\lambda$ be an inaccessible limit of Woodin cardinals, and let $(\alpha_1, \phi_1, s_1)$ be the lex-least tuple $(\b, \psi, t)$ such that there is a pair $(E, F)$ with the property that whenever $h_1\subseteq \Col(\omega, {<}\lambda_1)$ is $V[h]$-generic then

             \vspace{0.5em}\begin{enumerate}\itemsep0.5em
                 \item[(2.1)] $E, F\in \Gamma^\infty_{h}$,
                 \item[(2.2)] $t\in \beta^{<\omega}$,
                 \item[(2.3)] \footnote{The formula $\psi$ is in the language with a constant symbol for $\Gamma^\infty_{h*h_1}$.} letting $A$ be the set of $a$ such that 
             \[ L_{\beta}(\Gamma^\infty_{h*h_1}, \bR_{h*h_1})\models \psi[F_{h_1}, j_{h, h_1} \shortpwimg \Gamma^\infty_{h}, t, a],\footnote{Recall that $j_{h, h_1}\colon L(\Gamma_h^\infty,\bR_h) \rightarrow L(\Gamma_{h*h_1}^\infty,\bR_{h*h_1})$ denotes the canonical embedding with the property that $j_{h,h_1}(A) = A_{h_1}$ for $A \in \Gamma^\infty_h$. Notice that $j_{h, h_1} \shortpwimg \Gamma^\infty_{h}$ is independent of $h_1$.}\]
             and letting $C\in V[h]$ be defined by $y\in C$ if and only if 
              \[L_{\a_0}(\mathcal{A}_{h}^\infty)\models \phi_0[A, E, s_0, y],\]
              then $C\not \in \Gamma^\infty_{h}$.
             \end{enumerate}\vspace{0.5em}

            \medskip
              Thus, $(\a_1, \phi_1, s_1)$ defines the least possible member of $\mathcal{A}^\infty_h$ which can be used as a parameter to define the set of reals $C\not \in \Gamma^\infty_h$.\\\\
        \noindent\textbf{The definition of $(B_0, B_1)$.}\\
        Let $B_0, B_1\in \Gamma^\infty_{h}$ be such that

        \vspace{0.5em}\begin{enumerate}\itemsep0.5em
            \item[(3.1)] letting  $A$ be the set of $a$ such that for all $V[h]$-generic $h_1\subseteq \Col(\omega, {<}\l_1)$,
              \[ L_{\a_1}(\Gamma^\infty_{h*h_1}, \bR_{h_1})\models \phi_1[(B_1)_{h_1},  j_{h, h_1} \shortpwimg \Gamma^\infty_{h}, s_1, a], \]
             and letting $C\in V[h]$ be defined by $y\in C$ if and only if 
              \[ L_{\a_0}(\mathcal{A}_{h}^\infty)\models \phi_0[A, B_0, s_0, y], \]
              then $C\not \in \Gamma^\infty_{h}$.
        \end{enumerate}\vspace{0.5em}

        \medskip
              
        \noindent\textbf{The definition of $(\d_0, \d_1, W_0, W_1)$.}\\
        Let $\d_0<\d_1$ be two Woodin cardinals such that $\d_0>\l_1$. Let $p=(f, W_1, W_0)\in V_{\d_1}$ be a flipping system that flips through $\d_0$ such that $W_1$ is a $(\d_1, \l_1^+, \l_1)$-saturated set of measures (see Definition \ref{flipping system} and Definition \ref{def:saturated measures}).
  \medskip
        
        \noindent\textbf{The definition of $(\nu, \eta, \eta', \bar{\mu}^0, \bar{\mu}^1)$.}\\
      Let $\nu<\lambda$ be such that there are homogeneity systems $\bar{\mu}^0, \bar{\mu}^1\in V[h_\nu]$ such that

      \vspace{0.5em}\begin{enumerate}\itemsep0.5em
          \item[(4.1)] $\bar{\mu}^0\subseteq W_1$ and $\bar{\mu}^1\subseteq W_1$,
          \item[(4.2)] $(S_{\bar{\mu}^0_h})^{V[h]}=B_0$ and  $(S_{\bar{\mu}^1_h})^{V[h]}=B_1$.
      \end{enumerate}\vspace{0.5em}     
      Let $\eta$ be the least Woodin cardinal that is strictly greater than $\nu$ and let $\eta'$ be the least inaccessible above $\eta$.

      \medskip
      
       \noindent\textbf{The definitions of the term relations $\tau'$ and $\tau$.}\\
    In $V[h_\nu]$, we now have a term relation $\tau'$ which is always realized as $C$. First notice that in $V[h_\nu]$ we have a name $\dot{A}$ such that for any $V[h_\nu]$-generic $m\subseteq \Col(\omega, {<}\lambda)$, $\dot{A}_m$ is the set of $a$ such that whenever $h_1\subseteq \Col(\omega, {<}\l_1)$ is $V[h_\nu*m]$-generic, 
     \[ L_{\a_1}(\Gamma^\infty_{h_\nu*m*h_1}, \bR_{h_\nu*m*h_1})\models \phi_1[S_{\bar{\mu}^1}^{V[h_\nu*m*h_1]},  j_{h_\nu*m, h_1}\shortpwimg \Gamma^\infty_{h_\nu*m}, s_1, a].\footnote{Recall that $j_{h_\nu*m, h_1}\colon L(\Gamma_{h_\nu*m}^\infty,\bR_{h_\nu*m}) \rightarrow L(\Gamma_{h_\nu*m*h_1}^\infty,\bR_{h_\nu*m*h_1})$ denotes the canonical embedding with the property that $j_{h_\nu*m,h_1}(A) = A_{h_1}$ for $A \in \Gamma^\infty_{h_\nu*m}$.} \]
   Let now $\tau'$ be a name for a set of reals in $V[h_\nu]^{\Col(\omega, {<}\lambda)}$ such that $(p, \sigma)\in \tau'$ if and only if the following conditions hold:

   \vspace{0.5em}\begin{enumerate}\itemsep0.5em
       \item[(6.1)] $\sigma$ is a standard name for a real,
       \item[(6.2)] $p\in \Col(\omega, {<}\lambda)$,
       \item[(6.3)] whenever $m\subseteq \Col(\omega, {<}\lambda)$ is $V[h_\nu]$-generic with the property that $p\in m$,  
             \[ L_{\a_0}(\mathcal{A}_{h_\nu*m}^\infty)\models \phi_0[\dot{A}_m, S_{\bar{\mu}^0}^{V[h_\nu*m]}, s_0, \sigma_m]. \]
   \end{enumerate}\vspace{0.5em}
        We thus have that

        \vspace{0.5em}\begin{enumerate}\itemsep0.5em
            \item[(7.1)] for any $V[h_\nu]$-generic $m\subseteq \Col(\omega, {<}\lambda)$, $\tau_m'\not \in \Gamma^\infty_{h_\nu*m}$.
        \end{enumerate}\vspace{0.5em}
         We then let $\tau$ be the term relation for a set of reals in $V[h_\nu]^{\Col(\omega, \eta)}$ such that $(p, \sigma)\in \tau$ if and only if

         \vspace{0.5em}\begin{enumerate}\itemsep0.5em
             \item[(8.1)] $\sigma$ is a standard name for a real,
             \item[(8.2)] $p\in \Col(\omega, \eta)$,
             \item[(8.3)] $p$ forces that it is forced by $\Col(\omega, {<}\l)$ that $\sigma\in \tau'$.
         \end{enumerate}\vspace{0.5em}
         Let now $j: V\rightarrow M$ be an embedding (in $V$) witnessing that $\kappa$ is $\d_1$-supercompact. More precisely, $\cp(j)=\kappa$, $j(\kappa)>\d_1$ and $M^{\d_1}\subseteq M$. Let $\Sigma\in V[h_\nu]$ be the 
         set of $\T$ such that

         \vspace{0.5em}\begin{enumerate}\itemsep0.5em
             \item[(9.1)] $\T$ is a nice  iteration tree on $V_{\eta'}$ that is above $\nu$,
             \item[(9.2)] $\lh(\T)=\omega+1$,
             \item[(9.3)] all models of $j\T$, the copy of $\T$ on $M$, are well-founded.
         \end{enumerate}\vspace{0.5em}
             It follows from Windszus' Lemma (see Lemma \ref{lem:lemma 1.1} or \cite[Lemma 1.1]{StstattowfreeDMT}) that in $M[h_{\eta'}]$, $\Sigma$ is $j(\nu)$-homogeneously Suslin, and since $j(\nu)>\lambda_0$, we have that $\Sigma_{h}\in \Gamma^\infty_{h}$. The following is our main claim. It implies $C \in \Gamma^\infty_{h}$ as $C$ is defined by real quantification from $\Sigma_h \in \Gamma^\infty_h$ and will therefore finish the proof of Theorem \ref{lem: over der model}.
             
           \begin{claim}\label{defining c} In $V[h]$, for $x\in \bR_{h}$, $x\in C$ if and only if for every $\T\in \Sigma_h$ and for every $\M^\T[h_\nu]$-generic $m\subseteq \Col(\omega, \pi^\T(\eta))$,\footnote{Here, $\M^\T$ is the last model of $\T$ and $\pi^\T$ is the iteration embedding.} if $x\in \M^\T[h_\nu*m]$ then $x\in (\pi^\T(\tau))_m$.
           \end{claim}
           
           \begin{proof} Fix a real $x\in \bR_{h}$ and suppose $\T\in \Sigma_h$ and $m\subseteq \Col(\omega, \pi^\T(\eta))$ are such that
           
           \vspace{0.5em}\begin{enumerate}\itemsep0.5em
               \item[(10.1)] $m\in V[h]$ and $m$ is $\M^\T[h_\nu]$-generic,
               \item[(10.2)] $x\in \M^\T[h_\nu*m]$.
           \end{enumerate}\vspace{0.5em}
           We want to see that\\\\
           (*) $x\in C$ if and only if $x\in (\pi^\T(\tau))_m$.\\\\
           Because $\T\in \Sigma_h$, we have (in $V[h]$) an embedding $\sigma_0': \M^\T\rightarrow j(V_{\eta'})$ such that $j\restriction V_{\eta'}=\sigma_0'\circ \pi^\T$. Let now $\U$ be the result of applying $\T$ to $V_{\d_1}$\footnote{This means that we apply the extenders of $\T$ to $V_{\d_1}$ rather than $V_{\eta'}$.}. We then have $\sigma_0: \M^\U\rightarrow j(V_{\d_1})$ such that $\sigma_0'=\sigma_0\restriction \M^\T$ and $j\restriction V_{\d_1}=\sigma_0\circ \pi^\U$. Notice that $\M^\T=(V_{\pi^\U(\eta')})^{\M^\U}$. Set $N_0=\M^\U$ and $\pi_0=\pi^\U$. We then have that $(\pi_0, N_0, \sigma_0)$ is as in the hypothesis of Definition \ref{def: genericity iteration1}, and so we let $(\pi, N, \sigma)$ be an $\bR_{h}$-genericity iteration of $(\pi_0, N_0, \sigma_0)$ with the property that $\cp(\pi)>\pi_0(\eta')$. Let $m_1\subseteq \Col(\omega, {<}\l)$ be $N$-generic given by \cite[Lemma 3.1.5]{La04} such that

           \vspace{0.5em}\begin{enumerate}\itemsep0.5em
               \item[(11.1)] $\bR^{N[h_\nu*m*m_1]}=\bR_{h}$.
           \end{enumerate}\vspace{0.5em}
            Notice next that

            \vspace{0.5em}\begin{enumerate}\itemsep0.5em
                \item[(12.1)] $B_0=(S_{\pi\circ \pi_0(\bar{\mu}^0)})^{N[h_\nu*m*m_1]}$ and $B_1=(S_{\pi\circ \pi_0(\bar{\mu}^1)})^{N[h_\nu*m*m_1]}$\footnote{Notice that $\pi_0\circ \pi$ can be extended to act on $V[h_\nu]$ which is the model that contains $\bar{\mu}^0$ and $\bar{\mu}^1$.}.
            \end{enumerate}\vspace{0.5em}
            Recall that our goal is to show that (*) holds. Notice first that $\pi(\pi_0(\tau))=\pi_0(\tau)$. This is because $\cp(\pi)>\pi_0(\eta')$.  We now work in $N[h_\nu*m*m_1]$. In this model, $(\pi(\pi_0(\tau')))_{m*m_1}$ is realized as the set of all reals $y$ such that 
             \[ L_{\pi(\pi_0(\a_0))}(\mathcal{A}_{h_\nu*m*m_1}^\infty)\models \phi_0[(\pi(\pi_0(\dot{A})))_{m*m_1}, B_0, \pi(\pi_0(s_0)), y]. \]
         It follows from Lemmas \ref{lem: resurrection} and \ref{lem: resurrection1} that $\mathcal{A}_{h_\nu*m*m_1}^\infty=\mathcal{A}_{h}$ and $\pi(\pi_0(\dot{A})))_{m*m_1}=\dot{A}_h$. Given this, it follows from the minimality of $(\a_0, \phi_0, s_0)$ that

         \vspace{0.5em}\begin{enumerate}\itemsep0.5em
             \item[(13.1)] $\pi(\pi_0((\a_0, s_0)))=(\a_0, s_0)$,
             \item[(13.2)] $(\pi(\pi_0(\tau')))_{m*m_1}=C$.
         \end{enumerate}\vspace{0.5em}
         (*) now easily follows from (13.2). 
        \end{proof}
    \end{proof}

\subsection{The proof of Theorem \ref{sealing for the ub powerset}}\label{subsec:sealing for the ub powerset}

\begin{proof} We use the notation of Theorem \ref{sealing for the ub powerset}.
 We have that $\sf{Sealing}$ holds (see Theorem \ref{thm:SealingWoodin}). Suppose $\l_0<\l_1<\chi$ are inaccessible cardinals above $\k$ that are limits of Woodin cardinals. Let $W\in V_\chi$ be an infinitely tamed $(\chi, \l_1^+, \l_1)$-saturated set of measures, and let $X\prec V_{\chi+1}$ be such that $V_{\k+1}\cup \{\l_0, \l_1, W\}\subseteq X$ and $V\models \card{X}=\card{V_{\k+1}}$. Let $j: V\rightarrow N$ be a $\chi$-supercompactness embedding and let $h\subseteq \Col(\omega, {<}\l_0)$ be $V[g]$-generic. Then it follows from Theorem \ref{lem: over der model} that $L(\mathcal{A}^\infty_{g*h})\models {\sf{AD^+}}$. Let $k\subseteq \Col(\omega, \Gamma^\infty_{g*h})$ be $V[g*h]$-generic.
We first claim that the following holds:
\vspace{0.5em}\begin{enumerate}\itemsep0.5em
    \item[(a)] Whenever $H\subseteq \Col(\omega, {<}\k)$ is $V$-generic, $L(\mathcal{A}^\infty_H)\models {\sf{AD^+}}$.
\end{enumerate}\vspace{0.5em}
 The proof of (a) is very much like the proof of Theorem \ref{thm:dermodelrepomega}. Because of this, we will give a quick outline of the proof.
 
 It follows from Theorem \ref{thm:ubcapturingprinciple} and Theorem \ref{thm:exdmomega} that in $N[g*h*k]$, there is a $\sf{DM}$-sequence $D=((\sfa_\a, \sfb_\a, E_\a) \mid \a<\omega)$ for $V$ that is supported by $$(j[X\cap V_\chi], j[W], j(W), N, j(\chi), j(\k), \l, k).$$
 Let $${\sf{dm}}=(M_\a, \sigma_{\a, \b}, \sigma_\a, \k_\a:\a<\b<\omega)$$ be the underlying sequence of $D$, $M_\omega$ be the direct limit of ${\sf{dm}}$ and $\sigma_\omega: M_\omega \rightarrow N_{j(\chi)}$ be the canonical realizability embedding. As in the proof of Theorem \ref{thm:dermodelrepomega}, we can now find an $M_\omega$-generic $m\subseteq \Col(\omega, {<}\l_0)$\footnote{We have $\l_0=\k_\omega=\omega_1^{V[g*h]}$.} such that $(\Gamma^\infty)^{M_\omega[m]}=\Gamma^\infty_{g*h}$. We then claim that
\vspace{0.5em}\begin{enumerate}\itemsep0.5em
    \item[(b)] $(\mathcal{A}^\infty)^{M_\omega[m]}=\mathcal{A}^\infty_{g*h}$.
\end{enumerate}\vspace{0.5em}
Clearly, given Theorem \ref{lem: over der model}, (b) implies (a). To show (b) we apply the extenders $E_n$ of the $\sf{DM}$-sequence $D$ to $V$. More precisely, let $(K_i: i\leq \omega)$ be obtained by setting $K_0=V_{\chi}$ and letting $K_{i+1}=Ult(K_i, E_i)$ and $K_\omega$ be the direct limit of $(K_i, n_{i, i'}: i<i'<\omega)$ where $n_{i, i'}: K_i\rightarrow K_{i'}$ is the composition of ultrapower embeddings. Let $n_{i, \omega}$ be the direct limit embedding. The construction of the sequence $(K_i: i\leq \omega)$ is possible because $V_{\k+1}\subseteq M_0$, which by induction implies that for each $i\leq \omega$,
\vspace{0.5em}\begin{enumerate}\itemsep0.5em
    \item[(1.1)] $n_{0, i}(\k)=\k_i$,
    \item[(1.2)] $K_i|\k_i+1=M_i|\k_i+1$.
\end{enumerate}\vspace{0.5em}
We also get embeddings $q_i: K_i\rightarrow N|j(\chi)$ such that for $i<i'\leq \omega$, $q_i=q_{i'}\circ n_{i, i'}$. The embeddings exist because each $E_i$ is derivable from $q_i$. We start by setting $q_0=j\rest K_0$ and then apply Lemma \ref{lem:tworealizability}. Notice that $E_0$ is derivable from $q_0$ as witnessed by $(\sigma_1(\lh(E_0)), \sigma_1\rest \lh(E_0))$. We then inductively get that $E_i$ is derivable from $q_i$ as witnessed by $(\sigma_{i+1}(\lh(E_i)), \sigma_{i+1}\rest \lh(E_i))$, and this allows us to define $q_{i+1}$ by setting $q_{i+1}(\pi_{E_i}^{K_i}(f)(a))=q_i(f)(\sigma_{i+1}(a))$ where $a\in \lh(E)^{<\omega}$ and $f\in K_i$ is a function $f:[\k_i]^{\card{a}}\rightarrow K_i$. 

Notice that since $E_i$ is countable in $V[g*h]$, we have that $n_{0, \omega}(\l_1)=(\l_1)$. Notice next that Lemma \ref{lem: resurrection1} applies to $(\chi, \l_0, \l_1, n_{0, \omega}, K_\omega)$ with $\chi=\d_1$, $N=K_\omega$, $\pi=n_{0, \omega}$ and $\sigma=q_\omega$. This is because $(\Gamma^\infty)^{K_\omega[m]}=(\Gamma^\infty)^{M_\omega[m]}=\Gamma^\infty$, which is a consequence of the fact that $K_\omega|\k_\omega+1=M_\omega|\k_\omega+1$. It follows that $(\mathcal{A}^\infty)^{K_\omega[m]}=\mathcal{A}^\infty_{g*h}$. But since $K_\omega|\k_\omega+1=M_\omega|\k_\omega+1$, (b) follows, and this finishes the proof of (a).

Clause \eqref{eq: sealing for the ub powerset clause 2} of Definition \ref{def: sealing for the ub powerset} now follows from (a) because working in $V[g*r]$ where $r\subseteq \Col(\omega_1, \bR_g)$-generic and using Theorem \ref{thm:dermodelrepomega1} we can find $M_{\omega_1}$, which is a direct limit of some ${\sf{DM}}$-sequence for $V$, and an $M_{\omega_1}$-generic $s\subseteq \Col(\omega, {<}\k_{\omega_1})$ such that $(\mathcal{A}^\infty)^{M_{\omega_1}[h]}=\mathcal{A}^\infty_g$. (a) then implies that $L(\mathcal{A}^\infty_g)\models {\sf{AD^+}}$.

Clause \eqref{eq: sealing for the ub powerset clause 3} of Definition \ref{def: sealing for the ub powerset} uses both Theorem \ref{thm:dermodelrepomega} and Lemma \ref{lem: resurrection1}. Let $h$ and $h'$ be two consecutive $V[g]$-generics such that $V[g*h*h']\models \card{(2^{2^\omega})^{V[g*h]}}=\aleph_0$. Let $k\subseteq \Col(\omega, (2^{2^\omega})^{V[g*h*h']})$ be $V[g*h*h']$-generic. Working as we did while proving (a) we can find $\sf{DM}$-sequence $D=((\sfa_\a, \sfb_\a, E_\a) \mid \a<\omega \cdot 2)$ for $V[g*h*h']$ that is supported by $$(j[X\cap V_\chi], j[W], j(W), N, j(\chi), j(\k), \l, k),$$ and is such that $D\rest \omega=((\sfa_\a, \sfb_\a, E_\a) \mid \a<\omega)$ is a 
 $\sf{DM}$-sequence for $V[g*h]$\footnote{We can achieve this by applying Theorem \ref{thm:dermodelrepomega} to a surjection $k':\omega\cdot 2\rightarrow \Gamma^\infty_{g*h*h'}$ such that $k'\rest \omega$ enumerates $\Gamma^\infty_{g*h}$.}. Let $${\sf{dm}}=(M_\a, \sigma_{\a, \b}, \sigma_\a, \k_\a:\a<\b<\omega\cdot 2)$$ be the underlying sequence of $D$, $M_{\omega\cdot 2}$ be the direct limit of ${\sf{dm}}$ and $\sigma_{\omega\cdot 2}: M_{\omega\cdot 2} \rightarrow N_{j(\chi)}$ be the canonical realizability embedding.

Let now $G\subseteq \Col(\omega,{<}\omega_1^{V[g*h*h']})$ be $M_{\omega \cdot 2}$-generic such that 

\vspace{0.5em}\begin{enumerate}\itemsep0.5em
    \item[(2.1)] $G\in V[g*h*h'*k]$ and $G_{\kappa_\omega} = G\cap \Col(\omega, {<}\kappa_\omega)\in V[g*h*h']$\footnote{To get such a $G$, we factor $\Col(\omega, {<}\kappa_{\omega \cdot 2})$ as  $\Col(\omega, {<}\kappa_{\omega})\times \Col(\omega, {<}(\kappa_{\omega \cdot 2}, \kappa_\omega])$. Then we can find $G_0\in V[g*h*h']$ which is $M_{\omega \cdot 2}$-generic for $\Col(\omega, {<}\kappa_{\omega})$ and $G_1 \in V[g*h*h'*k]$ which is $M_{\omega \cdot 2}[G_0]$-generic for $\Col(\omega, {<}(\kappa_{\omega \cdot 2}, \kappa_{\omega}])$.},
    \item[(2.2)] $(\Gamma^\infty)^{M_\omega[G_{\kappa_\omega}]}=\Gamma^\infty_{g*h}$,
    \item[(2.3)] $(\Gamma^\infty)^{M_{\omega \cdot 2}[G]}=\Gamma^\infty_{g*h*h'}$.
\end{enumerate}\vspace{0.5em}
It follows from Lemma \ref{lem: resurrection1} that

\vspace{0.5em}\begin{enumerate}\itemsep0.5em
    \item[(3.1)] $(\mathcal{A}^\infty)^{M_\omega[G_{\kappa_\omega}]}=\mathcal{A}^\infty_{g*h}$,
    \item[(3.2)] $(\mathcal{A}^\infty)^{M_{\omega \cdot 2}[G]}=\mathcal{A}^\infty_{g*h*h'}$.
\end{enumerate}\vspace{0.5em}
Because we can lift $\pi_{\omega, \omega \cdot 2}$ to an embedding $\pi_{\omega, \omega \cdot 2}^+: M_\omega[G_{\kappa_\omega}]\rightarrow M_{\omega \cdot 2}[G]$, Clause \eqref{eq: sealing for the ub powerset clause 3} of Definition \ref{def: sealing for the ub powerset} follows from (2.1), (2.2), (2.3), (3.1), and (3.2).
 \end{proof}
 
 \section{A proof of Theorem \ref{thm: sealing for clubs}}\label{sec: sealing for clubs}

 \begin{proof} Let $j: V\rightarrow V^*$ be an embedding witnessing that $\k$ is an elementarily huge cardinal and such that ${\sf{ehst}}(\k)=\sup\{ j(f)(\k) \mid f\in V\}$. We will use the notation introduced in the statement of Theorem \ref{thm: sealing for clubs} (so $g\subseteq \Col(\omega, 2^{{\sf{ehst}}(\k)})$ is $V$-generic).
 
 The proof of Theorem \ref{thm: sealing for clubs} is an application of Theorem \ref{thm:dermodelrepomega1} and \cite[Theorem 0.1]{TrDMSpctM}. We start by proving the first part of the {\sf{Sealing Theorem}} in the sense of Theorem \ref{thm: sealing for clubs}. This means that we start by establishing Clauses \eqref{cl:1_def:st} and \eqref{cl:2_def:st} of Definition \ref{def: wst} for the model $L(\Gamma^\infty, \bR)[\mathcal{C}^\infty]$. The proof of Clause \eqref{cl:3_def:st} of Definition \ref{def: wst} follows the very same path as the proof of the similar clause in Theorem \ref{thm:SealingWoodin} (see Section \ref{sec:SealingTheorem}) and in Theorem \ref{sealing for the ub powerset} (see Subsection \ref{subsec:sealing for the ub powerset}). Therefore, we will only indicate how to prove it.
 
 To start, notice that $j(\k)$ satisfies the hypothesis of \cite[Theorem 0.1]{TrDMSpctM} (in $V^*$, with $j(\k)=\d_0$). Therefore, the following holds by the proof of \cite[Theorem 0.1]{TrDMSpctM}. 

 \vspace{0.5em}\begin{enumerate}\itemsep0.5em
     \item[(1.1)] Suppose for some $\l$, $F\in V^*$ is a $(j(\k), \l)$-extender and $i: V^*\rightarrow M=\Ult(V^*, F)$ is the ultrapower embedding given by $F$. Let $G\subseteq \Col(\omega, {<}j(\k))$ be $V^*$-generic and let, in $V^*[G]$, $\mathcal{F}\subseteq \powerset_{\omega_1}(\Gamma^\infty_G)$ be the filter given by $A\in \mathcal{F}$ if and only if for any $M[G]$-generic $H\subseteq \Col(\omega, {<}[j(\k), i(j(\k))))$\footnote{The conditions in this poset are finite functions $q:[j(\k), i(j(\k)))\times \omega\rightarrow i(j(\k))$ such that $q(\alpha, n)<\alpha$ for every $(\alpha, n)\in dom(q)$.}, letting $i^+ : V^*[G]\rightarrow M[G*H]$ be the lift-up of $i$, $i^+ \pwimg \Gamma^\infty_G \in i^+(A)$. Then, letting \[\mathcal{F}'=\mathcal{F}\cap L(\Gamma^\infty_G, \bR_G)[\mathcal{F}],\] 
     \begin{enumerate}\itemsep0.5em
         \item[(1.1.1)] $L(\Gamma^\infty_G, \bR_G)[\mathcal{F}]\models ``\mathcal{F}'$ is a normal, fine ultrafilter on $\powerset_{\omega_1}(\Gamma^\infty_G)"$, and
         \item[(1.1.2)] $\powerset(\bR)\cap L(\Gamma^\infty_G, \bR_G)[\mathcal{F}]=\Gamma^\infty_G$. 
     \end{enumerate}\vspace{0.5em}
    \end{enumerate}\vspace{0.5em}
 By elementarity, we have that the following holds in $V$. 

 \vspace{0.5em}\begin{enumerate}\itemsep0.5em
     \item[(2.1)] Suppose for some $\l$, $F\in V$ is a $(\k, \l)$-extender and $i: V\rightarrow M=\Ult(V, F)$ is the ultrapower embedding given by $F$. Let $G\subseteq \Col(\omega, {<}\k)$ be $V$-generic and let, in $V[G]$, $\mathcal{F}\subseteq \powerset_{\omega_1}(\Gamma^\infty_G)$ be the filter given by $A\in \mathcal{F}$ if and only if for any $M[G]$-generic $H\subseteq \Col(\omega, {<}[\k, i(\k)))$, letting $i^+ : V[G]\rightarrow M[G*H]$ be the lift-up of $i$, $i^+ \pwimg \Gamma^\infty_G \in i^+(A)$. Then, letting \[\mathcal{F}'=\mathcal{F}\cap L(\Gamma^\infty_G, \bR_G)[\mathcal{F}],\] 
     \begin{enumerate}\itemsep0.5em
         \item[(2.1.1)] $L(\Gamma^\infty_G, \bR_G)[\mathcal{F}]\models ``\mathcal{F}'$ is a normal, fine ultrafilter on $\powerset_{\omega_1}(\Gamma^\infty_G)"$, and
         \item[(2.1.2)] $\powerset(\bR)\cap L(\Gamma^\infty_G, \bR_G)[\mathcal{F}]=\Gamma^\infty_G$.
     \end{enumerate}\vspace{0.5em}
    \end{enumerate}\vspace{0.5em}
Notice now that because $V_{j(\k)}\prec V$, it is enough to prove that the conclusion of Theorem \ref{thm: sealing for clubs} holds in $V_{j(\k)}$, which is what we now start doing. 

 Let $F$ be the $(\k, j(\k))$-extender derived from $j$, and suppose $\d\in (\k, j(\k))$ is an inaccessible cardinal. Let $F_\d=F\restriction \d$. Notice now that 
\vspace{0.5em}\begin{enumerate}\itemsep0.5em
         \item[(3.1)] 
          if $\d$ is an inaccessible cardinal and $W\in V_\d$ is any set of measures that is tamed above $\k$, $F_\d$ is a wf-preserving extender for $W$.
     \end{enumerate}\vspace{0.5em}
    Also, notice that letting $\d={\sf{ehst}}(\k)$ and letting $k: Ult(V, F_\d)\rightarrow Ult(V, F)$ be the canonical factor embedding given by $k([a, f]_{F_\d})=[a, f]_F$, $\cp(k)=\d$.
 
Suppose now that $h$ is $V[g]$-generic for a poset $\mathbb{P}\in V_{j(\k)}[g]$. Let $\l<\chi_0<\chi_1<j(\k)$ be such that for $i\in 2$, $\chi_i$ is an inaccessible limit of Woodin cardinals of $V[g]$, and $\l>\d$ is an inaccessible cardinal such that $\mathbb{P}\in V_\l[g]$. Let $W\in V_{\chi_0}$ be a $(\chi_1, \l^+, \l)$-saturated set of measures that is infinitely tamed above $\l$, and let $X\prec V_{\chi_1+1}$ be such that $V_{\d+1}\cup\{\l, F_{\chi_0}, W\}\in X$ and $\card{X}=\card{V_{\d+1}}$.

We now use Theorem \ref{thm:dermodelrepomega1} in $V^*[g*h]$\footnote{Also, see Remark \ref{length omega1 case}.}. Let $\iota=\omega_1^{V[g*h]}$ and let $k\subseteq \Col(\iota, \card{\Gamma^\infty_{g*h}})$ be $V[g*h]$-generic. We have that since $k$ is generic for a countably closed poset, $\Gamma^\infty_{g*h*k}=\Gamma^\infty_{g*h}$. Let $$D=((\sfa_\a, \sfb_\a, E_\a) \mid \a<\iota)$$ be a ${\sf{DM}}$-sequence for $V$ supported by $(j[X\cap V_\chi], j[W], j(W), V^*, j(\chi), j(\k), \l, k)$ (see also Definition \ref{def:abstractblock} and Theorem \ref{thm:ubcapturingprinciple}). Let $${\sf{dm}}=(M_\a, \sigma_{\a, \b}, \sigma_\a, \k_\a:\a<\b<\iota)$$ be the underlying sequence of $D$, $M_{\iota}$ be the direct limit of ${\sf{dm}}$ and $\sigma_{\iota}: M_{\iota} \rightarrow U_\chi$ be the canonical realizability embedding. 

We introduce the following modification to the construction of the ${\sf{DM}}$ sequence we have introduced. Notice that in the proofs of Theorems \ref{thm:dermodelrepomega} and \ref{thm:dermodelrepomega1}, it is required that the extenders $E_\a$ witness wf-preservation for some set of measures $W$. (3.1) implies that $j(F_{\chi_0})$ witnesses wf-preservation for any set of measures $W$ in $j(V_{\chi_0})$, and so we require that $\sigma_\a(E_\a)=j(F_{\chi_0})$.

It follows from Theorem \ref{thm:dermodelrepomega1} that there is an $M_\iota$-generic $m\subseteq \Col(\omega, {<}\iota)$ such that $m\in V[g*h*k]$ and $\Gamma^\infty_{g*h}=(Hom^*)^{M_\iota[m]}$. 
Let 
\vspace{0.5em}\begin{enumerate}[(a)]\itemsep0.5em
\item $\l_0=\pi_0^{-1}({\sf{ehst}}(\k))$,
\item for $i\leq \iota$, $\l_i=\sigma_{0, i}(\l_0)$, 
\item for $i\leq \iota$, $U_i=E_i\restriction \l_i$, 
\item for $i\leq \iota$, $m_i=m\cap \Col(\omega, {<}\k_i)$, 
\item for $i\leq \iota$, $hom_i=(Hom^*)^{M_i[m_i]}$,\footnote{We thus have that $hom_i=(\Gamma^\infty)^{M_i[m_i]}$.}
\item for $i\leq \iota$, $\tau_i=_{def}\pi_{U_i}^{M_i}: M_i\rightarrow \Ult(M_i, U_i)$,
\item for $i\leq \iota$, let $\mathcal{F}_i\subseteq (\powerset_{\omega_1}(Hom^*))^{M_i[m_i]}$ be the filter derived from $\sigma_i$. More precisely, $A\in \mathcal{F}_i$ if and only if for every $\Ult(M_i, U_i)[m_i]$-generic $$n\subseteq \Col(\omega, {<}[\kappa_i, \tau_i(\kappa_i))),$$ $\tau_i^+ \pwimg hom_i \in \tau_i^+(A)$ where $\tau^+_i: M_i[m_i]\rightarrow \Ult(M_i, U_i)[m_i*n]$ is the lift-up of $\tau_i$,
\item  for $i\leq \iota$, in $M_i[m_i]$, $\mathcal{F}'_i=\mathcal{F}_i\cap L(\Gamma^\infty, \bR)[\mathcal{F}_i]$.
\end{enumerate}\vspace{0.5em}
It follows from (2.1) that

\vspace{0.5em}\begin{enumerate}\itemsep0.5em
    \item[(4.1)] for $i\leq \iota$, in $M_i[m_i]$, $L(\Gamma^\infty, \bR)[\mathcal{F}_i]\models ``\mathcal{F}'_i$ is a normal, fine ultrafilter on $\powerset_{\omega_1}(\powerset(\bR))"$,
    \item[(4.2)] for $i\leq \iota$, in $M_i[m_i]$, $\powerset_{\omega_1}(\bR)\cap L(\Gamma^\infty, \bR)[\mathcal{F}_i]=\Gamma^\infty$.
\end{enumerate}\vspace{0.5em}
Because $U_i$ is a sub-extender of $E_i$ for $i\leq \iota$, there is $\tau'_i: \Ult(M_i, U_i)\rightarrow M_{i+1}$ with the property that $\tau_i'\circ \tau_i=\sigma_{i, i+1}$. Moreover, $\cp(\tau_i')=\l_i$.\footnote{This fact is key in (5.3).} Let now for $i<i'\leq \iota$, $\sigma_{i, i'}^+: M_i[m_i]\rightarrow M_{i'}[m_{i'}]$ be the lift-up of $\sigma_{i,i'}$. We now have that

\vspace{0.5em}\begin{enumerate}\itemsep0.5em
    \item[(5.1)] $\sigma_{i, i'}^+(\mathcal{F}_i)=\mathcal{F}_{i'}$,
    \item[(5.2)] if $A\in \mathcal{F}_i$ then $\sigma^+_{i, i+1} \pwimg hom_i \in \sigma^+_{i, i+1}(A)$,\footnote{This is because we can lift $\tau_i'$ to $\tau_i'^+: \Ult(M_i, U_i)[m\cap \Col(\omega, {<}\l_i)]\rightarrow M_{i+1}[m_{i+1}]$.}
    \item[(5.3)] if $A\in \mathcal{F}_i$ then for every $i'\in (i, \iota]$, $\sigma^+_{i, i'} \pwimg hom_i \in \sigma^+_{i, i'}(A)$,
    \item[(5.4)] if $A\in \mathcal{F}_\iota$ then for some $i<\iota$, $\{ \sigma^+_{i, \iota} \pwimg hom_i \mid i<\iota\}\subseteq A$.\footnote{This follows from (5.1), (5.3) and the fact that $\iota$ is a limit ordinal.}
\end{enumerate}\vspace{0.5em}
It then follows from (4.1) and (5.4) that

\vspace{0.5em}\begin{enumerate}\itemsep0.5em
    \item[(6.1)] in $V[g*h*H]$, $\mathcal{F}_\iota'=\mathcal{C}^\infty\cap M_\iota[m]$.
\end{enumerate}\vspace{0.5em}
Therefore,

\vspace{0.5em}\begin{enumerate}\itemsep0.5em
    \item[(7.1)] in $V[g*h*H]$, letting $\mathcal{C}'=\mathcal{C}^\infty\cap L(\Gamma^\infty, \bR)[\mathcal{C}^\infty]$, we have that
    \vspace{0.5em}\begin{enumerate}\itemsep0.5em
        \item[(7.1.1)] $L(\Gamma^\infty, \bR)[\mathcal{C}^\infty]\models ``\mathcal{C}'$ is a normal, fine ultrafilter on $\powerset_{\omega_1}(\powerset(\bR))$" and
        \item[(7.1.2)] $\powerset(\bR)\cap L(\Gamma^\infty, \bR)[\mathcal{C}^\infty]=\Gamma^\infty$.
    \end{enumerate}\vspace{0.5em}
    \end{enumerate}\vspace{0.5em}
Notice next that $L(\Gamma^\infty_{g*h}, \bR_{g*h})[\mathcal{C}^\infty_{g*h}]=L(\Gamma^\infty_{g*h}, \bR_{g*h})[\mathcal{C}^\infty_{g*h*k}]$. This is because if $A\in V[g*h]$, $A\subseteq \powerset_{\iota}(\Gamma^\infty_{g*h})$ and $A$ is stationary in $V[g*h]$ then $A$ is stationary in $V[g*h*k]$.\footnote{This follows from the fact that countably closed posets are proper.} Therefore, we have that
\vspace{0.5em}\begin{enumerate}\itemsep0.5em
    \item[(8.1)] in $V[g*h]$, letting $\mathcal{C}'=\mathcal{C}^\infty\cap L(\Gamma^\infty, \bR)[\mathcal{C}^\infty]$, 
    \vspace{0.5em}\begin{enumerate}\itemsep0.5em
        \item[(8.1.1)] $L(\Gamma^\infty, \bR)[\mathcal{C}^\infty]\models ``\mathcal{C}'$ is a normal, fine ultrafilter on $\powerset_{\omega_1}(\powerset(\bR))$" and
        \item[(8.1.2)] $\powerset(\bR)\cap L(\Gamma^\infty, \bR)[\mathcal{C}^\infty]=\Gamma^\infty$.
    \end{enumerate}\vspace{0.5em}
    \end{enumerate}\vspace{0.5em}
This finishes the proof of the first part of the $\sf{Sealing\ Theorem}$ in the sense of Theorem \ref{thm: sealing for clubs}. It remains to show that if $h'$ is $V[g*h]$-generic then there is an elementary embedding $$j: L(\Gamma^\infty_{g*h}, \bR_{g*h})[\mathcal{C}^\infty_{g*h}]\rightarrow L(\Gamma^\infty_{g*h*h'}, \bR_{g*h*h'})[\mathcal{C}^\infty_{g*h*h'}]$$
such that $j(\mathcal{C}^\infty_{g*h})=\mathcal{C}^\infty_{g*h*h'}$. What we have shown is the following (we use the objects introduced above). Using the above notation, given $d\in L(\Gamma^\infty_{g*h}, \bR_{g*h})[\mathcal{C}^\infty_{g*h}]$ and $i<\iota$ such that $d\in \rng(\sigma_{i, \iota}^+)$, we let $d_i$ be the $\sigma_{i, \iota}$-preimage of $d$. Given sets $a, b, c$, we let $((a, b)[c])^\#$ be the sharp of $L(a, b)[c]$.\footnote{The language of such a sharp contains constants for the members of $a$ and $b$ and another constant symbol for $c\cap L(a, b)[c]$.} Let $\mathcal{C}'_{g*h}=\mathcal{C}^\infty_{g*h}\cap L(\Gamma^\infty_{g*h}, \bR_{g*h})[\mathcal{C}^\infty_{g*h}]$.
\vspace{0.5em}\begin{enumerate}\itemsep0.5em
        \item[(9.1)] For $i<\iota$, $\sigma_{i, \iota}^+(hom_i)=\Gamma^\infty_{g*h}$ and $\sigma_{i, \iota}^+(\mathcal{F}'_{i})=\mathcal{C}'_{g*h}$.
        \item[(9.2)] Suppose $A\in \Gamma^\infty_{g*h}$, $\phi$ is a formula, and $(c_0,..., c_n)$ is a sequence of constants for indiscernibles. Then $$(\phi, A, \mathcal{C}'_{g*h}, c_0,..., c_n)\in ((\Gamma^\infty_{g*h}, \bR_{g*h})[\mathcal{C}^\infty_{g*h}])^\#$$  if and only if whenever $i<\iota$ is such that $A\in \rng(\sigma_{i, \iota}^+)$, $$(\phi, A_i, \mathcal{F}_i', c_0, ..., c_n)\in (((hom_i, \bR^{M_i[m_i]})[\mathcal{F}_{i}'])^\#)^{M_i[m_i]}.$$
    \end{enumerate}\vspace{0.5em}
Using (9.1) and (9.2) it is easy to establish\footnote{As we have done in Theorem \ref{thm:SealingWoodin} (see Section \ref{sec:SealingTheorem}) and in Theorem \ref{sealing for the ub powerset} (see Subsection \ref{subsec:sealing for the ub powerset})} that the following holds:
\vspace{0.5em}\begin{enumerate}\itemsep0.5em
        \item[(10)] Suppose $A\in \Gamma^\infty_{g*h}$, $\phi$ is a formula, and $(c_0,..., c_n)$ is a sequence of constants for indiscernibles. Then $$(\phi, A, \mathcal{C}'_{g*h}, c_0,..., c_n)\in ((\Gamma^\infty_{g*h}, \bR_{g*h})[\mathcal{C}^\infty_{g*h}])^\#$$  if and only if $$(\phi, A_{h'}, \mathcal{C}'_{g*h*h'}, c_0,..., c_n)\in ((\Gamma^\infty_{g*h*h'}, \bR_{g*h*h'})[\mathcal{C}^\infty_{g*h*h'}])^\#$$ 
    \end{enumerate}\vspace{0.5em}
Proving (10) contains no new insight and follows the same set up of the two theorems mentioned above, and because of this we leave the details to the reader. 
\end{proof}

\bibliographystyle{plain}
\bibliography{References}

\end{document}